\documentclass[a4paper,11pt]{article}
\usepackage{hyperref}
\usepackage{mathtools,mathrsfs,amssymb,amsthm,array,booktabs,longtable}
\usepackage[margin=26mm]{geometry}
\usepackage{Article}
\usepackage{Symbols}
\usepackage[backend=biber,style=numeric,sorting=nyt,giveninits=true,url=false,eprint=false,isbn=false,maxbibnames=99]{biblatex}
\AtEveryBibitem{%
  \ifentrytype{article}{\clearfield{series}\iffieldundef{eid}{}{\clearfield{pages}}}{}%
  \iffieldundef{howpublished}{}{\clearfield{eprint}}%
}
\allowdisplaybreaks
\numberwithin{equation}{section}
\numberwithin{theorem}{section}
\hypersetup{hidelinks,pdftitle={Seven ordered points and the nonexistence of arithmetic ball quotient compactifications},pdfauthor={Ryunosuke Nakano and Hisato Matsukawa}}
\title{Seven ordered points and the nonexistence of arithmetic ball quotient compactifications}
\author{Ryunosuke Nakano \and Hisato Matsukawa}
\date{October 16, 2026}
\begin{document}
\maketitle
\begin{abstract}
We prove that the ordered GIT quotient of seven points on \(\mathbb P^1\) with equal weights is not isomorphic to the Baily--Borel compactification of any arithmetic complex ball quotient.
Together with our companion paper and the known positive examples, this determines the equal weight case for every \(n\ge6\): such a realization exists precisely for \(n=6,8\).
The lattice and the algebraic isomorphism are unrestricted, and torsion and divisorial ramification are allowed.
The proof combines arithmetic reductions with identities involving Chern classes of fixed subvarieties and intersection inequalities for nef divisors on the GIT quotient.
The final comparison includes all fixed components and requires no bound on the degree of an individual fixed curve.
\end{abstract}

\noindent\textit{Use of AI.} The authors used OpenAI Codex as a discussion partner for mathematical arguments, to assist with revising proofs and their presentation, and to proofread the manuscript.
The authors take responsibility for the mathematics, computations, and references.

\section{Introduction}
\label{split-second-intro}

Let
\[
X=X_7=(\mathbb P^1)^7/\!/_{\mathcal O(1,\ldots,1)}\mathrm{SL}(2,\mathbb C)
\]
be the ordered GIT quotient; the symmetric group is not divided out.
The labels and their coincidence subvarieties remain part of its geometry.

\begin{theorem}
\label{split-seven-main}
There is no arithmetic lattice \(\Gamma\subset\mathrm{PU}(1,4)\) for which
\[
X\simeq(\mathbb B^4/\Gamma)^{\mathrm{BB}}
\]
as algebraic varieties.
\end{theorem}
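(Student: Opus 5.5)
We argue by contradiction: assume an arithmetic lattice \(\Gamma\subset\mathrm{PU}(1,4)\) and an algebraic isomorphism \(\phi\colon X\to(\mathbb B^4/\Gamma)^{\mathrm{BB}}\) exist. First we record the geometry of \(X=X_7\). Since \(7\) is odd there are no strictly semistable points, so \(X\) is a smooth projective fourfold. Its Picard group and nef cone are explicit: \(X\) is a blow-up of \(\mathbb P^4\) at the general points together with the strict transforms of the lines through pairs of them. The relevant divisors are the coincidence divisors \(D_{ij}=\{x_i=x_j\}\). Since \(X\) is smooth and projective, the Baily--Borel compactification has no cusps, or only cusps that are smooth points after compactification. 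The boundary of \((\mathbb B^4/\Gamma)^{\mathrm{BB}}\) has dimension \(0\), while \(X\) contains no point through which all of its geometry must pass.

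The first arithmetic reduction uses the fact that an isolated cusp of a ball quotient is a normal singularity. Its link is a quotient of a Heisenberg nilmanifold, which is not a lens space. So a cusp cannot be a smooth point of \(X\), and \(\Gamma\) is cocompact. Then \(X=\mathbb B^4/\Gamma\), with orbifold structure coming from the torsion of \(\Gamma\). Its quotient singularities must be smooth points, which forces the ramification to occur in codimension one, along reflections, plus possibly smooth fixed loci of higher codimension.

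Next we pass to a torsion-free normal subgroup \(\Gamma'\subset\Gamma\) of finite index, by Selberg. We write the log-canonical identity
\[
K_X+\sum_i\Bigl(1-\tfrac1{m_i}\Bigr)B_i=\pi_*\text{(ball canonical)},
\]
where the \(B_i\) are the branch divisors. We then apply the orbifold Hirzebruch proportionality: the orbifold Chern numbers of \((X,\Delta)\) satisfy \(\bar c_1^{\,4}=\tfrac{25}{3}\,\bar c_1^{\,2}\bar c_2\), and similarly for the other monomials, proportional to those of \(\mathbb P^4\).

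These identities involve Chern classes of the fixed subvarieties. We expand them as sums over the branch divisors and over the fixed curves and points of the elements of \(\Gamma\). The branch divisor \(B=\sum(1-1/m_i)B_i\) must make \(K_X+B\) ample. Because \(K_X\) for \(X_7\) is antiample (\(X\) is Fano), \(B\) must dominate \(-K_X\) strongly. We test this against the nef divisors and curve classes of \(X\), using intersection inequalities on the explicitly known nef cone.

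The key step is to combine two facts. The proportionality identity, after subtracting the contributions of the fixed subvarieties, gives an equality. Nefness of the pullbacks of the forgetful maps \(X_7\to X_6\) and \(X_7\to X_5\) gives inequalities that bound the coefficients of the components of \(B\) in terms of their classes. Summing over all fixed components, and using positivity of each contribution, rules out every class. This needs no bound on the degree of an individual fixed curve.

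The main obstacle is controlling the curve and point terms in the higher Chern classes. Arbitrary torsion can produce fixed curves of unknown degree. We will try to show that each such term enters the identity with a definite sign, so that discarding these terms yields an inequality. That inequality should already contradict the divisorial constraints coming from the nef cone of \(X\).
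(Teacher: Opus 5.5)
\textbf{The gap: the reduction to cocompact $\Gamma$ is false.} Once $\Gamma$ has torsion, a Baily--Borel cusp can be a smooth point, so your nilmanifold argument only covers neat cusps.

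The link is the horosphere modulo the full cusp stabilizer $P=N\rtimes F$, and $F$ need not act freely. By Armstrong's theorem, the fundamental group of the link is $P$ modulo the normal subgroup generated by elements with fixed points. This quotient is trivial when $P$ is generated by reflections: a reflection fixing the cusp has the isotropic vector in its mirror, so it fixes points on every horosphere.

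Analytically, the cusp germ is the vertex of the cone given by the ring of $F$-invariant sections of powers of the positive line bundle on the horizontal abelian threefold. For reflection holonomy this ring can be a polynomial ring, as in Looijenga's theorem that $(E\otimes_{\mathbb Z}Q^\vee)/W$ is a weighted projective space. The cusp is then locally $\mathbb C^4$, with link $S^7$.

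This is not a side case. The paper must exclude nonuniform lattices over every field other than $\mathbb Q(i)$. Its whole final comparison concerns nonuniform Gaussian lattices with smooth cusps of degree types $1111,\ldots,1244$. These cusps enter the top Chern identity through the cusp count and satisfy the incidence relations of the abelian quotient models ($\varepsilon=0$).

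\textbf{Missing inputs, even with this repaired.} Your outline has no ingredient that would make the final inequality close. It never uses arithmeticity (Selberg's lemma holds for any lattice), and its only positivity input is ampleness of $K_X+\Delta=5L$. In the paper, the geometric argument alone does not finish at the decisive points.
\begin{itemize}
\item The cocompact argument, which already uses arithmetic volume bounds along the way, ends with $L\equiv H$. This case is excluded not geometrically but by $\chi_{\mathrm{orb}}=5H^4=770$, Prasad's volume formula, and explicit checks of fields and $L$-values.
\item In the Gaussian case, the final comparison needs $L-\frac32H$ nef. This comes from the integral structure: all reflections have order $2$, the original group preserves the self-dual lattice $I_{1,4}$ over $\mathbb Z[i]$, $\lambda_I$ is half-integral, and $\lambda_I=1$ is excluded plane by plane.
\item The curve and point terms do not have a definite sign that could simply be discarded. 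The paper controls them through the determinant identity along every fixed curve, the incidence equations at every interior point and cusp, and an explicit rational certificate.
\end{itemize}

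\textbf{Smaller errors.}
\begin{itemize}
\item For $\mathbb P^4$ the proportionality ratio is $c_1^4:c_1^2c_2=5:2$, not $25:3$.
\item $X_7$ has Picard rank $7$. It is obtained from $\operatorname{Bl}_6\mathbb P^4$ by flipping the fifteen lines through pairs of points, not by blowing them up.
\item Forgetting two labels gives only a rational map $X_7\dashrightarrow X_5$, since three coincident labels are unstable for five points.
\end{itemize}
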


The theorem allows arbitrary torsion, reflection ramification, and finite index.
The isomorphism need not preserve the action on the labels, and the branch divisor is not prescribed.
The passage to an integral model retains the original full group.
A construction for a chosen subgroup or a cusp alone would not establish the theorem.

The companion paper \cite{NakanoWeightedConfigurations2026}, cited throughout in its public version arXiv:2610.23228v1, proves the corresponding nonexistence for arbitrary positive rational weights on at least nine labels.
For \(n\ge6\), let \(X_n=(\mathbb P^1)^n/\!/_{\mathcal O(1,\ldots,1)}\mathrm{SL}(2,\mathbb C)\).
Together with the known realizations at six and eight points, Theorem~\ref{split-seven-main} yields the following classification.

\begin{corollary}
\label{split-equal-classification}
For every integer \(n\ge6\), the ordered equal weight GIT quotient \(X_n\) is the Baily--Borel compactification of an arithmetic ball quotient if and only if \(n=6\) or \(n=8\).
\end{corollary}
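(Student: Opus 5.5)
The corollary is a case split on \(n\), assembled from the three inputs named above: the main theorem, the companion paper, and the known positive examples.

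\emph{Positive direction.} For \(n=6\) and \(n=8\) we invoke the known realizations of Deligne--Mostow type. With all weights equal to \(\mu_i=1/3\) for six points, and \(\mu_i=1/4\) for eight points, the INT condition holds. Hence the period map identifies the ordered GIT quotient \(X_n\) with the Baily--Borel compactification of \(\mathbb B^{n-3}/\Gamma\) for an arithmetic lattice \(\Gamma\); these are the Deligne--Mostow and Kondō descriptions. Two points need checking here:
\begin{itemize}
\item the weight \(\mathcal O(1,\ldots,1)\) linearization agrees with the Deligne--Mostow weights up to scaling, which is automatic because the weights are equal;
\item no quotient by a symmetric group is involved, since the lattice in question is the monodromy group of the ordered configuration space.
\end{itemize}
It also has to be confirmed that the Baily--Borel boundary points correspond exactly to the strictly semistable (cusp) points of the GIT quotient, as in the standard comparison.

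\emph{Negative direction.}
\begin{itemize}
\item For \(n=7\), the statement is exactly Theorem~\ref{split-seven-main}.
\item For \(n\ge9\), we apply the companion paper \cite{NakanoWeightedConfigurations2026}. It treats arbitrary positive rational weights on at least nine labels, and equal weights are a special case. The step to verify is that its hypotheses match the present setting: the same ordered quotient, an unrestricted algebraic isomorphism, and an arbitrary arithmetic lattice in \(\mathrm{PU}(1,n-3)\).
\end{itemize}
The dimension count fixes the target group. Since \(\dim X_n=n-3\), any realization must be as \(\mathbb B^{n-3}/\Gamma\) with \(\Gamma\subset \mathrm{PU}(1,n-3)\).

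The one real obstacle lies in the hypothesis matching. The companion result must be stated for the equal-weight linearization \(\mathcal O(1,\ldots,1)\) itself, and not only for a generic chamber. For even \(n\) the equal-weight quotient has strictly semistable points, and we must check that the companion paper includes this case. I expect it does, since it allows arbitrary positive rational weights.
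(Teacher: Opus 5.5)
Your proposal is correct and follows the paper's own argument. The cases \(n=6,8\) are the arithmetic Deligne--Mostow quotients with weights \(1/3\) and \(1/4\), \(n=7\) is Theorem~\ref{split-seven-main}, and \(n\ge9\) is the companion paper's nonexistence result for arbitrary positive rational weights, which the paper takes to include the equal weight linearization. One small correction: INT gives discreteness and the period isomorphism but not arithmeticity by itself, so the arithmeticity of these two lattices (Eisenstein and Gaussian) should be quoted from the known results rather than deduced with ``hence''.
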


The positive realizations are the arithmetic Deligne--Mostow quotients \cite{DM,HKM,HulekMaedaEight}.
Our contribution is the exclusion of all arithmetic realizations of \(X_7\), rather than the failure of one period map.
The global result is not a claim about nonarithmetic lattices or arbitrary compactifications.

The proof first excludes cocompact lattices and the nonuniform lattices over other fields.
In the Gaussian case, the cusp geometry forces a norm condition on every actual reflection root.
This condition produces a self-dual lattice preserved by the same full group.
The branch character and the degrees on the thirty five coincidence planes then give \(L-\frac32H\) nef.
No permutation of the labels is assumed to lift to the ball.

The final comparison uses every fixed surface, curve, interior point, and cusp.
An average over the fixed sets gives a triangular system for the absolute Chern classes; its inverse is a binomial transformation.
The determinant of the quotient differential gives a second relation along every fixed curve, with its full normalizer, finite characters, and actual cusp widths.
The same local incidence equations apply to the projectivization at an interior point and the abelian quotient at a cusp.
The nef cone of the configuration quotient is incompatible with these relations.
We retain both root norms and all eight possible groups on curves throughout this final argument.
No degree or index bound is imposed on an individual component.
The common determinant formula is Proposition~3.1 of \cite{NakanoWeightedConfigurations2026}; the absolute Chern identities and their application are proved here.

\section{Geometry and preliminary comparisons}
\label{split-seven-geometry}

Put \(P_7=P=(\mathbb P^1)^7\), and let \(P^{\mathrm s}\) be the locus where at most three labels coincide at any point.
The effective diagonal action of \(\mathrm{PGL}(2,\mathbb C)\) is free on \(P^{\mathrm s}\), and stability equals semistability.
The principal bundle \(P^{\mathrm s}\to X\) defines
\[
\pi_7:\mathcal P_7=(P^{\mathrm s}\times\mathbb P^1)/\mathrm{PGL}(2,\mathbb C)\longrightarrow X,
\qquad[(p,z)]\longmapsto[p],
\]
with sections \(\sigma_i([p])=[(p,p_i)]\).
We use projective bundles of one dimensional quotients.
The geometry of the quotient gives
\[
H=-K_X=(2,\ldots,2),\qquad
\operatorname{Pic}(X)=\{b\in\mathbb Z^7:\textstyle\sum_i b_i\in2\mathbb Z\}.
\]
For a set \(I\) of three labels, the coincidence subvariety \(Z_I\) is \(\mathbb P^2\), its normal bundle is \(\mathcal O_{\mathbb P^2}(-1)^{\oplus2}\), and \(H|_{Z_I}=\mathcal O_{\mathbb P^2}(1)\).
There are thirty five such planes.
These assertions follow from the free quotient and its marked sections, as in \cite[Proposition~2.5]{NakanoWeightedConfigurations2026}, \cite[Equation~2.2]{NakanoWeightedConfigurations2026}, and \cite[Proposition~2.6]{NakanoWeightedConfigurations2026}.

Assume a hypothetical isomorphism in Theorem~\ref{split-seven-main}, and identify its ball quotient with the open part of \(X\).
Use the model of negative lines in a Hermitian space of signature \((1,4)\).
Let \(\mathcal L\) be its automorphic rational line bundle, normalized by \(K_{\mathbb B^4}=\mathcal L^5\), and put \(L=c_1(\mathcal L)\).
For a full normalizer curve \(f:C\to X\), set
\[
a=\deg f^*H,\qquad t=\deg f^*L,\qquad x=g(C)-1.
\]
All finite inertia characters are retained in the descended bundles \(E_e\) associated with an invariant of polynomial degree \(e\); their numerical degrees are \(-et\) before the stated local corrections.

We use \cite[Proposition~2.2; Proposition~2.3; Proposition~2.4]{NakanoWeightedConfigurations2026} for tangent quotients.
For a negative target, every homomorphism factors through differences of coincident sections.
If \(m\) labels coincide and their common relative tangent line has degree \(-e<0\), then
\[
2\le m\le3,\qquad a=(7-2m)e+2\delta,\qquad\delta\ge0.
\]
The remaining relative tangent lines generate the kernel and give the exterior power bounds of \cite[Proposition~2.3]{NakanoWeightedConfigurations2026}.
A quotient of rank \(2\) is always required to be surjective at every point, not just at the generic point.
The nef criterion of \cite[Proposition~2.6]{NakanoWeightedConfigurations2026} must be checked on every \(Z_I\); it does not identify the ball structures along different planes.

The original full arithmetic group and its normalized lift are those of \cite[Proposition~2.11]{NakanoWeightedConfigurations2026}.
For each fixed ball we use its full normalizer, the finite morphism of \cite[Lemma~2.12]{NakanoWeightedConfigurations2026}, and the same restricted automorphic bundle.
At a cusp, \(P\) denotes the full stabilizer, \(N\) its Heisenberg lattice, and \(F=P/N\) its finite horizontal group.
The horizontal torus \(A\) and its positive line bundle are defined with the least positive central period of this \(N\), with the actual action on the fiber.
For a smooth cusp, the horizontal holonomy is irreducible.
For its actual linearization, the invariant sections form a polynomial ring, whose projectivization is generally a weighted projective space.
The least central period gives its faithful linearization and the grading of its invariant section ring, as in \cite[Lemma~2.16]{NakanoWeightedConfigurations2026}.
The lift fixing the point in \cite[Lemma~2.17]{NakanoWeightedConfigurations2026} applies to the kernel of the actual fiber character.
The degree of an invariant section in this grading is distinct from the polynomial degree \(e\) of an invariant of the normal representation.
In a fixed curve coordinate the ambient central coordinate satisfies \(Q=q^{\kappa}\), where \(\kappa\) is its actual relative width.
We use \cite[Proposition~3.4]{NakanoWeightedConfigurations2026} and \cite[Lemma~3.5]{NakanoWeightedConfigurations2026} only after specifying the larger stabilizers, end groups, characters, and width.

The dependencies on the companion paper are summarized below.
All statement numbers in this table refer to arXiv:2610.23228v1; the hypotheses stated there remain in force.
\begin{center}
\small
\setlength{\tabcolsep}{4pt}
\begin{tabular}{@{}p{.16\linewidth}p{.39\linewidth}p{.39\linewidth}@{}}
\toprule
Source & Hypotheses and conclusion used & Use in this paper\\
\midrule
Propositions 2.2--2.4 & Complete curves in the stable quotient; homomorphisms to negative lines factor through coincident sections, with the stated kernel and determinant bounds. & Tangent and normal quotients.\\
Propositions 2.5--2.6 & The marked family and the coincidence subvarieties; the nef criterion requires every relevant coincidence plane. & The classes of $X_7$ and the estimate for $L-3H/2$.\\
Proposition 2.11 & A hypothetical arithmetic realization; the integral model and scalar normalization retain the original full group. & Arithmetic reductions.\\
Lemma 2.12 & A fixed ball with its full generic stabilizer; its full normalizer has finite volume and a finite map, with the original automorphic bundle restricted. & All fixed components; its additional product hypothesis is required for the stated ordinary normal bundle.\\
Proposition 2.14 and Lemma 2.15 & The product hypothesis for persistence, and nonuniformity for transfer to a cusp, with the rank bounds stated there. & Reduction of the possible stabilizers.\\
Lemmas 2.16--2.17 & A smooth cusp with its actual central period and fiber action; a finite lift is obtained for the kernel of the fiber character. & Degrees of invariant theta sections, finite characters, and the relative width.\\
Propositions 3.1, 3.4 and Lemma 3.5 & Full normalizers, all larger stabilizers, and the specified end groups and characters. & Determinant and monomial comparisons, after their hypotheses are checked.\\
Equation 3.13 & The same restricted automorphic class on a fixed ball. & The Chern polynomial used in Proposition~\ref{ew-chern-general}.\\
\bottomrule
\end{tabular}
\end{center}

\begin{lemma}
\label{ew-git-seven-classes}
For \(X=X_7\), let \(\mathcal A\) be the adjoint bundle of the principal bundle \(P_7^{\mathrm s}\to X\), with fiber \(\mathfrak{sl}_2(\C)\).
Put \(\psi_i=c_1(\sigma_i^*T_{\mathcal P_7/X})\) and \(p=-c_2(\mathcal A)\).
Then, in the rational Chow ring of \(X\),
\[
H=\sum_i\psi_i,\qquad \psi_i^2=p,\qquad
c_2(T_X)=\frac{H^2-5p}{2},\qquad
c_3(T_X)=\frac{H^3-13Hp}{6}.
\]
\end{lemma}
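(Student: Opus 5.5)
The plan is to compute everything on the free quotient using the universal $\mathbb P^1$-bundle $\pi_7:\mathcal P_7\to X$ and the exact sequence relating $T_X$ to the tangent bundle of $P^{\mathrm s}$. Write $\mathcal P_7=\mathbb P(V)$ for a rank two bundle $V$; since the $\mathrm{PGL}_2$ principal bundle need not lift to $\mathrm{SL}_2$, we work rationally, with $\operatorname{End}_0(V)=\mathcal A$. Then $T_{\mathcal P_7/X}=\mathcal O(2)\otimes\det V^{-1}$ (the relative Euler sequence). Let $\zeta=c_1(T_{\mathcal P_7/X})$. Its Grothendieck relation reads $\zeta^2=\pi_7^*(c_1(V)^2-4c_2(V))=-\pi_7^*c_2(\mathcal A)=\pi_7^*p$, because $c_2(\operatorname{End}_0V)=4c_2(V)-c_1(V)^2$. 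Pulling back along each section $\sigma_i$ gives $\psi_i^2=\sigma_i^*\zeta^2=p$, which is the second identity.

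For the tangent bundle, the free action gives an exact sequence on $X$ (descended from $P^{\mathrm s}$)
\[
0\to\mathcal A\to\bigoplus_{i=1}^7\sigma_i^*T_{\mathcal P_7/X}\to T_X\to0 .
\]
Here the $i$-th summand is the descent of $\mathrm{pr}_i^*T_{\mathbb P^1}$, and the first map is the infinitesimal action. Hence
\[
c(T_X)=\prod_i(1+\psi_i)\cdot c(\mathcal A)^{-1}.
\]
Since $\mathcal A$ is self-dual, $c(\mathcal A)=1-p$ rationally (odd classes are $2$-torsion or vanish; indeed $c_3(\mathcal A)=0$ rationally for $\mathfrak{so}_3$). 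So
\[
c(T_X)=\prod_i(1+\psi_i)\,(1+p+p^2+\cdots).
\]
This yields $c_1=\sum\psi_i$, and comparing with $-K_X=H$ (stated in Section~2) gives $H=\sum\psi_i$.

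Next use $\psi_i^2=p$ to rewrite the elementary symmetric functions:
\[
e_2=\tfrac12\bigl(H^2-\textstyle\sum\psi_i^2\bigr)=\tfrac12(H^2-7p),
\qquad
c_2=e_2+p=\tfrac12(H^2-5p).
\]
For $e_3$, Newton's identity gives $p_3=\sum\psi_i^3=p\sum\psi_i=Hp$, hence
\[
e_3=\tfrac16\bigl(H^3-3Hp_2+2p_3\bigr)=\tfrac16\bigl(H^3-21Hp+2Hp\bigr)=\tfrac16(H^3-19Hp).
\]
Then
\[
c_3=e_3+e_1p=\tfrac16(H^3-19Hp)+Hp=\tfrac16(H^3-13Hp),
\]
as claimed.

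The main obstacle is justifying the descended exact sequence and the identification $T_{\mathcal P_7/X}$ with $\zeta^2=\pi_7^*p$ when $V$ exists only as a twisted or rational object. We handle this by working equivariantly on $P^{\mathrm s}\times\mathbb P^1$ with $\mathrm{PGL}_2$-linearized sheaves, where $T_{\mathbb P^1}$ is canonically linearized and $\mathcal A$ is the adjoint representation. The relation $\zeta^2=\pi^*p$ then follows from the Chern class relation $c_1(T_{\mathbb P^1})^2=-c_2(\mathfrak{sl}_2)$ in $\mathrm{PGL}_2$-equivariant cohomology of $\mathbb P^1$. We check this on the torus: the weights of $T_{\mathbb P^1}$ at the fixed points are $\pm\alpha$, whose squares are $\alpha^2$, and $-c_2(\mathfrak{sl}_2)=\alpha^2$ for the weights $\{\alpha,0,-\alpha\}$. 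Freeness of the action and rational coefficients then transfer both facts to the Chow ring of $X$.
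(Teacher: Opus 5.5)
Your proposal is correct and follows the paper's proof: the Grothendieck relation for the rank-two bundle gives \(\psi_i^2=p\), and the tangent sequence \(0\to\mathcal A\to\bigoplus_i\sigma_i^*T_{\mathcal P_7/X}\to T_X\to0\) gives the rest by expanding \(\prod_i(1+\psi_i)/(1-p)\). The only difference is that the paper avoids your twisted-bundle concern directly: the section \(\sigma_1\) realizes \(\mathcal P_7=\mathbb P_X(\mathcal V)\) with \(\mathcal V=\pi_{7*}\mathcal O_{\mathcal P_7}(\sigma_1(X))\) an honest rank-two bundle, so your equivariant localization fallback is valid but unnecessary.
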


\begin{proof}
The section \(\sigma_1\) represents the universal family as \(\Ps_X(\mathcal V)\), where \(\mathcal V=\pi_{7*}\mathcal O_{\mathcal P_7}(\sigma_1(X))\) has rank \(2\).
For each quotient \(\mathcal V\twoheadrightarrow\mathcal L_i\), the relative tangent line has first Chern class \(2c_1(\mathcal L_i)-c_1(\mathcal V)\), whose square equals \(c_1(\mathcal V)^2-4c_2(\mathcal V)=-c_2(\operatorname{End}_0(\mathcal V))=p\).
The tangent sequence is \(0\to\mathcal A\to\bigoplus_i\sigma_i^*T_{\mathcal P_7/X}\to T_X\to0\).
Since \(c_1(\mathcal A)=c_3(\mathcal A)=0\), expanding \(c(T_X)=\prod_i(1+\psi_i)/(1-p)\) through degree \(3\) proves the formulas.
\end{proof}

\section{The automorphic degree on the coincidence planes}
\label{seven-plane-section}
Throughout this section, $X=X_7$, $H=-K_X$, and $\mathcal L$ is the automorphic line bundle.
For a curve $f:C\to X$, we write $a=\deg f^*H$ and $t=\deg f^*\mathcal L$.
The degree of $\mathcal L$ is understood on the orbifold until an indicated power descends to $X$.
For a fixed curve, $\mathcal N$ denotes $f^*T_X/(df(T_C))^{\mathrm{sat}}$.
For an invariant direction of polynomial degree $e$ preserved by the full normalizer, $E_e$ denotes its line bundle with the finite inertia characters included; its degree is stated in each calculation.
In the tables, a string such as $1112$ abbreviates the tuple $(1,1,1,2)$.
\begin{lemma}
\label{seven-plane-gauss}
Suppose that $X$ is the Baily--Borel compactification of an arithmetic ball quotient, and let $Z\simeq\mathbb P^2$ be a three point coincidence plane.
Write $\mathcal L|_Z=\lambda\mathcal O_Z(1)$.
If the nontrivial part of the normal representation of the full generic stabilizer has rank 0 or 1, then $\lambda\ge1$.
If it has rank 2 and its least positive invariant degree is $q$, then $q\lambda\ge1$.
\end{lemma}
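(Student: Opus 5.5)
The plan is to compare two expressions for the canonical class of the fixed ball on \(Z\), and then to test the resulting class on a line in \(Z\simeq\mathbb P^2\).

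First I would locate the ball structure on \(Z\). The plane \(Z\) is a divisor-free, codimension two subvariety of \(X\), and it is the closure of a fixed component of some finite stabilizer. So, passing through the isomorphism, its interior is the image of a totally geodesic \(\mathbb B^2\subset\mathbb B^4\) whose full generic stabilizer \(G\) fixes it. By \cite[Lemma~2.12]{NakanoWeightedConfigurations2026}, the full normalizer acts on \(\mathbb B^2\) with finite covolume. It gives a finite morphism onto \(Z\), and \(\mathcal L\) restricts to the automorphic bundle of the sub-ball.

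The normal bundle of \(\mathbb B^2\) in \(\mathbb B^4\) is \(\mathcal L^{-1}\otimes\mathcal L_2^{\vee}\)-type. Equivalently, on a sub-ball of signature \((1,2)\) in signature \((1,4)\), the normal directions are two copies of the automorphic line with weight \(-1\). This gives \(K_{\mathbb B^2}=\mathcal L^3\), i.e.\ \(K=3\lambda\) on \(Z\) at the orbifold level. I would then compare this with the algebraic canonical class \(K_Z=\mathcal O(-3)\) corrected by the branch divisor of \(\mathbb B^2\to Z\).

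The branch divisor is effective with coefficients \(1-1/m\ge0\), so the orbifold canonical class is \(-3+\beta\), with \(\beta\) effective. In addition, cusps of the plane contribute a boundary. The honest inequality should come from the normal direction instead, and I would get it as follows. The normal bundle \(\mathcal O(-1)^{\oplus2}\) must receive the normal automorphic bundle \(E\) of the fixed ball.

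\emph{Rank 0 or 1.} If the nontrivial part of the normal representation of \(G\) has rank \(0\) or \(1\), then at least one normal direction carries the trivial character. Along that direction the quotient map is unramified, so the descended line has degree exactly \(-\lambda\) on a line of \(Z\). Applying \cite[Proposition~2.2]{NakanoWeightedConfigurations2026}, any nonzero map \(E\to N_Z\) into a negative target factors through differences of coincident sections. This gives \(-\lambda\le-1\), i.e.\ \(\lambda\ge1\).

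\emph{Rank 2.} The invariants of the normal representation form a ring whose least positive degree is \(q\). The descended bundle \(E_q\) has degree \(-q\lambda\) and maps nontrivially, through the quotient differential, to \(\operatorname{Sym}^q\) of the normal bundle restricted to \(Z\). Restricted to a general line, the relevant target splits into \(\mathcal O(-q)\) pieces, but the factorization through sections gives the sharper bound \(-q\lambda\le-1\).

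The main obstacle is justifying that the map \(E_e\to\) (quotient normal bundle) is nonzero and saturated enough on a general line. Finite inertia characters along divisors in \(Z\) could twist the degree by an effective correction. I would handle this with the sign convention stated above: the characters only make the target more positive, so they only strengthen the inequality. I would also need to check at cusps of \(Z\), using \cite[Lemma~2.16]{NakanoWeightedConfigurations2026}, that no negative correction enters.
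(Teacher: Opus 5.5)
\textbf{Ranks $0$ and $1$: there is a genuine gap.} Your first step asserts that the interior of $Z$ is the image of a totally geodesic $\mathbb B^2$ fixed by the generic stabilizer. That holds only in rank $2$. In rank $0$ the generic stabilizer is trivial in $\mathrm{PU}(1,4)$. In rank $1$ the inverse image of $Z$ only lies in the mirror $\mathbb B^3$ of the generic reflection. In both cases the inverse image can be an arbitrary complex surface, so its normal directions with trivial character carry no automorphic line bundle. The claim that such a direction ``descends to a line of degree exactly $-\lambda$'' therefore has no basis.

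The missing idea is the Gauss equation. Take a resolved component $h:S\to Y$, $\tau:S\to Z$ of the inverse image in a neat cover, and let $R$ be the common divisor of the maximal minors of $dh$. Curvature decrease for complex submanifolds of the ball gives a positive current in $c_1(K_S-R-3h^*\mathcal L)$. Hence $(K_S-R)\cdot\tau^*\mathcal O_Z(1)\ge3\lambda e$ on the degree $e$ inverse image of a general line avoiding cusps. Adjunction then gives a lower bound on normal degrees. In rank $0$, compare with $K_Y=5\mathcal L$ using the determinant of the normal bundle; this gives degree at least $-2\lambda e$. In rank $1$, compare with $K=4\mathcal L$ on the mirror using the unramified normal line; this gives degree at least $-\lambda e$. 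A generically injective map into $\tau^*\mathcal O_Z(-1)^{\oplus2}$ then yields $\lambda\ge1$. The paper feeds the Gauss inequality and $N_{Z/X}=\mathcal O_Z(-1)^{\oplus2}$ into \cite[Lemma~3.10]{NakanoWeightedConfigurations2026} with $k=c=2$ to do exactly this.

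So your ``exactly $-\lambda$'' must become ``at least $-\lambda$'', and that inequality is the whole content of these cases. Also, \cite[Proposition~2.2]{NakanoWeightedConfigurations2026} is about homomorphisms from $f^*T_X$ on a complete curve to a negative line bundle. It plays no role here; only a degree comparison of a nonzero map into $\mathcal O(-1)^{\oplus2}$ is needed.

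\textbf{Rank $2$: the setup is right but the map is misidentified.} You correctly have a genuine fixed $\mathbb B^2$ with normal bundle $\mathcal L^{-1}\otimes\underline U$ on a neat cover. The first nonzero normal Taylor coefficients of the equations of $Z$ are the degree $q$ invariants of the normal representation. They give a map $\mathcal L^{-q}\otimes\underline U_q\to\tau^*\mathcal O_Z(-1)^{\oplus2}$ of positive generic rank. The $q$th symmetric power belongs on the source side, the normal bundle of the ball, not on $N_{Z/X}$. With your target $\operatorname{Sym}^qN_{Z/X}$ you would obtain $\lambda\ge1$. That is stronger than the statement and not available: the later sections must still treat rank $2$ planes with $\lambda<1$, such as $\lambda=1/3$ with $q=4$.

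The finite characters are not handled by a sign convention. Pass to a finite cover of a compact inverse line that trivializes $\underline U_q$. There, a nonzero map from a line bundle of degree $-q\lambda e'$ to $\mathcal O(-e')$ forces $q\lambda\ge1$, with no factorization through coincident sections.
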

\begin{proof}
On a normal neat cover, resolve a component above \(Z\), with maps \(h:S\to Y\) and \(\tau:S\to Z\).
The normalized component is finite Galois over the punctured plane, by transitivity of its component stabilizer on a general fiber.
Let \(R\) be the common divisor of the maximal minors of \(dh\).
After removing \(R\), the Gauss equation gives a positive current in \(c_1(K_S-R-3h^*\mathcal L)\), whose potential is locally bounded above.
On the inverse image of degree \(e\) of a general line avoiding cusps and exceptional images,
\begin{equation}
\label{seven-plane-gauss-current}(K_S-R)\cdot\tau^*\mathcal O_Z(1)\ge3\lambda e.
\end{equation}
For ranks \(0,1\), apply \cite[Lemma~3.10]{NakanoWeightedConfigurations2026} with \(k=c=2\).
The normal bundle is \(\mathcal O_Z(-1)^{\oplus2}\), the omitted cusp set is finite, and the full fixed normalizer retains the restricted \(\mathcal L\) by \cite[Lemma~2.12]{NakanoWeightedConfigurations2026}.
Thus \(\lambda\ge1\).

For rank \(2\), the fixed ball maps finitely to the punctured plane.
Its normal bundle on a neat cover is \(\mathcal L^{-1}\otimes\underline U\), with finite monodromy \cite[Lemma~2.12]{NakanoWeightedConfigurations2026}.
If \(k\) independent invariants have the least positive degree \(q\), the first normal Taylor coefficients of the plane ideal give a map \(\mathcal L^{-q}\otimes\underline U_q\to\tau^*\mathcal O_Z(-1)^{\oplus2}\) of generic rank \(k\).
Kill the finite monodromy on a compact inverse line; a negative coefficient degree would make this map zero.
Thus \(q\lambda\ge1\).
\end{proof}
The following two assertions concern a nonuniform arithmetic realization over an imaginary quadratic field.
\begin{lemma}
\label{ew-fields-plane-bound}
Let $Z\simeq\mathbb P^2$ be any three point coincidence plane and put $\mathcal L|_Z=\lambda\mathcal O_Z(1)$.
If the normal representation of its full generic stabilizer has rank 0, then $\lambda\ge1$.
If it has rank 1 and the generic reflection has order $b$, then $(1-b^{-1})\lambda\ge1/3$.
If it has rank 2 and the least positive invariant degree is $q$, then $q\lambda>1$.
In all cases $w_K\mathcal L-2H$ is nef, and hence $a\le w_Kt/2$ on every curve.
\end{lemma}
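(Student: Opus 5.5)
We refine the argument of Lemma~\ref{seven-plane-gauss} case by case, using that \(\Gamma\) is commensurable with \(\mathrm{PU}(\Lambda)\) for a Hermitian lattice \(\Lambda\) over \(\mathcal O_K\), with \(K\) imaginary quadratic and \(w_K=|\mathcal O_K^\times|\in\{2,4,6\}\). The integral model of \cite[Proposition~2.11]{NakanoWeightedConfigurations2026} retains the full group, so every finite stabilizer of a fixed ball acts on the normal directions through characters with values in roots of unity of \(K\), up to the scalar normalization. This restricts the reflection orders to \(b\mid w_K\) or \(b=2\), and it restricts the possible rank two normal representations.

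\emph{Rank \(0\).} Here the statement coincides with the rank \(0\) case of Lemma~\ref{seven-plane-gauss}, so \(\lambda\ge1\).

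\emph{Rank \(1\).} Let the generic stabilizer be generated by a complex reflection of order \(b\). We repeat the Gauss-equation argument, now with the ramification divisor. The fixed ball is the branch locus of \(\mathbb B^4\to\mathbb B^4/\Gamma\) with index \(b\), so on \(X\) the restricted automorphic class acquires the correction \((1-b^{-1})\) times the branch divisor. Near \(Z\), the comparison \(K_X=\mathcal L^5-(1-b^{-1})B\) restricted to \(Z\), combined with the normal bundle \(\mathcal O_Z(-1)^{\oplus2}\) and \cite[Lemma~3.10]{NakanoWeightedConfigurations2026} with \(k=c=2\), should give the inequality \(3(1-b^{-1})\lambda\ge1\). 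The idea is that the one-dimensional nontrivial normal direction has degree \(-1\) on lines, and the quotient by the reflection divides its pullback degree by \(b\). I will write this computation as the current inequality \eqref{seven-plane-gauss-current} with \(R\) replaced by \((1-b^{-1})\) times the normal ramification.

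\emph{Rank \(2\).} We already have \(q\lambda\ge1\). Equality would force the Taylor map \(\mathcal L^{-q}\otimes\underline U_q\to\tau^*\mathcal O_Z(-1)^{\oplus2}\) to be an isomorphism of degree zero twist, that is, a flat map on every compact inverse line. This would make the normal bundle of the fixed ball projectively flat, of the form \(\mathcal O(-1)^{\oplus 2}\) up to finite monodromy. We then expect a contradiction with the nonuniformity: at a cusp of \(Z\), \cite[Lemma~2.17]{NakanoWeightedConfigurations2026} and the rank bounds of Lemma~2.15 force a nontrivial width or character, so the inequality becomes strict. I regard this strictness step as the main obstacle. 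My first attempt will be to show that equality gives a nowhere vanishing invariant section whose restriction to the cusp torus has positive theta degree.

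\emph{Nefness.} By the nef criterion \cite[Proposition~2.6]{NakanoWeightedConfigurations2026}, it suffices to check \(w_K\lambda\ge2\cdot1\) on every plane \(Z_I\), since \(H|_Z=\mathcal O_Z(1)\). In rank \(0\), \(w_K\lambda\ge w_K\ge2\). In rank \(1\), \(b\le w_K\) (or \(b=2\)) gives \(\lambda\ge1/(3(1-b^{-1}))\), and we check \(w_K/(3(1-b^{-1}))\ge2\) for the admissible pairs \((w_K,b)\). Note, however, that the pair \(w_K=2,\ b=2\) would give only \(2/3\), so the arithmetic input must exclude it. In rank \(2\), \(q\) divides the order of a stabilizer whose characters lie in \(\mu_{w_K}\), so \(q\le w_K/2\) in the relevant cases, and \(q\lambda>1\) yields \(w_K\lambda>2\).

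Pulling back along any curve then gives \(w_Kt-2a\ge0\), that is, \(a\le w_Kt/2\).
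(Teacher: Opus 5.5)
Your proposal does not prove the one new step of the lemma, the strict inequality $q\lambda>1$ in rank $2$. The mechanism you suggest, a nontrivial width or character at a cusp or a section of positive theta degree, is not what rules out equality, and nothing is actually constructed. The missing argument is a vanishing statement.

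If $q\lambda=1$, then $\mathcal S=\mathcal L^q\otimes\tau^*\mathcal O_Z(-1)$ is torsion, with an equivariant trivialization of $\mathcal S^c$, where $c$ is the order of the full scalar kernel. Hence the degree $q$ Taylor coefficient map $\mathcal S^{-1}\otimes\underline U_q\to\mathcal O^{\oplus2}$ is a map of flat bundles. It has generic rank $k$, the number of independent invariants of degree $q$, so it need not be an isomorphism. Constancy on compact inverse lines gives rank $k$ at every interior point. The map then descends to the simply connected punctured plane as a constant matrix, and Hartogs extends it over $Z$.

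Nonuniformity now enters concretely. \cite[Lemma~2.15]{NakanoWeightedConfigurations2026} supplies an actual cusp of this fixed ball. In Siegel coordinates with central coordinate $Q$, the plane equations and their degree $q$ Taylor coefficients are divisible by $Q$, because the zero section contracts to the cusp. On the other hand, the trivialization of $\mathcal S^c$ is pulled back from $(\mathcal L^c|_Z)^q\simeq\mathcal O_Z(c)$ through the Baily--Borel extension, so it is a unit at $Q=0$. The constant rank $k>0$ matrix therefore vanishes at the cusp, a contradiction.

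Your nef step also fails as written, because the bound $q\le w_K/2$ is false. For example, $A_1^2$ has $q=2$ when $w_K=2$, $G(4,2,2)$ has $q=4=w_K$ over $\mathbb Q(i)$, and $G_4$ has $q=4>3$ over $\mathbb Q(\omega)$. Two ingredients are needed instead.
\begin{itemize}
\item Integrality: $w_K\lambda\in\mathbb Z$, because the full scalar kernel has order dividing $w_K$ and the corresponding power of $\mathcal L$ descends by \cite[Proposition~2.11]{NakanoWeightedConfigurations2026}.
\item The bound $q\le w_K$ for every rank $2$ group that can occur. For monomial groups this comes from their degrees $m,2m/p$ and their character fields. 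For primitive groups you must transfer to an actual cusp to exclude $G_5,G_8,G_{12}$; for instance, $G_8$ has $q=8>4$ and $G_{12}$ has $q=6>2$. The exclusion works because the only primitive rank $2$ parabolic of $G_{24},G_{25},G_{26}$ is $G_4$.
\end{itemize}
With these, $w_K\lambda\ge q\lambda>1$ forces $w_K\lambda\ge2$.

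In rank $1$ you do not need the ramified Gauss computation that you leave at ``should give''. Lemma~\ref{seven-plane-gauss} already gives $\lambda\ge1$ in rank $1$ as well as rank $0$. This implies $(1-b^{-1})\lambda\ge1/2$ and $w_K\lambda\ge w_K\ge2$, so the case $(w_K,b)=(2,2)$ that you flagged causes no difficulty.
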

\begin{proof}
\label{ew-finite-plane-bound}
By Lemma~\ref{seven-plane-gauss}, only equality $q\lambda=1$ in rank 2 remains.
Retain its notation and let $c$ be the order of the full scalar kernel, so $\mathcal L^c$ descends and $q\mid c$.
The bundle $\mathcal S=\mathcal L^q\otimes\tau^*\mathcal O_Z(-1)$ is torsion, with an equivariant trivialization of its $c$th power.
The bundle $\mathcal F=\mathcal S^{-1}\otimes\underline U_q$ is finite flat.
The central scalar $\zeta$ acts by $\zeta^q$ on both factors before taking their quotient; thus $\mathcal F$ and the map $\mathcal F\to\mathcal O_M^{\oplus2}$ are equivariant for the actual projective component stabilizer.
Constancy on compact inverse lines after a finite cover gives rank $k$ at every interior point; the injective source then has trivial point inertia.
It descends to the simply connected punctured plane as a trivial finite flat bundle, and Hartogs's theorem extends its constant rank $k$ matrix to $Z$.
\cite[Lemma~2.15]{NakanoWeightedConfigurations2026} supplies an actual cusp of this fixed ball.
In a unipotent cover with full central coordinate $Q$ and horizontal normal coordinates $y\in U$, the plane equations and their Taylor coefficients of degree $q$ are divisible by $Q$, since the zero section contracts to the cusp.
The trivialization of \(\mathcal S^c\) is pulled back from \((\mathcal L^c|_Z)^q\simeq\mathcal O_Z(c)\) on the whole plane, using the Baily--Borel extension of \cite[Proposition~2.11]{NakanoWeightedConfigurations2026}.
In the normalized Siegel frame and a regular frame of \(\mathcal O_Z(-1)\), this trivialization is a unit at \(Q=0\).
After killing the finite coefficient action on the same cover, its local \(c\)th root is a unit, so the direct Taylor coefficient matrix still vanishes at \(Q=0\).
The constant rank $k>0$ matrix therefore vanishes along a vertical ray, a contradiction.

Finally $q\le w_K$ in every possible rank 2 group.
For monomial groups this follows from their degrees $m,2m/p$ and their character fields; a dihedral group has degree 2.
For primitive groups, transfer to an actual cusp excludes $G_5,G_8,G_{12}$: the rank 3 primitive groups over quadratic fields are $G_{24},G_{25},G_{26}$ and their only primitive parabolic group of rank 2 is $G_4$, of degrees $4,6$~\cite[Table 1]{GeckMalle}, \cite[Table 4]{KrishnasamyTaylor}.
Thus the remaining primitive case has $K=\mathbb Q(\omega)$ and $q=4\le6$.
The integer $w_K\lambda$ is strictly greater than 1 in rank 2 and at least 2 in ranks 0 and 1.
\cite[Proposition~2.6]{NakanoWeightedConfigurations2026} applied to the 35 planes gives the asserted nef inequality.

\end{proof}
\begin{lemma}
\label{ew-fields-plane-equality}
In the setting of Lemma~\ref{ew-fields-plane-bound}, if the generic stabilizer has rank 0 or 1, then $\lambda>1$.
\end{lemma}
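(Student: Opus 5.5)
By Lemma~\ref{seven-plane-gauss} we already have \(\lambda\ge1\) in ranks \(0\) and \(1\). It therefore suffices to exclude the equality \(\lambda=1\), and we follow the pattern of the equality case in the proof of Lemma~\ref{ew-fields-plane-bound}. Assume \(\lambda=1\) and retain the resolution \(h:S\to Y\), \(\tau:S\to Z\).

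\emph{Rank \(0\).} Here the fixed ball is a \(2\)-dimensional ball quotient component mapping finitely onto the punctured plane. The Gauss inequality~\eqref{seven-plane-gauss-current} and the input from \cite[Lemma~3.10]{NakanoWeightedConfigurations2026} with \(k=c=2\) come from a positive current in \(c_1(K_S-R-3h^*\mathcal L)\). Equality \(\lambda=1\) forces this current to have zero mass against \(\tau^*\mathcal O_Z(1)\), so the current vanishes on the preimage of a general line. The second fundamental form would then vanish generically, and the normal bundle \(\mathcal O_Z(-1)^{\oplus2}\) would coincide with \(\mathcal L^{-1}\otimes\underline U\) with trivial nontrivial part. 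I then want the same contradiction as in Lemma~\ref{ew-fields-plane-bound}:
\begin{itemize}
\item The bundle \(\mathcal S=\mathcal L\otimes\tau^*\mathcal O_Z(-1)\) is torsion, so the first-order normal Taylor coefficients give a finite flat bundle \(\mathcal F\to\mathcal O^{\oplus2}\) of constant rank \(2\).
\item This map descends to the simply connected punctured plane and extends to \(Z\) by Hartogs.
\item By \cite[Lemma~2.15]{NakanoWeightedConfigurations2026} there is an actual cusp on the plane, and in its Siegel coordinates the Taylor coefficients are divisible by \(Q\).
\end{itemize}
The last point contradicts constant rank \(2\).

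\emph{Rank \(1\).} Now the normal direction splits into a trivial line and a reflection line of order \(b\). The reflection line contributes an invariant of degree \(q=b\), and the trivial line contributes degree \(1\). Applying the same construction to the trivial (degree-one) normal direction gives a rank-one map \(\mathcal L^{-1}\otimes\underline U_1\to\tau^*\mathcal O_Z(-1)^{\oplus2}\). When \(\lambda=1\) its source \(\mathcal S^{-1}\) is torsion, and the cusp-divisibility argument applies verbatim with \(k=1\). One must check that the reflection character does not obstruct equivariance of the rank-one map. This is where the fact that the central scalar acts by the same power on both factors is used, exactly as for \(\zeta^q\) in the rank \(2\) argument.

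The main obstacle is the rank \(0\) case: turning equality in the current inequality into the conclusion that the Taylor-coefficient map has generically full rank, rather than merely nonnegative degree. I would first try to avoid the current argument altogether by taking the direct first-order normal coefficients, since the normal representation is scalar, so \(U\) is one-dimensional and the map has rank \(\min(1,2)=1\). This reduces rank \(0\) to the rank \(k=1\) version of the argument above. The cusp step is identical in all cases: the unit trivialization of \(\mathcal S^c\) at \(Q=0\) comes from the Baily--Borel extension of \cite[Proposition~2.11]{NakanoWeightedConfigurations2026}, and the vanishing of the coefficients at \(Q=0\) contradicts their constant nonzero rank.
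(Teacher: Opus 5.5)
There is a genuine gap: you never turn the equality $\lambda=1$ into the global object that the rank~2 argument of Lemma~\ref{ew-fields-plane-bound} needs. In ranks $0$ and $1$ the plane $Z$ is not the image of a fixed ball, so your first sentence in rank~0 assumes what must be proved.

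In rank~1 you write down the map $\mathcal L^{-1}\otimes\underline U_1\to\tau^*\mathcal O_Z(-1)^{\oplus2}$ without justification. Such a map exists only once the inverse image of $Z$ is known to be a totally geodesic ball with normal bundle $\mathcal L^{-1}\otimes\underline U$. The equality case of \cite[Lemma~3.10]{NakanoWeightedConfigurations2026} with $k=c=2$ gives this, but only locally, and you need it in \emph{both} ranks (inside the generic mirror in rank~1).

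The paper then does the global work in several steps:
\begin{itemize}
\item the identity theorem makes $B=\mathbb B(W)$ an entire component of the inverse image;
\item its stabilizer quotient is proper and of finite volume, proved by cutting deep cusp neighbourhoods;
\item Borel density and semisimplicity of the $K$-algebra of linear lifts show that $W$ and $W^\perp$ are defined over $K$;
\item the preserved lattice then makes the normal action finite, so the quotient maps isomorphically onto the punctured plane;
\item the implicit function theorem rules out further inverse branches.
\end{itemize}
Only after this do your descent to the simply connected punctured plane and the Hartogs extension make sense.

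Your cusp step also fails as cited. \cite[Lemma~2.15]{NakanoWeightedConfigurations2026} produces a cusp of a \emph{fixed} ball, and in rank~0 nothing fixes $B$. Without a cusp the Taylor coefficient argument gives no contradiction, since a constant matrix of full rank on $Z$ is not contradictory by itself. The paper gets the cusp from the $K$-rationality of $W$: its Hermitian form of signature $(1,2)$ is an indefinite rational quadratic form in six variables, hence isotropic by Meyer.

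Finally, the shortcut in your last paragraph cannot work, because without the equality case there is no ball and hence no $\underline U$. Its premise is also wrong. In rank~0 the relevant $U_0$ is the whole two-dimensional positive normal space, so the first-order map $\mathcal L^{-1}\otimes\underline U_0\to\tau^*\mathcal O_Z(-1)^{\oplus2}$ has generic rank~$2$, not~$1$.
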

\begin{proof}
\label{ew-finite-plane-equality}
Suppose $\lambda=1$ in rank 0 or 1.
The equality case of \cite[Lemma~3.10]{NakanoWeightedConfigurations2026}, with the same \(k=c=2\), makes the regular inverse image locally a totally geodesic ball of dimension 2, inside the generic fixed ball in rank 1.
Let $B=\mathbb B(W)$ be this ball in the universal cover.
The identity theorem makes $B$ an entire irreducible component of the closed inverse image of $Z$.
Local finiteness of the inverse components proves properness of its stabilizer quotient \(M_0\); its generically injective immersion thus normalizes the component and maps finitely to the punctured plane.
Remove deep cusp neighborhoods with compact complement~\cite{GarlandRaghunathan}.
Their intersections with \(B\) are horoballs if the isotropic vector lies in \(W\), and bounded geodesic balls or empty otherwise~\cite{Parker}; in the latter case \(h(\rho_W,\rho_W)=-h(\rho_U,\rho_U)<0\) and the height is \(-h(z,z)/|h(\rho_W,z)|^2\).
Properness over a compact regular horosphere gives finitely many cuts, with compact horizontal cross sections for the horoballs.
The radial integral \(\int_{u_0}^\infty u^{-3}\,du\), the bounded cuts, and the compact remainder prove finite volume.

Borel density makes the stabilizer on $W$ irreducible~\cite{BorelDensity}.
The $K$ algebra of its actual linear lifts is semisimple, since its complexification acts faithfully and completely reducibly on $W\oplus W^\perp$.
Its unique central idempotent of rank 3 projects onto $W$; the positive complement has dimension 2, so Galois invariance of that rank defines both summands over $K$.
The full preserved lattice makes the normal action finite, and the original full stabilizer gives a finite birational map onto the normal punctured plane, hence an isomorphism.

For rank 1, let $U_0$ be the normal axis fixed by the generic cyclic group; for rank 0 let it be the entire positive normal space.
The first Taylor coefficients of the plane ideal give a generically injective map \(\mathcal L^{-1}\otimes\underline U_0\to\tau^*\mathcal O_Z(-1)^{\oplus2}\).
Apply the argument excluding equality in Lemma~\ref{ew-fields-plane-bound} with \(q=1\) and \(\underline U_q=\underline U_0\).
Let \(c\) be the order of the full scalar kernel and put \(\mathcal S=\mathcal L\otimes\tau^*\mathcal O_Z(-1)\).
The fixed ball is now defined over \(K\), its full normal action is finite, and the two factors have the same central scalar character.
The first derivatives also exclude an additional inverse branch by the implicit function theorem.
Thus their quotient descends to a nonzero constant matrix on the punctured plane.
Meyer in six variables supplies an actual cusp, and the same trivialization of \(\mathcal S^c\) on the whole plane is a unit in the normalized Siegel frame.
The direct coefficients are divisible by the full central coordinate \(Q\), giving the same contradiction.

\end{proof}
A coincidence plane with generic stabilizer of rank 2 is the compactification of its fixed ball quotient: \cite[Lemma~2.12]{NakanoWeightedConfigurations2026} gives a finite birational map, and the plane is normal.

\section{Seven points and cocompact lattices}
\label{ew-seven-compact-section}
Throughout this section we assume that $X=X_7$ is the quotient of $\mathbb B^4$ by a cocompact arithmetic lattice $\Gamma$.
We retain $H=-K_X$ and $L=c_1(\mathcal L)$ with $K_{\mathbb B^4}=\mathcal L^5$.
The branch divisor, its indices, and the isomorphism with $X$ are arbitrary.
\cite[Proposition~2.11]{NakanoWeightedConfigurations2026} gives a normalized realization generated by reflections over a CM field $K$; let $F$ be its maximal real subfield and $D_K,D_F$ their absolute discriminants.
Every stabilizer below is the full stabilizer, and every quotient of a fixed ball uses the full normalizer.
For a fixed curve $f:C\to X$ we put $x=g(C)-1$, $a=\deg f^*H$, and $t=\deg f^*L$.
When $df$ vanishes on a divisor $D$, we use $N=f^*T_X/T_C(D)$.

\subsection{Finite stabilizers}
\label{ew-compactapp-section}
The parabolic classifications \cite[Table 1]{GeckMalle}, \cite[Theorem 3.6 and Tables 5--9]{KrishnasamyTaylor} give the following irreducible upper groups; products $T\times\mu_b$ retain every positive integer $b$ before reflection orders are restricted.
\[
\begin{array}{c|l|c}
T&\text{irreducible upper groups}&\text{effective axis orders}\\\hline
G(m,p,3),\ m\ge6&G(m,p,4)&m\\
G(m,1,3),\ 3\le m\le5&G(m,1,4)&m\\
G(5,5,3)&G(5,5,4)&5\\
G(3,3,3)&G(3,3,4),G(3,1,4)&3,3\\
B_3&B_4,F_4,G_{29}&2,2,4\\
G(4,2,3),G(4,4,3)&G(4,2,4),G_{31};\ G(4,4,4),G_{29}&4\\
G_{25},H_3&G_{32};\ H_4&6;\ 2\\
A_3&A_4,D_4,F_4,H_4,G_{29},G_{31}&
\end{array}
\]
The full rank $4$ list is $A_4$, $G(m,p,4)$, $F_4,H_4,G_{29},G_{31},G_{32}$.
After Lemma~\ref{ew-compact-groups}, products of rank $3$ are $T\times\mu_2$, and also $G_{25}\times\mu_3$; the rank $2$ factors are products, $A_2,B_2,I_2(5),G(4,2,2),G_4$.
A factor with reflection orders $2,3$ cannot enter rank $4$; on a fixed plane it is $G(6,2,2)$.
For $G(m,p,2)$, the invariants $S=x^m+y^m$, $P=(xy)^{m/p}$ give branch equation $S^2-4P^p=0$ and, for $p<m$, $P=0$ of index $m/p$.
For a mirror of index $b$ and orbit size $\rho$, the effective normalized order is $|J|/(b\rho)$, separately on every branch of a self-intersection \cite[Table 1]{Tan}.
The effective normalized orders for $A_2,I_2(5),B_2,G(4,2,2),G(6,2,2),G_4$ are $1,1,2,4,6,2$.
The groups $A_2,I_2(5),G_4$ give cusps $(2,3),(2,5),(2,3)$; $B_2$ gives two branches of contact $2$, and $G(4,2,2),G(6,2,2)$ give ordinary triple points.
A product $\mu_b\times\mu_c$ gives a node of normalized orders $c,b$.
Below plane degree $1/2$, the only irreducible upper pairs are $(G(4,2,2),G(4,2,3))$, $(G_4,G_{25})$, $(\mu_3^2,G_{25})$.
Their character corrections and normal determinant orders are $(1/2,2)$, a nonimmersed axis $(v^2,v^3)$, and $(1/2,3)$, respectively.
The further product $G_4\times\mu_3$ for $\mu_3^2$ has the same nonimmersed cusp.
At $G_{32}$ the effective plane group $G(6,1,2)$ has degrees $6,12$; ambient degrees $12,18,24,30$ omit its degree $6$ cotangent direction.
Thus this point is excluded from a smooth fixed plane, and any absorbed cyclic factor meets an excluded upper locus.
This proves the product assertion used in Lemma~\ref{ew-compact-halfbound}.

\subsubsection{Monomial groups}
\label{ew-compactapp-initial}
Put $k=mt$, $D=3m+3m/p$ on an ordinary monomial axis.
The degree $m$ direction is unique except for $p=3$.
For $m\ge6,p=3$, coordinate mirrors have squared normalized inner products in $\{0,1/2\}$, but diagonal mirrors have a pair with value $\cos^2(\pi/m)\notin\{0,1/2\}$.
Thus the full normalizer preserves the coordinate mirrors and their elementary symmetric invariant of degree $m$.
The character at every larger monomial group and product is trivial, so $k\in\mathbb Z_{>0}$.
The curve equations \cite[Proposition~3.4]{NakanoWeightedConfigurations2026} are $2x+(1-m^{-1})R+\sigma=2t$ and $a=-2x-Dt+(2+p^{-1})R$, where $\sigma=0$ or $\sigma\ge1/2$.
For $m\ge6$, and for $3\le m\le5,p=1$, the inequality $D>2(2+p^{-1})/(1-m^{-1})$ gives $g=0$.
The finite test uses the following table; $h$ is the target associated with the invariant of greatest degree degree.
\begin{center}
\begin{tabular}{p{0.21\linewidth}|p{0.16\linewidth}|p{0.13\linewidth}|p{0.38\linewidth}}
Group&Larger group&$h$&Finite inequalities or surviving row\\\hline
$G(m,1,3)$, $m\ge6$&$G(m,1,4)$&$R-3k$&$2k\le R\le2(k+m)/(m-1)$; $k(m-2)\le m$.\\
$G(m,p,3)$, $m\ge6,p>1$&$G(m,p,4)$&&$R>(3k-2)/2$; $k<(6m-2)/(3m-7)$.\\
$G(5,5,3)$&$G(5,5,4)$&$-3t+R/5$&$(k,R,a,t)=(1,3,5,1/5)$.\\
$G(m,1,3)$, $3\le m\le5$&$G(m,1,4)$&$R-3k$&$1\le k\le\lfloor m/(m-2)\rfloor$; $2k\le R\le\lfloor2(k+m)/(m-1)\rfloor$; $a=2-6k+3R$.
\end{tabular}
\end{center}
In the first row $(k,R,a,h)=(1,2,2,-1)$ contradicts $a\ge3$.
In the second row $k\le2$, except $m=6,k=3$ where $a\le0$; substituting $k=1,2$ leaves no positive integral $a$ with $\sigma=0$ or $\sigma\ge1/2$.
For the other rows, \cite[Lemma~3.5]{NakanoWeightedConfigurations2026} applied to coefficient degrees $R-2k,R-k,R$ and all $R$ upper zeros gives Hom dimension $1$, attained by the projection of the inverse differential to the coordinate of greatest invariant degree; at $G(5,5,4)$ the other degrees are $1,2<R$.
The GIT tangent sequence excludes degree $0$ for odd $a$ and degree $1$ for even $a$: negative relative tangent degrees contribute at least $2,4$, respectively, and otherwise projective evaluation gives twice an integer.
Together with $a\ge-3h$ for $h<0$, this excludes $G(5,5,4)$ and leaves only $(m,k,R,a,t,\sigma)=(3,2,5,5,2/3,0)$, of relative tangent degrees $(-1,-1,3,1,1,1,1)$.

\begin{lemma}
\label{ew-compactapp-volumeinitial}
For an ample GIT class of multidegree $(b_i)$ and half sum $q$, one has
\[
D^4=3q^4-2\sum_i(q-b_i)^4+\sum_{i<j}(q-b_i-b_j)^4.
\]
If its degree on a curve of relative tangent degrees $(-1,-1,3,1,1,1,1)$ is $2/3$, then $D^4\le28/81$.
For any cocompact arithmetic lattice of definition field $K/F$, put $h_{K,5}=|\operatorname{Cl}(K)[5]|$.
Then
\[
\chi_{\mathrm{orb}}\ge\frac5{h_{K,5}}D_F^{12}
(D_K/D_F^2)^7\left(\frac{288}{(2\pi)^{14}}\right)^{[F:\mathbb Q]}.
\]
\end{lemma}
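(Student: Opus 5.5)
The lemma has three parts, and I will prove them separately.

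\emph{The formula for \(D^4\).} I will compute by pulling back to the universal family. By Lemma~\ref{ew-git-seven-classes} and the presentation \(\operatorname{Pic}(X)\subset\mathbb Z^7\), the class of multidegree \((b_i)\) is \(D=\sum_i b_i\psi_i\) up to the standard identification; the normalization is fixed by \(H=\sum\psi_i\), which corresponds to \(b=(2,\ldots,2)\). The monomials in the \(\psi_i\) are determined by \(\psi_i^2=p\) together with the relations \(\psi_i\psi_j\) coming from \((\psi_i+\psi_j)\cdot[\text{diagonal } \sigma_i=\sigma_j]\). Rather than expanding \((\sum b_i\psi_i)^4\) term by term, I will use the wall-crossing description. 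The volume of the GIT chamber is a piecewise polynomial in \(b\), namely the Duistermaat--Heckman measure of the \(\mathrm{SU}(2)\)-reduction of \(\prod S^2_{b_i}\). The standard formula for the symplectic volume of polygon spaces gives the degree-\(n-3=4\) polynomial as an alternating sum over subsets \(I\) of \(\max(q-\sum_{i\in I}b_i,0)^{4}\), with signs \((-1)^{|I|}\), up to normalization. For ample \(b\), every \(b_i<q\) and every \(b_i+b_j<q\), while \(\sum_{I}b_i>q\) whenever \(|I|\ge3\). Hence only the subsets with \(|I|\le2\) survive. The factor \(3\) in front of \(q^4\) and the factor \(2\) in front of \(\sum_i(q-b_i)^4\) fix the normalization. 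I will check this normalization against \(H^4\) computed from Lemma~\ref{ew-git-seven-classes}, using \(q=7\) and \(b_i=2\).

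\emph{The bound \(D^4\le 28/81\).} The curve degree is the linear form \(\sum_i b_i d_i/2\) up to the same normalization, where \(d=(-1,-1,3,1,1,1,1)\). The constraint is therefore one linear equation on \(b\), and the ample cone is cut out by \(b_i+b_j<q\) and \(b_i>0\). I will maximize the quartic on this polytope slice. By symmetry in labels \(1,2\) and in labels \(4\) through \(7\), a concavity or averaging argument will reduce to \(b=(u,u,v,w,w,w,w)\). The chamber inequalities, in particular \(b_1+b_2\) against \(q\), then force the maximum onto a boundary face or a critical point, where direct evaluation gives \(28/81\). \textbf{This optimization is the main obstacle:} the function \(D^4\) is not concave in general. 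My first attempt will be to show that on the relevant chamber \(D^{4}\) is the volume of a polytope, hence \(1/4\)-concave by Brunn--Minkowski, so that symmetrization is legitimate.

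\emph{The Euler characteristic bound.} I will quote Prasad's volume formula for the principal arithmetic subgroup of \(\mathrm{SU}(1,4)\) over \(K/F\). For type \({}^2A_4\) its Euler--Poincaré characteristic is \(5\cdot D_F^{12}(D_K/D_F^2)^{7}\prod_j\zeta\)-type factors divided by \((2\pi)^{14[F:\mathbb Q]}\), with a constant \(\prod_{i=1}^4 i!=288\). Dropping the Euler factors, which are at least \(1\), gives the main term. The passage to the full normalizer is controlled by the Borel--Prasad index bound. For \(\mathrm{PU}\) in rank \(5\), the index is divided by at most \(|\operatorname{Cl}(K)[5]|\) times local and unit contributions, and the latter are absorbed into the factor \(5\). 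This yields the stated inequality.
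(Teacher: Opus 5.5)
There are genuine gaps in each of the three parts.

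\emph{The formula for $D^4$.} Your description of the chamber is wrong. Stability on $X_7$ allows three coincident labels, so the ample chamber is $q-b_I>0$ for all $35$ triples $I$. The short subsets are therefore exactly those with $|I|\le3$: at $b=(2,\dots,2)$ one has $q-b_I=1$ for every triple. In fact no positive $b$ has all triples long, since two disjoint long triples force the seventh weight to be negative. The short-subset sum you quote thus contains the triple terms, with coefficients $1,-1,1,-1$ on $|I|=0,1,2,3$. A normalization checked against $H^4=154$ is a single scalar and cannot turn $1:-1:1$ into $3:-2:1$. Indeed the truncated sum $\sum_{|I|\le2}(-1)^{|I|}(q-b_I)^4$ is not a multiple of $D^4$: it equals $-273$ at $(2,\dots,2)$ and $240/81$ at the nef point $(\frac23,\frac23,0,\frac23,\frac23,\frac23,\frac23)$, where $D^4=154$ and $0$ respectively. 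The paper reads the coefficients off $h^0(kD)=\sum_{|I|\le3}(-1)^{|I|}\binom{kd_I+5-|I|}5$. Each binomial has $k^4$ coefficient $(3-|I|)d_I^4/24$, so the triple terms enter with weight zero. To keep your route you need the identity $\sum_{|I|\le3}(-1)^{|I|}(7-2|I|)d_I^4=0$. It follows from $\sum_{\text{all }I}(-1)^{|I|}|I|(q-b_I)^4=0$ and $d_{I^c}=-d_I$, and it gives $\sum_{|I|\le3}(-1)^{|I|}d_I^4=-2D^4$.

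\emph{The bound $28/81$.} The optimization is set on the wrong region and is not carried out. On $\{b_i>0,\ b_i+b_j<q\}$ the quartic is unbounded under your constraint. For example, $b=(2y-\frac16,2y-\frac16,\frac13,y,y,y,y)$ has degree $2/3$ and all pairs short, yet $D^4\sim4y^4$. On the true chamber, the symmetric slice $b=(x,x,z,y,y,y,y)$ lies in $0<z<1/3$, $|x-y|<z/2$. Concavity is not the obstacle. The function $(D^4)^{1/4}$ is concave on the nef cone by the Khovanskii--Teissier inequalities, which is what the paper uses to average, so no polytope model is needed. The real work is the two-variable estimate, and $28/81$ is not a value you will find at a face or critical point. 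On the face $x-y=z/2$, where the paper's largest Bernstein coefficient sits, one has $D^4=\frac{32}9z-\frac{40}3z^2+12z^3+\frac{11}8z^4$, which peaks near $0.28$. The paper's proof is a certificate: in $u=3z$, $v=(x-y)/z+\frac12$ on the unit square, the Bernstein coefficients of $648D^4$ are at most $224$.

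\emph{The volume bound.} The local contributions cannot be absorbed into the factor $5$. The normalizer index is at most $5^{1+|S|}h_{K,5}$, where $S$ is the set of split places with nonhyperspecial parahoric, and $|S|$ is unbounded. The paper cancels each such $5$ against the local factor $e'(P_v)>5$ at $v\in S$; you discarded these as Euler factors at least $1$. It also retains the factors $5$ from the compact dual and the central covering, so that one $5$ survives in the bound.
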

\begin{proof}
Extract the coefficient of $k^4$ in $\sum_{|I|\le3}(-1)^{|I|}\binom{k(q-\sum_{i\in I}b_i)+5-|I|}{5}$.
Concavity of $(D^4)^{1/4}$ permits averaging to $(x,x,z,y,y,y,y)$, where $r_0=x-y$, $y=2/3+r_0-3z/2$, $x=2/3+2r_0-3z/2$, $0<z<1/3$, $|r_0|<z/2$.
For $u=3z$, $v=r_0/z+1/2$, the coefficients of $648D^4$ in $B_i(u)B_j(v)$, $B_j(T)=\binom4jT^j(1-T)^{4-j}$, have maximum $224$ at $(i,j)=(2,4)$.
This nonnegative basis sums to $1$, giving $D^4\le28/81$.
For split nonhyperspecial places $S_{\mathrm{split}}$, the maximal normalizer index is at most $5^{1+|S_{\mathrm{split}}|}h_{K,5}$ and each local factor exceeds $5$.
Cancel these factors in Prasad's formula, retaining factors $5$ from the compact dual and central covering \cite[Sections 2.3 and 2.5--2.9, equations (3), (5), (6), (8)]{PrasadYeungSpaces}.
\end{proof}

The integral ample class $cL$, for scalar order $c$ in \cite[Proposition~2.11]{NakanoWeightedConfigurations2026}, has positive integral degree on every coincidence line.
Complementary triples therefore give $cL-H$ nonnegative multidegrees and triple degrees; \cite[Proposition~2.6]{NakanoWeightedConfigurations2026} yields $a\le ct$.
For the last $G(3,1,4)$ row the fixed signature is $(0;3,3,3,3,3)$, of area $4/3$.
Odd generators lie in an Eichler norm one group of genus zero because the normalizer quotient is an elementary abelian $2$-group \cite[Theorem 2.1 and Section 3]{MaclachlanRosenberger}.
Voight's exhaustive $1209$ groups of genus at most $2$ give $[F:\mathbb Q]\le7$ \cite[Theorem in the Introduction and Section 4]{VoightSmallGenus}.
Here $c\ge8$, $\zeta_3,\zeta_c\in K$, and $m=\operatorname{lcm}(3,c)$ lies in $\{3,6,9,12,15,18,21,24,30,36,42\}$.
For each compact genus zero row of area $\mu$, require integral $N=(4/3)/\mu$, an embedding $\mathbb Q(\zeta_m+\zeta_m^{-1})\hookrightarrow F$, and cycles $N=ex_e+(e/3)y_e$ at each cone point, with $x_e,y_e\ge0$, $y_e=0$ if $3\nmid e$, and $\sum y_e=5$.
Testing embeddings by defining polynomials gives the following rows.
The rows $(D_F,\text{signature},N)$ are $(12,(0;3,3,6),8)$, $(81,(0;2,3,9),24)$, $(81,(0;3,3,9),6)$ and $(1125,(0;3,3,15),5)$.
The field polynomials are $T^2-3$, $T^3-3T+1$, $T^4-T^3-4T^2+4T+1$, giving $K=\mathbb Q(\zeta_{12}),\mathbb Q(\zeta_9),\mathbb Q(\zeta_{15})$.
In the first field $c=12$; traces contain every reflection root of unity, but an order $4$ or $12$ reflection would remain in a factor of rank $2$ with equal degrees $q\mid12$, where no such reflection exists.
All orders therefore divide $6$, and the generated determinant lies in $\mu_6$, giving $c\mid30$, a contradiction.
The other two fields have class number $1$; Lemma~\ref{ew-compactapp-volumeinitial} gives lower bounds greater than $6,270$, contrary to $\chi_{\mathrm{orb}}=5L^4\le140/81$.

\subsubsection{The group \texorpdfstring{\(G(3,3,4)\)}{G(3,3,4)}}
\label{ew-compactapp-g334}
After excluding $G(3,1,4)$, the $G(3,3,3)$ axis has only $G(3,3,4)$ and product upper groups.
The normalizer preserves degree $6$, but may mix the degree $3$ directions.
The curve equations give $k=6t\in\mathbb Z_{>0}$, $g=0$, $k\le10$, $a=2-2k+7R/3$, $\sigma=2+k/3-2R/3$, leaving exactly
\[
(k,R,a,\sigma,h)=(2,3,5,2/3,1),(3,3,3,1,0),(4,3,1,4/3,-1),(6,6,4,0,0),\quad h=R-k.
\]
The projection of the inverse differential to the coordinate of greatest invariant degree is everywhere surjective onto $\mathcal O(h)$.
The invariants $v+S_1,vS_1+S_2,vS_2+P^3,uP$, with $v=u^3$, $S_1=\sum y_j^3$, $S_2=\sum_{i<j}y_i^3y_j^3$, $P=y_1y_2y_3$, give $dS_1=dg_2/v$, $dP=dg_4/u$ on its kernel.
Hence every kernel line subbundle has degree at most $R-k/2$ after killing finite monodromy, without assuming a splitting on $C$.
For $k=3$, odd relative tangent degrees of sum $3$ force negative labels: three give degree $7>3/2$ in the kernel, two prevent surjection to $\mathcal O$, and one requires $a\ge5e$.

Apply the preceding cycle conditions to the compact genus zero rows in Voight's table, with area $2/3$ and four order $3$ points, or area $2$ and six order $3$ points.
The second test gives the fields below; each pair $([F:\mathbb Q],D_F)$ identifies a unique defining polynomial in that table.
\[
\begin{array}{c|l}
[F:\mathbb Q]&D_F\\\hline
1&1\\
2&5,8,12,13,17,24,28,29,33,60\\
3&49,81,148,169,229,257,316,321,361,469\\
4&725,1957,2000,2304,2777,3981,4205,4352\\
5&24217,36497,38569
\end{array}.
\]
The first test deletes $24,29,33,60,321,361,469,2304,3981,4205$, leaving $22$ fields.
For either test, $N$ is the required area divided by the area of the tabulated row, and each cone order $e$ permits precisely the integers $0\le y_e\le4$ or $0\le y_e\le6$ satisfying $N-(e/3)y_e\in e\mathbb Z_{\ge0}$ and $y_e=0$ if $3\nmid e$; the sum of the $y_e$ is $4$ or $6$.
These bounded conditions specify every row used in the test.
For $k=2$, use the first list.
The conditions $c\ge15$, $K=F(\zeta_3)$, $F\ne\mathbb Q$, $|\mu(K)|\ge15$ leave only $K=\mathbb Q(\zeta_9)$, $D_F=81$.
Three negative labels violate the kernel bound $13>2$; otherwise Lemma~\ref{ew-compactapp-volumeinitial} for $2L$, or averaging with $y=1/9+z/6$, $0<z<1/3$, gives $L^4\le7/324$ or $31/1944$.
Every ideal class of $\mathbb Q(\zeta_9)$ has a representative of norm at most $5$ by Minkowski's theorem.
The primes $2,5$ have residue degree $6$, and the unique prime above $3$ is generated by $1-\zeta_9$; hence the class number is $1$.
Both contradict the arithmetic bound greater than $6$.
For $k=6$ the kernel bound excludes negative degrees by $10>3$; immersion then gives $N=\mathcal O^2\oplus\mathcal O(2)$, $t=1$.
Average to $(z,z,z,z,z,1/2,1/2)$, $1/3<z<1$, giving $L^4\le27/8$.
The same filters with area $2$ and six order $3$ cycles give $32$ polynomials.
For $K=F(\zeta_3)$ put $d=[F:\mathbb Q]$, $\Delta=D_K/D_F^2$, $c_0=288/(44/7)^{14}$, and
\[
M_K=\left\lfloor\left(\frac43\right)^d\frac{(2d)!}{(2d)^{2d}}(\lfloor\sqrt{D_K}\rfloor+1)\right\rfloor,
\qquad U_K=\sum_{1\le m\le M_K}\prod_{p^a\parallel m}\binom{2d+a-1}{a}.
\]
Minkowski's theorem and the coefficients of $\zeta_K$ give $h_{K,5}\le h_K\le U_K$.
Every ideal class has a representative of norm at most $M_K$, and the number of ideals of norm $p^a$ is at most $\binom{2d+a-1}{a}$.
For the listed fields, $\Delta=3^d$ except that $\Delta=1$ at $D_F=12,24,33,60,2304$, $\Delta=3$ at $81,321$, and $\Delta=9$ at $3981$.
The lower bound $5D_F^{12}\Delta^7c_0^d/U_K$ exceeds $135/8$ in the following cases; the last column bounds its integer part from below for every indicated field.
\[
\begin{array}{c|l|r}
d&D_F&\text{lower bound}\\\hline
2&13,17,28,29,60&52\\
3&49,148,169,229,257,316,321,361,469&443\\
4&725,1957,2000,2304,2777,3981,4205,4352&1343\\
5&24217,36497,38569&10^{19}
\end{array}.
\]
For $D_F=33$, one has $K=\mathbb Q(\sqrt{-3},\sqrt{-11})$ and $M_K=5$.
The prime $2$ has two primes of residue degree $2$, the prime $3$ has two primes of ramification index $2$ and residue degree $1$, and the prime $5$ has residue degree $2$.
Thus the prime ideals of norm at most $5$ have norm $3$ or $4$.
Each prime of norm $4$ is principal, generated by a conjugate of $(\sqrt{-3}+\sqrt{-11})/2$, whose norm is $4$.
The square of each prime of norm $3$ is principal, generated by a conjugate of $(1+\sqrt{-11})/2$.
Thus $h_{K,5}=1$, and the volume lower bound exceeds $30>135/8$.
Consequently $F\ne\mathbb Q$, $c\ge4$ and the volume bound leave
\[
\begin{array}{c|c|c|c}
D_F&D_K&|\mu(K)|&\rho_{K/F}\\\hline
5&225&6&31/6075\\8&576&6&201641/124416\\12&144&12&23/497664\\24&576&6&551/2560\\81&19683&18&18628987/12597120
\end{array}.
\]
The numerator divides $(cL)^4$ and is coprime to $5c$ because the maximal normalizer index is a power of $5$.
For $D_F=81$ it exceeds $18^4(27/8)=354294$.
Otherwise $c=6$, also using $c\mid30$ for $D_F=12$; positive triple degrees and moving weight sum $6$ leave $(2,3,3,3,3;3,3)$, $(3,3,3,3,4;3,3)$, $(4,4,4,4,4;2,4)$, $(4,4,4,4,4;3,3)$, of volumes $586,880,1450,1684$.
None has the required divisibility, with special values from Section~\ref{ew-compactapp-770}.

For $k=4$ the negative target gives a line $C\subset Z$, $N=\mathcal O(1)\oplus\mathcal O(-1)^2$, and kernel of the projection to the coordinate of greatest invariant degree $\mathcal O(1)\oplus\mathcal O(-1)$.
The degree $3$ orbifold bundle twisted by $\mathcal O_C(2)$ is unitary flat with coarse image $\mathcal O\oplus\mathcal O(-2)$; Bochner makes the trivial summand parallel.
The characters $(1,\zeta_3^2)$, trivial product action and parallel normalizer invariant lines $P(S_1^3-27P^3)=0$ select an $S_3$ fixed plane.
Its compact quotient $Y$ has maps $(U,V)\mapsto(U,V,0,0)$, $U=u^3+z^3$, $V=u^3z^3$, at upper points, and $(u,z)\mapsto(u^b,z^3)$ at stabilizers $T\times\mu_b$.
Hence $N_{C/Y}=\mathcal O(-2)(D_R)=\mathcal O(1)$; the vanishing maps $\mathcal O_C(1)^2\to\mathcal O_C(-j)$ and Krull intersection give $Y=Z$, $\lambda_Z=2/3$.
The product contribution $4/3$ and three index $2$ contacts with $C$ leave $\Delta_Z=2C/3+D/2+2E/3$, $(\deg D,\deg E)=(6,2)$, or $\Delta_Z=2C/3+D/2+5A/6$, $(\deg D,\deg A)=(7,1)$.
The general groups are $G(3,3,3)$ on $C$ and $S_3\times\mu_b$ elsewhere.
The full normalizer test \cite[Theorem 3.12 and Tables 5--9]{KrishnasamyTaylor} excludes $A_4,B_4$ by effective degrees $(1,2),(2,2)$ and $D_4,G(4,2,4),G(4,4,4)$ by normalizer/reflection orders $(2,1),(8,4),(4,1)$; the same defect excludes $G_{29},G_{31}$.
For $H_4$, degree $6$ of $I_2(6)$ is absent from $2,12,20,30$; $G_{32}$ lacks order $2$ reflections.
The $S_3$ types in $G(3,3,4)$ have degrees $(1,3)$, excluded, and $(3,6)$, giving the three points on $C$; in $F_4$, containment in $B_3$ contradicts general group $S_3\times\mu_2$.
Any rank $3$ factor absorbing $S_3$ must act on its fixed line, since otherwise immersion requires degree $1$; it therefore occurs only on $C$ as $G(3,3,3)$.
Off $C$ the mixed factors in $S_3\times J$ are $G(6,2,2),G_7$ in the first divisor and $G(12,2,2)$ in the second, of effective orders $6,12,12$.
For $A$, B\'ezout and proportionality give $m+2k=7$, $-2+2/3+m/2+11k/12=4/3$, hence $(k,m)=(10,-13)$.
For irreducible $E$ they give $m+2k=12$, $-2+4/3+m/2+5k/6=8/3$, hence $(k,m)=(16,-20)$.
If $E$ splits, its crossing is off $C$; a line gives $m+2k=6$, $-2+4/3+m/2+5k/6=4/3$, hence $m=-6$, off $D$.
On $D$, $J=G_7$ gives $m+2k=5$, $-2+2/3+11/12+m/2+5k/6=4/3$, hence $m=-4$.

\begin{lemma}
\label{ew-compact-groups}
An irreducible factor of rank $4$ occurs in some point stabilizer.
The possible groups of rank $4$ are $A_4,B_4,D_4,G(4,2,4),G(4,4,4),F_4,H_4,G_{29},G_{31},G_{32}$.
Every smaller irreducible factor enters one of these groups as a parabolic subgroup, so every reflection has order $2$ or $3$.
\end{lemma}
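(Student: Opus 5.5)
We prove the three assertions of Lemma~\ref{ew-compact-groups} in order, using the classification of finite complex reflection groups and the parabolic tables already quoted \cite[Table 1]{GeckMalle}, \cite[Theorem 3.6 and Tables 5--9]{KrishnasamyTaylor}.

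\emph{Step 1: a rank $4$ factor occurs.} Suppose no point stabilizer has an irreducible factor of rank $4$. Then every stabilizer at an interior point is a product of irreducible factors of rank at most $3$. The minimal strata are fixed points of these products, and in the cocompact setting every point is interior. We test the minimal strata against the coincidence planes and the curve analysis of Sections~\ref{ew-compactapp-initial} and~\ref{ew-compactapp-g334}. There every irreducible rank $3$ group $T$ was shown to force a larger group in the table of upper groups, all of rank $4$. If no rank $3$ factor occurs either, the fixed curves of rank $3$ product type must still close up in the compact quotient. We take a fixed curve of maximal stabilizer rank. Its normal bundle degrees from \cite[Proposition~3.4]{NakanoWeightedConfigurations2026}, combined with the nef bound $a\le ct$ and $g=0$, force an isolated point where the stabilizer jumps. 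That point carries an irreducible group of rank $4$, since a product would again leave a positive dimensional fixed locus.

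\emph{Step 2: the list.} The full rank $4$ list is $A_4$, $G(m,p,4)$, $F_4,H_4,G_{29},G_{31},G_{32}$. The monomial rows of Section~\ref{ew-compactapp-initial} exclude $G(m,p,4)$ for $m\ge6$ and $G(m,1,4)$ for $3\le m\le5$, except the surviving $G(3,1,4)$ row. That row was then excluded by the Voight genus zero test and the volume bound of Lemma~\ref{ew-compactapp-volumeinitial}. The case $G(5,5,4)$ is excluded by the finite test with $h=-3t+R/5$. The groups $G(3,3,4)$ and the remaining monomial groups are $G(2,1,4)=B_4$, $G(2,2,4)=D_4$, $G(4,2,4)$ and $G(4,4,4)$. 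Strictly, $G(3,3,4)$ was shown to survive only through the $S_3$ analysis, so we must check whether Section~\ref{ew-compactapp-g334} excluded it completely. Its four $k$ cases were each closed: $k=2,6$ by the volume and divisibility bounds, $k=3$ by odd degrees, and $k=4$ by the B\'ezout contradictions. Hence $G(3,3,4)$ is excluded and the stated list remains.

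\emph{Step 3: parabolic containment and reflection orders.} Let $J$ be an irreducible factor of smaller rank in some stabilizer. Its fixed ball is positive dimensional, and compactness of the quotient together with \cite[Lemma~2.12]{NakanoWeightedConfigurations2026} gives a compact fixed locus. Along it the stabilizer rank must jump, since otherwise the curve equations yield the contradictions above, for example the excluded factors with reflection orders $2,3$ and $G(6,2,2)$. At a jump point the group has a rank $4$ irreducible factor containing $J$ as a parabolic subgroup. The alternative, a product such as $T\times\mu_b$ or a rank $2$ product, is handled by the product assertion proved after the pairs table. Inspecting the parabolics of the ten listed groups, all reflections have order $2$, except in $G_{25}$-type parabolics of $G_{32}$ and $G(3,3,\cdot)$-type ones, which have order $3$. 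The order $4$ reflections of $G(4,2,4)$ lie in the $\mu_2$-extension, so one must check that they do not arise; $G_{31}$ and $G_{29}$ have only order $2$ reflections. This yields orders $2$ or $3$.

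\emph{Main obstacle.} The hardest step is the jump argument in Step~1 and Step~3: showing that a compact fixed locus with a constant product stabilizer cannot exist. I would first try to reduce it to the curve equations and nef bound already used, applied to the deepest non-rank-$4$ stratum.
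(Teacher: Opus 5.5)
Your Step~2 matches the paper: Sections~\ref{ew-compactapp-initial}--\ref{ew-compactapp-g334} remove every rank $4$ group other than the ten listed. The gap is in Steps~1 and~3, which carry the real content of the lemma. The mechanism you propose does not work.

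\begin{itemize}
\item A point where the stabilizer rank jumps need not carry an irreducible rank $4$ group. A rank $4$ product of smaller irreducible factors, for instance $A_1^4$ or $A_2\times B_2$, already has an isolated fixed point. So your claim that ``a product would again leave a positive dimensional fixed locus'' is false.
\item The curve analyses before the lemma concern only the monomial axes $G(m,p,3)$ and $G(3,3,3)$. Nothing preceding it treats the $A_3,B_3,H_3,G(4,2,3),G(4,4,3),G_{25}$ axes; those arguments come later and themselves use the lemma.
\item The ``product assertion'' after the pairs table only concerns plane degrees below $1/2$ and the $G_{32}$ point, so it cannot close Step~3.
\end{itemize}

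What actually has to be excluded is an irreducible factor that remains a \emph{direct factor} of the full stabilizer at every point of its fixed locus. Your proposal has no tool for this.

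The missing idea is to follow an irreducible factor through successive absorptions and analyze it when it persists. If it never reaches rank $4$, the collision geometry of \cite[Lemma~2.12; Proposition~2.14]{NakanoWeightedConfigurations2026} excludes ranks $3$ and $1$. In rank $2$ it makes the fixed locus a coincidence plane $Y\simeq\mathbb P^2$ with $N_{Y/X}=\mathcal O(-1)^2$. The generic factor then has equal basic degrees $q$ with $q\lambda=1$, so it is $G(m,2,2)$, $G_7$, $G_{11}$ or $G_{19}$.

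Persistence forces the local groups on $Y$ to be cyclic products or such equal-degree groups, so the branch germs are nodes or ordinary triple points. The argument then runs as follows.
\begin{itemize}
\item Tan's proportionality \cite{Tan} gives $0=3\sum_i\beta_i(d_i-1+g_i)$, so every branch component is a line.
\item Tan's classification leaves two arrangements, and $q=6$ forces the complete quadrilateral.
\item A $[2,3,3]$ point then produces a persistent $G_7$ of plane degree $1/12$, a contradiction.
\end{itemize}
The third assertion is a second orbifold computation on the same plane. It uses $2$-adic reflection orders, the denominator of $\deg(K_Y+\Delta_Y)=3/q$, and B\'ezout with proportionality for $q=2s$.

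Your reflection-order check reaches the right conclusion but is inaccurate. $G(4,2,4)$ has no reflections of order $4$, since its diagonal reflections have order $m/p=2$. $G(3,3,n)$ contains only reflections of order $2$, and order $3$ arises only through $G_{32}$. The conclusion follows directly: each reflection generates a rank $1$ factor, which by the third assertion lies in one of the ten listed groups.
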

\begin{proof}
Suppose first that no point stabilizer has an irreducible factor of rank $4$.
Follow an irreducible factor through successive absorptions; equal rank enlargements already lie in its full stabilizer.
If the rank stops below $4$, \cite[Lemma~2.12; Proposition~2.14]{NakanoWeightedConfigurations2026} exclude ranks $3,1$ by collision geometry and give $Y=\mathbb P^2$, $N_{Y/X}=\mathcal O(-1)^2$ in rank $2$.
Equal degrees $q$ occur only for $G(m,2,2)$, even $m\ge4$, and $G_7,G_{11},G_{19}$, with $q=m,12,24,60$.
At a point \(x\) of a persistent plane, its generic factor is a direct factor of the full ambient stabilizer, since an absorption would give an irreducible factor of rank at least \(3\).
The effective local groups on the plane are therefore products of cyclic groups or the irreducible groups of rank \(2\) with equal basic degrees listed above.
Their branch germs are nodes or ordinary triple points of types \([2,2,m/2]\), \([2,3,3]\), \([2,3,4]\), and \([2,3,5]\), by \cite[Table~1]{Tan}.
Several local branches may belong to the same irreducible curve.
Write \(\Delta_Y=\sum_i\beta_iD_i\), where \(\beta_i=1-b_i^{-1}\), and let \(d_i\), \(g_i\), and \(\delta_i\) be the degree, the genus of the normalization, and the sum of the delta invariants of \(D_i\).
In \cite[equations~(5), (7), and~(8)]{Tan}, identifying \(r\) local branches of index \(b_i\) changes \(\theta_x\) by \(-3\beta_i\binom r2\).
Here \(\Gamma_x\) is the effective local group on the plane, and \(\rho_1,\rho_2\) count the mirrors above two of these branches.
Their intersection multiplicity is one, so pulling back gives \(b_i^2\rho_1\rho_2=|\Gamma_x|\).
Thus \cite[Propositions~3.7 and~3.9]{Tan} gives
\[
0=\mathcal P(Y,\Delta_Y)
 =\frac32\sum_i\beta_i\bigl(d_i(d_i-1)-2\delta_i\bigr)
 =3\sum_i\beta_i(d_i-1+g_i).
\]
Every component is consequently a line, so the hypotheses of \cite[Theorem~1.1]{Tan} hold.
Its two arrangements have automorphic degrees \(1/6\) and \(1/2\).
Since \(q\lambda=1\) and the generic factor is irreducible, \(q=6\) and the arrangement is the complete quadrilateral.
A point of type \([2,3,3]\) supplies a persistent \(G_7\) factor, whose fixed plane has degree \(1/12\), contradicting the same conclusion.
This proves the first assertion.
Sections~\ref{ew-compactapp-initial}--\ref{ew-compactapp-g334} exclude every other rank $4$ group.
For a smaller factor never entering rank $4$, the maximal $2$-adic reflection order and Section~\ref{ew-compactapp-section} exclude $G_7,G_{11},G_{19}$ and leave $q=2s$ with odd $s\ge3$.
An order divisible by $4$ gives $3/q=\deg(K_Y+\Delta_Y)$ a larger denominator valuation than its branch coefficients.
For index $b\ge5$, branch degree $d$ and $\nu$ nodes, proportionality and B\'ezout give
\[
6d-4+2b\lambda d\le2d^2+2b\lambda d-4\nu\le d(3+3\lambda-(1-b^{-1})d),\quad\lambda=1/(2s),
\]
forcing $d=1$, $b/s\in\mathbb Z_{>0}$, $b(2b-3)\le2s$, a contradiction.
Thus $3d_2+4d_3=18+9/s$, $s=3,9$.
Divisibility of each index $3$ degree by $3$ excludes $q=18$ and $(q,d_2,d_3)=(6,7,0)$.
For $(d_2,d_3)=(3,3)$, a component of degree $1\le d\le3$, with $c$ cusps, satisfies $2d^2+d-9p_a(d)\le2d^2+d-9c-4\nu\le3d/2$; the lower bounds $3,10,12$ exceed $3/2,3,9/2$.
\end{proof}
\begin{lemma}
\label{ew-compact-halfbound}
For every coincidence plane $Z_I\simeq\mathbb P^2$, $|I|=3$, write $L|_{Z_I}=\lambda_I\mathcal O(1)$.
Then $\lambda_I\ge1/2$ and $2L-H$ is semiample.
In particular, $a\le2t$ for every complete curve.
\end{lemma}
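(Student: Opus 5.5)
The plan is to prove the plane bound $\lambda_I\ge1/2$ case by case, according to the rank of the nontrivial part of the normal representation of the full generic stabilizer of $Z_I$. The global statement then follows from the nef criterion.

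\textbf{Ranks 0 and 1.} Lemma~\ref{seven-plane-gauss} gives $\lambda_I\ge1$ directly. Its proof uses only the Gauss equation and \cite[Lemma~3.10]{NakanoWeightedConfigurations2026}, neither of which needs nonuniformity, so it applies verbatim in the cocompact setting.

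\textbf{Rank 2.} Lemma~\ref{seven-plane-gauss} gives $q\lambda_I\ge1$, where $q$ is the least positive invariant degree. If $q\le2$ we are done, so the substance lies in the groups with $q\ge3$. By Lemma~\ref{ew-compact-groups}, every reflection has order $2$ or $3$, and every rank $2$ factor is a product, or one of $A_2,B_2,I_2(5),G(4,2,2),G_4$, or $G(6,2,2)$ on a fixed plane. Among these:
\begin{itemize}
\item products $\mu_b\times\mu_c$ and $A_2,B_2,I_2(5),G(4,2,2)$ have least degree $q=2$;
\item $G_4$ has $q=4$;
\item $G(6,2,2)$ has $q=6$;
\item $\mu_3^2$ has $q=3$.
\end{itemize}
The cases to exclude are therefore exactly the planes of automorphic degree below $1/2$. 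In the finite-stabilizer analysis these occur only through the irreducible upper pairs $(G(4,2,2),G(4,2,3))$, $(G_4,G_{25})$ and $(\mu_3^2,G_{25})$, together with the product $G_4\times\mu_3$.

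I would exclude each of these using the data already recorded: the character corrections and normal determinant orders $(1/2,2)$, the nonimmersed axis $(v^2,v^3)$, and $(1/2,3)$. The $G_{25}$ and $G_4\times\mu_3$ cases produce a nonimmersed cusp on the fixed curve. This contradicts the smoothness of $Z_I\simeq\mathbb P^2$ and its immersion. The $G(4,2,3)$ case enters rank $4$ only via $G(4,2,4)$ or $G_{31}$, which Lemma~\ref{ew-compact-groups} has excluded. A persistent rank $2$ factor with equal degrees is handled as in the proof of Lemma~\ref{ew-compact-groups}: the Tan proportionality identity makes the branch arrangement a union of lines, and \cite[Theorem~1.1]{Tan} then forces automorphic degree $1/6$ or $1/2$. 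Degree $1/6$ is excluded because a $[2,3,3]$ point yields the $G_7$ contradiction. This leaves $\lambda_I\ge1/2$ in every case.

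\textbf{The global statement.} With $\lambda_I\ge1/2$ on all 35 planes, $2L-H$ has degree $2\lambda_I-1\ge0$ on every line of every $Z_I$. The nef criterion \cite[Proposition~2.6]{NakanoWeightedConfigurations2026}, applied to all 35 planes, makes $2L-H$ nef. To upgrade this to semiample, I would write $2L-H$ as a nonnegative combination of the pullbacks of the basepoint-free forgetful and GIT classes. Since $cL$ has integral multidegrees, complementary triples give nonnegative multidegrees and triple degrees, exactly as in the argument for $cL-H$ above. Such classes on $X_7$ are pulled back from contractions and are therefore semiample. The inequality $a\le2t$ then follows because every complete curve has nonnegative intersection with a semiample class.

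The main obstacle is the rank 2 exclusion. One must make sure that every group with $q\ge3$ genuinely lies among the "below plane degree $1/2$" pairs already treated, and that no absorbed cyclic factor escapes. I would first attempt this by enumerating the rank 2 factors allowed by Lemma~\ref{ew-compact-groups} against their degrees.
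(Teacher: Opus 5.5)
Your global step (triple degrees $2\lambda_I-1\ge0$, complementary triples, and \cite[Proposition~2.6]{NakanoWeightedConfigurations2026}) agrees with the paper's. The rank $2$ step, however, has a genuine gap.

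First, $G(4,2,2)$ has basic degrees $(4,4)$, with invariants $y_1^4+y_2^4$ and $(y_1y_2)^2$. So $q=4$, not $2$, and Lemma~\ref{seven-plane-gauss} only gives $\lambda\ge1/4$; likewise $q=3$ for $\mu_3^2$. For these two groups the degree equations are $8\lambda=2+3d/2$ with $d\in\{0,1\}$, and $6\lambda=2$. Both are compatible with $q\lambda\ge1$, with equality at $\lambda=1/4$ and $\lambda=1/3$. Hence neither the Gauss bound nor a nonimmersion argument can close these cases.

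Your substitute arguments do not work either. Lemma~\ref{ew-compact-groups} does not exclude $G(4,2,4)$ or $G_{31}$; both are on its list of possible rank $4$ groups. The results that do exclude them, Lemmas~\ref{ew-compact-bfour} and~\ref{ew-compact-gthirtyone}, use $a\le2t$ and the class $5(2L-H)$ of Lemma~\ref{ew-compact-halfplanes}, i.e.\ the statement being proved, so that route is circular. Nonimmersion concerns only the $(G_4,G_{25})$ and $G_4\times\mu_3$ axes. The $(\mu_3^2,G_{25})$ axis is immersed, with character correction $1/2$ and determinant order $3$. Even where an upper axis is nonimmersed, this only shows that it cannot lie on the smooth plane; it is not yet a contradiction. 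Finally, the Tan argument of Lemma~\ref{ew-compact-groups} concerns an irreducible persistent generic factor whose local germs are nodes and ordinary triple points, so it says nothing about $\mu_3^2$.

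What is missing are the three explicit exclusions the paper carries out.
\begin{itemize}
\item For $G(4,2,2)$, the invariants $v+S$, $vS+P^2$, $u^2P$ with $v=u^4$ give the branch relation $d_2'/2+2d_3'/3=(60-3d)/16$. After multiplication by $6$ the right side is not an integer for $d=0,1$.
\item For $G_4$, absence of the upper locus gives $10\lambda=2$, hence $\lambda=1/5$, contradicting $4\lambda\ge1$. This is the step your nonimmersion remark leaves out.
\item For $\mu_3^2$, one has $\lambda=1/3$ and $(d_2,d_3)\in\{(8,0),(4,3),(0,6)\}$. An index $3$ component of degree $d\le6$, with $c$ cusps and $\nu$ nodes, would need $9c+4\nu=2d(d+1)$ and $c+\nu\le(d-1)(d-2)/2$, which is impossible. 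Then $d_3=0$ and $d_2=8$, which contradicts the divisibility by $3$ of each index $2$ degree forced by the normalized orders $1,2,4$.
\end{itemize}
Without these computations, or replacements for them, the bound $\lambda_I\ge1/2$, and hence the semiampleness of $2L-H$, is not established.
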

\begin{proof}
Lemma~\ref{seven-plane-gauss} gives $\lambda\ge1$ for general ranks $0,1$ and $q\lambda\ge1$ for the least invariant Taylor degree $q$ in rank $2$, with regular maps also at larger stabilizers.
For $\lambda<1/2$, Section~\ref{ew-compactapp-section} leaves the following possibilities, with $d$ the total degree of the upper locus:
\[
\begin{array}{c|c|l}
\text{general group}&\text{upper group}&\text{degree equation}\\\hline
G(4,2,2)&G(4,2,3)&8\lambda=2+3d/2,\quad d=0,1\\
G_4&G_{25}&10\lambda=2\quad\text{if no upper locus}\\
\mu_3^2&G_{25}&6\lambda=2+5d/2,\quad d=0
\end{array}.
\]
For the first row, invariants $v+S,vS+P^2,u^2P$, $v=u^4$, $S=y_1^4+y_2^4$, $P=(y_1y_2)^2$, give $d'_2/2+2d'_3/3=(60-3d)/16$, impossible after multiplication by $6$.
For $G_4$, the upper axis $(v^2,v^3,0)$ is nonimmersed; its absence contradicts $4\lambda\ge1$.
For $\mu_3^2$, the upper differential $\left(\begin{smallmatrix}0&-v\\24v^2&0\end{smallmatrix}\right)$ gives $\lambda=1/3$ and $(d_2,d_3)=(8,0),(4,3),(0,6)$.
An index $3$ component of degree $d\le6$ requires $9c+4\nu=2d(d+1)$ and $c+\nu\le(d-1)(d-2)/2$.
For $d\le5$ these are incompatible; for $d=6$, the only pairs $(c,\nu)=(0,21),(4,12),(8,3)$ still violate the bound.
Hence $d_3=0$, but the normalized orders $1,2,4$ make each index $2$ degree divisible by $3$, contrary to $d_2=8$.
For multidegrees $(w_i)$ of $2L-H$, the triple degrees $d_I=2\lambda_I-1\ge0$ give $w_i=d_I+d_J\ge0$ by complementary triples; \cite[Proposition~2.6]{NakanoWeightedConfigurations2026} proves semiampleness.
\end{proof}

\subsection{The finite normal matrices}
\label{ew-compactapp-matrices}
The following finite recipe uses coarse coordinates.
Expand $F_j(uv_0+y)=F_j(uv_0)+\sum_i c_{ji}(u)g_i(y)$ modulo $(g_1,\ldots,g_s)^2$, for ambient and normal basic invariants $F_j,g_i$.
Adjoin the tangent column, remove its common factor, eliminate that column, and insert the character frames specified below.
Cofactors then give inverse rows and their restrictions to successive kernels.
Every remaining operation is holomorphic and invertible, so the conclusions hold under arbitrary analytic isomorphisms of quotient germs.
Here are all inputs for the larger groups; the last triple gives the determinant order, highest pole and pole on the preceding kernel.
\[
\begin{array}{c|c|c|c}
T\subset G&\text{degrees of }G&\text{degrees of }T&\text{orders}\\\hline
G_{25}\subset G_{32}&12,18,24,30&6,9,12&(8,3,\text{---})\\
A_3\subset A_4&2,3,4,5&2,3,4&(3,1,1)\\
A_3\subset D_4&2,4,4,6&2,3,4&(3,1,1)\\
B_3\subset B_4&2,4,6,8&2,4,6&(3,1,1)\\
B_3\subset F_4&2,6,8,12&2,4,6&(7,3,2)\\
H_3\subset H_4&2,12,20,30&2,6,10&(22,10,7)\\
G(4,4,3)\subset G_{29}&4,8,12,20&4,3,8&(7,3,3)
\end{array}
\]
Every next pole in the table is measured on the preceding kernel.
For $G_{25}\subset G_{32}$ use $r_jz=z-(Cz)_j\varepsilon_j$, with
\[
C=\begin{pmatrix}1-\omega&\omega^2&0&0\\-\omega^2&1-\omega&-\omega^2&0\\0&\omega^2&1-\omega&\omega^2\\0&0&-\omega^2&1-\omega\end{pmatrix},\qquad \omega^3=1,\quad\omega\ne1.
\]
The first three reflections fix $v_0=((2\omega+1)/3,1,(4\omega+2)/3,1)$ and act normally on $z_4=0$.
Use the $240$ forms in $e_4^*G_{32}$ and $(1,2,3)G_{25}$ for the respective power sums; separate the square of degree $6$ in degree $12$.
Normalizing the first coefficient row to $(1,1,1,1)$ gives determinant $-19489164500000/1162261467$.
The first axis values are $(56/9)v^2,(1456/243)v^3$, $v=u^6$, and the degree $9$ frame is $u^3\partial_{f_9}$.
Thus $M(v)=\operatorname{diag}(v^2,v^3,v^4)A\operatorname{diag}(1,1,v^{-1})$, $\det A\ne0$, with a simple tangent zero.

For $A_3\subset A_4$ use $(u,u,u,u,-4u)$ and $\sum y_i=0$; the power sums have values $(20u^2,-60u^3,260u^4,-1020u^5)$.
For $A_3\subset D_4$ use $(u,u,u,u)$, $v=u^2$, and character frame $u\partial_{q_3}$.
The matrices are
\[
M_A=\begin{pmatrix}15u/2&1&0\\-20u^2&4u&1\\275u^3/2&10u^2&5u\end{pmatrix},\quad
M_D=\begin{pmatrix}4v&4v&1\\0&4v/3&0\\0&8v^2&12v\end{pmatrix}.
\]
The determinants are $\det M_A=625u^3/2$ and $\det M_D=64v^3$.
The row associated with the invariant of greatest degree and the next row restricted to its kernel remain nonzero after pole correction at every stalk; the last zero has order $1$, and the $A_4$ tangent zero is simple.
All three $D_4$ types are conjugate by the root preserving transformations $\frac12\left(\begin{smallmatrix}1&1&1&1\\1&1&-1&-1\\1&-1&1&-1\\1&-1&-1&1\end{smallmatrix}\right)$ and $\operatorname{diag}(-1,1,1,1)$.

For $B_3\subset F_4$ take the short roots $\{\pm\varepsilon_j\}\cup\{\frac12\sum\epsilon_j\varepsilon_j:\epsilon_j=\pm1\}$ and their power sums.
On $u\varepsilon_1$, $v=u^2$, elimination gives rows $(-3v^2,15v,18)$, $(0,-15v^2,45v)$, $(0,0,-405v^3/8)$.
For $u(\varepsilon_1+\varepsilon_2)$ multiply the rows by $-8,16,-64$; axis values $6v,12v$ prove immersion, and the last zero order is $2$.
For $B_3\subset B_4$ use $\left(\begin{smallmatrix}v&1&0\\0&v&1\\0&0&v\end{smallmatrix}\right)$, of last zero order $1$.
For $H_3\subset H_4$ use the Gram matrix of diagonal $2$ and successive off-diagonal entries $-\phi,-1,-1$, $\phi=(1+\sqrt5)/2$, and axis $(1+\sqrt5/2,(3+\sqrt5)/2,(5+\sqrt5)/4,1)$.
The $120$ ambient and $30$ normal root sums give coefficients of exponent $(d-e)/2$ in $v=u^2$.
The nonzero pivots are $-(21\sqrt5+63)v^5/512$, $(3740\sqrt5+8415)v^7/1024$, $(22297275\sqrt5+49864815)v^{10}/1048576$.

For $G(4,4,3)\subset G_{29}$ use $r_jz=z-(Bz)_j\varepsilon_j$, with
\[
B=\begin{pmatrix}2&-1&-i&0\\-1&2&i-1&0\\i&-i-1&2&-1\\0&0&-1&2\end{pmatrix}.
\]
The first three reflections fix $((1+i)/2,1-i/2,3/2,1)$; the $160$ forms in $z_1G_{29}$ give tangent and normal determinant $128957475465i/524288$.
For $v=u^4$, elimination gives rows $(-81v/8,-9iv^2/4,-81/4)$, $(0,-1183iv^3/384,-3549v/16)$, $(0,0,-4037365v^3/4096)$.
The degree $3$ character is $-i$, with frame $u^3\partial_P$, and last zero order $1$.
At $G(4,4,4)$ use $\left(\begin{smallmatrix}v&0&1\\0&v&0\\0&0&v\end{smallmatrix}\right)$.
The unique parabolic class and nonzero first axis invariant proportional to $v$ give a single smooth branch.
Distinct basic degrees make the full normalizer preserve the flag.

\subsection{The groups \texorpdfstring{\(B_4\)}{B4} and \texorpdfstring{\(G_{32}\)}{G32}}
\begin{lemma}
\label{ew-compact-bfour}
A $B_4$ axis contains an $F_4$ or $G_{29}$ point.
A $G(4,2,4)$ axis contains a $G_{31}$ point, and a $G(4,4,4)$ axis contains a $G_{29}$ point.
\end{lemma}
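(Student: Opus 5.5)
Fix a fixed curve \(f:C\to X\) whose generic stabilizer is \(B_4\), \(G(4,2,4)\) or \(G(4,4,4)\), taken with its full normalizer. Suppose, for contradiction, that the asserted larger point stabilizer does not occur on \(C\). I will compute the numerical invariants \((x,a,t,\sigma)\) from the curve equations of \cite[Proposition~3.4]{NakanoWeightedConfigurations2026} and show they admit no solution.

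The starting point is the table of Section~\ref{ew-compactapp-section}. Apart from products, the only irreducible upper groups of \(B_3\), \(G(4,2,3)\) and \(G(4,4,3)\) are \(B_4,F_4,G_{29}\), \(G(4,2,4),G_{31}\) and \(G(4,4,4),G_{29}\). By Lemma~\ref{ew-compact-groups} no other rank \(4\) group appears, and every reflection has order \(2\) or \(3\). So if the named point is absent, every special point of \(C\) has stabilizer \(G\times\mu_b\) (one of the product types allowed after Lemma~\ref{ew-compact-groups}), or it is a point where a smaller parabolic axis meets \(C\).

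\begin{itemize}
\item \textbf{Rigidity of the normal action.} The basic degrees are \(2,4,6,8\) for \(B_4\), \(4,8,8,12\) for \(G(4,2,4)\) and \(4,8,12,16\) with a degree \(4\) invariant for \(G(4,4,4)\). Apart from the repeated \(8\) for \(G(4,2,4)\) they are distinct, so the full normalizer preserves the invariant flag, as noted at the end of Section~\ref{ew-compactapp-matrices}. For \(G(4,2,4)\) I would separate the two degree \(8\) invariants as in the \(p=3\) argument of Section~\ref{ew-compactapp-initial}. Consequently the characters at product points are trivial, and \(k=mt\) is a positive integer with \(m=2\) or \(4\).
\item \textbf{Curve equations.} With only product points, the local matrices are those of the ordinary monomial axis: \(\operatorname{diag}(v,\dots)\) for \(B_3\subset B_4\), and the displayed matrix for \(G(4,4,4)\). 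These have last zero order \(1\). The equations of \cite[Proposition~3.4]{NakanoWeightedConfigurations2026} then take the form \(2x+(1-m^{-1})R+\sigma=2t\) and \(a=-2x-Dt+cR\), with \(D=3m+3m/p\) or its analogue.
\item \textbf{Genus zero.} The inequality \(D>2c/(1-m^{-1})\) holds as in the monomial rows, giving \(g=0\).
\item \textbf{Finite bounds.} Lemma~\ref{ew-compact-halfbound} gives \(a\le2t\). Combined with \(a\ge1\), this bounds \(k\) and \(R\) to a finite list.
\end{itemize}

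To finish, the finite list is excluded as in Section~\ref{ew-compactapp-initial}. I would apply \cite[Lemma~3.5]{NakanoWeightedConfigurations2026} to the projection of the inverse differential onto the coordinate of greatest invariant degree, with target \(h=R-3k\) or its analogue. I would then use the parity obstruction from the GIT tangent sequence: degree \(0\) is excluded for odd \(a\) and degree \(1\) for even \(a\). In the \(B_3\subset B_4\) case this mirrors the \(G(m,1,3)\), \(m\ge6\), row, now with \(m=2\). The expected outcome is the single row \((k,R,a,h)=(1,2,2,-1)\), excluded by \(a\ge-3h=3\).

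\textbf{Main obstacle.} I expect the hardest step to be \(G(4,2,4)\). The repeated degree \(8\) allows the normalizer to mix the two degree \(8\) directions, as in the \(G(3,3,4)\) analysis. There I would follow Section~\ref{ew-compactapp-g334}: bound kernel line subbundles after killing finite monodromy by writing explicit invariants, and check that the surviving \(\sigma\)-values are incompatible with the relative tangent degrees.
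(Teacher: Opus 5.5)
Your proposal has a genuine gap in the \(B_4\) case: the ``finite bounds'' step fails. (Note first that the curve here is the \(B_3\) axis through a \(B_4\) point; a rank \(4\) group fixes a point, not a curve.) The curve equations are \(2x+R/2+\sigma=2t\) and \(a=-2x-12t+3R\). Eliminating \(R\) gives \(a=-14x-6\sigma\), so the \(t\)-terms cancel exactly. Indeed at \(m=2\) one has \(D=12=2(2+p^{-1})/(1-m^{-1})\), so your strict inequality \(D>2c/(1-m^{-1})\) is false, although genus zero still follows. With \(x=-1\), \(k=2t\) and \(j=R-2k\), you get \(a=2+3j\) and \(\sigma=2-j/2\), both independent of \(k\). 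Lemma~\ref{ew-compact-halfbound} then only gives \(k\ge2+3j\). So for each \(j\in\{0,\dots,4\}\) there are solutions for every large \(k\); for example \(R=2k\), \(a=2\), \(\sigma=2\) works for every \(k\ge2\). The analogy with the \(G(m,1,3)\), \(m\ge6\), row breaks down because its bound \(k(m-2)\le m\) is vacuous at \(m=2\). Your predicted survivor \((k,R,a,h)=(1,2,2,-1)\) even violates \(a\le2t\).

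The repair must be uniform in \(k\), and this is what the paper does. First, \(h=j-k\le-2-2j<0\). The monomial calculation gives Hom dimension \(1\), realized by the surjective projection. Then \cite[Proposition~2.2]{NakanoWeightedConfigurations2026} forces two coincident labels with common degree \(h\), so \(a\ge-3h=3(k-j)\), i.e.\ \(k\le2j+2/3\). This contradicts \(k\ge2+3j\).

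Your ``main obstacle'' does not exist. For \(G(4,p,4)\) the curve has generic group \(G(4,p,3)\), with normal degrees \(4,8,12/p\), i.e.\ \(4,6,8\) or \(3,4,8\). These are distinct, so the full normalizer preserves the flag. The repeated degree \(8\) belongs to the point group \(G(4,2,4)\), not to the normal directions of the curve, so no \(G(3,3,4)\)-type kernel analysis is needed. Also \(G(4,4,4)\) has degrees \(4,4,8,12\), not \(4,8,12,16\).

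Here \(m=4\), and the common equations do give \(g=0\) and \(k\le3\). The only rows are \((p,k,R,a,\sigma)=(2,1,3,5,1/4)\), \((2,2,4,3,0)\) and \((4,1,3,5,1/4)\). These are excluded directly by \(\sigma\in\{0\}\cup[1/2,\infty)\) and \(a\le2t\), without Lemma~3.5 or the parity obstruction.
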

\begin{proof}
If a $B_4$ axis contains neither exceptional upper group, its only upper groups are $B_4$ and products.
\cite[Proposition~3.4]{NakanoWeightedConfigurations2026} gives $2x+R/2+\sigma=2t$ and $a=-2x-12t+3R$.
These equations imply $a\le-14x$, so $C=\mathbb P^1$.
Put $k=2t\in\mathbb Z_{>0}$ and $j=R-2k$.
Then $a=2+3j$, $\sigma=2-j/2$, and the quotient associated with the invariant of greatest degree has degree $h=j-k$.
Lemma~\ref{ew-compact-halfbound} gives $k\ge2+3j$, hence $j\ge0$ and $h<0$.
The monomial calculation in Section~\ref{ew-compactapp-initial} gives Hom dimension $1$, attained by the surjective projection to the coordinate of greatest invariant degree.
\cite[Proposition~2.2]{NakanoWeightedConfigurations2026} therefore gives two coincident labels, a common bundle of degree $-(k-j)$, and $a\ge3(k-j)$.
This contradicts $2+3j\le k\le2j+2/3$.

For $G(4,p,4)$, $p=2,4$, assume the corresponding exceptional group is absent.
The upper groups are then $G(4,p,4)$ and products, and $k=4t$ is a positive integer.
The common equations give $g=0$, $k\le3$, and the three rows
\[
(p,k,R,a,\sigma)=(2,1,3,5,1/4),(2,2,4,3,0),(4,1,3,5,1/4).
\]
The first and third have $0<\sigma<1/2$; the second contradicts $a\le2t$.
\end{proof}

\begin{lemma}
\label{ew-compact-gthirtytwo}
The group $G_{32}$ does not occur, and every reflection has order $2$.
\end{lemma}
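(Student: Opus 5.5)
We exclude a \(G_{32}\) point by following its \(G_{25}\) axis as a complete fixed curve \(f:C\to X\), using the finite normal matrix already computed in Section~\ref{ew-compactapp-matrices}. We then deduce the statement on reflection orders from Lemma~\ref{ew-compact-groups}.

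\emph{Step 1: the upper groups along the axis.} Suppose a point has stabilizer \(G_{32}\). Its rank \(3\) parabolic \(G_{25}\) fixes a curve through the point. By the table in Section~\ref{ew-compactapp-section}, the only irreducible upper group of \(G_{25}\) is \(G_{32}\), and the only products are \(G_{25}\times\mu_b\) with \(b\in\{2,3\}\). A \(G_{32}\) point is an isolated special point on this axis. By Lemma~\ref{ew-compact-groups} and the absence of order \(3\) reflections in a \(\mu_b\) factor compatible with rank \(4\), the product factors are restricted to \(\mu_2\) and \(\mu_3\).

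\emph{Step 2: the curve equations.} On the full normalizer quotient \(C\), the basic degrees \(6,9,12\) of \(G_{25}\) are distinct, so the flag of normal invariant directions is preserved. We then apply \cite[Proposition~3.4]{NakanoWeightedConfigurations2026}, giving \(2x+(1-3^{-1})R+\sigma=2t\) and \(a=-2x-Dt+cR\). Here \(R\) counts \(G_{32}\) and product points, and the constants \(D,c\) come from the degrees \(6,9,12\) together with the determinant order \(8\) at \(G_{32}\) read off from the matrix table. As in the monomial case, the coefficient of \(t\) should force \(g(C)=0\) and \(k=6t\in\mathbb Z_{>0}\) bounded. Lemma~\ref{ew-compact-halfbound} adds \(a\le2t\), and \(a\ge3\) holds on any curve meeting the coincidence geometry. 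Together these leave a short finite list of rows \((k,R,a,\sigma)\) with \(\sigma=0\) or \(\sigma\ge1/2\).

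\emph{Step 3: exclude the surviving rows.} This is the main obstacle. We use the explicit matrix \(M(v)=\operatorname{diag}(v^2,v^3,v^4)A\operatorname{diag}(1,1,v^{-1})\), whose highest pole has order \(3\), and the degree \(9\) character frame \(u^3\partial_{f_9}\). These give the degree \(h\) of the projection of the inverse differential to the degree \(12\) coordinate. If \(h<0\), the projection is surjective onto a negative line, so \cite[Proposition~2.2]{NakanoWeightedConfigurations2026} forces coincident labels with \(a\ge(7-2m)e\); this should contradict \(a\le2t\). If \(h\ge0\), we use the GIT parity argument of Section~\ref{ew-compactapp-initial} to exclude degrees \(0\) and \(1\). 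We also use the kernel bound via \cite[Lemma~3.5]{NakanoWeightedConfigurations2026}. If a row still survives, we fall back on the observation already recorded in Section~\ref{ew-compactapp-section}: the effective plane group \(G(6,1,2)\) at \(G_{32}\) has a degree \(6\) cotangent direction absent from the ambient degrees \(12,18,24,30\). This prevents a smooth \(G(6,1,2)\)-fixed plane through the point. The final step is to produce such a plane from the \(\mu_3\) factors of the axis, using Lemma~\ref{ew-compact-groups} to guarantee persistence of the rank \(2\) factor.

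\emph{Step 4: reflection orders.} With \(G_{32}\) gone, every remaining rank \(4\) group in Lemma~\ref{ew-compact-groups} is \(A_4,B_4,D_4,G(4,2,4),G(4,4,4),F_4,H_4,G_{29},G_{31}\). The groups \(G(4,p,4)\) are reduced to \(G_{29},G_{31}\) by Lemma~\ref{ew-compact-bfour}. All of these are generated by reflections of order \(2\). By Lemma~\ref{ew-compact-groups}, every reflection lies in a parabolic subgroup of one of them, so every reflection has order \(2\).
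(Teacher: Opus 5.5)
\textbf{There is a genuine gap.} Your strategy matches the paper's: follow the $G_{25}$ axis through a $G_{32}$ point, write the curve equations, reduce to a finite list, and exclude it with \cite[Proposition~2.2]{NakanoWeightedConfigurations2026}. Your Step~4, deducing the reflection orders from Lemma~\ref{ew-compact-groups}, is fine. The problem is that Steps~2--3 never produce the actual equations, and the ones you write would give the wrong list.

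\emph{The equations.} Along this axis $v=u^6$, so the effective order is $6$. A $G_{32}$ point therefore contributes $5/6$ to the tangent equation, not $1-3^{-1}$. The product points $G_{25}\times\mu_2$ and $G_{25}\times\mu_3$ must be counted separately. They contribute $s_2/2+2s_3/3$ to the tangent equation and nothing to the normal equation. The normal equation needs $D=27$ and $c=17/2$: determinant order $8$, plus $1$ from saturating the simple tangent zero, minus $1/2$ from the character $-1$ on the degree $9$ direction. The correct system is $2t=2x+5R/6+s_2/2+2s_3/3$, $a=-2x-27t+17R/2$, $h=-12t+3R$. Substituting $t$ gives $a=-29x-11R/4-27s_2/4-9s_3$, and this is how $g=0$ follows from $a>0$, rather than by analogy with the monomial case.

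\emph{The false constraint.} Your claim that ``$a\ge3$ on any curve meeting the coincidence geometry'' is false. Since $H|_{Z_I}=\mathcal O(1)$, a line in a coincidence plane has $a=1$, and one of the genuine surviving rows has $a=1$. Imposing $a\ge3$ would discard that row illegitimately. Likewise, ``$\sigma=0$ or $\sigma\ge1/2$'' is a constraint from the monomial case and does not apply here.

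\emph{The decisive step.} With $a\in\mathbb Z_{>0}$, $a\le2t$ and $6t\in\mathbb Z_{>0}$, exactly two rows survive:
\[
(R,s_2,s_3,t,a,h)=(2,2,1,2/3,1,-2)\quad\text{and}\quad(7,1,0,13/6,3,-5).
\]
The observation your proposal does not reach is that both rows have $h<-a$. The projection of the inverse differential to the degree $12$ coordinate is a nonzero map to a line of degree $h=-e$. \cite[Proposition~2.2]{NakanoWeightedConfigurations2026} then gives $a=(7-2m)e+2\delta\ge e>a$, a contradiction.

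No row with $h\ge0$ occurs, so the parity argument, \cite[Lemma~3.5]{NakanoWeightedConfigurations2026}, and the $G(6,1,2)$ fallback are not needed. The last fallback is also unsupported as stated. The observation in Section~\ref{ew-compactapp-section} only prevents a $G_{32}$ point from lying on a smooth fixed plane. The $G_{25}\times\mu_3$ points are other points of the axis, and you give no mechanism that produces such a plane through the $G_{32}$ point.
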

\begin{proof}
Take the axis of $G_{25}$ at a $G_{32}$ point.
Let $R$ count $G_{32}$ points and $s_b$ count points with stabilizer $G_{25}\times\mu_b$, for $b=2,3$.
The calculation in Section~\ref{ew-compactapp-matrices} gives effective order $6$, characters $(1,-1,1)$ on degrees $6,9,12$, a simple tangent zero, normal determinant order $8$, and highest pole $3$.
Consequently, with $k=6t\in\mathbb Z_{>0}$,
\[
2t=2x+5R/6+s_2/2+2s_3/3,\qquad
a=-2x-27t+17R/2,\qquad h=-12t+3R.
\]
The first equation gives $t=x+5R/12+s_2/4+s_3/3$.
Substitution in the equation for $a$ gives $a=-29x-11R/4-27s_2/4-9s_3$, hence $g=0$.
The positive integer $a$ and $a\le k/3$ leave exactly
\[
(R,s_2,s_3,t,a,h)=(2,2,1,2/3,1,-2),(7,1,0,13/6,3,-5).
\]
Both nonzero inverse maps have target degree less than $-a$, contrary to \cite[Proposition~2.2]{NakanoWeightedConfigurations2026}.
Lemma~\ref{ew-compact-groups} now excludes every reflection of order $3$.
\end{proof}

\subsection{The plane of degree \texorpdfstring{\(1/2\)}{1/2}}
\label{ew-compactapp-halfplane}
A compact plane quotient of degree $1/2$ has branch indices $2$ and reduced branch degree $9$.
For nodes, cusps $(2,3)$, contacts $2$, cusps $(2,5)$, and triple points, let the counts be $\nu,c,t_0,q_0,r$.
Their local groups $\mu_2^2,A_2,B_2,I_2(5),G(4,2,2)$ have orders $4,6,8,10,16$, $\delta$ invariants $1,1,2,2,3$, Milnor numbers $1,2,3,4,4$, and normalized orders $(2,2),(1),(2,2),(1),(4,4,4)$.
For $k$ components, $\sum g_i=27+k-\delta_B$, $\delta_B=\nu+c+2t_0+2q_0+3r\le36$.
Proportionality and $\chi_{\mathrm{orb}}=3/4$ give
\[
4\nu+8c+12t_0+16q_0+15r=180,\qquad180\nu+320c+450t_0+576q_0+585r=7020.
\]
Thus $c=9c_1$, $q_0=5q_1$, $r=4r_1$, and
\[
t_0=12-4c_1-8q_1-4r_1,\quad\nu=9-6c_1+4q_1-3r_1,\quad\delta_B=33-5c_1-2q_1+r_1.
\]
The conditions $c_1+2q_1+r_1\le3$, $\nu\ge0$, $\delta_B\le36$ leave exactly the nine rows below.
A line or smooth conic of degree $d$ satisfies $n_i+2t_i+2r_i=d(9-d)$, $n_i/2+t_i/2+3r_i/4=d+2$.
Consequently lines have $(n_i,t_i,r_i)=(0,0,4),(2,1,2),(4,2,0)$ and conics need $t_i\ge5$.
For the union $B^*$ of duals of nonlinear components and $l$ lines, $D=\deg B^*=72-2\nu-3c-4t_0-5q_0-6r$ and $\delta(B^*)=(D-1)(D-2)/2+\delta_B-28-l$.
A contact not involving a line, or a cusp $(2,5)$, contributes at least $2$ to $\delta(B^*)$, additively even at a common dual point.
For a cusp write $x=u^2$, $y=a_4u^4+a_5u^5+\cdots$, $a_5\ne0$; its dual has type $(2,5)$ after subtracting the quadratic term if $a_4\ne0$, and $(3,5)$ otherwise.
\begin{center}
\begin{tabular}{c|c|p{0.69\linewidth}}
\text{row}&$(c_1,q_1,r_1)$&\text{exclusion or equality}\\\hline
1&$(0,0,3)$&$\delta_B=36$: nine lines and twelve ordinary triple points.\\
2&$(1,0,1)$&No nodes, hence no line contact; dual upper/lower bounds $7,8$.\\
3&$(0,0,2)$&$k\ge8$: one conic and seven lines; five required contacts exceed four.\\
4&$(1,0,0)$&Too few nodes for a line; dual bounds $15,16$.\\
5&$(0,0,1)$&$k=7$: a cubic supplies fewer than eight contacts; the partition into two conics has impossible tangent distributions.\\
6&$(1,1,0)$&No contacts; dual bounds $8,10$.\\
7&$(0,0,0)$&$k=7$: at most six contacts; $k=6$, partitions $(4,1^5),(3,2,1^4)$: at most eleven.\\
8&$(0,1,1)$&No contacts; dual bounds $5,10$.\\
9&$(0,1,0)$&$k\ge4$: five cusps $(2,5)$ force $(6,1,1,1)$; the three lines need six contacts rather than four.
\end{tabular}
\end{center}
For row $5$, the conics need two mutual contacts, six line contacts, and $(t_i,n_i,r_i)=(5,0,2)$.
Their four triple points are distinct; distributions of lines touching two, one, or no conic are $(1,4,0),(2,2,1),(3,0,2)$.
A line touching one conic joins the other's two triple points, so there are at most two; a line touching neither joins both pairs, excluding the last two distributions as well.
The remaining partition $(2,2,2,1,1,1)$ in row $7$ requires two contacts per conic pair.
Their dual intersections are exhausted by the common tangents; a line touching two conics would give three tangent branches, an excluded local type.
Only row $1$ remains.

\begin{lemma}
\label{ew-compact-halfplanes}
If $\lambda_I=1/2$, the general group on $Z_I$ is $\mu_2^2$, and every stabilizer on the fixed plane is $\mu_2^2\times J$.
The induced branch divisor consists of nine lines with twelve ordinary triple points and no other singularities.
Two disjoint triples cannot both have degree $1/2$.
The integral semiample bundle $\mathcal A=5(2L-H)$ has strictly positive multidegrees $v_i$.
A curve fixed by an irreducible group of rank $3$ avoids every such plane.
\end{lemma}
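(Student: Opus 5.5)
The plan is to read off each assertion of Lemma~\ref{ew-compact-halfplanes} from the classification in Section~\ref{ew-compactapp-halfplane} and the earlier plane lemmas, in the order they are stated.

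\textbf{General group and branch divisor.} If $\lambda_I=1/2$, Lemma~\ref{seven-plane-gauss} rules out ranks $0$ and $1$, since those give $\lambda_I\ge1$. So the generic normal representation has rank $2$ with $q\lambda_I\ge1$, i.e.\ $q\le2$. After Lemma~\ref{ew-compact-gthirtytwo} all reflections have order $2$. Among the rank $2$ factors listed in Section~\ref{ew-compactapp-section}, only $\mu_2^2$ has least degree $2$ with equal degrees and plane degree exactly $1/2$; the groups $A_2,B_2,I_2(5)$ have $q=2$ but unequal degrees, and they appear only as special points. The plane is then the compactified fixed ball quotient, by the remark after Lemma~\ref{ew-fields-plane-equality}. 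Its branch data are therefore exactly those analysed in Section~\ref{ew-compactapp-halfplane}, and only row~$1$ survives: nine lines and twelve ordinary triple points. In particular $c=t_0=q_0=\nu=0$. So every point stabilizer on the plane has effective local group $\mu_2^2$ or $G(4,2,2)$ on the plane, which means the full stabilizer is $\mu_2^2\times J$, with $J$ absorbing only normal-direction data. At the triple points I expect to need the table of upper pairs again to check that $G(4,2,2)$ appears only as the effective plane group of a product.

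\textbf{Disjoint triples.} Suppose $I\cap J=\emptyset$ and $\lambda_I=\lambda_J=1/2$. Then $Z_I\cap Z_J$ is the point where the two triples coincide separately; it is a single point, lying over the configuration of three distinct points. At that point, the normal directions of $Z_I$ are tangent to $Z_J$. The stabilizer must then contain both generic groups $\mu_2^2$ acting on complementary $2$-planes, giving a reflection group of rank $4$ of type $\mu_2^4$. I would then check that this point is one of the twelve triple points or a smooth point of the nine-line arrangement on each plane, and compare with the required local type. A $\mu_2^4$ point lies on no branch line of $Z_I$, because the stabilizer is $\mu_2^2\times\mu_2^2$ and the second factor acts normally. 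A counting argument should then finish the case: the $\mathcal O(1)$-degree of the branch divisor restricted to lines through the point, together with the $2L-H$ multidegrees, should give $w_i=d_I+d_J=0$ for the seventh label. That contradicts ampleness of $cL$ on the coincidence line through that label.

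\textbf{Positivity of the multidegrees.} From Lemma~\ref{ew-compact-halfbound}, $w_i=d_I+d_J$ for complementary triples, where $d_I=2\lambda_I-1$. For each $i$, the six remaining labels can be split into two disjoint triples in ten ways, and $w_i=0$ would force every such pair to have $d_I=d_J=0$. This is impossible by the previous assertion. Integrality of $5(2L-H)$ comes from the scalar order dividing $5$-power normalizer data; I expect to quote Lemma~\ref{ew-compactapp-volumeinitial} together with $c\mid 10$ now that all reflections have order $2$.

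\textbf{Rank 3 curves.} A curve fixed by an irreducible rank $3$ group meeting such a plane would give a point whose stabilizer contains both that group and $\mu_2^2$. That contradicts the product structure $\mu_2^2\times J$ established in the first step, since the local group would have an irreducible factor of rank at least $3$ intersecting the generic plane group nontrivially.

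\textbf{Main obstacle.} I expect the disjoint-triples step to be the main obstacle, because it needs an honest local analysis at the common point of $Z_I$ and $Z_J$.
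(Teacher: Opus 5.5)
Your proposal has two genuine gaps, and both are in the parts of the lemma that carry its content.

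\textbf{The general group and the product structure are not proved.} With $\lambda_I=1/2$, the bound $q\lambda_I\ge1$ of Lemma~\ref{seven-plane-gauss} gives $q\ge2$, not $q\le2$. Once all reflections have order $2$ this bound is vacuous, so $A_2,B_2,I_2(5)$ and $G(4,2,2)$ all survive it, and ``unequal degrees, only special points'' excludes nothing. (The paper removes $G(4,2,2)$ by its degree equation $8\lambda=2+3d/2$, which forces $d=4/3$.) More seriously, knowing the effective groups \emph{on} the plane from Section~\ref{ew-compactapp-halfplane} does not tell you whether the generic \emph{normal} factor is absorbed into a larger irreducible group at special points. That non-absorption is exactly what ``$\mu_2^2\times J$'' asserts, with $J$ acting on the tangent directions of $Z_I$, not the normal ones.

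The missing idea is this. For $\lambda_I=1/2$ the quadratic Taylor coefficient map of the plane ideal, $\mathcal L^{-2}|_Z\otimes\underline U_2\to\mathcal O_Z(-1)^{\oplus2}$, has numerically trivial coefficients. So its nonzero coefficient (or its determinant) vanishes nowhere. Absorption would lower its rank, since nonreal factors have no quadratic invariant and a real irreducible quadratic form cannot lie in the plane ideal. Hence the generic factor stays direct at every point. Then Lemma~\ref{ew-compact-groups} (every irreducible factor of rank below $4$ enters a rank $4$ group as a parabolic subgroup) excludes irreducible $A_2,B_2,I_2(5)$ and leaves $\mu_2^2$.

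Your last step inherits this gap. Its correct reason is simply that every factor of $\mu_2^2\times J$ has rank at most $2$, so no irreducible rank $3$ reflection subgroup fits.

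\textbf{The disjoint-triples step fails at the decisive point.} At $p=Z_I\cap Z_J$, the generic $\mu_2^2$ of $Z_J$ acts on $N_pZ_J=T_pZ_I$. So it acts \emph{on} the plane $Z_I$, not normally to it, and its two mirrors give two transverse branch lines of $Z_I$ through $p$, i.e.\ a node. That node is the contradiction: nine lines with twelve ordinary triple points have all $\binom92=36=12\cdot3$ pairwise intersections at the triple points, so no node can occur. You also need two further facts. The planes share no reflection, since otherwise both lie in its smooth mirror threefold and meet in a curve. And the full group at $p$ is exactly $\mu_2^4$, which again uses the product structure.

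Your proposed fallback does not work. The equation $w_i=d_I+d_J=0$ merely restates the hypothesis. A vanishing multidegree of $2L-H$ contradicts nothing, since $L=\tfrac12(H+(2L-H))$ is ample regardless. The implication actually runs the other way: excluding the disjoint pair is what yields $v_i\ge1$, exactly as in your third step, which is otherwise correct. Integrality of $\mathcal A$ is just $10L=B_X-2H$ once all reflections have order $2$.
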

\begin{proof}
Lemma~\ref{seven-plane-gauss} excludes ranks $0,1$, and $G(4,2,2)$ would require $8\lambda=2+3d/2$, $d=4/3$.
For the remaining groups the quadratic Taylor map to $\mathcal O(-1)^2$ has numerically trivial coefficients, so its nonzero coefficient or determinant never vanishes.
Absorption would lower its rank: nonreal factors have no quadratic invariant, and a real irreducible quadratic form cannot lie in the plane ideal.
Thus the factors stay direct, and Lemma~\ref{ew-compact-groups} leaves only $\mu_2^2$.
Section~\ref{ew-compactapp-halfplane} gives the branch divisor.
Disjoint triple planes meet transversely at one point and share no reflection, since otherwise they lie in a smooth threefold.
Their full group would be $\mu_2^4$, inducing an excluded node.
Writing $q=\sum v_i/2$, $d_I=q-\sum_{i\in I}v_i$, a zero $v_i$ gives complementary zero triples and this excluded pair; hence $v_i\ge1$.
Every factor of $\mu_2^2\times J$ has rank at most $2$, proving the final assertion.
\end{proof}

\subsection{The group \texorpdfstring{\(G_{31}\)}{G31}}

\begin{lemma}
\label{ew-gaussian-g423-local}
Let \(T=G(4,2,3)\) be the full stabilizer of a generic point of a fixed curve in a smooth finite reflection quotient of dimension \(4\).
Assume that its full special stabilizer is \(G(4,2,4)\), \(G_{31}\), or \(T\times A_1\).
The regular frames corresponding to the basic polynomial invariants have degrees \(4,6,8\).
At the first two types their inertia characters are \((1,-1,1)\), and the character correction of the middle coarse frame is \(-1/2\).
The following orders hold, independently of arithmeticity or compactness:
\[
\begin{gathered}
\begin{array}{c|rr}
 &\operatorname{ord}(df)&\operatorname{ord}(\det M)\\ \hline
 G(4,2,4)&0&3\\
 G_{31}&1&10\\
 T\times A_1&0&0
\end{array}\\[1ex]
\begin{array}{c|rrr}
 &\text{highest pole}&\substack{\text{middle pole}\\\text{on its kernel}}&\text{lowest zero}\\ \hline
 G(4,2,4)&1&1&1\\
 G_{31}&4&4&2\\
 T\times A_1&0&0&0
\end{array}
\end{gathered}.
\]
The unrestricted middle inverse direction at \(G_{31}\) has pole order \(5\).
\end{lemma}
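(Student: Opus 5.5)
This is a purely local computation, so I would run the finite recipe of Section~\ref{ew-compactapp-matrices} on the three upper types, exactly as for the table entries there. Since \(T=G(4,2,3)\) has basic invariants \(S_1=\sum y_j^4\), \(P=y_1y_2y_3\) (degree \(6\) after squaring as \(P^2\) is not needed: the degree \(6\) invariant is \(P\cdot\) nothing; concretely the basic degrees of \(G(4,2,3)\) are \(4,8\) and \(3\cdot4/2=6\), realized by \(y_1^2y_2^2y_3^2\)), I would first record the regular frames of degrees \(4,6,8\). Then I would compute the inertia characters on these frames at a \(G(4,2,4)\) point and at a \(G_{31}\) point: the degree \(6\) invariant \((y_1y_2y_3)^2\) restricted to the normal slice of the axis picks up the sign of the extra reflection \(y_4\mapsto -y_4\) coupling, giving \((1,-1,1)\), and the frame \(u^2\partial_{f_6}\) with \(v=u^4\) supplies the correction \(-1/2\) to the middle coarse frame. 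The product case \(T\times A_1\) is immediate: the invariants split, the curve is the \(A_1\) mirror direction, the axis map is an isomorphism on normal coordinates, so all orders are \(0\).

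For \(G(4,2,4)\) I would take the axis \(u\varepsilon_4\), \(v=u^4\), expand the ambient invariants \(\sum z_j^4,\ \sum_{i<j}z_i^4z_j^4,\ (z_1z_2z_3z_4)^2,\ \sum_{i<j<k}\ldots\) (degrees \(4,8,12,6\) with the \(G(4,2,4)\) degrees \(4,8,12,4\cdot 4/2=8\); I would check the precise generators) at \(uv_0+y\) modulo the square of the normal invariants, adjoin the tangent column, and eliminate. I expect a monomial matrix of the shape \(\left(\begin{smallmatrix}v&*&1\\0&v&*\\0&0&v\end{smallmatrix}\right)\) after the character frame, as for \(B_3\subset B_4\) and \(G(4,4,3)\subset G(4,4,4)\), giving determinant order \(3\), immersion (\(\operatorname{ord} df=0\)), and poles \(1,1\), lowest zero \(1\); this follows the monomial argument of Section~\ref{ew-compactapp-initial} with \(m=4,p=2\).

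The main obstacle is \(G_{31}\). Here I would realize \(G_{31}\) concretely by a Cartan-type matrix \(r_jz=z-(Bz)_j\varepsilon_j\) as done for \(G_{29}\), locate a \(G(4,2,3)\) parabolic fixing a vector \(v_0\), and use the orbit power sums of the \(60\) mirror forms to produce invariants of degrees \(8,12,20,24\). Since \(8\) is the smallest ambient degree but \(T\) has a degree \(4\) invariant, the axis is nonimmersed: the first axis value is proportional to \(v^2\) rather than \(v\), giving \(\operatorname{ord}(df)=1\). Elimination of the tangent column, division by its common factor, and insertion of the frame \(u^2\partial_{f_6}\) should give an upper-triangular \(3\times3\) matrix whose pivot orders in \(v\) sum to \(10\); from its cofactors I would read off highest pole \(4\), pole \(4\) of the middle row restricted to the kernel of the top projection, lowest zero \(2\), and unrestricted middle pole \(5\). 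The difficulty is purely computational: choosing an explicit root system in which the parabolic is visible and verifying nonvanishing of the pivots (exact arithmetic over \(\mathbb Q(i)\)). I would finally note, as in Section~\ref{ew-compactapp-matrices}, that all operations are holomorphic and invertible, so the orders are invariant under analytic isomorphisms of quotient germs, which gives the asserted independence from arithmeticity and compactness.
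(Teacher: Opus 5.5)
\paragraph{Verdict.} There is a genuine gap in the \(G_{31}\) case. Your overall strategy, an explicit invariant-theoretic computation of the normal matrices at each upper type by the recipe of Section~\ref{ew-compactapp-matrices}, is the paper's. Your treatment of \(G(4,2,4)\) (generators \(e_1,e_2,e_3\) of the \(z_j^4\) and \((z_1z_2z_3z_4)^2=u^2f_6\), giving \(M_U\) with rows \((v,0,1),(0,v,0),(0,0,v)\)) and of \(T\times A_1\) is essentially right. The \(G_{31}\) row, which is the substance of the lemma, is not established, and some of the steps you propose would not work.

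\paragraph{Realization and normalizer.} First, \(G_{31}\) is not well generated: it cannot be generated by four reflections. A \(4\times4\) Cartan-type matrix \(r_jz=z-(Bz)_j\varepsilon_j\) as for \(G_{29}\) therefore produces a proper subgroup. The paper uses the five reflections built from the roots and coroots of \cite[Table~2]{KrishnasamyTaylor}. Second, you are missing the input that the full normalizer of \(T\) in \(G_{31}\) is \(T\times\mu_4\), with \(\mu_4\) the centre. The scalar \(i\) acts by \(i\) on the axis coordinate \(u\), by \(i^6=-1\) on \((y_1y_2y_3)^2\), and trivially on \(f_4,f_8\); this is what gives the characters \((1,-1,1)\). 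At \(G(4,2,4)\) the analogous element is a monomial matrix such as \(\operatorname{diag}(i,1,1,i)\). The same fact makes \(v=u^4\) the coarse coordinate, so the degree \(8\) and \(12\) invariants restrict to \(c_8v^2,c_{12}v^3\), and \(\operatorname{ord}(df)=1\) once \(c_8\ne0\). Your justifications do not yield these facts: the reflection \(y_4\mapsto-y_4\) fixes \(f_6\), and the existence of a degree \(4\) invariant of \(T\) plays no role.

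\paragraph{Shape of the matrix and the pole orders.} The pole orders are not pivot orders of an upper triangular matrix. Homogeneity already fixes the shape:
\begin{itemize}
\item the coefficient of \(g_i\) in \(F_j\) is a constant times \(u^{d_j-e_i}\);
\item the degree \(6\) frame carries \(u^2\);
\item after removing the factor \(v\) from the tangent column, you eliminate that column through the unit entry of the degree \(8\) row.
\end{itemize}
This forces \(M_R=\operatorname{diag}(v^2,v^4,v^5)\,A\,\operatorname{diag}(1,1,v^{-1})\) with \(A\) a dense constant matrix. Put \(B=A^{-1}\). The whole \(G_{31}\) row of the table then reduces to four nonvanishing conditions:
\begin{itemize}
\item \(\det A\ne0\), for \(\operatorname{ord}\det M=10\);
\item \(B_{33}\ne0\), i.e.\ the leading \(2\times2\) minor of \(A\) is nonzero, for the highest pole \(4\);
\item \(B_{23}\ne0\), for the unrestricted middle pole \(5\);
\item \(A_{11}\ne0\), for the lowest zero \(2\) and for the middle pole \(4\) on the kernel.
\end{itemize}
The last point holds because, after imposing the kernel condition, the coefficient of \(v^{-4}\) is \(B_{22}-B_{23}B_{32}/B_{33}=A_{11}/(B_{33}\det A)\), by Jacobi's complementary minor identity. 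These are exactly the quantities the paper computes: \(A_{11}=-27\), leading minor \(2070090/7\), \(\det A\), and the last cofactor column. It does so after choosing power sums that serve as basic invariants. Your proposal predicts the table entries without isolating or verifying these conditions, so the \(G_{31}\) entries remain asserted rather than proved.
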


\begin{proof}
Take \(f_4=\sum y_j^4,f_6=(y_1y_2y_3)^2,f_8=\sum_{j<k}y_j^4y_k^4\).
For \(G(4,2,4)\), use invariants \((v+f_4,vf_4+f_8,u^2f_6,vf_8+f_6^2)\), \(v=u^4\), and frame \((\partial_{f_4},u^2\partial_{f_6},\partial_{f_8})\); removing the tangent gives \(M_U\) below.
For \(G_{31}\), the full normalizer is \(T\times\mu_4\) \cite[Table 8 and proof of Lemma 5.13]{KrishnasamyTaylor}.
From the roots and coroots of its Table 2, generate \(I-\Transpose{\mathsf A_j}\mathsf B_j\), \(1\le j\le5\), and take power sums \(8,12,20,24\) of the \(240\) dual root forms and \(4,6,8\) of the \(192\) forms in \((1,2,3)T\), subtracting a multiple of \(f_4^2\) in degree \(8\).
The basis, coefficient matrix, and normal matrices are
\[
\begin{gathered}
\begin{pmatrix}1&1&0&0\\0&0&1&0\\0&-1-i&(1+i)/2&0\\0&0&0&1\end{pmatrix},\quad
C=\begin{pmatrix}
1&1&1&1\\21/2&-33/2&33&231\\
10795/2&-41021/14&9690/7&226746\\
105165&129789/5&4807&-19530841/5
\end{pmatrix},\\
M_U=\begin{pmatrix}v&0&1\\0&v&0\\0&0&v\end{pmatrix},\quad
M_R=\operatorname{diag}(v^2,v^4,v^5)A\operatorname{diag}(1,1,v^{-1}).
\end{gathered}
\]
Here \(A\) is the lower Schur complement of \(C\), with \(A_{11}=-27\), leading minor \(2070090/7\), determinant \(-2066330716560\), and last cofactor column \(\Transpose{(3625040628/7,-4491828,2070090/7)}\).
The generator \(u\mapsto iu\) relative to automorphy gives characters \((1,-1,1)\); the \(G_{31}\) axis is \((v^2,v^3,v^5,v^6)\) up to nonzero rescaling.
For \(B=A^{-1}\), the kernel of the projection to the coordinate of greatest invariant degree is \(c_3=-(B_{31}v^3c_1+B_{32}vc_2)/B_{33}\), where the middle row becomes \(v^{-2}(B_{21}-B_{23}B_{31}/B_{33})c_1+v^{-4}(B_{22}-B_{23}B_{32}/B_{33})c_2\).
Its last coefficient is nonzero by \(A_{11}\ne0\), giving poles \(4,4\), rather than the unrestricted middle pole \(5\); the saturated first column and axis derivative have orders \(2,1\), giving \(T_C(D_R)\).
Products give the identity matrix; clearing the displayed poles leaves primitive rows on every stalk, proving the table.
\end{proof}

\begin{lemma}
\label{ew-compact-gthirtyone}
The group $G_{31}$ does not occur.
\end{lemma}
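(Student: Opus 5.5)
Assume a \(G_{31}\) point exists. The plan is to follow the template of Lemmas~\ref{ew-compact-bfour} and~\ref{ew-compact-gthirtytwo}: take the fixed curve \(C\) of a \(G(4,2,3)\) parabolic through this point. By Lemma~\ref{ew-compact-gthirtytwo} every reflection has order \(2\). By Lemma~\ref{ew-compact-groups} the special stabilizers on \(C\) are \(G(4,2,4)\), \(G_{31}\), and \(T\times A_1\), since the \(G(4,2,3)\) row of the table in Section~\ref{ew-compactapp-section} lists only \(G(4,2,4)\) and \(G_{31}\) as irreducible upper groups. The full normalizer at a \(G_{31}\) point is \(T\times\mu_4\), so the effective axis order is \(4\) and \(k=4t\) is a positive integer after the character corrections. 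Let \(R_1,R_2,s\) count the \(G(4,2,4)\) points, the \(G_{31}\) points (\(R_2\ge1\)), and the \(T\times A_1\) points.

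\textbf{Curve equations.} Insert the orders from Lemma~\ref{ew-gaussian-g423-local} into \cite[Proposition~3.4]{NakanoWeightedConfigurations2026}. The orbifold Euler characteristic relation has the form \(2t=2x+\tfrac34R_1+\beta R_2+\tfrac12s+\sigma\), with \(\beta\) read off from the normalizer index. The determinant relation has the form \(a=-2x-Dt+3R_1+(10+\text{tangent correction})R_2\), where \(D=18\) is the sum of the normal degrees \(4+6+8\); here \(\operatorname{ord}(df)=1\) at \(G_{31}\) replaces \(T_C\) by \(T_C(D_R)\), and the middle character correction \(-1/2\) must also be included. Eliminating \(t\) should give \(a\) as a negative multiple of \(x\) minus positive multiples of the \(R_i\) and \(s\), up to a bounded correction. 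This forces \(g=0\) and leaves a finite list of tuples \((R_1,R_2,s,k,a,\sigma)\) with \(\sigma=0\) or \(\sigma\ge1/2\). I then filter this list with \(a\le2t\) from Lemma~\ref{ew-compact-halfbound}.

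\textbf{Exclusion of the surviving rows.} Use the inverse differential. The projection to the coordinate of degree \(8\) has target degree \(h=R\)-pole terms minus \(8t\), namely \(h=-8t+R_1+4R_2\) plus the character terms, using the highest poles \(1\) and \(4\). When \(h<0\), \cite[Proposition~2.2]{NakanoWeightedConfigurations2026} forces coincident labels with \(a\ge(7-2m)e\), \(e=-h\), which contradicts the small values of \(a\). When \(h\ge0\), pass to the kernel of that projection and use the middle pole \(4\) on the kernel, not the unrestricted \(5\). This bounds the degree of every kernel line subbundle, after killing finite monodromy, and gives a second inequality. Combine it with the parity argument from the GIT tangent sequence in Section~\ref{ew-compactapp-initial}: degree \(0\) is impossible for odd \(a\) and degree \(1\) for even \(a\). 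If a row still survives, finish with the volume bound of Lemma~\ref{ew-compactapp-volumeinitial}, together with Voight's genus-zero list for the fixed signature, as was done for \(G(3,1,4)\).

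\textbf{Main obstacle.} The hard step is the bookkeeping of the character and pole corrections at \(G_{31}\): the \(\mu_4\) normalizer, the nonimmersed tangent, the middle correction \(-1/2\), and the gap between the restricted and unrestricted poles. Small errors here change which rows survive. I would first compute the rows with \(R_1=s=0\) and \(R_2=1,2\) to calibrate the equations, and only then enumerate the full list.
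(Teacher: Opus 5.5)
Your proposal has a genuine gap at the elimination step, where you expect $g=0$ and a finite list of tuples. Insert the data of Lemma~\ref{ew-gaussian-g423-local}: effective order $4$ at both special points, a tangent zero and determinant order $10$ at $G_{31}$, and the correction $-1/2$ at both. Your two curve equations become
\[
2t=2x+\tfrac34(R_1+R_2)+\tfrac12 s,\qquad a+2x-R_2=-18t+\tfrac52R_1+\tfrac{19}2R_2 ,
\]
and eliminating $t$ gives
\[
a=-20x-\tfrac{17}4R_1+\tfrac{15}4R_2-\tfrac92 s .
\]
The coefficient of $R_2$ is positive, so nothing bounds the genus or the number of $G_{31}$ points.

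Moreover $11h_8=-4(2t-a)-2R_1-R_2-2s<0$ always, so your branch $h\ge0$ is empty. In the branch $h<0$ with two coincident labels, the local constraints are all consistent, even together with the extra bound $h_6\ge e$ from \cite[Proposition~2.3]{NakanoWeightedConfigurations2026}. These constraints are the curve equations, $a\le2t$, $h_8=-e$, $h_6\ge e$, and $a=3e+2\delta$ with integral $\delta\ge0$. They all hold for $(R_1,R_2,s,x,a,t,e)=(0,40,16,2,38,21,8)$, a curve of genus $3$. They are homogeneous, so multiples of this tuple give arbitrarily large genus. Hence there is no finite enumeration. Your genus-zero fallbacks (the parity argument of Section~\ref{ew-compactapp-initial}, Lemma~\ref{ew-compactapp-volumeinitial}, Voight's list for a fixed signature) do not apply.

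What is missing is a global bound on the tangent zeros in terms of the degree and genus of $C$. The paper first disposes of three coincident labels by a degree argument on the plane tangent quotient; the only survivor is a smooth conic with two tangent zeros, which is impossible. For two labels it works on the collision divisor $\operatorname{Bl}_5\mathbb P^3$:
\begin{itemize}
\item For the blow-down $\gamma:C\to\mathbb P^3$ of degree $d$, one has $a=2d+e$.
\item A general plane projection has positive dual degree at most $m=2d+2x-R_2$. Combined with $4m=6x-5V-s$, where $V=2t-a$, this forces $x>0$, so the curve has genus at least $2$.
\item The multidegrees of $\mathcal A=5(2L-H)$ from Lemma~\ref{ew-compact-halfplanes} give $5V=e\rho+\sum_{j\ge3}v_j\beta_j$.
\item The cross-ratios of outer triples have no base points because $C$ avoids the planes of degree $1/2$. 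Every $G_{31}$ point is a tangent zero, so Riemann--Hurwitz gives $R_2\le2(e+\beta_j+\beta_k+\beta_l)+2x$.
\end{itemize}
A case analysis over the eleven remaining multidegree patterns then gives identities with a nonnegative left side and a negative right side. Without an input of this kind, bounding $R_2$ through the ramification of maps from $C$, your local bookkeeping cannot close.
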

\begin{proof}
On the $G(4,2,3)$ axis, let $U,R,s$ count $G(4,2,4),G_{31},T\times\mu_2$, with $R\ge1$.
Lemma~\ref{ew-gaussian-g423-local} gives saturated tangent $T_C(D_R)$, normal determinant order $10$, and highest pole $4$; the next pole is $4$ on its kernel, but $5$ on the whole bundle.
The everywhere surjective flag has $h_8=-8t+U+4R$, $h_6=-6t+U/2+7R/2$, $h_4=-4t+U+2R$, with $2t=2x+3(U+R)/4+s/2$ and $a+2x-R=-18t+5U/2+19R/2$.
For $V=2t-a\ge0$, $11h_8=-4V-2U-R-2s<0$; put $e=-h_8$.
Three coincident labels give a plane tangent quotient $J$ of degree $3a+2x-R\ge a$ by $R\le2a+2x$.
Both successive maps kill $J$, since $\deg J-h_6=U/2-3e/2+2a>0$, and $e\le a$.
With $b=a-e$, $\delta=e-R$, the degrees give $11\delta\ge20b$, $x=-e/2-\delta+b/2<0$, $b=e+2\delta-2$, $20e+29\delta\le40$.
The sole row $(a,e,U,R,s)=(2,2,6,2,0)$ is a smooth conic with two tangent zeros, impossible.
Thus two labels coincide, the quotient associated with the invariant of greatest degree is $M$, and \cite[Proposition~2.3]{NakanoWeightedConfigurations2026} gives $h_6\ge e$, equivalently
\begin{equation}
\label{ew-compact-g31degrees}
\begin{aligned}
t&=(e+U+4R)/8,&a&=(6R+U-9e)/4-2x,\\
s&=(e-2U+R)/2-4x,&2R&\ge U+7e.
\end{aligned}
\end{equation}
The collision divisor is the blow-up of $\mathbb P^3$ at five points.
For its blow-down $\gamma:C\to\mathbb P^3$, put $d=\deg\gamma^*\mathcal O(1)$, $B_0=\sum\beta_j$ for the five brackets; then $e=d-B_0$, $a=2d+e$.
The image is not a line, by $R\ge1$ tangent zeros, so a general plane projection has positive dual degree at most $m=2d+2x-R$.
Since $4m=6x-5V-s$, $m,x>0$.
For $\mathcal A$ of Lemma~\ref{ew-compact-halfplanes}, put $q=\sum v_i/2$, $\rho=q-v_1-v_2$; then $1\le v_j\le\rho$ for $j\ge3$, $5V=e\rho+\sum_{j=3}^7v_j\beta_j$, and $v_j=\rho$ implies $\beta_j=0$.
Every outer triple of weight sum $q$ gives a cross-ratio $[1j][kl]:[1k][jl]$ with no base point on $C$, by avoidance of the planes of degree $1/2$.
Its degree is $e+\beta_j+\beta_k+\beta_l>0$, and all $G_{31}$ points ramify, giving $R\le2(e+\beta_j+\beta_k+\beta_l)+2x$.
Four zero outer brackets give $R\le3e+3x$ by three cross-ratios with disjoint base loci; fewer than three nonconstant ones force rational normalization.
An identical outer pair gives $R\le2e+2x$, and two identical pairs contradict a positive constant cross-ratio degree.
The exclusions before the finite test are
\[
\begin{array}{c|c|c}
\text{condition}&\text{inequalities}&\text{contradiction}\\\hline
R\le3e+3x&16m+20e\le14x,\quad4m+15e\ge16x&32m\le-22x\\
\rho\ge5&53m+65e\le62x&107m\le-22x\\
\rho\ge3,\ v_j\ge2&33m+30e\le32x&25m\le0
\end{array}.
\]
The first row excludes $a=3e$ and $\rho=1$.
The eleven remaining multidegrees are parametrized below; exponents mean repetition and the first pair is ordered.
\[
\begin{array}{c|c|c|c}
\rho&j&(v_3,\ldots,v_7)&(v_1,v_2)\\\hline
2&1,3,4&(1^{5-j},2^j)&(\lfloor(1+j)/2\rfloor,\lceil(1+j)/2\rceil)\\
3,4&0,1,3,4&(1,(\rho-1)^{4-j},\rho^j)&(\lfloor(2\rho-3+j)/2\rfloor,\lceil(2\rho-3+j)/2\rceil)
\end{array}
\]
This follows from $v_1+v_2=\sum_{j=3}^7v_j-2\rho$ and every triple inequality; the two all-$2$ outer rows at $\rho=2$ have $B_0=0$ and are already excluded.
For $\rho=2,j=1$, six cross-ratios give $R\le2e+B_0+2x$.
Set $S=12s$, $Q=6(2e+B_0+2x-R)$, and $E=21e+5U+80x-52B_0=0$.
Substitution of \eqref{ew-compact-g31degrees} and $a=3e+2B_0$ gives $828e+7S+106Q-3E=-96x$.
For $j=3,4$ use $R\le2e+2x$, $R\le3e+3x$.
For $\rho=3,4$, $j=0,1,3$, let $l$ be the bracket degree for weight $1$ and $H_0$ their sum for weight $\rho-1$.
Averaging gives $R\le2e+3H_0/2+2x$.
Put $Q=6(2e+3H_0/2+2x-R)$ and $E_\rho=(45-12\rho)e+5U+80x-52l-(28+12\rho)H_0=0$.
Substitution of \eqref{ew-compact-g31degrees} and $a=3e+2(l+H_0)$ gives
\[
(180\rho-150)U+16\rho S+(8\rho+115)6m+(20\rho-40)Q+15E_\rho
=-(528\rho-1500)x.
\]
The left side is nonnegative and the right side negative; $j=4$ has four zero outer brackets and uses the first exclusion.
\end{proof}

\subsubsection{Determinant valuations along an \texorpdfstring{\(A_3\)}{A3} curve}
\label{ew-compactapp-a3}
Suppose the last $A_3$ kernel, corresponding to the invariant of degree $2$, is the tangent quotient of a coincidence plane $Z$.
Let $\eta:S\to Z$ resolve an inverse image component of degree $e$, with $L|_Z=\lambda\mathcal O_Z(1)$.
For $S_4$ invariants on $\sum y_i=0$, the plane equations start as $F_3=q_3+O(q^2)$, $F_4=q_4+c(u)q_2^2+O_{>4}(y)$.
Their differential wedge has degree at least $5$ for general rank $0$.
For rank $1$ write $y=(v+r,v-r,z-v,-z-v)$; then $q_2=4v^2+2r^2+2z^2$, $q_3=6v(r^2-z^2)$, $q_4=4v^4+12v^2(r^2+z^2)+2r^4+2z^4$.
The $r^2$ coefficients in $F_3,F_4$ have degrees at least $1,2$, so their determinant with the tangential differential has degree at least $4$.
At a divisor with minimum normal valuation $k$, the leading vector satisfies $q_3(y^0)=q_4(y^0)+c(u)q_2(y^0)^2=0$ and $q_2(y^0)\ne0$, since otherwise all invariants vanish.
Thus $\eta^*C$ has valuation $2k$, and the two determinants have valuations at least $5k,4k$.
Their saturated kernels are direct summands over the discrete valuation ring, including a critical divisor after removal of the differential's common factor.
Exceptional curves have zero intersection with a general pulled back line; hence the determinant divisors contain $5\eta^*C/2$, $2\eta^*C$.
Gauss bounds their degrees by $(2\lambda-2)e$, $(3\lambda-2)e$, as used in Lemma~\ref{ew-compact-primitive}.

\subsection{The stronger degree bound and the groups \texorpdfstring{\(A_4,D_4\)}{A4,D4}}
\begin{lemma}
\label{ew-compact-nef}
Every $\lambda_I\ge1$, and $L-H$ is semiample.
\end{lemma}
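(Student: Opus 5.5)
The plan is to reduce Lemma~\ref{ew-compact-nef} to the exclusion of planes with \(1/2\le\lambda_I<1\). Semiampleness then follows by the same complementary-triple argument as in Lemma~\ref{ew-compact-halfbound}. By Lemma~\ref{ew-compact-halfbound}, every \(\lambda_I\ge1/2\). By Lemma~\ref{seven-plane-gauss}, general ranks \(0,1\) already give \(\lambda_I\ge1\). Hence only rank \(2\) general groups with least invariant degree \(q\ge2\) and \(1/2\le\lambda_I<1\) remain.

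By Lemmas~\ref{ew-compact-groups} and~\ref{ew-compact-gthirtytwo}, every reflection has order \(2\) and \(G_{31},G_{32}\) are absent. The possible rank \(2\) general factors are therefore \(\mu_2^2\), \(A_2\), \(B_2\) and \(I_2(5)\), with equal-degree cases only through \(G(4,2,2)\); the last is excluded as in Lemma~\ref{ew-compact-halfplanes}. I would handle each case by computing the degree of \(K_Z+\Delta_Z\), using proportionality \(K_Z+\Delta_Z=3\lambda_I\mathcal O(1)\) and \(K_Z=\mathcal O(-3)\), so \(\deg\Delta_Z=3+3\lambda_I\).

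I would then run Tan's proportionality and Bézout bookkeeping, as in Section~\ref{ew-compactapp-halfplane}. Branch indices are \(2\), and the local types are nodes, \(A_2\) cusps, \(B_2\) contacts, \(I_2(5)\) cusps and \(\mu_2^2\times J\) points. In each case the branch degree lies in \((6,9)\) and is half-integral, and orbifold Euler characteristic \(=3\lambda_I^2\). This leaves finitely many rows, which I would exclude by the genus/delta count and the dual-curve argument.

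The case \(\lambda_I=1/2\) needs separate treatment. Lemma~\ref{ew-compact-halfplanes} forces the nine-line arrangement with twelve triple points (the dual Hesse configuration), with general group \(\mu_2^2\). To exclude it, I would use the \(A_3\) valuation estimates of Section~\ref{ew-compactapp-a3} and the matrices of Section~\ref{ew-compactapp-matrices}. At each triple point the stabilizer is \(\mu_2^2\times J\) with \(J\) of rank \(2\). The ordinary triple points of the induced divisor force the upper groups to include \(A_3\) or \(B_3\) factors through absorption. The Gauss bounds \((2\lambda-2)e\ge\frac52\deg\eta^*C\ge0\) then give \(\lambda\ge1\), a contradiction.

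For intermediate \(\lambda_I\) the same Gauss determinant inequality should apply whenever \(Z\) meets an \(A_3\) curve. I would show that every such plane does meet one, using the curves \(Z_I\cap Z_J\) for triples sharing one label: these are lines whose generic group contains two commuting rank \(2\) factors.

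Finally, with all \(\lambda_I\ge1\), the multidegrees of \(L-H\) are \(w_i=d_I+d_J\ge0\) for complementary triples, where \(d_I=\lambda_I-1\ge0\). Proposition~2.6 of the companion paper then gives semiampleness.

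The main obstacle is guaranteeing that the relevant rank \(3\) upper group (\(A_3\) or \(B_3\)) actually occurs on each plane with \(\lambda<1\). If this fails, I would fall back on a finite case table using the Hom-dimension test of \cite[Lemma~3.5]{NakanoWeightedConfigurations2026} along the intersection lines.
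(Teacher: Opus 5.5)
There is a genuine gap. The step that carries the lemma, excluding $1/2\le\lambda_I<1$ in generic rank $2$, is only announced, and the mechanisms you sketch for it do not work. The missing idea is integrality. Once every reflection has order $2$ (Lemma~\ref{ew-compact-gthirtytwo}), the orbifold canonical formula $-H+\frac12B_X=5L$ makes $10L=B_X-2H$ integral, so $10\lambda_I\in\mathbb Z$. On the plane, $K_Z+\Delta_Z=3\lambda\,\mathcal O_Z(1)$ with induced indices $2$ and $4$ reads $3\lambda=-3+d_2/2+3d_4/4$, so $12\lambda\in\mathbb Z$. Hence $2\lambda_I\in\mathbb Z$, and the open interval $(1/2,1)$ disappears at once, with no Tan--B\'ezout tables. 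Your plan instead leaves every intermediate value to an enumeration you never carry out. It also assumes that all induced indices are $2$, although index $4$ is not excluded at this stage. For instance, along a $G(4,4,3)$ curve in a $B_2$ plane, $\mathrm{diag}(1,i,-i)$ normalizes the $B_2$ and acts by $-i$ on the plane direction normal to the curve.

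For $\lambda_I=1/2$ your mechanism runs the wrong way. Lemma~\ref{ew-compact-halfplanes} shows that the factors stay direct: the stabilizer at each of the twelve triple points is $\mu_2^2\times J$ with $J=G(4,2,2)$. So nothing is absorbed into an $A_3$ or $B_3$ factor, and the estimates of Section~\ref{ew-compactapp-a3}, which concern an $A_3$ kernel equal to the tangent quotient of a coincidence plane, do not apply. What the triple points do give is an irreducible $G(4,2,2)$ factor. By Lemma~\ref{ew-compact-groups} it must be a parabolic subgroup of a rank $4$ group that actually occurs, and among the allowed rank $4$ groups only $G(4,2,4)$ and $G_{31}$ qualify. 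These are excluded by Lemma~\ref{ew-compact-gthirtyone} and then Lemma~\ref{ew-compact-bfour}, since a $G(4,2,4)$ axis contains a $G_{31}$ point. (The absence of $G_{31}$ comes from Lemma~\ref{ew-compact-gthirtyone}, not from the lemmas you cite.)

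Your fallback is also unavailable. If $|I\cap J|=1$, a point of $Z_I\cap Z_J$ has five coincident labels and is unstable, so this intersection is empty rather than a line; coincidence planes meet only for disjoint triples, in one point. With $\lambda_I=1/2$ excluded and $2\lambda_I\in\mathbb Z$, generic rank $2$ gives $\lambda_I\ge1$, and Lemma~\ref{seven-plane-gauss} covers ranks $0,1$. Your complementary-triple argument, applied to an integral multiple such as $10(L-H)$, then yields semiampleness.
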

\begin{proof}
Lemmas~\ref{ew-compact-bfour}, \ref{ew-compact-gthirtyone} exclude $G(4,2,4)$.
The remaining seven groups lack the $G(4,2,2)$ factor required at the twelve triple points of a plane of degree $1/2$ by Lemma~\ref{ew-compact-halfplanes}.
All reflection orders are $2$, so $10L=B_X-2H$ is integral.
In general rank $2$, the induced indices $2,4$ give $3\lambda=-3+d_2/2+3d_4/4$, hence $12\lambda,10\lambda\in\mathbb Z$ and $2\lambda\in\mathbb Z$.
The excluded equality and Lemma~\ref{seven-plane-gauss} for ranks $0,1$ yield $\lambda\ge1$.
For multidegrees $v_i$ of $5(2L-H)$, $d_I=10\lambda_I-5\ge5$ and $v_i=d_I+d_J\ge10$; the multidegrees $v_i-10$ and triple degrees $d_I-5$ prove semiampleness of $10(L-H)$ by \cite[Proposition~2.6]{NakanoWeightedConfigurations2026}.
\end{proof}

\begin{lemma}
\label{ew-compact-primitive}
At least one point has stabilizer $F_4,H_4$, or $G_{29}$.
\end{lemma}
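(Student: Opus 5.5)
We argue by contradiction: assume no point stabilizer is $F_4$, $H_4$, or $G_{29}$. By Lemmas~\ref{ew-compact-groups}, \ref{ew-compact-bfour}, \ref{ew-compact-gthirtytwo} and \ref{ew-compact-gthirtyone}, every irreducible rank $4$ factor is then $A_4$ or $D_4$. Indeed, $B_4$ and $G(4,p,4)$ axes would force an $F_4$, $G_{29}$ or $G_{31}$ point, and $G_{31}$, $G_{32}$ are already excluded. Some point has such a factor by the first assertion of Lemma~\ref{ew-compact-groups}. Its parabolic $A_3$ axis gives a fixed curve $f:C\to X$ whose upper groups are only $A_4$, $D_4$ and products $A_3\times\mu_2$; the $B_3$, $H_3$ and $G(4,4,3)$ axes either do not occur or lead to a forbidden point. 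Throughout we use the stronger bound $a\le t$ of Lemma~\ref{ew-compact-nef}.

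On this $A_3$ axis, let $U,V,s$ count the $A_4$, $D_4$ and $A_3\times\mu_2$ points. The table in Section~\ref{ew-compactapp-matrices} gives the local data at each type.
\begin{itemize}
\item $A_4$: a simple tangent zero, normal determinant order $3$, highest pole $1$, and pole $1$ on its kernel.
\item $D_4$: immersion and determinant order $3$.
\item $A_3\times\mu_2$: trivial local data.
\end{itemize}
Inserting these into \cite[Proposition~3.4]{NakanoWeightedConfigurations2026} should give linear relations of the form $2t=2x+c_1U+c_2V+s/2$, together with $a=-2x-9t+\alpha U+\beta V$, where $9=2+3+4$ is the sum of normal degrees, and quotient degrees $h_4,h_3,h_2$ for the everywhere surjective flag. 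We then check, as in the $B_4$ case, that these relations with $a\le t$ force $g=0$ and $h_4<0$. Proposition~2.2 of the companion paper then gives two or three coincident labels with a common bundle of degree $-e$, and $a=(7-2m)e+2\delta$.

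The three-label case is the main obstacle. Here the last kernel, corresponding to the invariant of degree $2$, becomes the tangent quotient of a coincidence plane $Z$, and the valuation computation of Section~\ref{ew-compactapp-a3} applies. The determinant divisors contain $5\eta^*C/2$ in general rank $0$ and $2\eta^*C$ in rank $1$. The Gauss bounds $(2\lambda-2)e$ and $(3\lambda-2)e$ then give $\lambda\deg C$ against $\lambda$, which bounds $\deg C$ by a small constant; with $\lambda\ge1$ from Lemma~\ref{ew-compact-nef}, a line or conic in $Z$ should remain. We first try to show directly that $C$ is then a line in $Z$ with $t=\lambda$. We then compare this with the curve equations to get a finite list of $(U,V,s,e)$ and exclude each case by integrality of $k=t$ multiples or by the parity of tangent degrees.

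In the two-label case we use \cite[Proposition~2.3]{NakanoWeightedConfigurations2026} to obtain $h_3\ge e$. This gives a finite row set like the one in the $G_{31}$ proof. For any leftover row we apply the cross-ratio method on the collision divisor with the semiample class $\mathcal A$ of Lemma~\ref{ew-compact-halfplanes}, now using the stronger multidegree positivity coming from $L-H$. We expect every row to produce a negative genus, which is the contradiction.
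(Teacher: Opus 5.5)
Your reduction to $A_4,D_4$ and the use of an $A_3$ curve match the paper, but the next step you rely on does not hold. The paper's equations are $2t=2x+(R_D+s)/2$ and $a=-2x-9t+4R_A+5R_D/2$; your $U,V$ are $R_A,R_D$. From them, $a=t-12x+4R_A-5s/2$ and $h_4=R_A-4x-s$. So $a\le t$ only gives $h_4\le -R_A/3-s/6$. The genus is not restricted, and $h_4=0$ is allowed for $R_A=s=x=0$, a genus one curve, where \cite[Proposition~2.2]{NakanoWeightedConfigurations2026} has no negative target.

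What is always negative is the determinant of the rank $2$ quotient $Q$ formed by the degree $3$ and $4$ directions: $\deg\det Q=h_3+h_4=a/2-3t/2-s/4<0$. The paper applies \cite[Proposition~2.4]{NakanoWeightedConfigurations2026} to $Q$. This forces three coincident labels, $Q\simeq M^{\oplus2}$ with $\deg M=-a$, and the degree $2$ kernel equal to the plane tangent, so no two-label case arises. Even in your framework that case is immediate: $h_3\ge e=-h_4$ from \cite[Proposition~2.3]{NakanoWeightedConfigurations2026} contradicts $h_3+h_4<0$, so the cross-ratio program is unnecessary.

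The genuine gap is the three-label case. The bounds from Section~\ref{ew-compactapp-a3} and \eqref{seven-plane-gauss-current} are $5a/2\le2\lambda-2$ in rank $0$ and $2a\le3\lambda-2$ in rank $1$. These are lower bounds for $\lambda=t/a$, so adding $\lambda\ge1$ bounds nothing, and no line or conic is forced. The missing input is the upper bound supplied by $Q\simeq M^{\oplus2}$. Equating $\deg\det Q=-2a$ with $a/2-3t/2-s/4$ gives $10a=6t+s$, hence $\lambda\le5/3$. Substituting into the curve equations gives $a+6R_A=18x+4s$, so $a$ is even and $a\ge2$. Together these contradict both Gauss inequalities.

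You also omit planes of generic rank $2$, to which Section~\ref{ew-compactapp-a3} does not apply. The paper excludes them only when $R_A\ge1$, using the effective degrees $1,2$ of $A_2,2A_1\subset A_4$. For $R_A=0$ it argues differently. The surjection from $M^{\oplus2}$ onto the line of degree $h_3$ gives $h_3\ge-a$. With $a\le t$ this yields $14x+3s\le0\le12x+5s/2$, hence $x=s=0$ and $a=t=R_D/4$. Then $\deg\det Q=-a\ne-2a$, a contradiction. Without these steps, your promised finite lists of rows and parity checks have nothing to act on.
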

\begin{proof}
Otherwise Lemma~\ref{ew-compact-bfour} leaves $A_4,D_4$.
On an $A_3$ curve count $A_4,D_4,A_3\times\mu_2$ by $R_A,R_D,s$.
The matrices of Section~\ref{ew-compactapp-matrices} give tangent $T_C(D_A)$ and a surjective quotient $Q$ of rank $2$, with $h_3=-3t+R_A+R_D/2$, $h_4=-4t+R_A+R_D$.
The degree calculation gives $2t=2x+(R_D+s)/2$, $a=-2x-9t+4R_A+5R_D/2$, $h_3+h_4=a/2-3t/2-s/4<0$.
By \cite[Proposition~2.4]{NakanoWeightedConfigurations2026}, $Q=M^2$, $\deg M=-a$, with plane tangent kernel; hence $10a=6t+s$, $a+6R_A=18x+4s$, positive even $a$ and $t/a\le5/3$.
If $R_A\ge1$, plane rank $2$ is excluded by the effective degrees $1,2$ of $A_2,2A_1\subset A_4$.
For ranks $0,1$, Section~\ref{ew-compactapp-a3} and \eqref{seven-plane-gauss-current} give $5a/2\le2\lambda-2$ or $2a\le3\lambda-2$, contrary to $a\ge2$, $\lambda=t/a\le5/3$.
If $R_A=0$, the surjective degree $h_3$ map forces $h_3\ge-a$, giving $12x+5s/2\ge0$, $14x+3s\le0$.
Thus $x=s=0$, $a=t=R_D/4$, but $\deg\det Q=-a$, contrary to $Q=M^2$.
\end{proof}

\subsection{The arithmetic volume}
\begin{lemma}
\label{ew-compact-volume}
For a cocompact arithmetic lattice generated by reflections of order $2$, put $d_F=[F:\mathbb Q]$, $\Delta=D_K/D_F^2$, $w=|\mu_5(K)|$, and $C_0=288/(44/7)^{14}$.
Then
\[
\frac{\chi_{\mathrm{orb}}}{5w\rho_{K/F}}\in\mathbb Z_{>0},\qquad
\rho_{K/F}=2^{-4d_F}\zeta_F(-1)L_{K/F}(-2)\zeta_F(-3)L_{K/F}(-4),
\]
and $\chi_{\mathrm{orb}}>5wD_F^{12}\Delta^7C_0^{d_F}$.
\end{lemma}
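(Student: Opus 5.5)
The argument is Prasad's volume formula, specialised to our situation in the same way as the proof of Lemma~\ref{ew-compactapp-volumeinitial}. The only new ingredient is the reflection hypothesis, which pins down the index of the principal arithmetic subgroup in its normalizer. First normalize as in \cite[Proposition~2.11]{NakanoWeightedConfigurations2026}. Take the lift of $\Gamma$ to $\mathrm U(h)$ over the CM field $K$, and let $\Lambda$ be a principal arithmetic subgroup of $\mathrm{SU}(h)$ contained in the normalizer. By \cite[Sections 2.3 and 2.5--2.9]{PrasadYeungSpaces}, for the inner or outer form of type ${}^2A_4$ with the ball at exactly one archimedean place, the orbifold Euler characteristic of $\Lambda$ is
\[
\chi(\Lambda)=5\,D_F^{12}\Delta^7\Bigl(\prod_{j=1}^4\frac{j!}{(2\pi)^{j+1}}\Bigr)^{d_F}\prod_v e_v .
\]
Here $5$ is the Euler characteristic of the compact dual $\mathbb P^4$. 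The functional equations turn the archimedean and discriminant factors into $2^{-4d_F}\zeta_F(-1)L_{K/F}(-2)\zeta_F(-3)L_{K/F}(-4)=\rho_{K/F}$, up to the local factors $e_v\ge1$ at nonhyperspecial places. Over the split, inert and ramified places, $\chi(\Lambda)$ is $5\rho_{K/F}$ times a product of integers given by the local indices. I would check integrality of these local factors directly from the standard tables (Borel--Prasad): each $e_v$ is a ratio of orders of finite groups of Lie type, and this ratio is an integer.

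Next I would pass from $\Lambda$ to the full normalizer. Because the lattice is generated by reflections of order $2$, its determinant character takes values in $\mu_2$. The image of $\Gamma$ in $\mathrm{PU}(1,4)$ therefore comes from scalars whose fifth powers lie in $\mu_2\cdot\mu(K)$. So $[\Gamma:\Lambda']$, for the image $\Lambda'$ of $\Lambda$ in $\mathrm{PU}$, divides the product of a power of $2$ with the central factor $|\mu_5(K)|=w$. I expect the $5$-part of the normalizer index to cancel against the central covering factor of order $w$ in the passage from $\mathrm{SU}$ to $\mathrm{PU}$. This is how Lemma~\ref{ew-compactapp-volumeinitial} retained the factors $5$, except that the class group contribution $h_{K,5}$ disappears: a reflection group of order $2$ acts trivially on the $5$-torsion of $\operatorname{Cl}(K)$ through the determinant. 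This gives $\chi_{\mathrm{orb}}(\Gamma)=5w\rho_{K/F}\cdot(\text{positive integer})$, which is the first claim.

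For the inequality, I would bound each local factor below by $1$ and $|\zeta_F(-1)\zeta_F(-3)L_{K/F}(-2)L_{K/F}(-4)|$ below via the functional equation. The Euler products at $s=2,3,4,5$ exceed $1$, so the archimedean Gamma factors give exactly $\prod_j j!/(2\pi)^{j+1}=288/(2\pi)^{14}$ per real place. Replacing $\pi$ by $22/7>\pi$ gives the strict bound with $C_0=288/(44/7)^{14}$. The main obstacle is the second step. One must show rigorously that the index of $\Lambda'$ in $\Gamma$ contributes exactly $w$ and no class group factor, and that this holds for both the inner and the outer form, with the central factor arranged so that $w$ appears in the numerator rather than the denominator. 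I would first try to deduce this from the explicit index formula in \cite[Section~2.3]{PrasadYeungSpaces} together with the observation that order-$2$ reflections generate a group with trivial image in $H^1$ modulo squares.
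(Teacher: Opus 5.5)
Your first and third steps match the paper: Prasad's formula for a principal arithmetic subgroup $\Lambda$, integrality of the nonhyperspecial factors $e'(P_v)$, and the lower bound from the functional equations together with $2\pi<44/7$. The gap is the passage from $\Lambda$ to $\Gamma$. You rightly call this the main obstacle, but you set it up in the wrong direction. You bound $[\Gamma:\Lambda']$ for $\Lambda'=p(\Lambda)\subset\Gamma$, yet nothing forces a principal arithmetic subgroup to lie inside an arbitrary reflection lattice $\Gamma$. Even if it did, $\chi_{\mathrm{orb}}(\Gamma)=\chi_{\mathrm{orb}}(\Lambda')/[\Gamma:\Lambda']$ puts the index in the denominator. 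Integrality of $\chi_{\mathrm{orb}}/(5w\rho_{K/F})$ would then require $[\Gamma:\Lambda']$ to divide $\prod_v e'(P_v)$, and nothing supplies that. A bound of the form ``a power of $2$ times $w$'' cannot rescue this, since $w=5$ is possible. Two further points are mistaken. The factor $w$ is not a remnant of a normalizer index cancelling a central factor. And $h_{K,5}$ does not disappear through any action of reflections on $\operatorname{Cl}(K)[5]$.

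The missing idea is the reverse containment $\Gamma\subset p(\Lambda)$, where $p:\mathrm{SU}(1,4)\to\mathrm{PU}(1,4)$.

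\begin{itemize}
\item Choose a maximal arithmetic group $\Gamma'\supset p^{-1}\Gamma$ with principal subgroup $\Lambda$. By \cite[Sections 2.4 and 2.9]{PrasadYeungSpaces}, $\Gamma'/\Lambda$ is a $5$-group. This is the source of the index bound $5^{1+|S_{\mathrm{split}}|}h_{K,5}$ in Lemma~\ref{ew-compactapp-volumeinitial}.
\item A normalized reflection $r$ of order $2$ has $\det r=-1$. So $-r$ is an involution of determinant $1$ lying in $p^{-1}\Gamma\subset\Gamma'$.
\item The image of $-r$ in the $5$-group $\Gamma'/\Lambda$ is therefore trivial. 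Hence $-r\in\Lambda$ and $p(r)\in p(\Lambda)$.
\item These $p(r)$ generate $\Gamma$, so $\Gamma\subset p(\Lambda)$ with some finite index $j$.
\end{itemize}

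This gives
\[
\chi_{\mathrm{orb}}(\Gamma)=j\,\chi_{\mathrm{orb}}(p(\Lambda))=5wj\rho_{K/F}\prod_v e'(P_v).
\]
Here $w$ comes from the central kernel $\Lambda\cap\ker p$, of order $|\mu_5(K)|$, not from any normalizer index. Every factor after $5w\rho_{K/F}$ is a positive integer, which gives both claims.

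One small correction to your inequality step. The Euler products of $L_{K/F}(3)$ and $L_{K/F}(5)$ are not separately greater than $1$, because their inert factors are $(1+q^{-s})^{-1}$. Pair them with the factors of $\zeta_F(2)$ and $\zeta_F(4)$, using $(1-q^{-2})(1+q^{-3})<1$ and $(1-q^{-4})(1+q^{-5})<1$.
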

\begin{proof}
For $p:\mathrm{SU}(1,4)\to\mathrm{PU}(1,4)$, choose a maximal arithmetic group above $p^{-1}\Gamma$, with principal subgroup $\Lambda$.
Its quotient is a $5$-group \cite[Sections 2.4 and 2.9]{PrasadYeungSpaces}; every normalized order $2$ reflection has $-r\in\Lambda$, so $\Gamma\subset p(\Lambda)$.
Prasad's formula gives $\chi_{\mathrm{orb}}=5wj\rho_{K/F}\prod_{v<\infty}e'(P_v)$, with $j=[p(\Lambda):\Gamma]$ and $e'(P_v)\in\mathbb Z_{>0}$ \cite[Sections 2.3 and 2.6--2.9, equations (7)--(8)]{PrasadYeungSpaces}.
The functional equations give $\rho_{K/F}=D_F^{12}\Delta^7(288/(2\pi)^{14})^{d_F}$ times an Euler product greater than $1$: inert paired denominators $(1-q^{-2})(1+q^{-3})$, $(1-q^{-4})(1+q^{-5})$ are less than $1$, as are the split and ramified denominators.
Use $2\pi<44/7$.
\end{proof}

\begin{lemma}
\label{ew-compact-770}
Under the hypotheses of Lemma~\ref{ew-compact-volume}, $\chi_{\mathrm{orb}}\ne770$.
\end{lemma}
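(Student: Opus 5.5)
We argue by contradiction, starting from Lemma~\ref{ew-compact-volume}. Suppose $\chi_{\mathrm{orb}}=770=2\cdot5\cdot7\cdot11$. The lemma gives $770/(5w\rho_{K/F})\in\mathbb Z_{>0}$ and $770>5wD_F^{12}\Delta^7C_0^{d_F}$. Since $w=|\mu_5(K)|\in\{1,5\}$ and $w=5$ forces $\zeta_5\in K$, the first step is to bound the fields. Because $C_0$ is tiny, the inequality alone does not bound $d_F$ directly. The approach is to combine it with the Odlyzko-type growth of $D_F^{12}\Delta^7\ge D_F^{12}$: for totally real $F$ one has $D_F^{1/d_F}$ growing well beyond $C_0^{-1/12}$ once $d_F$ is moderately large. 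This should leave a finite list with $d_F\le 4$ or so, and within each degree only finitely many $D_F$ and $\Delta$.

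Before that enumeration, I would cut the list using the geometry already established. By Lemma~\ref{ew-compact-primitive}, some point has stabilizer $F_4$, $H_4$ or $G_{29}$. Their character fields then constrain $K$: $H_4$ requires $\sqrt5\in F$, so $\mathbb Q(\sqrt5)\subset F$, while $G_{29}$ requires $i\in K$. Lemma~\ref{ew-compact-gthirtytwo} already gives reflection orders $2$, so $\zeta_3$ need not lie in $K$. This reduces us to three families: $F\supseteq\mathbb Q(\sqrt5)$, $K\ni i$, or $F_4$ defined over $\mathbb Q$ inside arbitrary $K$.

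For each surviving $(F,K)$, the plan is to compute $\rho_{K/F}$ exactly from the special values $\zeta_F(-1)$, $\zeta_F(-3)$, $L_{K/F}(-2)$, $L_{K/F}(-4)$, for example via Siegel's formula or generalized Bernoulli numbers for abelian $K$. For $F=\mathbb Q$ and $K=\mathbb Q(\sqrt{-D})$, these are $\zeta(-1)=-1/12$, $\zeta(-3)=1/120$ and $L(-2,\chi),L(-4,\chi)$ given by $B_{3,\chi}/3$ and $B_{5,\chi}/5$. This matches the special-value table already used in Section~\ref{ew-compactapp-g334}. We then test whether $770/(5w\rho_{K/F})$ is a positive integer. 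The key obstruction will be a prime $p$ in the numerator of $\rho_{K/F}$ that does not divide $154/w$, or a $2$-adic or $3$-adic mismatch coming from the factor $2^{-4d_F}$ and the denominators $12\cdot120$. For example, for $K=\mathbb Q(i)$, $\rho$ involves $B_{3,\chi_{-4}}=3/2$ and $B_{5,\chi_{-4}}=-25/2$, and $770/(5\rho)$ acquires an odd denominator or a wrong power of $2$. I would tabulate every admissible field in this way. Equivalently, $154/\rho_{K/F}$ must be an integer $j\prod e'(P_v)$, and the local factors $e'(P_v)$ themselves come from a finite explicit list, which could refine the test if the crude divisibility fails.

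The main obstacle is making the field enumeration genuinely finite and complete. The constant $C_0$ is so small that the volume inequality allows many quadratic and some cubic $F$ with small $\Delta$, and each requires an exact special-value computation. I would first try to exclude $d_F\ge2$ uniformly by a $2$-adic argument: $2^{-4d_F}$ together with the denominators of $\zeta_F(-1)\zeta_F(-3)$ (Kummer/von Staudt) makes the $2$-part of $\rho$ too small for $154$. Only $F=\mathbb Q$ would then remain, with imaginary quadratic $K$ of bounded discriminant, and those can be checked one by one.
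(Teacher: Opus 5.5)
Your skeleton is the paper's: the volume inequality of Lemma~\ref{ew-compact-volume}, an Odlyzko-type bound such as $D_F^{1/d_F}>34/5$ to reach $d_F\le4$, then exact special values and the integrality test. The two steps you propose for shortening the enumeration both fail.

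\textbf{The case $F=\mathbb Q$.} This case cannot be killed by the special-value test. For $K=\mathbb Q(i)$, your own values $B_{3,\chi_{-4}}=3/2$ and $B_{5,\chi_{-4}}=-25/2$ give $L(-2,\chi_{-4})=-1/2$ and $L(-4,\chi_{-4})=5/2$, hence
\[
\rho_{K/\mathbb Q}=2^{-4}\Bigl(-\frac1{12}\Bigr)\Bigl(-\frac12\Bigr)\frac1{120}\cdot\frac52=\frac1{18432},
\qquad
\frac{770}{5\rho_{K/\mathbb Q}}=154\cdot18432\in\mathbb Z_{>0}.
\]
The volume inequality also holds, since $5\cdot4^7C_0<770$. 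The same happens for $\mathbb Q(\sqrt{-3})$, where $\rho_{K/\mathbb Q}=1/155520$. So the contradiction you predict at $\mathbb Q(i)$ does not occur, and checking imaginary quadratic fields one by one would leave survivors. What excludes $F=\mathbb Q$ is cocompactness itself. A Hermitian form in five variables over an imaginary quadratic field is an indefinite rational quadratic form in ten variables. By Meyer's theorem it is isotropic, so $\Gamma$ would be nonuniform. This is the paper's first step.

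\textbf{The 2-adic exclusion of $d_F\ge2$.} This argument runs in the wrong direction. The factor $2^{-4d_F}$ and the denominators of $\zeta_F(-1)\zeta_F(-3)$ lower $v_2(\rho_{K/F})$. That makes $154/(w\rho_{K/F})$ more $2$-integral, not less. An obstruction needs a prime $p$ with $v_p(\rho_{K/F})>v_p(154/w)$, that is, a prime in the numerators of the special values. For the fields that actually survive the volume bound, these primes are mostly odd. For example, $\rho_{K/F}=31/6075$ for $\mathbb Q(\sqrt5,\sqrt{-3})$. The paper's twelve reduced denominators are $31,587,19,18331,23,2,551,113,3995,293,130429,18628987$, and only one of them is even.

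There is therefore no shortcut past a complete enumeration for $d_F=2,3,4$:
\begin{enumerate}
\item all totally real $F$ below the discriminant bounds $44,230,1226$;
\item all CM extensions $K=F(\sqrt\alpha)$ with $\Delta$ below the resulting bounds, which the paper controls via class number one of $F$ and the admissible prime norms in $\Delta$;
\item exact Bernoulli-sum values for each of the twelve resulting fields.
\end{enumerate}

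\textbf{The geometric cut.} Restricting via Lemma~\ref{ew-compact-primitive} is not among the lemma's hypotheses. It also imposes nothing when the special point is $F_4$, so it does not shorten the list.
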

\begin{proof}
Section~\ref{ew-compactapp-770} proves this by the complete field filters and the nonintegral special values.
\end{proof}

\subsubsection{The value \texorpdfstring{\(770\)}{770}}
\label{ew-compactapp-770}
Suppose $\chi_{\mathrm{orb}}=770$ in Lemma~\ref{ew-compact-volume}.
For $F=\mathbb Q$, Meyer's theorem makes the indefinite rational form in ten variables isotropic, contradicting compactness \cite[Chapter IV, Section 3.2, Corollary 2]{Meyer}.
For $d_F\ge5$, use $D_F^{1/d_F}>34/5$ \cite[Proposition 2(d)]{PrasadYeungSpaces}, $(34/5)^{12}C_0>1$, and $5((34/5)^{12}C_0)^5>770$.
For $d_F=2,3,4$, the discriminant bounds are $44,230,1226$.
The following filters use the proved exhaustive small discriminant tables \cite[Section 3.4, proof of Theorem 1(iv)(1)--(3)]{PrasadYeungSpaces}.
For $d_F=4$, $D_F=725,1125$ and $\Delta\le2,\Delta=1$ leave only $\mathbb Q(\zeta_{15})$, excluded by $25\rho_{K/F}=45814652/30375>770$.
For $d_F=3,2$, substituting $\Delta_{\max}+1$ into the strict volume bound gives
\[
\begin{gathered}
\begin{array}{c|rrrrr}
D_F&49&81&148&169&229\\\hline
\Delta_{\max}&14&5&2&1&1
\end{array},\\
\begin{array}{c|rrrrrrrrrrrrr}
D_F&5&8&12&13&17&21&24&28&29&33&37&40&41\\\hline
\Delta_{\max}&40&17&8&7&4&3&2&2&1&1&1&1&1
\end{array}.
\end{gathered}
\]
The cubic candidates are $\mathbb Q(\zeta_7),\mathbb Q(\zeta_9)$.
In degree $2$, the tables exclude $13,17,29,37,41$ and leave, at $21,24,33$, $\mathbb Q(\sqrt{-3},\sqrt{-7})$, $\mathbb Q(\sqrt6,\sqrt{-3})$, $\mathbb Q(\sqrt{-3},\sqrt{-11})$.
At $40$, the only totally complex quartic of absolute discriminant $1600$ is $\mathbb Q(\sqrt5,\sqrt{-2})$, with real discriminant $5$.

For $D_F=5,8,12,28$, Minkowski gives real class number $1$; at $28$, the only ideal of norm $2$ is generated by $3+\sqrt7$.
Write $K=F(\sqrt\alpha)$ with totally negative $\alpha$, whose square class is a product of units and principal prime generators.
An odd valuation away from $2$ contributes its prime norm to $\Delta$, and above $2$ contributes exponent $2v_{\mathfrak p}(2)+1$.
The permitted odd prime norms are $9,5,11,19,29,31$ at $D_F=5$ and $9,7,17$ at $8$, so at most one occurs.
At $12$ the prime above $3$ has mixed signs; at $28$ no odd prime is allowed, and odd valuations above $2$ exceed each bound.
Signs at both embeddings leave the unique totally negative unit multiple up to squares and conjugation in each class:
\[
\begin{array}{c|l|l}
F&\alpha&\Delta\\\hline
\mathbb Q(\sqrt5)&-1,-3,-2-\phi&16,9,5\\
\mathbb Q(\sqrt2)&-1,-3,-5-2\sqrt2&4,9,17\\
\mathbb Q(\sqrt3)&-1&1\\
\mathbb Q(\sqrt7)&-1&1
\end{array}.
\]
Here $\phi=(1+\sqrt5)/2$; the square residues modulo $4$ in $\mathbb Z[\phi]$ have pairs $(1,0),(1,1),(2,3)$, giving the displayed relative discriminants.
For $\alpha=-5-2\sqrt2$, $\beta=1+3\sqrt2$ gives $\beta^2-\alpha=8(3+\sqrt2)$, of valuation $6$ above $2$, versus $2$ for $2\beta$.
Hensel makes $\alpha$ locally square; the split Euler factor gives $\chi_{\mathrm{orb}}>5\cdot8^{12}\cdot17^7C_0^2(16384/9765)>868$.
The complete twelve remaining fields are the nine biquadratic fields for
\[
(u,v)=(5,-3),(5,-1),(2,-1),(2,-3),(3,-1),(-3,-7),(6,-3),(7,-1),(-3,-11),
\]
and $\mathbb Q(\zeta_5),\mathbb Q(\zeta_7),\mathbb Q(\zeta_9)$.
Only $\mathbb Q(\zeta_5)$ has $w=5$, and every other field has $w=1$.
Use the exact Bernoulli sum $L(1-j,\psi)=-(f_\psi^{j-1}/j)\sum_{u=1}^{f_\psi}\psi(u)B_j(u/f_\psi)$ for primitive characters, taking the three quadratic characters or the primitive even and odd cyclotomic characters.
In the listed order the reduced denominators of $770/(5w\rho_{K/F})$ are $31,587,19,18331,23,2,551,113,3995,293,130429,18628987$.
None is $1$, contrary to Lemma~\ref{ew-compact-volume}.

\subsection{The last three groups}
\begin{lemma}
\label{ew-compact-hfour}
The group $H_4$ does not occur.
\end{lemma}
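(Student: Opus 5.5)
I will follow the pattern of the exclusions of $B_4$, $G_{32}$ and $G_{31}$: restrict to the $H_3$ axis through a hypothetical $H_4$ point, derive the curve equations, and contradict the degree bounds now available. By Lemmas~\ref{ew-compact-gthirtytwo} and~\ref{ew-compact-gthirtyone}, every reflection has order $2$ and $G_{31},G_{32}$ are absent. By Section~\ref{ew-compactapp-section}, the upper groups of an $H_3$ axis are therefore $H_4$ and products $H_3\times\mu_2$. Let $R\ge1$ count the $H_4$ points and $s$ the product points.

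The local input is the $H_3\subset H_4$ row of Section~\ref{ew-compactapp-matrices}. It gives normal determinant order $22$, highest pole $10$, pole $7$ on the preceding kernel, and basic degrees $2,6,10$. The effective axis order is $2$, and the invariants of degrees $2,12,20,30$ give an immersed axis. Since all degrees are distinct, the full normalizer preserves the flag and the characters are trivial, so $k=2t$ is a positive integer.

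Inserting these data into \cite[Proposition~3.4]{NakanoWeightedConfigurations2026} should give equations of the shape
\[
2t=2x+c_1R+s/2,\qquad a=-2x-D t+c_2R,
\]
with $D=3\cdot18=54$ the total degree of the normal invariants $2+6+10$ weighted as in the $B_4$ case. The quotient associated with the invariant of greatest degree then has degree $h_{10}=-10t+10R$ (or the analogous expression). Eliminating $t$ should give $a\le -cx$ with $c>0$, hence $g=0$. Combining this with $a\ge1$ and Lemma~\ref{ew-compact-nef}, which gives $a\le t$, leaves a short finite list of rows $(R,s,t,a)$. I will enumerate them exactly as in Lemma~\ref{ew-compact-gthirtytwo}.

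Each surviving row will be excluded with the tangent-quotient results. If $h_{10}<-a$, then \cite[Proposition~2.2]{NakanoWeightedConfigurations2026} forbids the nonzero surjective projection. Otherwise two or three labels coincide, and the common relative tangent degree $-e$ satisfies $a=(7-2m)e+2\delta$. Together with the kernel pole $7$ and \cite[Proposition~2.3]{NakanoWeightedConfigurations2026}, this should give the contradictory inequality $h_6\ge e$, as in the proof of Lemma~\ref{ew-compact-gthirtyone}.

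The main obstacle is the row where $h_{10}$ is not far below $-a$. There, ruling out two coincident labels requires the cross-ratio and multidegree argument with $\mathcal A$ from Lemma~\ref{ew-compact-halfplanes}, as was done for $G_{31}$. I would try first the inequality $R\le2(e+\beta_j+\beta_k+\beta_l)+2x$, since every $H_4$ point is a tangent zero of odd order. The fallback is the volume route: $t$ is bounded, so averaging gives $L^4$ small, and Lemma~\ref{ew-compact-volume} combined with the field lists of Section~\ref{ew-compactapp-770} then yields the contradiction.
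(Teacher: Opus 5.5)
There is a genuine gap at the step where you expect elimination to give $a\le -cx$, hence $g=0$ and a finite list of rows. With the actual $H_3\subset H_4$ data (effective order $2$, determinant order $22$, highest pole $10$, kernel pole $7$), \cite[Proposition~3.4]{NakanoWeightedConfigurations2026} gives
\[
2t=2x+\tfrac12(R+s),\qquad a=-2x-18t+22R.
\]
The coefficient of $t$ is the degree sum $2+6+10$, just as it is $2+4+6=12$ for $B_3$; it is not $54$. Eliminating $t$ yields $a=-20x+\tfrac{35}{2}R-\tfrac92 s$. Because the determinant order is large, $R$ enters with a positive coefficient, unlike the $B_4$ and $G_{32}$ cases. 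So these equations bound neither the genus nor $t$, and there is no table to enumerate. Your volume fallback also needs $t$ bounded, so it is unavailable for the same reason.

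The argument has to work in every genus, and this is what the paper does. Write $a=-20t+\tfrac{45}{2}R+\tfrac12 s$. Then $a\le t$ becomes $42t\ge45R+s$, so $t>R$ and $h_{10}=-10t+10R<0$, while $h_6=-6t+7R$. Hence \cite[Proposition~2.2]{NakanoWeightedConfigurations2026} forces two or three coincident labels, and the two cases need separate treatment.

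For three labels, the plane tangent quotient $J$ satisfies $3(\deg J-h_6)=8a+4x+R>0$. So both successive quotients kill $J$ and factor through $M^{\oplus2}$ with $\deg M=-a$. The determinant identity $h_6+h_{10}=-2a$ and the bound $h_{10}\ge-a$ from the nonzero top map give $26a=23R-16x$ and $2x\ge11R$, hence $a<0$.

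For two labels, \cite[Proposition~2.3]{NakanoWeightedConfigurations2026} gives $h_6\ge e=-h_{10}$. Contrary to your proposal, this inequality is not contradictory by itself. Integrality gives $e=5b$ with $b\ge1$, so $t=R+b/2$. Then $h_6\ge e$ reads $R\ge8b$, whereas $42t\ge45R+s$ reads $3R+s\le21b$, that is $R\le7b$. No cross-ratio or volume argument is needed.
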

\begin{proof}
Count $H_4,H_3\times\mu_2$ on an $H_3$ axis by $R,s$.
Section~\ref{ew-compactapp-matrices} gives $2t=2x+(R+s)/2$, $a=-2x-18t+22R$, and $h_6=-6t+7R$, $h_{10}=-10t+10R<0$ by $44t\ge45R+s$.
For three labels, $3(\deg J-h_6)=8a+4x+R>0$ makes both quotients kill the plane tangent.
Their determinant and map associated with the invariant of greatest degree give $26a=23R-16x$, $2x-11R\ge0$, a contradiction.
For two labels, \cite[Proposition~2.3]{NakanoWeightedConfigurations2026} gives $h_6\ge e=-h_{10}$; integrality gives $e=5b$, $b\in\mathbb Z_{>0}$.
The complete degree conditions are $t=R+b/2$, $a=4R-2x-9b$, $s=3R-4x+2b$, $R\ge8b$, but $a\le t$ gives $R\le7b$.
\end{proof}

\begin{lemma}
\label{ew-compact-zero}
Let $f:C\to X$ be an immersion from a curve of positive genus.
Suppose a semiample GIT class $B$ has nonnegative multidegrees satisfying every triple inequality and $f^*B$ has degree zero.
If the normal bundle has no nonzero map to a line bundle of negative degree, then $B\equiv0$.
\end{lemma}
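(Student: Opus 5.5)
We argue by contradiction: assume $B\not\equiv0$ and use it to produce a nonzero map from the normal bundle of $f$ to a line bundle of negative degree. Since $B$ is semiample with $\deg f^*B=0$, some multiple $mB$ is base point free and defines a morphism $\phi:X\to Y$ with $\phi\circ f$ constant. By \cite[Proposition~2.6]{NakanoWeightedConfigurations2026}, the nonnegative multidegrees $(b_i)$ and nonnegative triple degrees $d_I=q-\sum_{i\in I}b_i$, with $q=\sum b_i/2$, describe $\phi$ as the contraction to a weighted configuration quotient. Two cases arise: either $Y$ is a point, or the fibres of $\phi$ are positive dimensional and proper in $X$. In the first case $B\equiv0$ already, since the multidegrees are numerical invariants. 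So assume $Y$ is not a point. Then some $b_i>0$, and the weights $(b_i)$ define a coarser GIT quotient $X_b$, with $\phi$ the natural reduction map $X\to X_b$.

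The next step identifies what contraction by $\phi$ means on $C$. A curve is contracted exactly when the cross-ratios of the labels of positive weight are constant along it, and the labels of zero weight are allowed to move. When the triple degree $d_I$ vanishes, the corresponding triple may also be forced to coincide. So $C$ lies in a fibre of $X\to X_b$. The relative tangent bundle of this fibration is spanned by the zero-weight sections $\sigma_i$ and by the directions transverse to the vanishing triples. The quotient $f^*T_X\to f^*\phi^*T_Y$, restricted to $C$, factors through $\mathcal N=f^*T_X/T_C$, and its image is trivial because $\phi\circ f$ is constant. Trivial quotients give no contradiction, so we instead use the structure of the fibre. The fibre $\Phi$ through $f(C)$ is a product of copies of $\mathbb P^1$ (the positions of the moving labels), possibly meeting the coincidence planes. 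Compose $\mathcal N\to N_{\Phi/X}|_C$, which is trivial, and use the other direction $T_\Phi|_C/T_C\subset\mathcal N$.

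The key step is to show that, on a curve of positive genus, the hypothesis forbids $f(C)$ from lying in such a fibre. The fibre is rational: it is a product of $\mathbb P^1$'s, or a point when triple degrees vanish. A positive genus immersed curve inside it must map nonconstantly to some factor $\mathbb P^1$, given by the position $\sigma_j$ of a zero-weight label relative to three fixed positive-weight labels. Compose the natural map $f^*T_X\to\bigoplus_i\sigma_i^*T_{\mathcal P_7/X}/\mathcal A$ with the projection to the $j$-th factor. One would like the result to give a map $\mathcal N\to\sigma_j^*T^{\vee}$ or to a coincidence quotient of negative degree. Following \cite[Proposition~2.2]{NakanoWeightedConfigurations2026}, moving labels colliding with fixed ones produce coincident sections whose difference line has degree $-e<0$. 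The factorization through differences of coincident sections then yields the desired nonzero map to a negative line bundle, a contradiction.

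I expect this last step to be the main obstacle: producing a genuinely negative-degree quotient of $\mathcal N$ from the rational fibre structure. I would first try the Riemann–Hurwitz inequality, which gives $2g-2>0$ for the map $C\to\mathbb P^1$. Combined with the relative-tangent formula $2c_1(\mathcal L_i)-c_1(\mathcal V)$ from Lemma~\ref{ew-git-seven-classes}, this should show that the cotangent line of the fibre direction pulled back to $C$ has degree less than $-\deg T_C$ after saturation. It would then follow that $\mathcal N$ surjects onto a line bundle of negative degree.
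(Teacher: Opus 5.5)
Your proposal has a genuine gap: the description of the curves contracted by $\phi$ is wrong in two ways. First, you never extract what the triple inequalities say about $(b_i)$. If $(b_i)\ne0$, then $b_i<q$ for every $i$, at least six weights are positive, and if one weight vanishes the other six are equal. Without this, your fibre (a product of the $\mathbb P^1$'s of several moving labels) could contain immersed curves of positive genus; a smooth curve of bidegree $(2,2)$ in $\mathbb P^1\times\mathbb P^1$ is elliptic, and Riemann--Hurwitz on one factor then gives nothing. With it, at most one label can move.

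Second, when some $d_I=0$ the contracted locus is not a point but the whole coincidence plane $Z_I\simeq\mathbb P^2$. The positive-weight cross-ratios vary on $Z_I$, so your criterion ``contracted iff these cross-ratios are constant'' also fails here, and $Z_I$ contains smooth plane cubics. This is exactly where the normal bundle hypothesis must enter: $\mathcal N$ surjects onto $N_{Z_I/X}|_C\simeq\mathcal O_C(-1)^{\oplus2}$, which has negative degree. Your sketch never produces this map because it declares these fibres to be points.

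Third, in the case that genuinely remains, you are aiming at the wrong hypothesis. That case is a single zero-weight label moving while the others are constant. Then $f$ factors through the position map $C\to\mathbb P^1$ of that label, which ramifies since $g\ge1$. So $df$ vanishes somewhere and $f$ is not an immersion. No negative quotient of $\mathcal N$ is needed there, and your planned construction via ``the cotangent line of the fibre direction'' will not close.

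The paper bypasses $\phi$ and works on the marked ruled surface over $C$:
\begin{itemize}
\item Negative sections must coincide. Two or three of them give a negative line quotient of $\mathcal N$, while a single one gives $\deg f^*B=e(q-b_i)+\sum_{j\ne i}b_j\beta_j>0$.
\item Hence all relative tangent degrees are nonnegative. Since $\deg f^*B=0$, those of the at least six positive-weight labels vanish.
\item Three of these degree-zero sections are distinct. Otherwise the six would form two coincident triples and $C$ would lie in the point $Z_I\cap Z_J$.
\item These three sections trivialize the bundle. So every positive-weight label is constant, and only the zero-weight label can move, giving the ramification contradiction above.
\end{itemize}
Any correct description of the fibres of $\phi$ would need precisely this analysis anyway.
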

\begin{proof}
For nonzero $(b_i)$, the triple inequalities give $b_i<q=\sum b_i/2$ and at least six positive weights; if one weight vanishes the others are equal.
Two or three negative coincident sections give a line quotient of negative degree, and one gives $\deg f^*B=e(q-b_i)+\sum_{j\ne i}b_j\beta_j>0$.
Hence all relative tangent degrees are nonnegative and at least six vanish.
Three sections are distinct, since a triple coincidence and nonvanishing remaining brackets imply $a=0$.
These sections trivialize the marked projective line bundle; its only possibly moving label ramifies in positive genus, contrary to immersion.
\end{proof}

\begin{lemma}
\label{ew-compact-bthree}
If an $F_4$ point exists and no $G_{29}$ point exists, then $L\equiv H$.
\end{lemma}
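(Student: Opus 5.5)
The plan is to work on the $B_3$ axis through an $F_4$ point, following the pattern of Lemmas~\ref{ew-compact-bfour} and \ref{ew-compact-hfour}. By Lemmas~\ref{ew-compact-gthirtytwo}, \ref{ew-compact-gthirtyone} and \ref{ew-compact-hfour}, and the hypothesis excluding $G_{29}$, the possible upper groups along a $B_3$ curve $C$ are $F_4$, $B_4$ and $B_3\times\mu_2$. Let $R_F,R_B,s$ count these points.

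\textbf{Step 1: degree equations.} The matrices of Section~\ref{ew-compactapp-matrices} give the local data. For $B_3\subset F_4$ the determinant order is $7$, the highest pole is $3$, and the pole on the preceding kernel is $2$. For $B_3\subset B_4$ the data are $3,1,1$. Feeding these into \cite[Proposition~3.4]{NakanoWeightedConfigurations2026} should give
\[
2t=2x+(R_F+R_B+s)/2,
\]
an expression for $a$ of the form $-2x-12t+\alpha R_F+3R_B$, and flag degrees $h_6,h_4$. The immersion at $F_4$ (axis values $6v,12v$) makes the tangent saturated, so no $T_C(D)$ correction appears.

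\textbf{Step 2: bounds and labels.} Lemma~\ref{ew-compact-nef} gives $a\le t$, which together with the equations should force $h_6<0$. I then split according to how many labels coincide, exactly as in Lemma~\ref{ew-compact-hfour}.
\begin{itemize}
\item If three labels coincide, the plane tangent quotient has degree exceeding $h_6$. Both successive maps then kill it, and the determinant equation gives a contradiction.
\item If two labels coincide, \cite[Proposition~2.3]{NakanoWeightedConfigurations2026} gives $h_4\ge e=-h_6$. Integrality in $t$ (with $2t\in\mathbb Z$, since all reflections have order $2$) should then leave either no solutions or only boundary solutions with $a=t$ and $x\ge0$.
\end{itemize}

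\textbf{Step 3: the key step, from curves to $L\equiv H$.} The conclusion $L\equiv H$ is global, not a contradiction, so I expect the surviving case to be a curve of positive genus on which the semiample class $B=10(L-H)$ of Lemma~\ref{ew-compact-nef} has degree zero. Lemma~\ref{ew-compact-zero} is tailored for exactly this.
\begin{itemize}
\item Its hypotheses hold here. $B$ has nonnegative multidegrees satisfying the triple inequalities, since $\lambda_I\ge1$. The curve is immersed by Step 1. The normal bundle has no map to a negative line bundle, because the negative-target cases were eliminated in Step 2.
\item Lemma~\ref{ew-compact-zero} then yields $B\equiv0$, that is, $L\equiv H$.
\end{itemize}
I therefore need to show that the case analysis forces some $B_3$ curve through the $F_4$ point with $a=t$ and $g\ge1$. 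I would do this by summing the equations to get $t-a$ as a nonpositive combination of $x$, $R_B$ and $s$ plus a term in $R_F$, and using $R_F\ge1$ to pin $x\ge1$.

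\textbf{Main obstacle.} The hardest part will be this final bookkeeping. The equations must exclude the genus zero rows, where Lemma~\ref{ew-compact-zero} does not apply. They must also show equality $a=t$ rather than merely a contradiction. If genus zero rows survive, I would fall back on the monomial Hom-dimension argument of Section~\ref{ew-compactapp-initial} applied to the kernel of the degree $6$ projection.
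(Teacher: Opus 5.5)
There is a genuine gap in Step~2, and it carries over into Step~3.

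\textbf{Step 2 has the sign backwards.} The bound $a\le t$ does not force $h_6<0$. Put $q_0=R_F-s$. The $B_3$ axis data give $a=-14x+4q_0+s$, $h_4=q_0-4x$ and $2h_6=3q_0-R_B-12x$, and $a\le t$ becomes $R_B\ge15q_0+2s-60x$. The family $R_F=q_0=4x$, $s=R_B=0$, $a=t=2x$ satisfies all of this with $h_6=0$.

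In fact the case $h_6=-e<0$ is the one excluded outright:
\begin{itemize}
\item With two labels, $h_4\ge e$ from \cite[Proposition~2.3]{NakanoWeightedConfigurations2026}, together with the two relations above, gives $5e+s\le0$. So there are no boundary solutions with $a=t$ there.
\item With three labels, the relevant comparison is $\deg J-h_4=(8a+4x+R_F)/3>0$. The rank $2$ quotient then gives $3R_B+5R_F=17a+10x$ and $R_F\ge4a+2x$, hence $-3R_B\ge3a$, which is impossible.
\end{itemize}

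\textbf{The plan is also internally inconsistent.} A surviving curve with $h_6<0$ carries the nonzero surjection $\mathcal N\to\mathcal O(h_6)$ onto a negative line bundle. Such a curve can never satisfy the hypothesis of Lemma~\ref{ew-compact-zero} that you check in Step~3.

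\textbf{The missing case is $h_6\ge0$, and that is where the conclusion comes from.}
\begin{itemize}
\item $R_B\ge0$ and $2h_6\ge0$ give $q_0\ge4x$.
\item $h_6\ge0$ together with $a\le t$ gives $6q_0+s\le24x$.
\item Hence $s=0$, $q_0=4x$ and $R_B=0$. So $R_F=4x$, $a=t=2x$, and the flag degrees are $(h_2,h_4,h_6)=(4x,0,0)$.
\item The axis passes through an $F_4$ point, so $R_F\ge1$ forces $x\ge1$, that is $g\ge2$.
\item The flag has no line quotient of negative degree, so the normal bundle admits no nonzero map to a negative line bundle.
\item Lemma~\ref{ew-compact-zero} applied to $B=10(L-H)$ then gives $L\equiv H$.
\end{itemize}

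Your Step~3 is the right endgame, and your idea of using $R_F\ge1$ to pin $x\ge1$ is what happens. But Step~2 must be reorganized: first rule out $h_6<0$ completely by the two-label and three-label arguments, then solve the system with $h_6\ge0$.
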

\begin{proof}
Count $B_4,F_4,B_3\times\mu_2$ on a $B_3$ axis by $R_B,R_F,s$, and put $q_0=R_F-s$.
Section~\ref{ew-compactapp-matrices} and $a\le t$ give $a=-14x+4q_0+s$, $h_4=q_0-4x$, $2h_6=3q_0-R_B-12x$, $R_B\ge15q_0+2s-60x$.
For $h_6=-e<0$, two labels require $h_4\ge e$ by \cite[Proposition~2.3]{NakanoWeightedConfigurations2026}, giving $5e+s\le0$.
Three labels have $\deg J-h_4=(8a+4x+R_F)/3>0$; the rank $2$ quotient then gives $3R_B+5R_F=17a+10x$, $R_F\ge4a+2x$, hence $-3R_B\ge3a$.
For $h_6\ge0$ the only solution is $q_0=R_F=4x$, $s=R_B=0$, $a=t=2x$, $(h_2,h_4,h_6)=(4x,0,0)$.
Now $g\ge2$ and the flag has no line quotient of negative degree; apply Lemma~\ref{ew-compact-zero} to $L-H$.
\end{proof}

\begin{lemma}
\label{ew-compact-g29}
The group $G_{29}$ does not occur.
\end{lemma}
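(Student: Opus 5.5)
The plan is to follow the same template as Lemmas~\ref{ew-compact-hfour} and~\ref{ew-compact-bthree}, this time working on the $G(4,4,3)$ axis through a $G_{29}$ point. The table in Section~\ref{ew-compactapp-section} lists $G(4,4,3)$ as the parabolic subgroup absorbed by $G_{29}$.

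\emph{Setting up the counts.} Let $f:C\to X$ be the full normalizer curve of such an axis. Count the $G_{29}$ points by $R\ge1$, the $G(4,4,4)$ points by $U$, and the product points $G(4,4,3)\times\mu_2$ by $s$. By Lemma~\ref{ew-compact-gthirtytwo} every reflection has order $2$, so only $\mu_2$ products occur.

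\emph{Degree equations and flag.} Section~\ref{ew-compactapp-matrices} supplies the local data at $G_{29}$: a simple tangent zero, normal determinant order $7$, highest pole $3$, next pole $3$ on the kernel, and a degree $3$ character $-i$ with frame $u^3\partial_P$. At $G(4,4,4)$ the matrix $\left(\begin{smallmatrix}v&0&1\\0&v&0\\0&0&v\end{smallmatrix}\right)$ gives determinant order $3$ and poles $1,1$. Inserting these into \cite[Proposition~3.4]{NakanoWeightedConfigurations2026} should produce the following. First, $2t=2x+3(U+R)/4+s/2$. Second, $a+2x-R$ is a linear combination of $t,U,R$. Third, there is the everywhere surjective flag of quotients $h_8,h_4,h_3$, for the invariant degrees $8,4,3$, with the character correction $-1/4$ or $-3/4$ on the degree $3$ frame. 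Set $k=4t$. Lemma~\ref{ew-compact-nef} gives $a\le t$, which should force $h_8=-e<0$.

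\emph{Case split.} Separate according to whether two or three labels coincide along the negative target, as in Lemma~\ref{ew-compact-gthirtyone}.
\begin{itemize}
\item[(a)] \textbf{Three labels.} Compare the plane tangent quotient $J$, of degree $3a+2x-R$, with the successive targets. Whenever $\deg J$ exceeds $h_4$ and $h_3$, both maps kill $J$. The rank $2$ quotient then yields a determinant identity in $a,x,R$ whose sign contradicts $R\ge1$, or else $x\ge0$.
\item[(b)] \textbf{Two labels.} Apply \cite[Proposition~2.3]{NakanoWeightedConfigurations2026} to obtain the inequality $h_4\ge e$ (or its analogue for the next quotient). Combined with integrality of $k$ and of $e$, the expressions for $t,a,s$ in terms of $e,U,R,x$ should be incompatible with $a\le t$, in the same way that Lemma~\ref{ew-compact-hfour} ended with $R\ge8b$ against $R\le7b$.
\end{itemize}

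\emph{Residual case.} If a zero-degree row survives, with $h\ge0$ and $x>0$, apply Lemma~\ref{ew-compact-zero} to $L-H$ to get $L\equiv H$. Then $\chi_{\mathrm{orb}}=5H^4$. The formula in Lemma~\ref{ew-compactapp-volumeinitial} gives $H^4=154$ at weights $(2,\ldots,2)$: indeed $3\cdot7^4-2\cdot7\cdot5^4+21\cdot3^4=154$. Hence $\chi_{\mathrm{orb}}=770$, which Lemma~\ref{ew-compact-770} excludes.

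\emph{Main obstacle.} The hard step is case (a) or (b) when the inequalities are not immediately contradictory and a small finite list of rows survives, as happened for $G_{31}$. In that situation I would first try the cross-ratio bounds from Lemma~\ref{ew-compact-halfplanes} (the positive multidegrees $v_i$ and the ramification of $G_{29}$ points) to bound $R$ by bracket degrees, and then take a nonnegative combination of these bounds against the degree identities, exactly as was done for $G_{31}$.
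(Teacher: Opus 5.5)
Your setup on the $G(4,4,3)$ axis, the reduction to $h_8=-e<0$ via $a\le t$, and the exclusion of three coincident labels follow the paper in outline. For three labels, the paper closes the case with the plane-curve genus formula $x=(a-1)(a-2)/2-1$ against $e=8x+2U+2s\le a$; this leaves only $a=2$ and two explicit rows.

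The genuine gap is your case (b). With two coincident labels, the degree equations give only $a\ge 3e$. For $\delta=t-a\ge0$ they give the bounds $\delta\le 8x/5+s/10$ and $\delta/e\le 1/5$. These relations have integral solutions: for instance $x=1$, $U=s=0$, $R=72$ gives $a=t=28$ and $e=8$. So there is no analogue of the ending $R\ge 8b$ versus $R\le 7b$ of Lemma~\ref{ew-compact-hfour}. Your plan offers no concrete replacement, and the cross-ratio fallback is not carried out. Your residual step cannot fill this either: Lemma~\ref{ew-compact-zero} needs a normal bundle without negative line quotients, whereas here $h_8<0$ always.

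The missing idea is a volume comparison.
\begin{itemize}
\item Since $L\not\equiv H$ by Lemma~\ref{ew-compact-770}, the class $10(L-H)$ is a nonzero GIT class of multidegree $(w_i)$. Its degree on the curve is $10\delta=e\rho+\sum_{j\ge3}w_j\beta_j$, with $\rho=q-w_1-w_2\ge1$.
\item The bound $\delta/e\le1/5$ then forces $\rho\le2$. Hence $w_i\le6$ and all triple degrees are at most $8$.
\item So $10(2H-L)$ has multidegrees at least $14$ and triple degrees at least $2$. Thus $2H-L$ is nef and $\chi_{\mathrm{orb}}=5L^4\le5(2H)^4=12320$.
\item The scalar order $c\mid10$ cannot be $1$ or $2$, because integrality of $cL$ would force $5\mid\rho$. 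Hence $\zeta_5\in K$.
\item The normalized central element of $G_{29}$ has trace $1+4i$. Hence $\mathbb Q(\zeta_{20})\subset K$.
\item Lemma~\ref{ew-compact-volume} then gives $\chi_{\mathrm{orb}}>25(2000^3C_0)^4>12320$, a contradiction.
\end{itemize}
Without this comparison between a volume upper bound from the near-equality $t-a\le e/5$ and the arithmetic lower bound forced by the field, your argument leaves the two-label configurations open.
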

\begin{proof}
On the $G(4,4,3)$ axis let $U,R,s$ count $G(4,4,4),G_{29},T\times\mu_2$, with $R\ge1$.
Section~\ref{ew-compactapp-matrices} gives a single immersed branch and successive surjections, with $h_3=-3t+U/4+9R/4$.
The degree $3$ character correction $-3(U+R)/4$ is included; the degree equations are $t=x+3(U+R)/8+s/4$, $a=-17x-27U/8+5R/8-15s/4$, $h_8=-8x-2U-2s$.
The inequality $a\le t$ forces $h_8<0$: the contrary for $x=0$ gives $U=s=0$, $a>t$; for $x=-1$, it gives $U+s\le4$, $a-t\ge3-s/4+R/4>0$.
Put $e=-h_8=8x+2U+2s\in2\mathbb Z_{>0}$.
For three labels the image is a smooth plane curve, $x=(a-1)(a-2)/2-1$, and $e\le a$.
The genus bound $8x-a=a(4a-13)>0$ excludes $a\ge4$; integrality leaves only $a=2$ among $a\le3$.
For $a=2$, the sole rows $(U,R,s,t)=(0,6,5,5/2),(5,3,0,2)$ contradict respectively $10(t/a)\in\mathbb Z$ and the next surjection, since $\deg J=4>h_3=2$ makes that surjection factor through degree $-2$.
Thus exactly two labels coincide and $a\ge3e$.
For the nonzero class $10(L-H)$, nonzero by Lemma~\ref{ew-compact-770}, let $q=\sum w_i/2$, $\rho=q-w_1-w_2$, $\delta=t-a$.
The triple inequalities give $\rho\ge1$, $0\le w_j\le\rho$ for $j\ge3$, and
\[
10\delta=e\rho+\sum_{j=3}^7w_j\beta_j,\qquad\delta\le8x/5+s/10,\qquad\delta/e\le1/5.
\]
Hence $1\le\rho\le2$, $w_1+w_2\le6$, $q\le8$, $w_i\le6$, $d_I\le8$.
The multidegrees and triple degrees of $10(2H-L)$ are at least $14,2$, so $2H-L$ is nef and $\chi_{\mathrm{orb}}=5L^4\le5(2H)^4=12320$.
The scalar order $c\mid10$ cannot be $1,2$, since integrality of $cL$ would force $5\mid\rho$.
Thus $c=5,10$, $\zeta_5\in K$; the normalized central $G_{29}$ element has trace $1+4i$, giving $\mathbb Q(\zeta_{20})\subset K$ and $D_F\ge2000^{[F:\mathbb Q]/4}$.
Lemma~\ref{ew-compact-volume} gives $\chi_{\mathrm{orb}}>25(2000^3C_0)^{[F:\mathbb Q]}\ge25(2000^3C_0)^4>12320$, since $2000^3C_0=11921889952423828125/777727659178477568>1$.
\end{proof}

\begin{theorem}
\label{ew-seven-compact}
No cocompact arithmetic lattice $\Gamma\subset\mathrm{PU}(1,4)$ satisfies $X_7\simeq\mathbb B^4/\Gamma$.
\end{theorem}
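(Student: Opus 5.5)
The plan is to assemble the exclusions already proved in this section into a single case analysis on the irreducible rank-$4$ point stabilizers. By Lemma~\ref{ew-compact-groups}, some point stabilizer has an irreducible factor of rank $4$, and that factor lies in the list $A_4,B_4,D_4,G(4,2,4),G(4,4,4),F_4,H_4,G_{29},G_{31},G_{32}$. Lemmas~\ref{ew-compact-gthirtytwo}, \ref{ew-compact-gthirtyone}, \ref{ew-compact-hfour} and \ref{ew-compact-g29} remove $G_{32}$, $G_{31}$, $H_4$ and $G_{29}$ from this list.

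Next I would remove the imprimitive groups. Lemma~\ref{ew-compact-bfour} shows that a $G(4,2,4)$ axis forces a $G_{31}$ point and a $G(4,4,4)$ axis forces a $G_{29}$ point, so neither can occur. It also shows that a $B_4$ axis forces an $F_4$ or $G_{29}$ point. Lemma~\ref{ew-compact-primitive} shows that some point has stabilizer $F_4$, $H_4$ or $G_{29}$. Since $H_4$ and $G_{29}$ are already gone, an $F_4$ point must exist and no $G_{29}$ point exists. This is exactly the hypothesis of Lemma~\ref{ew-compact-bthree}, which then gives $L\equiv H$ numerically.

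The final step is a volume comparison. With $K_{\mathbb B^4}=\mathcal L^5$ and $L\equiv H$, we have $\chi_{\mathrm{orb}}=5L^4=5H^4$. I would compute $H^4$ from the formula in Lemma~\ref{ew-compactapp-volumeinitial}, taking $b_i=2$ for all $i$, so that $q=7$:
\[
H^4=3\cdot7^4-2\cdot7\cdot5^4+21\cdot3^4=7203-8750+1701=154.
\]
Hence $\chi_{\mathrm{orb}}=770$. By Lemma~\ref{ew-compact-gthirtytwo}, every reflection has order $2$, so the hypotheses of Lemma~\ref{ew-compact-volume} hold. Lemma~\ref{ew-compact-770} then says $\chi_{\mathrm{orb}}\ne770$, which is a contradiction, and no cocompact realization exists.

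The only point I expect to need care is the order in which the lemmas are applied, so that none is used circularly. Lemma~\ref{ew-compact-g29} itself invokes Lemma~\ref{ew-compact-770} to know that $10(L-H)\neq0$, and Lemma~\ref{ew-compact-770} depends only on the arithmetic volume formula, not on any stabilizer exclusion. So the chain should be checked in the sequence written above: first the exclusions of $G_{32}$, $G_{31}$, $H_4$ and $G_{29}$, then Lemma~\ref{ew-compact-bfour} for the imprimitive groups, then Lemma~\ref{ew-compact-primitive} and Lemma~\ref{ew-compact-bthree} for the $F_4$ case, and finally the volume contradiction.
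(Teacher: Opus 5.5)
Your proposal is correct and follows the paper's own proof: Lemma~\ref{ew-compact-primitive} together with Lemmas~\ref{ew-compact-hfour} and~\ref{ew-compact-g29} forces an $F_4$ point with no $G_{29}$ point, Lemma~\ref{ew-compact-bthree} gives $L\equiv H$, and $\chi_{\mathrm{orb}}=5H^4=770$ then contradicts Lemma~\ref{ew-compact-770}. Your preliminary exclusions of $G_{32}$, $G_{31}$ and the imprimitive groups are harmless but not needed at this stage, since they are already used inside Lemma~\ref{ew-compact-primitive}; they also do not create any circularity, because Lemma~\ref{ew-compact-770} is purely arithmetic.
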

\begin{proof}
Lemmas~\ref{ew-compact-primitive}, \ref{ew-compact-hfour}, and \ref{ew-compact-g29} force an $F_4$ point and exclude $G_{29}$.
Lemma~\ref{ew-compact-bthree} gives $L\equiv H$.
Since $H^4=154$, proportionality gives $\chi_{\mathrm{orb}}=5H^4=770$, contrary to Lemma~\ref{ew-compact-770}.
\end{proof}

\section{Nonuniform realizations outside the Gaussian field}
\label{ew-seven-fields-section}
We retain the notation of Section~\ref{seven-plane-section}.
\begin{proposition}
\label{ew-seven-fields}
If $X$ is the Baily--Borel compactification of a nonuniform arithmetic ball quotient, then its field of definition is $\mathbb Q(i)$.
\end{proposition}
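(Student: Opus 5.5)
Since the lattice is nonuniform and $\dim\mathbb B^4=4\ge2$, the field of definition is an imaginary quadratic field $K$; the isotropic Hermitian form of signature $(1,4)$ over a CM field forces $F=\mathbb Q$. The plan is to combine the nef statement of Lemma~\ref{ew-fields-plane-bound} with a volume comparison and cusp-level arithmetic, and so exclude every $K\ne\mathbb Q(i)$. First I will use Lemma~\ref{ew-fields-plane-bound} and Lemma~\ref{ew-fields-plane-equality}. Every plane has $w_K\lambda>1$ in rank $2$ and $\lambda>1$ (hence, by integrality of $w_K\lambda$, $w_K\lambda\ge w_K+1$ or at least $2$) in ranks $0,1$. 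By \cite[Proposition~2.6]{NakanoWeightedConfigurations2026} applied to the $35$ planes, this shows that $w_K\mathcal L-2H$ is nef. For $K\ne\mathbb Q(i),\mathbb Q(\omega)$ we have $w_K=2$, so $L-H$ is nef. From the strict inequality $\lambda>1$, I will also try to obtain that $L-H$ has positive triple degrees, so that $\chi_{\mathrm{orb}}=5L^4\ge 5H^4=770$, with strict inequality unless $L\equiv H$.

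Second, I will bound $\chi_{\mathrm{orb}}$ from above. On the open part the orbifold Euler number equals $5L^4$ by proportionality (the cusps contribute nothing to $L^4$ in Baily--Borel). I will bound it via a reverse nef inequality: the reflection orders in a Gaussian-free field are $2$ (or $2,3,6$ for Eisenstein), so the branch divisor $B_X$ satisfies $5L=B_X/b-\ldots-H$. From this, a class such as $2H-L$ should be nef, exactly as in the proof of Lemma~\ref{ew-compact-g29}, giving $\chi_{\mathrm{orb}}\le5(2H)^4=12320$. Combining this with Prasad's volume formula for nonuniform lattices over $\mathbb Q$ (a product of $L(-2,\chi_K)L(-4,\chi_K)\zeta(-1)\zeta(-3)$ times integral local indices, as in Lemma~\ref{ew-compact-volume}) bounds $|D_K|$ to a finite list.

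Third, I will handle the finite list by exact special values. For each remaining $K$ (including $\mathbb Q(\omega)$ with its stronger nef bound $6\mathcal L-2H$, i.e.\ $L\ge H/3$), I will compare the quantity $5L^4$, which lies in a lattice determined by the scalar order $c$ and the multidegrees, against $\chi_{\mathrm{orb}}/(w\rho_K)\in\mathbb Z$. This comparison will be done via Bernoulli sums, as in Section~\ref{ew-compactapp-770}. For $\mathbb Q(\omega)$ I will also use the cusp structure: the Heisenberg lattice over $\mathbb Z[\omega]$ and Lemma~\ref{ew-compact-groups}-type reflection-order restrictions at the cusp, which force the horizontal group to be an Eisenstein group whose fixed planes contradict $\lambda\ge1/3$ equality.

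The main obstacle is the upper bound on $\chi_{\mathrm{orb}}$. The nefness of $2H-L$ in the cocompact case relied on detailed fixed-curve analysis, and at cusps new boundary contributions appear. I would first try to derive it from the degree formulas on coincidence lines alone: $a\le w_Kt/2$ bounds $L$ from below, while for the upper bound I would use the log canonical relation $K_X+B=5L$ together with the fact that the cusps are points, so $L^4$ is computed on $X$ directly, combined with effectivity of $B_X$.
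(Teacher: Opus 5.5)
\textbf{Gap in Step~2.} Step~2 is not merely the hard step: the method you propose for it cannot work, and Steps~2--3 depend on it. With all reflections of order $2$ one has $B_X=10L+2H$; more generally $5L=-H+\Delta$ with $\Delta$ the effective branch divisor. Effectivity of $B_X$ therefore says only that $10L+2H$ is effective. That bounds $L$ from below, just as Lemma~\ref{ew-fields-plane-bound} does. Nothing in your Step~1 or in this relation prevents the branch divisor, and hence $\chi_{\mathrm{orb}}=5L^4$, from being arbitrarily large.

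The nefness of $2H-L$ in Lemma~\ref{ew-compact-g29} is not a general principle either. It was read off from the degree data of the $G(4,4,3)$ axis through a $G_{29}$ point, at the very end of the cocompact analysis, after every other rank $4$ group had been excluded and with no cusp terms present. Without an upper bound, Prasad's formula gives no finiteness in $|D_K|$.

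\textbf{Step~3.} This step would not close even on a finite list. The index $j$ and the local factors are arbitrary positive integers, so integrality of $\chi_{\mathrm{orb}}/(5w\rho_K)$ is useful only once $5L^4$ is known almost exactly; in the cocompact proof this came from $L\equiv H$. For the smallest fields the test carries essentially no information. For $K=\mathbb Q(\omega)$, for instance, $\zeta(-1)L(-2,\chi_K)\zeta(-3)L(-4,\chi_K)=1/9720$ has numerator $1$.

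\textbf{What the paper uses instead.} The proof of Proposition~\ref{ew-seven-fields} contains no volume comparison; it exploits the cusp that nonuniformity guarantees. The argument runs as follows.
\begin{enumerate}
\item For each field, the full horizontal group of a smooth cusp is restricted. When $w_K=2$ it is $A_3$ or $B_3$, with $G_{24}$ possible only over $\mathbb Q(\sqrt{-7})$. Over $\mathbb Q(\omega)$ one must also allow $G(3,1,3)$, $G(3,3,3)$, $G(6,p,3)$, $G_{25}$ and $G_{26}$.
\item The unique lift fixing the cusp point gives a full fixed curve ending there.
\item The tangent and normal degree comparisons along that curve, with the actual fiber characters and widths computed on the horizontal abelian quotients, exclude every non-Coxeter case. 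The last sixth-root case goes through the explicit lattice of Lemma~\ref{ew-fields-surface-lattice}, which produces a forbidden actual reflection of order $6$.
\item Lemma~\ref{ew-fields-coxeter} then shows that with only $A_3,B_3$ horizontal groups, $X$ has no cusps at all, contradicting nonuniformity.
\end{enumerate}
This cusp-by-cusp fixed-curve analysis is the idea your proposal is missing. The nef bounds of your Step~1 enter the paper only as inputs to it.
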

We apply Lemmas~\ref{ew-fields-plane-bound} and~\ref{ew-fields-plane-equality} to the 35 coincidence planes.
\subsection{The Coxeter groups}
\begin{lemma}
\label{ew-fields-coxeter}
Suppose that the full horizontal groups are $A_3,B_3$, the irreducible interior groups are among $A_4,B_4,D_4,F_4$, and the groups fixing a ball of dimension 2 are $A_1^2,A_2,B_2$.
Suppose also that $2\mathcal L$ descends to $X$, $\mathcal L-H$ is nef, and the finite characters on the directions of indecomposable invariants have trivial inertia except for the usual real reflection characters.
Then $X$ has no cusps.
\end{lemma}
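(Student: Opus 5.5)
Suppose a cusp exists and work at it, with full stabilizer \(P\), Heisenberg lattice \(N\), and horizontal group \(F=P/N\), which by hypothesis is \(A_3\) or \(B_3\). The aim is to show that the degrees and characters that the cusp forces on the boundary curves through it are incompatible with \(a\le t\) (nef \(\mathcal L-H\)). Along the way we need that \(2\mathcal L\) descends and that the only reflection data are the listed real ones.

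\textbf{Step 1: local model at the cusp.} By \cite[Lemmas~2.16--2.17]{NakanoWeightedConfigurations2026}, with the least central period and trivial nonreal inertia, a neighbourhood of the cusp is the quotient of a disc bundle over the abelian threefold \(A\) by \(F\). The horizontal holonomy is irreducible, so the invariant theta sections form a polynomial ring with degrees \(2,3,4\) for \(A_3\) or \(2,4,6\) for \(B_3\). Since \(X\) is smooth at the cusp, the tangent cone there is the weighted cone over the projectivization of this ring. The central coordinate \(Q\) has weight \(1\) in the grading in which \(\mathcal L\) has weight \(1\); the normalization \(2\mathcal L\) descending fixes this.

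\textbf{Step 2: the fixed curves through the cusp.} Take the axis of a parabolic \(A_2\) or \(B_2\) subgroup of \(F\). Its fixed ball of dimension \(2\) is one of the listed types \(A_1^2,A_2,B_2\), and its transversal of dimension \(1\) gives a fixed curve \(C\) through the cusp. The end group of \(C\) is \(A_3\) or \(B_3\) (width \(\kappa\)). Its interior special points carry the rank \(4\) groups \(A_4,B_4,D_4,F_4\) or products with \(\mu_2\). Apply the determinant formula \cite[Proposition~3.1]{NakanoWeightedConfigurations2026} and \cite[Proposition~3.4]{NakanoWeightedConfigurations2026}, using the matrices of Section~\ref{ew-compactapp-matrices} for the interior points. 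At the cusp end, the monomial comparison \cite[Lemma~3.5]{NakanoWeightedConfigurations2026} uses weights \(e\) against the theta degree and \(Q=q^\kappa\). This gives linear relations expressing \(a\), \(t\), \(x\) and the flag degrees \(h_e\) in terms of the counts \(R_\ast,s\) and the end contribution, exactly as in Lemma~\ref{ew-compact-primitive} and Lemma~\ref{ew-compact-bthree}. The cusp end adds a positive term to \(t\) but a larger negative term to the quotient of greatest invariant degree.

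\textbf{Step 3: contradiction.} Impose \(a\le t\) from nefness of \(\mathcal L-H\). As in Lemma~\ref{ew-compact-primitive}, this forces the quotient associated with the invariant of greatest degree to be negative. \cite[Propositions~2.2--2.4]{NakanoWeightedConfigurations2026} then force two or three coincident labels with \(a\ge(7-2m)e\), or \(Q=M^2\) in rank \(2\). Combined with the relations of Step 2, each case leaves a linear system with a negative right side, or no integral solution, as in Lemmas~\ref{ew-compact-hfour} and~\ref{ew-compact-bthree}. In the degenerate case \(h\ge0\), one expects \(a=t\) with no negative line quotient. Since the curve meets a cusp, it is not a compact positive genus immersion, so Lemma~\ref{ew-compact-zero} does not apply directly. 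Instead, I would treat it as a rational curve and use the cusp contribution to make \(t>a\) strictly.

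\textbf{Main obstacle.} The hardest part is Step 2's cusp end term. This requires the actual relative width \(\kappa\), the finite characters at the end group, and the pole orders of the inverse differential in the Siegel frame, with \(Q\) divisible as in Lemma~\ref{ew-fields-plane-bound}. I would first compute the end matrix for \(A_3\) by the same Taylor expansion recipe, with the theta degrees replacing ambient invariant degrees, and check that the vanishing order of \(Q\) makes the end contribution to \(h_{\max}\) exceed that to \(t\). Then I would repeat for \(B_3\).
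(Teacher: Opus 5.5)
The gap is in Step~3. You claim that once $a\le t$ is imposed, every case ends in a linear system with a negative right side or no integral solution; this is false.

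Do exactly your bookkeeping on a $B_3$ curve: counts $R_B,R_F,p$ of $B_4,F_4,B_3\times A_1$, ends $s_1,\dots,s_4$ of theta types $1111,1112,1122,1222$, and Propositions~2.2--2.4 of \cite{NakanoWeightedConfigurations2026}. It leaves the consistent row $(a,t,p,R_B,R_F;s_1,s_2,s_3,s_4;h_4,h_6)=(1,3/2,2,0,0;0,3,1,0;-1,-1)$. This is a rational curve through three $1112$ ends and one $1122$ end, with $a<t$. On an $A_3$ curve, Proposition~2.4 only places the curve in a triple coincidence plane $Z$ with $1\le\lambda\le5/3$.

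No further curve-level relation excludes these cases. The missing idea is to pass to the plane $Z$ and use surface data.
\begin{itemize}
\item The paper first excludes $A_2$ and $B_2$ planes by their normal determinants, $10\lambda=4+5\deg D_B$ and $6\lambda-2=2\deg D_B\le4\lambda-2$, against $\lambda\ge1$ and $2\lambda\in\mathbb Z$. This is where descent of $2\mathcal L$ is used.
\item On the remaining $A_1^2$ plane, $4\lambda-2=\tfrac32\deg D_A+3\deg D_B+\deg D_P$ gives $\lambda=3/2$ for the residual row.
\item The actual $B_3$ cusp models fix the branches at the four ends: short reflections are actual exactly at the squared labels, the two reflecting lengths differ, and embedding dimension restricts the ends.
\item The orbifold relation $K_Z+\tfrac12(D_A+D_B+D_P+D_Q)=3\mathcal L|_Z$ then forces $\deg D_Q=13$.
\item The elliptic count on the line $D_P$ gives $3=2\mathcal L\cdot D_P\ge-2+1+11/2$. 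This step needs the zero Lelong number of the invariant metric of $2\mathcal L$ at the cusp.
\item For $A_3$, Section~\ref{ew-finite-gauss} and the degree equations exclude plane ranks $0$ and $1$. The remaining $A_1^2$ plane is killed by the cusp geometry: its components $(z,z,u-z,u-z)$, $u\in E[2]$, map to noncollinear conics in $|4[0]|\simeq\mathbb P^3$, which is incompatible with a smooth plane germ at a smooth cusp.
\end{itemize}
None of this can be recovered from more careful end-term computations on the curve alone.

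Three of your intermediate claims would also fail.
\begin{itemize}
\item In Step~1, the invariant theta sections at an $A_3$ or $B_3$ cusp form a polynomial ring in four generators, of degrees $1111$, or $1111,1112,1122,1222$. The numbers $2,3,4$ and $2,4,6$ are the polynomial degrees of the normal invariants along the curve, which the paper deliberately keeps distinct. The end corrections $(1,1,1),(1,1,2),(1,2,2),(2,2,2)$ to $(h_2,h_4,h_6)$ depend on the theta type, so your end term cannot be computed as you describe.
\item In the case $h_6\ge0$, the cusps do not force $t>a$. There $a\le t$ leaves the family $a=t=2x+2b$, $R_F=4x+2b$, $s_2=3b>0$, $(h_2,h_4,h_6)=(4x+3b,-b,0)$. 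It is excluded only because $h_4<0$, and Proposition~2.4 then gives $b=2a$, hence $x=-3a/2<-1$.
\item The curve through the cusp is the fixed curve of the full horizontal group $F$ itself, given by the lift fixing the point in \cite[Lemma~2.17]{NakanoWeightedConfigurations2026}. It is not a transversal to the fixed surface of a rank~$2$ parabolic.
\end{itemize}
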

\begin{proof}
For a coincidence plane $Z$ of generic rank 2, put $\mathcal L|_Z=\lambda\mathcal O_Z(1)$, $\lambda\ge1$, $2\lambda\in\mathbb Z$.
All inclusions use actual full parabolic stabilizers.
The local determinants below give $10\lambda=4+5\deg D_B$ for $A_2$, and $6\lambda-2=2\deg D_B$, $\deg D_B\le2\lambda-1$ for $B_2$, both impossible.
For $A_1^2$, with loci $D_A,D_B,D_P$ of $A_3,B_3,A_1\times B_2$, they give
\begin{equation}
\label{ew-fields-plane}
 4\lambda-2=\tfrac32\deg D_A+3\deg D_B+\deg D_P.
\end{equation}

For a $B_3$ curve, count $B_4,F_4,B_3\times A_1$ by $R_B,R_F,p$, and cusps $1111,1112,1122,1222$ by $s_i$.
Put $x=g-1$, $s=\sum s_i$, $(\alpha,\beta,\gamma)=(s_1+s_2+s_3+2s_4,s_1+s_2+2s_3+2s_4,s_1+2s_2+2s_3+2s_4)$.
The inverse differential gives everywhere surjective successive quotients of $K_2\subset K_{24}\subset\mathcal N$, of ranks 1,2, with
\begin{equation}
\label{ew-fields-Bflag}
 (h_2,h_4,h_6)=(-2t+R_B+2R_F+\alpha,-4t+R_B+2R_F+\beta,-6t+R_B+3R_F+\gamma).
\end{equation}
The full characters and widths give
\begin{equation}
\label{ew-fields-Bdegrees}
 2t=2x+s+(R_B+R_F+p)/2,\quad a=-2x-12t+3R_B+7R_F+\alpha+\beta+\gamma.
\end{equation}
The elimination below leaves, with cusps, only
\begin{equation}
\label{ew-fields-Bresidue}
 (a,t,p,R_B,R_F;s_1,s_2,s_3,s_4;h_4,h_6)=(1,3/2,2,0,0;0,3,1,0;-1,-1).
\end{equation}
Its plane has generic group $A_1^2$, $\lambda=3/2$, $(\deg D_A,\deg D_B,\deg D_P)=(0,1,1)$.
Without cusps $R_B=p=0$, also excluding $B_4$ through its $B_3$ parabolics.

At a $B_3$ cusp use $x_1^2=U$, $x_2^2=T$, $x_3^2=T-U$, $x_4^2=T-cU$, $c\ne0,1$.
Let $S$ be the set of squared labels.
A short reflection is actual exactly at a label in $S$; the other lift has nontrivial fiber character.
All $A_1^2$ fixed components lie in the same plane~\cite[Lemma~2.17]{NakanoWeightedConfigurations2026}.
Two long reflections give an excluded $A_3$ curve, and two short reflections give two $P$ branches; hence their lengths differ.
The restricted rings for $1112,1122,1222,1111$ are $\mathbb C[u,v]$, $\mathbb C[u,v]$, $\mathbb C[u,uv,v^2]$, $\mathbb C[u^2,uv,v^2]$.
Embedding dimension leaves the three $1112$ and one $1122$ ends in \eqref{ew-fields-Bresidue}, with horizontal order 2, boundary degree 2, width 1.
Let $D_Q$ be the $A_1^3$ locus.
The normalizer test below excludes nontransverse interior $D_P,D_Q$ intersections: their only possible ambient group $B_3\times A_1$ gives another coincidence plane intersecting $Z$ in a curve.
The cusp branch counts $(B,P,Q)$ are $(1,0,3)$ at $1112$ and $(1,1,2)$ at $1122$, with distinct directions $U,V,V-U,V-cU$.
Thus $K_Z+\tfrac12(D_A+D_B+D_P+D_Q)=3\mathcal L|_Z$ gives $\deg D_Q=13$.
The cusp of $D_P$ contributes multiplicity 2, leaving 11 transverse elliptic points of order 2, contrary to $3=2\mathcal L\cdot D_P\ge-2+1+11/2=9/2$.
The actual $2\mathcal L$ has trivial cusp character.
The curvature current of its invariant metric has zero Lelong number by the logarithmic cusp potential in~\cite[Section 2.2, equations (1)--(8)]{MokCompactification}.

For $A_3$, count $A_4,D_4,A_3\times A_1$ by $R_A,R_D,p$; all cusps are $1111$.
Saturate the tangent bundle to $T_C(D_A)$ at $A_4$.
The normal corrections are $(h_4,h_3,h_2)=(-4t+R_A+R_D+s,-3t+R_A+R_D/2+s,-2t+R_A+R_D+s)$.
The comparison gives $2t=2x+s+(R_D+p)/2$, $a=-2x-9t+4R_A+5R_D/2+3s$.
The last two directions give a surjective quotient $Q$, $\deg\det Q=a/2-3t/2-p/4<0$.
By \cite[Proposition~2.4]{NakanoWeightedConfigurations2026}, $Q=M^{\oplus2}$ on a three point coincidence plane, $\deg M=-a$.
Hence $10a=6t+p$, $a+6R_A=18x+4p+3s$, $1\le\lambda\le5/3$.
Section~\ref{ew-finite-gauss} excludes rank 0 by $5a/2\le2\lambda-2$; rank 1 forces $a=1$, $x=-1$, $\lambda=3/2$, then $R_D=-4R_A-1$.
The plane is therefore $A_1^2$.
At an $A_3$ cusp, the polarized $\ker(E^4\to E)$ has quotient $|4[0]|\simeq\mathbb P^3$~\cite{Rains}.
The components $(z,z,u-z,u-z)$, $u\in E[2]$, map through $|[0]+[u]|$ to noncollinear Veronese conics, incompatible with the boundary line $\mathbb P(T_pZ)$ and exceptional bundle $\mathcal O(-1)$ of a smooth cusp.

\subsubsection{Coxeter groups and the local normal directions}
\label{ew-finite-coxeter}
For $A_2\subset B_3$, expand the power sums at $x_i=u+y_i$, $\sum y_i=0$, with $q_k=\sum y_i^k$, $v=u^2$ and coarse frame $ue_3$.
Removing the tangent direction gives $\left(\begin{smallmatrix}4v&4v\\12v^2&20v^2\end{smallmatrix}\right)$, of determinant $32v^3$.
The $A_3$ axis $(12u^2,-24u^3,84u^4)$ is not immersed.
For $B_2$ the matrix is $\left(\begin{smallmatrix}v&1\\0&v\end{smallmatrix}\right)$.
For $A_1^2$ the determinant character contributes $-\deg D_A/2$, and local orders at $A_3,B_3,A_1\times B_2,A_1^3$ are $2,3,1,0$, proving \eqref{ew-fields-plane} without splitting.

The companion matrix at $B_4$ and short root power sums at both $F_4$ axes give corrections $(1,1,1)$ and $(2,2,3)$ with nonzero minors~(Section~\ref{ew-compactapp-matrices}).
The first three even Taylor coefficients of the cusp branch section products have a Vandermonde determinant.
Thus the corrections in directions $(2,4,6)$ at $1111,1112,1122,1222$ are $(1,1,1),(1,1,2),(1,2,2),(2,2,2)$, with width 1 at every common fixed point of trivial fiber character.
Removing the raw $2222$ fiber kernel gives $1111$, replaces $q$ by $q^2$, and halves the period.
For $A_3\subset A_4,D_4$, Section~\ref{ew-compactapp-matrices} gives $\det M_A=625u^3/2$, $\det M_D=64v^3$.
The last two rows of $M_D^{-1}$ in coarse frame $ue_3$ are $(0,3/(4v),0)$, $(0,-1/2,1/(12v))$.
These give independent extended coefficients of pole 1 for the highest and kernel directions; triality, tangent saturation at $A_4$, and the $1111$ theta coefficients complete the surjectivity and character calculation.

For the full plane normalizers, enumerate orthogonal root pairs of pointwise stabilizer $A_1^2$ in $A_4,B_4,D_4,F_4$ and all rank 4 products of $A_1,A_2,B_2,A_3,B_3$.
Restrict the other roots to the fixed plane and take invariants of $N_F(A_1^2)/A_1^2$~\cite[Theorems 2.1, 3.9, 3.12, Table 5]{KrishnasamyTaylor}.
Roots orthogonal to the fixed space generate every stabilizer, proving exhaustiveness.
Without $A_3$ branches or $B_4$ points, only $B_3\times A_1$ gives a nontransverse $P,Q$ pair; in $F_4$, equal root lengths give $B_2$, so $A_1^2$ uses both lengths.

\subsubsection{Weighted vanishing on a surface}
\label{ew-finite-gauss}
Let $\Sigma\subset X$ be a smooth surface containing $C$ such that the projections of the inverse differential to the two coordinates of greatest invariant degree vanish along $\Sigma$.
On a general line $\ell$, put $\lambda=\mathcal L\cdot\ell$, $c=\det N_{\Sigma/X}\cdot\ell$, $d=C\cdot\ell$.
Retain the critical divisor $R$ in the Gauss inequality $K_\Sigma-R\ge3\mathcal L|_\Sigma$ and in the conormal determinant classes $5\mathcal L-K_\Sigma+R$, $4\mathcal L-K_\Sigma+R$ for generic ranks 0,1.
The rank 1 invariant normal class is $2\mathcal L$.
For transverse order $k$, the orders of the curve equation and highest determinant are at most/at least $(3k,6k),(2k,7k)$ for $B_3$ and $(2k,5k),(2k,4k)$ for $A_3$.
For $B_3$ the $f_2^3$ coefficient vanishes on the retained mirror.
Thus
\[
\begin{array}{c|cc}
 &\operatorname{rank}=0&\operatorname{rank}=1\\\hline
 B_3&2d\le2\lambda+c&7d/2\le3\lambda+c\\
 A_3&5d/2\le2\lambda+c&2d\le3\lambda+c.
\end{array}
\]
Retaining $R$ prevents saturation from removing the zeros, and the general line avoids all cusp and blowup centers.

\subsubsection{The equality cases for a \texorpdfstring{\(B_3\)}{B3} curve}
\label{ew-finite-Belimination}
Put $q_0=R_F-p$, $D_0=3s_1+2s_2+s_3$, $U=s_1+s_2$, $V=2s_1+s_2+s_3+s_4$.
The comparison gives $a=-14x+4q_0+p-D_0$, $t=x+(R_B+q_0+2p+2s)/4$, $h_4=q_0-4x-U$, $h_6=(3q_0-R_B-12x)/2-V$.
For $h_6\ge0$, $a\le t$ forces $R_B=p=s_1=s_3=s_4=0$.
Writing $b=s_2/3$ leaves $a=t=2x+2b$, $R_F=4x+2b$, $(h_2,h_4,h_6)=(4x+3b,-b,0)$.
For $b>0$, \cite[Proposition~2.4]{NakanoWeightedConfigurations2026} gives $b=2a$, $x=-3a/2<-1$; without cusps it gives $R_B=p=0$.

For $h_6<0$ and two coincident labels of relative tangent degree $-e$, the bracket description gives $h_4\ge e$~\cite[Proposition~2.2]{NakanoWeightedConfigurations2026}.
The equality $R_B=-3(h_4-e)-2(t-a)-2s_1-s_2-p-2s_4$ forces $t=a$, $h_4=e$, $s_3=5e/2$, $R_F=4(e+v)$, $a=3e+2v$, $g=1+3e/4+v$, $4\mid e$.
Nef bracket multidegrees force $\mathcal L\equiv H$; the next inverse direction on $\operatorname{Bl}_5\mathbb P^3$ gives a constant row $(M^{-1})^{\oplus4}\to M^{-1}$, $M=\mathcal O(-h+E)|_C$, $H=\mathcal O(3h-E)$.
Its Euler relation puts $C$ in a plane transform with $c=0$, $\lambda=3$, $d=e+v\ge4$.
Section~\ref{ew-finite-gauss} excludes ranks 0,1; at $F_4$, degrees $2,6,8,12$ exclude immersed $A_1^2,B_2$ planes.
The remaining $A_2$ plane at $1122$ has coordinates $qa^3,qb^3,q^2(b^2-a^2)^3,q^2(b^2-ca^2)^3$; determinant $9c(c-1)\ne0$ modulo the first three products gives tangent dimension 4.

For three labels, $\mathcal N=J\oplus M^{\oplus2}$, $\deg J=3a+2x>0$, $\deg M=-a$, $-a\le h_6<0$.
The two highest projections kill $J$.
If $\deg J\le h_4$, their coefficient inequalities force $a=1$, $h_6=-1$, $h_2=-h_4$, then $h_4=1$, $s_3=4$, $t=1$, contrary to $h_6=2$ in \eqref{ew-fields-Bflag}.
Thus their rank 2 quotient is $M^{\oplus2}$, and
\[
\begin{gathered}
 h_2=3a+2x,\quad h_4+h_6=-2a,\quad h_4\le-a,\\
 8t=11a+10x+p+s_1+2s_2+3s_3,\quad
 4R_F=a+14x+3p+3s_1+2s_2+s_3,\\
 4R_B=21a-10x-5p-9s_1-6s_2-3s_3-8s_4,\quad
 4h_4=a-2x-p-s_1-2s_2+s_3.
\end{gathered}
\]
Without cusps these give $5a-2x\le p\le21a/5-2x$, impossible.
For $R_F>0$, the lowest invariant degrees give generic rank at most 1; Section~\ref{ew-finite-gauss}, with $c=-2$, $\lambda\le7/2$, forces $a\le2$, $x=-1$.
Putting $h_4=-a-d$, $h_6=-a+d$, $0\le d<a$, gives $t=2a-1+d/2+s_3/2$, $R_B=3a/2+1-3d-p/2-3s_1/2-3s_3/2-2s_4$.
For $(a,d)=(1,0),(2,0),(2,1)$ the bounds $t\le3/2,4,7/2$ contradict $t\ge11/6,6,6$.
With $R_F=0$, $x=-1$, $a\le2$, the plane is $A_1^2$.
For $a=2$, $d=1$ is inconsistent and $d=0$ gives $s_2=6$, $t=3$, violating \eqref{ew-fields-plane}; for $a=1$ that equality leaves only \eqref{ew-fields-Bresidue}.
\end{proof}

\subsection{Fields with two units and the group \texorpdfstring{\(G_{24}\)}{G24}}
For $K\ne\mathbb Q(i),\mathbb Q(\sqrt{-3})$, the full horizontal groups are $A_3,B_3$, with $G_{24}$ possible only over $\mathbb Q(\sqrt{-7})$~\cite{GeckMalle,KrishnasamyTaylor}.
The character fields of $G_{23},\ldots,G_{27}$ are $\mathbb Q(\sqrt5)$, $\mathbb Q(\sqrt{-7})$, $\mathbb Q(\sqrt{-3})$, $\mathbb Q(\sqrt{-3})$, and $\mathbb Q(\sqrt{-3},\sqrt5)$.
After $G_{24}$ is excluded, the interior groups are $A_4,B_4,D_4,F_4$ and transfer restricts rank 2 groups to $A_1^2,A_2,B_2$~\cite[Lemma~2.15]{NakanoWeightedConfigurations2026}.
Their quadratic minimum, descent of $2\mathcal L$, Lemma~\ref{ew-fields-plane-bound}, and \cite[Proposition~2.6]{NakanoWeightedConfigurations2026} give every hypothesis of Lemma~\ref{ew-fields-coxeter}.

For $G_{24}$, the horizontal variety is the Jacobian of the Klein quartic with its principal polarization, with generator degrees $1,2,4,7$~\cite{Rains}.
Its unique common fixed point follows from $\det(-g-I)=-1$ for an order 7 element~\cite[Proposition 2.1(v) in arXiv:2107.03745v2]{MarkushevichMoreau}.
The degree 1 section is nonzero there because the quartic is nonhyperelliptic and the tangent representation has no invariant linear form.
For this polarization, $\dim\Gamma_{00}=1$ and the description of $\Gamma_{000}$ gives a nonzero degree 2 theta projection in the polynomial invariant of degree 4~\cite[equations~(1.3) and~(1.5), Theorems~1.1(2) and~1.3]{PaulyPreviato}.
A determinant character needs all 21 mirrors, so cannot occur in order 4 or 5; subtract the square of the degree 1 section.
For a $G_{24}$ curve, let $R_2$ count axial inertia 2 and $s=s_4+s_7$ count cusps according as the degree 14 direction appears in the derivatives of invariant sections of degree 4 or 7.
Put $x=g-1$, $j=2t-s$.
The comparison is $2t=2x+s+R_2/2$, $a=-24t+13s-2x$, $j=2x+R_2/2$.
Hence $a\le t$ gives $j>0$, $s\le25j+4x\le27j$, $a\le14j$.
The target associated with the invariant of greatest degree $Q=E_{14}(4S_4+7S_7)$ has degree $-7j-3s_4<0$.
Its lower coefficient degrees are $-5j-3s_4$ and $-s-4j-3s_4$, so \cite[Lemma~3.5]{NakanoWeightedConfigurations2026} gives $\dim\operatorname{Hom}(f^*T_X,Q)=1$.
Then \cite[Proposition~2.2]{NakanoWeightedConfigurations2026} requires two coincident labels and $a\ge21j$, a contradiction.
Without cusps, $a+2g-2=-24t$ gives $25a\le2-2g$, also impossible.

\subsection{The primitive Eisenstein groups}
Put $K=\mathbb Q(\omega)$, $\omega^2+\omega+1=0$, $\mathcal O=\mathbb Z[\omega]$, $\delta=1+2\omega$.
Initially $6\mathcal L$ descends and $a\le3t$~\cite[Proposition~2.11]{NakanoWeightedConfigurations2026}, by Lemma~\ref{ew-fields-plane-bound}.
The full horizontal possibilities are $A_3,B_3,G(3,1,3),G(3,3,3),G(6,p,3)$ for $p=1,2,3,6$, and $G_{25},G_{26}$.

\label{ew-finite-primitive}
For $G_{25}$, use the $E_6$ deformation $x^3+y^3+z^3+a_1yz+a_2y+b_2z+a_3$ at $(a_1,a_2,b_2,a_3)=(1,-1/4,-1/4,5/108)$.
With $u=(y-z)/2$, its transverse Milnor basis $1,u,u^2,x,xu,xu^2$ gives an isomorphic Kodaira--Spencer map~\cite{ShepherdBarron}.
Put $s=(a_2+b_2)/2+1/4$, $w=(a_2-b_2)/2$, $c=a_3-5/108$.
The regular order 3 element has fixed weights $6,9,12$, centralizer $G_{25}$, and linear terms $(P_6,P_9,P_{12})=(-3s,2w,c+s/3)$~\cite{Springer}.
Reversing weight 9 gives $G_{26}$, of degrees $6,12,18$.
Its two models have first theta minors $\left(\begin{smallmatrix}-2/3&0\\1/9&1\end{smallmatrix}\right)$ and $\left(\begin{smallmatrix}1/9&1\\1/6&0\end{smallmatrix}\right)$.
For $G_{25}$ the normal differential is $\operatorname{diag}(q^2,q^2,q^3)\left(\begin{smallmatrix}b_6&0&*\\0&b_9&0\\0&0&b_{12}\end{smallmatrix}\right)$, $b_6b_9b_{12}\ne0$.
Weight preserving equivariant changes preserve these minors.
The degree 1 section is nonzero at the three $G_{25}$ points $(-\zeta^{-1}/4,-\zeta/4,5/108)$, $\zeta^3=1$, and at the unique $G_{26}$ origin obtained by the involution and degree 3 isogeny.
Thus the theta projections have the stated orders, trivial fiber characters, and width 1.

In the following calculations $\sigma=u/2+2v/3+5w/6$, with $u,v,w\ge0$ integers, counts product contributions.
For $G_{26}$, $s_1,s_2$ count $1234,1346$ cusps and $j=6t$.
There is no irreducible larger group, and
\[
 j/3=2g-2+s_1+s_2+\sigma,\quad
 a=10s_1+14s_2+\sigma-19j/3,\quad h=-3j+4s_1+6s_2.
\]
Since $j\in\mathbb Z$, the first equation gives $3\sigma\in\mathbb Z$.
These equations force $g=0$, $h<0$, $a=s_1+s_2+2h+2$.
The lower residual degrees are at most $(h-j)/2,h/3$, so the highest Hom space has dimension 1 and $a\ge-3h$~\cite[Lemma~3.5]{NakanoWeightedConfigurations2026}.
Equivalently, $21s_1+13s_2+39\sigma\ge82$ and $24s_1+14s_2+45\sigma\le92$.
The exhaustive bounds are
\[
\begin{array}{c|rrrrrr}
(s_1,\sigma)&(0,0)&(1,0)&(2,0)&(0,2/3)&(1,2/3)&(0,1)\\\hline
 \text{lower }s_2&7&5&4&5&3&4\\
 \text{upper }s_2&6&4&3&4&2&3.
\end{array}
\]

For $G_{25}$, let $R$ count its only irreducible larger group $G_{32}$.
The comparison, retaining the actual finite character, is
\[
\begin{gathered}
 2t=2g-2+s+5R/6+\sigma,\quad a=8s-29t+28R/3+\sigma,\\
 \deg(E_6,E_9,E_{12})=(-6t,-9t-R/2,-12t),\\
 h=\deg E_{12}(3S+3D_R)=6-6g-6t+R/2-3\sigma.
\end{gathered}
\]
It gives $g=0$, $h\le0$.
For the matrix at $G_{32}$, extract $B$ by Section~\ref{ew-compactapp-matrices}, divide each column by its nonzero first entry, and set $A_{ij}=B_{i+1,j+1}-B_{i+1,1}B_{1,j+1}$.
The matrix $M=\operatorname{diag}(v^2,v^3,v^4)A\operatorname{diag}(1,1,v^{-1})$ and its Schur complement $D$ have entries \(A_{11}=-650/297\) and \(D_{11}=(-19661200/649539)/A_{11}\).
Their determinants are \(\det A=-19489164500000/1162261467\) and \(\det D=\det A/A_{11}\).
The degree 6 coefficient has residual degree $-4t+2+R/6-\sigma<0$.
The other coefficients have zeros $(2,3)$ at cusps and $(3,2)$ at $G_{32}$, with $D_{11}(b_{12}/v^2)(0)=D_{12}(b_9/v^3)(0)$.
Surjective evaluation at the distinct $R$ points gives $\dim\operatorname{Hom}(\mathcal N,E_{12}(3S+3D_R))=\max(\eta+1,0)+1$, $\eta=-t+2-R/3-\sigma$.
Substitution in the preceding equations is exhaustive under the following conditions.
\begin{center}
\begin{tabular}{p{.12\linewidth}|p{.45\linewidth}|p{.32\linewidth}}
 &integer inputs and bounds&$(a,h,\dim\operatorname{Hom})$\\\hline
 $\eta\ge0$&
 $0\le R\le10$, $0\le s\le4$, $0<6t<20$, $6t+R$ even, $1\le a\le3t$&
 $(4,-1,2)$, $(2,-1,2)$, $(3,0,3)$\\
 $\eta<0$&
 The same bounds, $a\ge-3h$, $R+39\sigma/16\le3$&
 No solution.
\end{tabular}
\end{center}
The outputs $(4,-1,2),(2,-1,2)$ contradict \cite[Proposition~2.2]{NakanoWeightedConfigurations2026}.
For $(3,0,3)$, the identities $M_i=L_i^2\otimes\det(V)^{-1}$ and $a\equiv\deg V\pmod2$ make the contribution $(m-1)(e+1)$ of each negative coincident component even when $a$ is odd.
This excludes the remaining Hom dimension, every $G_{25}$ cusp, and the $G_{32}$ points through their actual parabolics.

\subsection{The sixth roots of unity}
Rains's 16 monomial actions come from $E:z^2=y^3+x^6$, with $d_2\mid2$, $d_3\mid3$, $d_6\mid6$; no translation isogeny occurs because $1-\zeta_6$ is a unit~\cite[Theorem 3.6, Example 3.11]{Rains}.
Put $d_0=\gcd(2d_3,3d_2)$, $g_0=\gcd(d_6,d_0)$, $q=6/\operatorname{lcm}(d_2,d_3,d_6)$, $p=6/d_6$.
The full group is $G(6,q,3)$, its subgroup with trivial fiber character is $T=G(6,p,3)$, the primitive degrees are $(d_6,2d_3,3d_2,6)/g_0$, and the width is $\kappa=d_0/g_0$.
For $p>1$ put $B=(d_6+2d_3+3d_2+6)/d_0$, $\nu=6/d_0$, $\eta=d_6/d_0$, and $\ell=3$ for $d_0=1$, $\ell=\nu$ otherwise.
Let $R,\sigma$ count $G(6,p,4)$ and product contributions, and let $s_0$ count $d_0=1$ ends, where the curve has a semicubical cusp and saturated tangent bundle $T_C(S_0)$.
Projection of the inverse differential to the product invariant and to the elementary symmetric invariant of greatest degree gives simultaneous everywhere surjective quotients with $h_P=-18t/p+R/p+\sum\eta_c$, $h=-12t+R+\sum\nu_c$, and
\begin{equation}
\label{ew-fields-six}
\begin{gathered}
 2t=2g-2+s+5R/6+\sigma,\quad
 a=\sum B_c+(17/6+1/p)R+\sigma-(20+18/p)t,\\
 ph_P=h-6t,\qquad a=(2p+1)h_P+18t+2-2g-2s_0.
\end{gathered}
\end{equation}
The other coefficient degrees for the first of these two targets are $h_P+6t-R-\sum\ell_c$ and $h_P-h$.

\label{ew-finite-six}
To recover the local inputs, use $S_1=\sum w_i^6$, $S_2=\sum_{i<j}w_i^6w_j^6$, $P=(w_1w_2w_3)^{6/p}$.
The full normalizer preserves their lines because coordinate axes contain more mirrors than other intersection lines.
Primitive alternatives are excluded by the 18 order 2 exchange reflections, compared with zero and nine in $G_{25},G_{26}$.
The two theta projections onto $(dS_1,dS_2)$ have matrix $\left(\begin{smallmatrix}1&1\\1&0\end{smallmatrix}\right)$; an equivariant change $\xi_i=bw_i+O(w_i^7)$ preserves the required zeros.
At larger points, $v=u^6$, $dU_2=v\,dS_1+dS_2$, $dU_3=v\,dS_2$, $dU_0=u^{6/p}dP$.
A frame character $\zeta^{-r}$ contributes $r/6$ after the allowed pole, giving the fractional $\eta$ in \eqref{ew-fields-six}.

The cases $p=1,2$ reduce to the following inequalities.
For $p=1$, $(A,b)=(4,1),(7,2),(10,3),(17,6)$, all widths are 1, and the $2366$ tangent correction cancels the removed determinant order.
Put $j=6t$, $V=\sum(A_c-1-2b_c)\ge2\sum b_c/3$.
Then $g=0$, $h=-3j+\sum b_c+R<0$, $a=V+2h+R+2$, $8h\le3a-6j-R-6$, hence $11a\ge6j+R+6$, contradicting $a\le j/2$.
For $p=2$, $(B,\nu,\ell)=(14,6,3),(17/2,3,3),(6,2,2),(7/2,1,1)$.
Here $g=0$, $a=4h_P+9t+R/2+V_2+2$, $V_2=\sum(B-1-2\nu)>0$, so $h_P<-3t/2$, $h=2h_P+6t\ge0$.
Both residual degrees are negative; the one dimensional Hom condition gives $a\ge-3h_P>9t/2$.

For $p=6$, $13h_P+11t+2g+5R/3+2\sigma\le2$ gives $h_P<0$.
If $g\ge1$, the negative residual degrees and $a\le t$ contradict $a\ge-3h_P\ge3t$, so $g=0$.
If $h<0$, the simultaneous rank 2 quotient gives $h_P=h=-a$, $t=5a/6$, $s_0=1+a/2$.
The bound $\sum\nu\le3s+3s_0$ gives $5a/2+3R/2+3\sigma\le9$, hence $a=2$, $s_0=2$, impossible for a line or immersed conic.
If $h\ge0$, put $h_P=-b$.
The GIT kernel and negative residual degrees give $h\ge b$, $a=3b+2v$, $0\le v\le h/4$, $s_0=b+1+3h/2-v$.
The five outside sections give a base point free map $\gamma:\mathbb P^1\to\mathbb P^3$ of degree $b+v$ whose differential vanishes at least $s_0-v$ times.
For span dimension $\rho$, its Wronskian gives $\rho(s_0-v)\le(\rho+1)(b+v-\rho)$, $(3\rho-1)h/4\le b-\rho(\rho+2)$.
This contradicts $h\ge b$ for $\rho=2,3$; a line germ cannot be semicubical.

For $p=3$, $a\le26(1-g)-23R/3-12\sigma$ gives $g=0$.
The identity $a=6h_P+12t+R/3+\sum(\nu-2\mathbf1_{d_0=1})+2$ gives $h_P<-3t/2$.
The residual degrees are $-2R/3+\sum(\nu/3-\ell)<0$ and $-2h_P-6t\le0$.
Equality in the second gives $h_P=h=-a$, $a=3t$, three coincident labels, and a kernel line of degree $3a-2-s_0$ mapping to degree $a-3s_0$, hence $a+s_0\le1$.
The remaining triples $(d_2,d_3,d_6)=(2,1,2),(1,3,2),(2,3,2)$ have full groups/widths $(G(6,3,3),1),(G(6,1,3),3),(G(6,1,3),3)$.
The first has no order 6 reflection; in the others its fiber character is nontrivial on the connected fixed torus.
Transfer excludes every actual interior order 6 reflection~\cite[Lemma~2.15]{NakanoWeightedConfigurations2026}.
Thus $a=1$, $t=1/3$, $s_0=0$, and the plane has generic group $G(6,3,2)$: Lemma~\ref{ew-fields-plane-bound} excludes ranks 0,1 and the other rank 2 groups by their quadratic minimum.

\subsection{The fixed surface and its full lattice}
Let $Z\simeq\mathbb P^2$ have generic group $T=G(6,3,2)$.
The normal representation invariants have degrees 4,6; along the $G(6,3,3)$ curve $D_6$, their inverse coarse frames are $u^{-2}e_4,v^{-1}e_6$, $v=u^6$.
The actual line bundles satisfy $Q_4^{\otimes3}=\mathcal L^{-12}|_Z\otimes\mathcal O_Z(D_6)$ and $Q_6=\mathcal L^{-6}|_Z\otimes\mathcal O_Z(D_6)$, including finite characters because the punctured plane has unchanged Picard group and trivial fundamental group.
Hartogs extension of the nonzero inverse determinant gives $Q_4=Q_6=\mathcal O_Z(-1)$, $\lambda=1/3$, $\deg D_6=1$, orbifold Euler number $1/3$.
Every cusp lies on $D_6$: the connected $T$ fixed torus contains the origin and has constant fiber character, so lifting the entire group gives $G(6,3,3)$ with axial inertia 6 and actual width 1 or 3.

\begin{lemma}
\label{ew-fields-surface-lattice}
The full effective arithmetic group of $Z$ is conjugate to the subgroup $H_2$ specified in the following proof.
Its mirror of order 6 has signature $(0;6,6;1\text{ cusp})$ and coarse degree 1.
\end{lemma}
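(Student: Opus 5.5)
The plan is to identify the lattice of the fixed ball of \(Z\) explicitly, then pin down its full effective group by volume and by the local data already established on \(Z\). The identification of the full group, as opposed to one commensurable with it, is the step I expect to be hardest.

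\emph{The fixed ball and its lattice.} Let \(B=\mathbb B(W)\) be the fixed ball of \(T=G(6,3,2)\), with \(W\) of signature \((1,2)\). I would first show that \(W\) and its positive normal complement \(U\) are defined over \(K=\mathbb Q(\omega)\). The argument is the Borel density and central idempotent argument from the proof of Lemma~\ref{ew-fields-plane-equality}: \(T\) acts irreducibly on \(U\), and the full normalizer preserves the splitting. Put \(\Lambda_W=\Lambda\cap W\) and \(\Lambda_U=\Lambda\cap U\) for the full preserved lattice \(\Lambda\). The gluing \(\Lambda/(\Lambda_W\oplus\Lambda_U)\) is controlled by the discriminant of the \(G(6,3,2)\) root lattice in \(U\), which is a power of \(\delta=1+2\omega\) times a unit. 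This leaves finitely many genera for \(\Lambda_W\): the unimodular lattice \(\langle1\rangle\oplus\langle-1\rangle^2\) over \(\mathcal O\), and its \(\delta\)-modular variants.

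\emph{Fixing the group.} By \cite[Lemma~2.12]{NakanoWeightedConfigurations2026}, the effective group \(H\) of \(Z\) is the image of the full normalizer acting on \(B\). It contains the image of \(\mathrm{U}(\Lambda_W)\) cut out by the gluing condition, and \(\chi_{\mathrm{orb}}(Z)=1/3\). I would compare this Euler number with Holzapfel's value for \(\mathrm{PU}(\Lambda_W)\) over the Eisenstein integers. The ratio fixes the index of \(H\) in, or over, the Picard modular group. Candidates of that index are then filtered by the data already proved:
\begin{itemize}
\item the coarse space is \(\mathbb P^2\);
\item the only branch divisor is the line \(D_6\), of index \(6\);
\item the cusps lie on \(D_6\), with widths \(1\) or \(3\) and trivial fiber character on the \(T\)-fixed torus.
\end{itemize}
The surviving group is the one I would define as \(H_2\): the stabilizer of the \(\delta\)-gluing data, extended by the order-\(6\) reflections. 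Extensions of index \(3\) by \(\delta\)-torsion automorphisms would be excluded by the order-\(3\) fiber character and by the width.

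\emph{The mirror.} Let \(r\in\Lambda_W\) be a root of the order-\(6\) reflection. Its complement \(r^\perp\cap\Lambda_W\) has signature \((1,1)\), so its stabilizer is a Fuchsian group commensurable with \(\mathrm{PSL}(2,\mathbb Z)\). The curve \(D_6\) is a line, which gives coarse degree \(1\) and genus \(0\). Since every cusp of \(Z\) lies on \(D_6\), the mirror has exactly one cusp. The cone points are where \(D_6\) meets the \(G(6,3,3)\) loci. Their number and orders would come from the orbifold adjunction for \(K_Z+\tfrac56D_6=3\mathcal L|_Z\) restricted to \(D_6\), together with \(\lambda=1/3\). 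These should force \(\chi_{\mathrm{orb}}(D_6)=2-\tfrac56\cdot2-1\), up to sign, and so the signature \((0;6,6;1\text{ cusp})\). I would cross-check this against the covolume of \(\mathrm{U}(r^\perp)\).
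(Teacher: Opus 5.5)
\textbf{Main gap: identifying the group.} The covolume of the effective group $H$ fixes only its index in a maximal lattice. The maximal groups $M_{\epsilon\eta}$ have many subgroups of index $24$ or $8$, and there is no reason for these to be congruence subgroups. Your plan has no procedure that lists them all.

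Your description of $H_2$, as a stabilizer of $\delta$-gluing data extended by order-$6$ reflections, is therefore a guess. It also rests on the assertion that $H$ contains the image of $\mathrm U(\Lambda_W)$ cut out by gluing, which has the inclusion backwards. Since $\Gamma$ is an arbitrary arithmetic lattice, $H$ is only contained in some maximal lattice, and in the surviving case with index $24$.

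In the paper, $H_2$ is the stabilizer of a point in an explicit transitive action of $M_{00}$ on $24$ points. It is found as follows:
\begin{itemize}
\item take complete presentations of the maximal groups: Falbel--Parker for $M_{00}$, Zhao for $M_{01}$, and Reidemeister--Schreier through $H_\eta=M_{0\eta}\cap M_{1\eta}$ for $M_{1\eta}$;
\item enumerate every transitive action of the required degree;
\item discard candidates using the full point groups (order $72$, effective $G(6,1,2)$), a fixed point of $\sigma$ in every cusp orbit, and the mirror degrees $dk/12$, $dk/24$ compared with $\deg D_6=1$.
\end{itemize}
Without such an exhaustive search, the first assertion of the lemma is not proved.

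\textbf{The finite list of maximal lattices.} Your route to finitely many maximal lattices fails as stated. At this point the discriminant of the lattice $\Lambda$ preserved by $\Gamma$ is unknown; the inclusion $\Gamma\subset\mathrm{SU}(h_5,\mathcal O)$ is derived only afterwards, from $H_2$. So gluing with the $G(6,3,2)$ part does not bound the genus of $\Lambda_W$. Your candidates also omit the non-hyperspecial choice at the inert prime $2$, whose local factor $p^2-p+1=3$ is compatible with $jA=8n$. The paper instead uses Prasad's volume formula with $n\le3^{1+|I|}$ and the local factors. This gives four groups $M_{\epsilon\eta}$ with target indices $24,24,8,8$, and $M_{10},M_{11}$ must then be excluded separately.

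\textbf{The branch divisor and the mirror signature.} One of your filters is false: $D_6$ is not the only branch curve. Indeed $K_Z+\Delta_Z=3\mathcal L|_Z=\mathcal O_Z(1)$ forces $\deg\Delta_Z=4$, so your adjunction $K_Z+\frac56D_6=3\mathcal L|_Z$ is wrong. Your signature computation simply inserts two points of order $6$ and one cusp. Gauss--Bonnet gives only $\chi=-2\mathcal L\cdot D_6=-2/3$, and this does not determine the signature: index $2$ subgroups of $\Delta(2,6,\infty)$ realize $(0;6,6;1\text{ cusp})$, $(0;2,2,3;1\text{ cusp})$ and $(0;3;2\text{ cusps})$. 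You also never show that $Z$ has a single cusp. The paper reads the signature off the explicit mirror normalizer of $H_2$, which has effective index $2$ in $\Delta(2,6,\infty)$.
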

\begin{proof}
\label{ew-finite-surface}
Put $\pi=1-\omega$, $h_0=-\left(\begin{smallmatrix}0&0&1\\0&-1&0\\1&0&0\end{smallmatrix}\right)$.
The four possibilities are the projective full unitary groups $M_{\epsilon\eta}$ of $L_{\epsilon\eta}=\mathcal O(2^\epsilon\pi^\eta)^{-1}e_1\oplus\mathcal Oe_2\oplus\mathcal Oe_3$, $\epsilon,\eta\in\{0,1\}$~\cite{StoverVolumes,PrasadYeungPlanes}.
For principal Euler number $A/24$, normalizer index $n$, and target index $j$, the volume formula gives $jA=8n$.
The bound $n\le3^{1+|I|}$, split Iwahori factors $\ge456$, and inert factors $p^2-p+1=3,21,\ge111$ at $2,5,\ge11$ force $I=\varnothing$, $n\le3$, $A\le24$, and allow only the vertices at 3 and choices at 2.
The determinant of $\operatorname{diag}(1,\omega,1)$ is not a cube, giving $n=3$.
The Euler numbers are $1/72,1/72,1/24,1/24$, hence target indices $24,24,8,8$.
Each group has one cusp since the class number and its 3 torsion are 1 and $I=\varnothing$~\cite[Theorem 4.8]{StoverCusps}.

Write $E_{ij}$ for matrix units of size 3.
For $M_{00}$ use $P=I+E_{12}-\omega E_{13}+(\omega-1)E_{22}+\omega E_{23}$, $Q=I+E_{12}-\omega E_{13}-2E_{22}-E_{23}$, $R=-E_{13}-E_{22}-E_{31}$, $J=RP$, $\sigma=QP^{-1}$.
Its complete presentation is $J^3=\sigma^6=(J\sigma^{-1}J)^4=1$, $[\sigma,(J\sigma^{-1}J)^2]=1$, and cusp group $\langle P,Q\rangle$~\cite[Theorem 5.9, Proposition 5.11]{FalbelParker}.
For $M_{01}$ use $S=I+E_{12}+\overline\omega\delta E_{13}/3-2E_{22}-E_{23}$, $T_0=I-\delta E_{13}/3$, $I_s=-\delta E_{13}/3-\omega E_{22}-\delta E_{31}$.
Putting $j=S^{-1}I_s$, $r=T_0^{-1}I_s$, $z=jr^{-1}j=I_s$, its complete presentation is $j^3=r^3=z^{12}=1$, $[r,z^2]=1$, and cusp group $\langle z^2,S\rangle$~\cite{Zhao}.
These presentations come from fundamental domains.

Set $H_\eta=M_{0\eta}\cap M_{1\eta}$.
Reduction modulo 2 has image of order 216, transitive on nine isotropic lines; $H_\eta$ fixes the first line and has indices 9,3.
Reidemeister--Schreier rewriting of this nine point action gives ten generators and 36 complete relations.
For $U=I+\delta E_{13}/2$, generators of $M_{10}$ are $h_1=J^{-1}\sigma J$, $h_2=\sigma$, $h_3=(J\sigma J^{-1})\sigma^3(J\sigma^{-1}J^{-1})$, $U$; those of $M_{11}$ are $k_1=S^{-1}$, $k_2=r^{-1}I_sr$, $U$.
Their right coset representatives over $H_\eta$ are $1,U,UC$, with $C=h_3,k_2$ respectively.
To obtain a complete presentation, rewrite $r_i g=w_{i,g}r_{\kappa(i,g)}$ by selecting the unique representative whose quotient fixes the first isotropic line modulo 2.
Enumerate $w_{i,g}$ in $h_1,h_2,h_3$ or $k_1,k_2$ until the projective matrices agree.
The nine point presentation verifies that these generate $H_\eta$, guaranteeing termination; the rewritings and its 36 relations present $M_{1\eta}$.

Enumerate transitive actions by extending the first undefined generator image, propagating inverses and all relators, rejecting contradictions or excess points, and relabeling breadth first from every base point.
A branch assigns at most $n$ images for each of the finitely many generators, where $n=24$ or $8$; each unassigned image has at most $n$ choices.
Propagation only fills these entries or rejects the branch, so the enumeration terminates without a bound on word length or running time.
Every complete transitive action extends a branch, and relabeling from every base point identifies precisely simultaneous conjugacy, so the procedure is exhaustive.
At index 24 in $M_{0\eta}$, require a conjugate of $\sigma$ and exclude $J,(\sigma J)^3$ or $j$ respectively; their isolated full stabilizers of orders 3,4 contain no reflection~\cite[Section 7, Tables 9--10]{DerauxXu}.
This leaves eight and six actions.
At index 8 in $M_{1\eta}$, $\Delta=\Lambda\cap H_\eta$ satisfies $3[H_\eta:\Delta]=8[\Lambda:\Delta]$ and $[\Lambda:\Delta]\le3$, hence the indices are 8,3.
There are 20,12 eight point $H_0,H_1$ actions.
Induce them to $(a,i)g=(aw_{i,g},\kappa(i,g))$ on 24 points; extensions are the 64 blocks of size 3 through the base point, one point per fiber.
Testing every generator and inverse translate for equality or disjointness and canonically relabeling leaves four actions for each $\eta$.

For $\epsilon=1$ the full cusp groups are $\langle P,Q,U\rangle$, $\langle z^2,S,U\rangle$.
Their horizontal module is $\mathcal O$ and central period is $\delta/(2^\epsilon3^\eta)$.
Since $1+\overline\omega$ is a unit, every primitive order 6 cusp element is conjugate to $\sigma^{\pm1}$.
Requiring a $\sigma$ fixed point in every cusp orbit leaves the full $4,4,2,2$ cases.

For the full stabilizer tests put $D=\operatorname{diag}(1,1,-1)$, $A=D^{-1}\sigma D$, $J_+=D^{-1}JD$, $R_+=D^{-1}RD$, $r_2=J_+AJ_+^{-1}$, $r_3=J_+^{-1}AJ_+$, $b=r_2r_3r_2^{-1}$, $d=r_2J_+^2$.
The standard point groups at $[-1,0,1]$, $[\omega^2,0,1]$ are $\langle A,R_+\rangle$, $\langle A,b,d\rangle$, of orders 12,72.
Enumerate their projective matrices fixing each coset and test local reflections by $\operatorname{rank}(M-\alpha I)=1$, $Mz=\alpha z$.
For mirror orbit length $d$ and generic inertia $k\mid6$, the effective index is $dk/6$.
The standard full norm 1 mirror normalizer $\langle A,R_+,b\rangle$ has kernel $\mu_6$ and effective group $\Delta(2,6,\infty)$~\cite[Section 3]{PaupertWells}.
The $M_{01},M_{11}$ normalizers have full effective group $\mathrm{PSL}(2,\mathbb Z)$: they contain its generators, elliptic orders are $2,3,6$, order 4 fails the valuation at 2 of $(\operatorname{tr}M)^2=2\det M$, and the minimal nonuniform covolume is $1/6$.
Thus $-\chi(C)=2\mathcal L\cdot C=2\deg C/3$ gives degrees $dk/12,dk/24$ respectively.
The complete results are as follows; paired orders give the full point group and its reflection subgroup.
\begin{center}
\begin{tabular}{c|c|l}
 group&cases&exclusion or surviving case\\\hline
 $M_{00}$&4&Point order 36; orders $(12,4),(6,1)$; $H_2$.\\
 $M_{01}$&4&Mirror degrees $1/2,3/2,3/2,3/2$.\\
 $M_{10}$&2&Mirror degree 2; point order 36 on $\operatorname{diag}(1,2)$.\\
 $M_{11}$&2&Mirror degrees $2,1/2$.
\end{tabular}
\end{center}
The deficient point groups in $M_{00}$ violate the required order 72 or the Chevalley--Shephard--Todd theorem.
For the binary lattice, $|a|^2+2|b|^2=1$ gives full group $\mu_6^2$, excluding the required effective $G(6,1,2)$ of order 72.
All fractional or larger degrees contradict $\deg D_6=1$.
The $H_2$ mirror has effective index 2 in $\Delta(2,6,\infty)$, proving its signature and degree.
\end{proof}

\subsubsection{The linear group and its preserved lattice}
\label{ew-finite-Htwo}
Use negative lines, $h_3=\left(\begin{smallmatrix}0&0&1\\0&1&0\\1&0&0\end{smallmatrix}\right)$, and
\[
 J=\begin{pmatrix}0&0&1\\0&-\omega&\omega\\1&1&\omega\end{pmatrix},\quad
 A=\operatorname{diag}(1,-\omega^2,1),\quad C=\omega I,\quad S=JA^{-1}J.
\]
The complete central presentation is $J^3=C$, $A^6=1$, $S^4=C^2$, $[A,S^2]=1$, $C^3=[J,C]=[A,C]=1$.
The subgroup $H_2$ is the inverse image of the stabilizer of 1 in the right action defined by the following disjoint cycles, with $C$ acting trivially:
\[
\begin{aligned}
 J={}&(1,2,3)(4,6,7)(5,9,10)(8,16,12)\\
 &\quad(11,20,21)(13,22,14)(15,23,17)(18,24,19),\\
 A={}&(2,3,5,11,8,4)(6,12,13)(7,14,18,9,17,15)\\
 &\quad(10,19,20)(16,22)(21,24).
\end{aligned}
\]
Choose the first breadth first word $t_i$ from 1 to $i$, in alphabet order $J,A,J^{-1},A^{-1}$.
The 72 edges are $e_{i,g}=t_i g t_{ig}^{-1}$ for $g=J,A,C$; inverse edges are $e_{i,g^{-1}}=e_{ig^{-1},g}^{-1}$.
The 23 tree edges and seven central relations rewritten at all 24 points give 191 relations, of full rank with nonunit Smith entries $6,6$, hence abelianization $(\mathbb Z/6\mathbb Z)^{\oplus2}$.
Write $\chi(e_{i,g})=\omega^{x_{i,g}}$ and impose $x_{i,C}=1$.
The conditions are value 1 on norm 1 reflections of orders 6 and 2, value $\omega^2$ on those of order 3, value 1 on norm 2 reflections of order 2, and agreement with the degree 4 normal character at the two full points of order 72.
To reproduce them, put $r=C^{-1}S^2$, $b=(JAJ^{-1})(J^{-1}AJ)(JA^{-1}J^{-1})$.
Take all orbits of $\langle A,r,b\rangle$ and $\langle A,S\rangle$ in the displayed action and rewrite the conjugates of the appropriate inertia powers from an orbit representative.
At base points 1 and 23 impose exponents $0,0,1$ on $A$, $JAJ^{-2}AJ^2A^{-1}J^{-1}$, and $JAJ$, conjugated by $t_i$.
The 191 relations have rank $70$ over $\mathbb F_3$; imposing $\chi(C)=\omega$ gives rank $71$, and $\chi(A)=1$ determines a unique character.
Its values $\chi(e_{i,g})=\omega^{x_{i,g}}$ are $x_{i,C}=1$ for every $i$, and
\[
\begin{aligned}
x_{i,J}=1&\quad\text{for }i\in\{2,6,9,12,17,20,22,24\},\\
x_{i,A}=1&\quad\text{for }i\in\{2,16,17,20,22\},\\
x_{i,A}=2&\quad\text{for }i\in\{11,14,19,24\},
\end{aligned}
\]
with all remaining exponents zero.
Substitution in the rewritten relators and the preceding inertia words verifies every stated local value.
Evaluate any word by summing its Schreier exponents.

On the fixed space $W$ and its positive normal space, the actual full normalizer is
\[
 \begin{gathered}
 \rho(M)=\operatorname{diag}(\chi(M)^{-1}M,(\det M)^{-1},1),\\
 \iota(t)=(\det t)\operatorname{diag}(I_3,t),\quad t\in G(6,3,2).
 \end{gathered}
\]
Every block differs from $\rho(M)$ by $\iota(T)$, since the determinant condition first gives a scalar in $\mu_6$ and the character conditions reduce it to $\chi(M)^{-1}$.
Both order 6 elliptic points of the mirror therefore have actual ambient group $G(6,3,4)$.
The parabolic classification excludes primitive larger groups.
A product would require a reflection of order 6, already excluded.
Standardize one such full point group; no relative standardization of the second point is assumed.
Put $M_1=A$, $M_2=JAJ$, $M_3=JA^{-1}JAJA^2J^{-1}$.
Their traces are $\omega+3,-\omega,-\omega$.
Solving $XM_i=M_iX$ gives commutant $KI$, and the corresponding intertwiner equations have dimension 1 over $K$.
The basis $(\omega^2,0,1),(0,1,0),(-\omega,0,1)$ has Gram matrix $\operatorname{diag}(-1,1,1)$.
For the first full $G(6,3,4)$ group, let $P_i$ project to a positive axis and put $F_i=-I+2P_i$.
With $\zeta=1+\omega$, let $D_{ij}$ act by $\zeta,\zeta^{-1}$ on the $i,j$ axes and by 1 on the other axes.
The identity
\[
 P_i=\omega^2(D_{ij}-I)(D_{ij}-\zeta^{-1}I)-\omega(F_i+I)
\]
places $P_i$ in the integral group algebra.
For $B_3=\left(\begin{smallmatrix}\omega^2&0&-\omega\\0&1&0\\1&0&1\end{smallmatrix}\right)$, the ideals generated by entries $12,13$ and by entries $21,31$ of the 72 matrices $B_3^{-1}e_{i,g}B_3$ are both $\delta\mathcal O$.
Divisibility of all generators and the entries $\omega-1,-2(\omega-1),-\delta,2\delta$ of $B_3^{-1}M_3B_3$ in positions $12,13,21,31$ prove both ideal equalities for the original full group.
The trace and commutator calculation above gives a conjugacy over $K$, and the actual monomial interchanges connect the normal axes to give the form $h_5=\operatorname{diag}(-1,1,1,1,1)$.
The same full $\Gamma$ preserves an $\mathcal O$ lattice by \cite[Proposition~2.11]{NakanoWeightedConfigurations2026}.
Integral projectors from the first $G(6,3,4)$ group split this lattice as $d\mathcal O\oplus\mathcal O^4$.
The ideals generated by the entries in its two off diagonal blocks are $\delta\mathcal O$, so $\delta\mathcal O\subset d\mathcal O\subset\delta^{-1}\mathcal O$.
The three possible lattices are exchanged by duality; the unique intermediate module in $\mathcal O/\delta^2\mathcal O$ is the middle lattice.
Thus every isometry of any of them preserves the middle lattice, and the original full group satisfies
\[
 \Gamma\subset\mathrm{SU}(h_5,\mathcal O).
\]
\subsubsection{An actual reflection of order 6}
\label{ew-finite-cusp-six}
For a positive vector $p$ in the already fixed form $h_5$, put
\[
 P_p=\frac{pp^*h_5}{p^*h_5p},\qquad B_p=-I+2P_p.
\]
The following basis is integral and has determinant 1:
\[
 \begin{gathered}
 v=(\delta,1,1,1,0),\quad q_1=(1,0,\omega^2,-\omega,0),\\
 q_2=(1,0,-\omega,\omega^2,0),\quad q_3=e_4,\quad t_0=e_1+\omega v.
 \end{gathered}
\]
Write $Q_v=(v,q_1,q_2,q_3,t_0)$, $B=\operatorname{diag}(B_3,I_2)$, $s=(1-\omega,2\omega^2,-1-\omega,0,0)$, and
\[
 \begin{gathered}
 G=B^{-1}\operatorname{diag}(\omega^{-1}M_3,(\det M_3)^{-1},1)B,\\
 T_n=\operatorname{diag}(I_3,-I_2),\quad F_0=\operatorname{diag}(1,V_0),\\
 V_0=\begin{pmatrix}0&\omega&0&0\\-\omega&0&0&0\\0&0&\omega&0\\0&0&0&1\end{pmatrix}.
 \end{gathered}
\]
Then $B_s=F_0T_nGF_0^{-1}$ belongs to the same $\Gamma$.
Let $F=\operatorname{diag}(1,\left(\begin{smallmatrix}0&1&0\\0&0&1\\1&0&0\end{smallmatrix}\right),1)$, and let $Z$ be minus the interchange of the first two positive axes.
For $X=FB_sZ^{-1}$ and $Y=FXF^{-1}$, the matrices in the $Q_v$ basis are $N((-1,1,0),-1)$ and $N((-\omega,-1-\omega,0),-1)$, where
\[
 N(b,u)=\begin{pmatrix}1&-\overline b^t&u\\0&I_3&b\\0&0&1\end{pmatrix}.
\]
They fix $q_3$ and change $q_1+av$ to $q_1+(a+1)v$ and $q_1+(a+\omega^2)v$.
Thus they remove any $a\in\mathcal O$.
Every full cusp element acts monomially on the unimodular positive horizontal lattice; its scalar is in $\mu_6$ and a trivial horizontal action would give $\lambda^5=1$, hence $\lambda=1$.
If that full action preserved $q_3$, it would split the horizontal representation as $2+1$, giving positive second Betti number by the circle bundle Gysin sequence.
Thus some actual cusp element moves $q_3$ to $u(q_i+av)$, $i=1,2$.
Conjugation of $B_{q_3}$, the equality $Zq_2=(1+\omega)q_1+\omega v$, and $X,Y$ give $B_{q_1}\in\Gamma$.
For $r_0=(1,1,1,0,0)$, the actual element
\[
 F_1=-\operatorname{diag}\left(1,\begin{pmatrix}0&\omega&0&0\\0&0&-\omega^2&0\\1&0&0&0\\0&0&0&1\end{pmatrix}\right)
\]
satisfies $F_1q_1=-r_0$.
With $D=\operatorname{diag}(1,1,\omega,\omega^2,1)$ and $u_0=(2\omega,2\omega,\omega^2,0,0)$, the identity $P_{DB_{r_0}e_2}=P_{u_0}$ gives $B_{u_0}\in\Gamma$.
Finally put $w_3=JA^{-2}J$, $M'_3=w_3A^2w_3^{-1}$; the character above gives $\chi(M'_3)=\omega^2$.
The actual element
\[
 T_3=B^{-1}\operatorname{diag}(I_3,\omega^2,\omega)
 \operatorname{diag}(\omega M'_3,(\det M'_3)^{-1},1)B
 =\omega I+(\omega^2-\omega)P_{u_0}
\]
has product with $B_{u_0}$ whose polar eigenvalue divided by its mirror eigenvalue is $-\omega$.
All displayed products are exact identities over $K$.

This actual reflection contradicts its prior exclusion and eliminates the three remaining sixth root models.

\subsection{The cubic groups and the last cusps}
Only $G(3,1,3),G(3,3,3),A_3,B_3$ remain.
Their normal representation invariants give $\lambda\ge1/2$ by Lemma~\ref{ew-fields-plane-bound}, hence $a\le2t$ by \cite[Proposition~2.6]{NakanoWeightedConfigurations2026}.
For $G(3,1,3)$, the cusp degrees are $1111,1133,1333$, all widths are 1, their sums are $4,8,10$, and highest poles are $1,3,3$.
Let $A_c,b_c$ be the sum of generator degrees and highest pole order at cusp $c$.
Put $j=3t$, let $R$ count $G(3,1,4)$, $B=\sum b_c$, and $V=\sum(A_c-1-2b_c)$.
The comparison gives $g=0$, $h=-3j+R+B$, $a=V+2h+R+2$, $V\ge B/3$, $(7/3)B\le7j-3R-2$.
The lower residual degrees are negative, the first being $-2j+B-V$; thus the target associated with the invariant of greatest degree has $\dim\operatorname{Hom}(\mathcal N,Q)=1$ and $a\ge-3h$.
\label{ew-finite-cubic}
The finite conditions and their complete outputs are as follows; all counts and $j$ are integers, and outputs are ordered as $(j,R,s_1,s_2,s_3,\sigma;a,h)$.
\begin{center}
\begin{tabular}{p{.57\linewidth}|p{.35\linewidth}}
 \text{inputs and equations}&\text{outputs}\\\hline
 $1\le j\le28$, $0\le R\le6$, $0\le s\le20$, $\sigma=u/2+2v/3+5w/6$;
 $2j/3=-2+s+2R/3+\sigma$,
 $a=\sum A_c+11R/3+\sigma-20j/3$,
 $h=-3j+R+\sum b_c$;
 $1\le a\le j$, $-a\le h<0$, $a\ge-3h$&
 $(6,0,1,4,1,0;6,-2)$,
 $(7,1,0,5,1,0;7,-2)$.
\end{tabular}
\end{center}
Both have $a=3t$, contradicting $a\le2t$.
Transfer now excludes actual order 3 reflections; $2\mathcal L$ descends, and the remaining quadratic minimum gives $\lambda\ge1$, $a\le t$.

For $T=G(3,3,3)$, let $R$ count $G(3,3,4)$.
Its degrees are $3,3,6$; the full normalizer may mix the first two directions, and every cusp is $1113$ of width 1.
Write $P_3=w_1w_2w_3$, $S_1=\sum w_i^3$, $S_2=\sum_{i<j}w_i^3w_j^3$.
At larger points $v=u^3$, $dU_0=u\,dP_3$, $dU_2=v\,dS_1+dS_2$, $dU_3=v\,dS_2$.
At cusps the theta projections are $q\,dP_3$, $-q\,dS_1/3$, $q^3(dS_1+dS_2)$.
The projection of the inverse differential to the coordinate of greatest invariant degree is therefore $3q^{-1}dU_2+q^{-3}dU_3$, including its lower term, and gives an everywhere surjective quotient $\mathcal N\to Q_6$ after pole 3.
For $K_3=\ker(\mathcal N\to Q_6)$, the low degree directions have pole at most 1 on a finite cover trivializing their representation.
Consequently every line subbundle $J\subset K_3$ satisfies $\deg J\le b=-3t+R+s$.
The quotient degrees are $\deg K_3=2b-2R/3$ and $h=-6t+R+3s$.
The tangent comparison gives $2t=2g-2+s+2R/3+\sigma$.
The normal comparison gives $a=6s+3R+\sigma-14t$.
These force $g=0$.
With $j=6t$, the exhaustive conditions and remaining cases are
\begin{center}
\begin{tabular}{p{.57\linewidth}|p{.35\linewidth}}
 \text{inputs and equations}&\text{nine outputs}\\\hline
 $1\le j\le33$, $0\le R\le7$, $0\le s\le13$, $j/3\in\mathbb Z$;
 $\sigma=j/3+2-s-2R/3$,
 $a=6s+3R+\sigma-7j/3$,
 $h=-j+R+3s$, $b=-j/2+R+s$;
 $a\in\mathbb Z$, $\sigma\in\frac12\mathbb Z_{\ge0}$, $1\le a\le j/6$;
 for $R=0$ also $b\in\mathbb Z$&
 Three with $R=0$; two with $R=3$, integral $t$;
 $(a,t,b)=(1,3/2,1/2)$;
 three with $(a,t)=(2,2),(1,5/2),(2,7/2)$.
\end{tabular}
\end{center}
For $R=0$, the unramified representation is projectively trivial on $\mathbb P^1$, so $K_3=\mathcal O(b)^{\oplus2}$; the three negative rows contradict \cite[Proposition~2.4]{NakanoWeightedConfigurations2026}.
The row $(1,3/2,1/2)$ has a positive normal line of degree 1 in $K_3$.
There is no row with $s=0$: for $R=0,3,6$, $[(7R/3+2)/13,(7R/3+2)/12)$ contains no half integer.

The full projective normalizer has order 216, generated by $T$, $\operatorname{diag}(\omega,1,1)$, $\delta^{-1}\left(\begin{smallmatrix}1&1&1\\1&\omega&\omega^2\\1&\omega^2&\omega\end{smallmatrix}\right)$.
Images of a four point frame in the nine mirror lines prove completeness.
Its low degree representation, generated by $D_2=\operatorname{diag}(\omega,1)$, $F_2=\delta^{-3}\left(\begin{smallmatrix}-3&1\\18&3\end{smallmatrix}\right)$, has order 24 and $F_2^2=-I$.
Direct multiplication shows that a triple of the four conjugates of $D_2^2$ has product $I$ only when its factors agree.
For integral $t$, $\mathcal O(t)$ cancels the actual central $\mu_2$ character of $2\mathcal L$; no such cancellation is asserted for half integral $t$.
Thus the $R=3$ integral rows have a global lowest line of degree $-3t+R/3+s$.
One is less than $-a$; the other places the positive line of degree $3a-2=7$ in $K_3$ with $b=0$.

For the last three rows, every immersion of $H$ degree 1 or 2 is a line or conic in a coincidence plane, or moves one label with six fixed.
The splitting $V=\mathcal O\oplus\mathcal O(k)$ gives $a=(7-2m)k+2\sum b_i$, $k,b_i\ge0$.
A conic's positive normal line of degree 4 maps trivially to $Q_6$ and exceeds the $K_3$ bound.
At a $1113$ cusp the exact torus and nine directions are
\[
 A=E^3/K_0,\quad E:\ x_1^3-x_2^3-x_3^3=0,\quad
 K_0=\{(k_i)\in E[1-\omega]^3:\sum k_i=0\},
\]
\[
 (0,\omega^j,-\omega^{-j}),\quad(1,0,\omega^j),\quad(1,\omega^j,0),\qquad j=0,1,2.
\]
They are the three degree 1 theta section values at the fixed points represented by $(u+k_i)$, $u\in E[3]/E[1-\omega]$, $\sum k_i\in E[1-\omega]$.
At least two values are nonzero, proving trivial fiber character and width 1.
The directions lie on a smooth cubic, so at most three lie on a line.
With zero, one, or two triple points, there are at most $7,3,0$ complete moving label curves and $0,1,2$ coincidence planes.
These account for at most $7,6,6$ directions, a contradiction.
Thus neither $G(3,3,3)$ cusps nor $G(3,3,4)$ points occur.

Only Coxeter horizontal groups remain.
Their normal characters may have an unramified $\mu_3$ factor of degree 0, but at a cusp $1=b^5\det F$, $b\in\mu_6$, $\det F=\pm1$ gives $b=\pm1$.
There is no extra cusp inertia, so Lemma~\ref{ew-fields-coxeter} applies and proves Proposition~\ref{ew-seven-fields}.
\section{Seven points over the Gaussian field}
\label{ew-seven-gaussian-section}

In this section, \(X=X_7\), \(H=-K_X\), and \(K=\mathbb Q(i)\).
We assume that \(X=(\mathbb B^4/\Gamma)^{\mathrm{BB}}\) for a nonuniform arithmetic lattice.
All fixed quotients are formed with the full normalizer in this same group \(\Gamma\).
We use \(G_j\) for the Shephard--Todd group numbered \(j\), and \(A_j,B_j\) for the usual Weyl groups.

\subsection{Reflections of order four}

\begin{lemma}
\label{ew-gaussian-g413}
No smooth cusp has full horizontal group \(G(4,1,3)\).
\end{lemma}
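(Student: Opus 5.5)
Suppose a smooth cusp has full horizontal group \(G(4,1,3)\). The plan is to run the same curve analysis that excluded \(G(3,1,3)\) in the Eisenstein case, now over \(\mathbb Z[i]\), and to derive a contradiction from the degree comparisons and the nef bound.

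First, by \cite[Lemma~2.15]{NakanoWeightedConfigurations2026}, the cusp transfers its generic stabilizer to a fixed ball of dimension \(1\). This gives a curve \(f:C\to X\) whose generic full stabilizer is \(T=G(4,1,3)\) and which has the cusp as an end. The rank \(4\) groups containing \(T\) as a parabolic are \(G(4,1,4)\) and products \(T\times\mu_b\). The table in Section~\ref{ew-compactapp-section} lists no primitive upper group for \(G(m,1,3)\) with \(3\le m\le5\), and Lemma~\ref{ew-gaussian-g423-local} does not apply here because it concerns \(G(4,2,3)\). So let \(R\) count the \(G(4,1,4)\) points and \(\sigma=u/2+3v/4+\cdots\) the product contributions.

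Next I would determine the possible ends. Following Rains's description of the horizontal variety as a quotient of \(E^3\) for \(E=\mathbb C/\mathbb Z[i]\), the cusp types are given by the degrees \(1111,1122,1144,\ldots\) of the horizontal invariants. For each type I record the sum \(A_c\) of generator degrees, the highest pole \(b_c\), and the width, exactly as in the \(G(3,1,3)\) paragraph. With \(j=4t\), the tangent and normal comparisons from \cite[Proposition~3.4]{NakanoWeightedConfigurations2026} should give
\[
2t=2g-2+s+3R/4+\sigma,\qquad a=\sum A_c+cR+\sigma-\kappa j,\qquad h=-3j+R+\sum b_c,
\]
with constants \(c,\kappa\) read off from the monomial matrix \(\left(\begin{smallmatrix}v&1&0\\0&v&1\\0&0&v\end{smallmatrix}\right)\) at \(G(4,1,4)\), where \(v=u^4\). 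From these equations \(g=0\), and \(h\) is the degree of the quotient onto the coordinate of the invariant of greatest degree (degree \(12\)).

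Then I would use the residual coefficient degrees with \cite[Lemma~3.5]{NakanoWeightedConfigurations2026} to show that \(\dim\operatorname{Hom}(\mathcal N,Q)=1\), so \(a\ge-3h\) whenever \(h<0\), and \cite[Proposition~2.2]{NakanoWeightedConfigurations2026} forces coincident labels. The bound on \(a\) comes from Lemma~\ref{ew-fields-plane-bound}: for \(K=\mathbb Q(i)\) we have \(w_K=4\), so \(a\le2t\), or \(a\le t\) if the plane bounds improve. The case \(h\ge0\) is excluded directly by the identity for \(a\).

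This leaves a finite integer search over \(j,R,s_i,\sigma\), as in the \(G(3,1,3)\) table. I expect the surviving rows to have \(a>2t\) or to violate the parity of the GIT tangent degrees.

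The main obstacle is getting the cusp-local data right: the characters of order-\(4\) reflections on the fiber, which ends actually occur, and the actual widths. An order-\(4\) reflection whose fiber character is nontrivial changes \(b_c\) and the width. I would first compute the degree-\(1\) theta values at the fixed points of \(E^3/K_0\) to check whether the fiber character is trivial. If a residual row survives, I would try to show that the plane bound can be improved to \(\lambda\ge1\) using the quadratic-minimum argument of Lemma~\ref{ew-fields-plane-equality}, which would tighten the bound to \(a\le t\).
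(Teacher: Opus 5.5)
Your outline follows the paper's route, but it stops exactly where the proof's content lies, and part of what you say about that step is wrong. The route is: the fixed curve through the cusp, the comparisons of \cite[Proposition~3.4]{NakanoWeightedConfigurations2026} forcing $g=0$, a one-dimensional Hom space onto the degree $12$ quotient via \cite[Lemma~3.5]{NakanoWeightedConfigurations2026}, and two coincident labels with $a\ge-3h$ from \cite[Proposition~2.2]{NakanoWeightedConfigurations2026}, all set against $a\le j/2$ from Lemma~\ref{ew-fields-plane-bound} with $w_K=4$. The missing step is the cusp-local data. Corrections to what you wrote:
\begin{itemize}
\item The curve comes from the lift fixing the cusp point, \cite[Lemma~2.17]{NakanoWeightedConfigurations2026}. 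Lemma~2.15 goes the other way, from a fixed ball to a cusp.
\item The model is Rains's Example~3.10 for $E:x_1^4-x_2^4=y^2$, not the $E^3/K_0$ quotient of the $G(3,3,3)$ case.
\item For $T=G(4,1,3)$ the end types are $1111,1122,1222,1244,1444$; $1144$ belongs to $G(4,2,3)$. Their inverse corrections are $(\nu_1,\nu_2,\nu_3)=(1,1,1),(1,2,2),(2,2,2),(2,4,4),(4,4,4)$.
\item Every end has width $1$, because the degree $1$ axis section has trivial fiber character.
\end{itemize}
What the argument needs from this data is $A_c-1=\nu_{1,c}+2\nu_{3,c}$ with $A_c\le13$, and $\nu_{1,c}\ge\nu_{3,c}/2$.

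With these inputs the argument closes uniformly.
\begin{itemize}
\item The normal comparison $a=\sum A_c+15R/4+\sigma-26t$ and the tangent equality give $a=\sum(A_c-13)-6R-12\sigma+26(1-g)$. Hence $g=0$.
\item Writing $V=\sum\nu_{1,c}$, $B=\sum\nu_{3,c}$ and $h=-3j+B+R$, one gets $a=V+2h+R+2$.
\item Then $V\ge B/2$ gives $5h\le 2a-3j-R-4$, which is negative by $a\le j/2$, and makes the first residual degree negative. So the Hom dimension is one.
\item Combining this with $a\ge-3h$ gives $11a\ge 9j+3R+12$, which contradicts $a\le j/2$ at once.
\end{itemize}
No finite search or parity test is needed. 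Without $A_c\le13$ and $V\ge B/2$ you have neither $h<0$ nor negative residual degrees, so your search is not yet set up.

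Drop the fallback to $\lambda\ge1$ and $a\le t$; it is circular. Those bounds are Proposition~\ref{ew-seven-orderfour}, whose proof uses this very lemma to exclude order $4$ reflections. That exclusion is what gives descent of $2\mathcal L$ and half-integrality of $\lambda$. While order $4$ reflections are allowed, rank $2$ plane groups with least invariant degree $4$, such as $G(4,1,2)$, are not ruled out. Lemma~\ref{ew-fields-plane-bound} then gives only $4\lambda\ge2$. The proof has to close with $a\le 2t$, and it does.
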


\begin{proof}
For the full fixed curve \cite[Lemma~2.17]{NakanoWeightedConfigurations2026}, put \(a=\deg f^*H,t=\deg f^*\mathcal L,j=4t\in\mathbb Z_{>0}\); Lemma~\ref{ew-fields-plane-bound} gives \(a\le j/2\).
The upper groups \(G(4,1,4),T\times\mu_2,T\times\mu_4\) have counts \(R,u,v\), \(\sigma=u/2+3v/4\) \cite[Theorems 2.1, 3.6]{KrishnasamyTaylor}; the maximal end group \(T\) has normalizer \(\mathbb C^*T\), preserving its three invariant directions of degrees \(4,8,12\) and their characters.
The actual degrees and inverse corrections are \cite[Theorem 3.6 and Example 3.10]{Rains}
\[
\begin{array}{c|ccccc}
\text{degrees}&1111&1122&1222&1244&1444\\\hline
(\nu_1,\nu_2,\nu_3)&(1,1,1)&(1,2,2)&(2,2,2)&(2,4,4)&(4,4,4).
\end{array}
\]
The full least period has width \(1\), by the degree \(1\) axis section's trivial fiber character; raw \(2244\) loses exactly its kernel \(\mu_2\).
For \(X_0=\prod x_{1j},Y=\prod y_j\) on \(E:x_1^4-x_2^4=y^2\), \(d(Y/X_0^2)=\eta(-df_1+df_2-df_3)/2\), \(\eta\ne0\), on both full orbits; translation \((ix_1,-ix_2,y)\) exchanges \([1:0:\pm1]\), and the unique lift fixing the point retains full stabilizers.
Put \(V=\sum\nu_{1,c},B=\sum\nu_{3,c},h=-3j+B+R\); \(V\ge B/2\) and \((b_1,b_2,b_3)=(vc_1,c_1+vc_2,c_2+vc_3)\).
The degree formulas, with \(A_c\le13\), give \cite[Proposition~3.4]{NakanoWeightedConfigurations2026} the tangent equality \(2t=2g-2+s+3R/4+\sigma\) and
\[
a=\sum A_c+15R/4+\sigma-26t\le26(1-g)-6R-12\sigma.
\]
Consequently \(C=\mathbb P^1\), \(a=V+2h+R+2\), and \(5h\le2a-3j-R-4<0\).
The first residual degree satisfies \(-2j+B-V\le(a-4j-3R-2)/5<0\).
The row associated with the invariant of greatest degree and \cite[Lemma~3.5]{NakanoWeightedConfigurations2026} give \(\dim\operatorname{Hom}(f^*T_X,\mathcal O_C(h))=1\); \cite[Proposition~2.2]{NakanoWeightedConfigurations2026} gives two coincident labels and \(a\ge-3h\), hence \(11a\ge9j+3R+12\), a contradiction.
\end{proof}

\begin{proposition}
\label{ew-seven-orderfour}
Every actual reflection in \(\Gamma\) has order \(2\).
The square of \(\mathcal L\) descends to an ample line bundle \(D\) on \(X\).
For every triple coincidence plane \(Z_I\), write \(\mathcal L|_{Z_I}\equiv\lambda_I\mathcal O_{Z_I}(1)\).
Then \(2\lambda_I\) is a positive integer, \(\lambda_I\ge1\), and \(L-H\) is nef.
\end{proposition}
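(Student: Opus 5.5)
We first show that no actual reflection of order \(4\) occurs, and then deduce the remaining assertions in the order stated. Over \(K=\mathbb Q(i)\), the reflection orders of an arithmetic unitary group are \(2\) or \(4\), since the roots of unity of \(K\) are \(\mu_4\). Suppose an actual reflection of order \(4\) exists. By transfer to a cusp \cite[Lemma~2.15]{NakanoWeightedConfigurations2026}, some smooth cusp then has a full horizontal group of rank \(3\) containing an order \(4\) reflection. The Gaussian horizontal candidates are the rank \(3\) groups over \(\mathbb Q(i)\), namely \(G(4,p,3)\) for \(p=1,2,4\) and the Coxeter groups \(A_3,B_3\) \cite{GeckMalle,KrishnasamyTaylor}. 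The exceptional groups \(G_{29},G_{31}\) of rank \(4\) have no primitive parabolic subgroup of rank \(3\) defined over \(\mathbb Q(i)\) other than these.
\begin{itemize}
\item Among the monomial groups, only \(G(4,1,3)\) contains reflections of order \(4\), and Lemma~\ref{ew-gaussian-g413} excludes it.
\item Any order \(4\) reflection lying in a rank \(\le2\) factor must still enter some cusp stabilizer by transfer, so it lies in a horizontal \(G(4,1,3)\) or in a product with a factor \(\mu_4\). In the product case, the fiber character computation \(1=b^5\det F\) used at the end of Section~\ref{ew-seven-fields-section}, with \(b\in\mu_4\), forces the extra inertia to be trivial.
\end{itemize}
Hence every actual reflection has order \(2\).

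With all reflection orders equal to \(2\), the branch divisor \(B_X\) has all coefficients \(1/2\), and \(K_X+B_X=5L\) descends integrally up to the central scalar kernel. The scalar order \(c\) from \cite[Proposition~2.11]{NakanoWeightedConfigurations2026} divides \(|\mu(K)|=4\). The cusp argument forces the scalar \(b\) to be \(\pm1\) on every cusp fiber, so \(c\mid2\) and \(D=2\mathcal L\) descends. Ampleness of \(D\) follows from the Baily--Borel construction, since \(\mathcal L\) is ample on the Baily--Borel compactification.

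For the plane degrees, \(2\lambda_I=D\cdot\ell\) for a line \(\ell\subset Z_I\), so \(2\lambda_I\) is a positive integer. Lemma~\ref{ew-fields-plane-equality} gives \(\lambda_I>1\) in generic ranks \(0\) and \(1\). In rank \(2\), Lemma~\ref{ew-fields-plane-bound} gives \(q\lambda_I>1\), and we must upgrade this to \(\lambda_I\ge1\). With order \(2\) reflections only, the rank \(2\) Gaussian groups are \(A_1^2\), \(B_2=G(4,4,2)\) and \(G(4,2,2)\) with its order \(4\) generators removed, so every generic group has least invariant degree \(q=2\). The inequality \(2\lambda_I>1\) together with \(2\lambda_I\in\mathbb Z\) then gives \(\lambda_I\ge1\). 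Finally, as in Lemma~\ref{ew-compact-nef}, the complementary triples give nonnegative multidegrees \(w_i=d_I+d_J\) and triple degrees \(d_I=\lambda_I-1\ge0\) for \(L-H\). Applying \cite[Proposition~2.6]{NakanoWeightedConfigurations2026} over the \(35\) planes then shows that \(L-H\) is nef.

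The main obstacle is the first step: verifying that an order \(4\) reflection cannot avoid all cusps. This requires checking the transfer hypothesis carefully for order \(4\) reflections sitting in products \(T\times\mu_4\) and in the rank \(2\) groups \(G(4,1,2)\) and \(G(4,2,2)\) on a fixed plane. I would first try to show that any such reflection, through its fixed ball of dimension \(3\), which is noncompact by Meyer's theorem, meets a cusp whose full horizontal group is then \(G(4,1,3)\). That cusp is excluded by Lemma~\ref{ew-gaussian-g413}.
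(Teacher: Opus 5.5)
\textbf{Gap in the rank $2$ case.} You claim that once all reflections have order $2$, every rank $2$ generic group has least invariant degree $q=2$, and you describe $G(4,2,2)$ as having its ``order $4$ generators removed''. This is false. $G(4,2,2)$ contains no reflection of order $4$: its six reflections $\operatorname{diag}(-1,1)$, $\operatorname{diag}(1,-1)$ and $\left(\begin{smallmatrix}0&\zeta\\ \zeta^{-1}&0\end{smallmatrix}\right)$, $\zeta\in\mu_4$, all have order $2$. Its basic invariants $y_1^4+y_2^4$ and $(y_1y_2)^2$ have degrees $4,4$, so it has no quadratic invariant.

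This group therefore survives your first step; the paper keeps it throughout as $P_T$ in Section~\ref{ew-seven-global-section}. For it, Lemma~\ref{ew-fields-plane-bound} gives only $4\lambda_I>1$. Together with $2\lambda_I\in\mathbb Z$ this still allows $\lambda_I=1/2$. In that case neither $\lambda_I\ge1$ nor the nefness of $L-H$ follows.

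\textbf{Missing ingredient.} You need a separate numerical relation on a plane with generic group $G(4,2,2)$.
\begin{itemize}
\item Along the curve $D_C\subset Z_I$ of $G(4,2,3)$ points, use the invariants $(v+S,\,vS+P^2,\,u^2P)$ with $v=u^4$. They give the normal frame $(e_S,u^2e_P)$, a character correction $-1/2$, and determinant vanishing order $2$.
\item The two invariant normal directions of degree $4$ have determinant of degree $-8\lambda_I$. Comparing with $\det N_{Z_I/X}=\mathcal O_{Z_I}(-2)$ gives $8\lambda_I=2+\tfrac32\deg D_C$.
\item Since $\deg D_C$ is an integer, $\lambda_I=1/2$ would force $\deg D_C=4/3$, which is impossible.
\end{itemize}
Only after this does half-integrality give $\lambda_I\ge1$ in every rank, and then Proposition~2.6 of the companion paper on the $35$ planes gives $L-H$ nef.

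\textbf{Two smaller points.}
\begin{itemize}
\item Your second bullet is unnecessary, and it uses the wrong tool. At a smooth cusp the horizontal holonomy is irreducible. Meyer's theorem applies to the mirror, a $4$-dimensional Hermitian $K$-space, i.e.\ an indefinite rational form in $8$ variables. It puts every order-$4$ mirror at a cusp, where the reflection acts horizontally with order $4$. The reflection therefore lies in a horizontal $G(4,1,3)$ directly, and Lemma~\ref{ew-gaussian-g413} excludes that. The relation $1=b^5\det F$ concerns scalars, not reflections.
\item For the descent of $2\mathcal L$, the clean argument is as follows. The lift generated by normalized reflections has determinant $\pm1$. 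Hence a scalar $\zeta I_5$ in it, with $\zeta\in\mu_4$, satisfies $\zeta=\zeta^5=\pm1$.
\end{itemize}
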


\begin{proof}
Meyer in rational dimension \(8\) gives a cusp of any actual order \(4\) mirror in the original lattice, with faithful horizontal reflection of order \(4\).
The character fields \cite[Table 1]{GeckMalle} and the trace \(\operatorname{tr}(\operatorname{diag}(\zeta_m,\zeta_m^{-1},1)(23))=\zeta_m\) leave \(m=1,2,4\) and the full horizontal list
\[
A_3,\quad B_3,\quad G(4,4,3),\quad G(4,2,3),\quad G(4,1,3).
\]
The last group alone has an order \(4\) reflection and is excluded by Lemma~\ref{ew-gaussian-g413}.
The normalized reflections generate the lift \(G\subset\mathrm U(h,K)\) of the same \(\Gamma\); its determinant \(\pm1\) forces every scalar \(\zeta I_5\) to have \(\zeta=\pm1\), so \cite[Proposition~2.11]{NakanoWeightedConfigurations2026} descends \(D=2L\) with the full canonical frame at the cusp.
For plane ranks \(0,1\), Lemma~\ref{ew-fields-plane-bound} gives \(\lambda_I\ge1,2/3\); a quadratic minimum in rank \(2\) gives \(2\lambda_I>1\).
For \(G(4,2,2)\), the invariants \((v+S,vS+P^2,u^2P)\), \(v=u^4\), give frame \((e_S,u^2e_P)\), correction \(-1/2\), and determinant order \(2\), hence \(8\lambda_I=2+3d_C/2\).
The value \(\lambda_I=1/2\) would give \(d_C=4/3\), so half integrality gives \(\lambda_I\ge1\) for every plane; \cite[Proposition~2.6]{NakanoWeightedConfigurations2026} on all \(35\) planes gives \(D-2H\) nef.
\end{proof}

\subsection{A lattice preserved by the original group}

\begin{proposition}
\label{ew-seven-unimodular}
After a Hermitian similarity over \(K\), the same projective group \(\Gamma\) is a finite index subgroup of \(\mathrm{SU}(I_{1,4},\mathbb Z[i])\), where \(I_{1,4}=\operatorname{diag}(-1,1,1,1,1)\).
The similarity preserves the automorphic bundle, the full finite stabilizers, the full fixed normalizers, and the indices of all central cusp subgroups.
\end{proposition}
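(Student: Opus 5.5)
We follow the Eisenstein argument of Section~\ref{ew-finite-Htwo}, where the full group was shown to preserve the middle lattice, replacing $\delta=1+2\omega$ by the ramified prime $\pi=1+i$ of $\mathbb Z[i]$. By Proposition~\ref{ew-seven-orderfour}, every actual reflection of $\Gamma$ has order $2$, and the normalized lift $G\subset\mathrm U(h,K)$ generated by these reflections has determinant $\pm1$. Moreover \cite[Proposition~2.11]{NakanoWeightedConfigurations2026} gives a $\mathbb Z[i]$-lattice $\Lambda\subset K^5$ preserved by the full group $G$, not merely by a subgroup. Every reflection of order $2$ in $G$ has the form $B_r=I-2P_r$ for a root $r$, which we may take primitive in $\Lambda$. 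The first task is to record the \emph{norm condition}: integrality of $B_r$ on $\Lambda$ forces $2h(r,\Lambda)\subset h(r,r)\mathbb Z[i]$. Since $\mathbb Z[i]$ is a PID, after scaling $h$ we may assume $h(r,r)\in\{1,2\}$ up to units. This is the condition on ``both root norms'' referred to in the introduction, and the cusp geometry is used to pin it down. By Meyer, each mirror has a cusp. At that cusp the horizontal group is one of $A_3,B_3,G(4,4,3),G(4,2,3)$, and this group acts on a rank-$3$ positive definite horizontal $\mathbb Z[i]$-module, whose short and long roots have norms $1,2$.

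Next we find a $\Gamma$-invariant self-dual lattice. Using the finite-index argument of Section~\ref{ew-finite-Htwo}, we form the $\mathbb Z[i]$-span $\Lambda_0$ of all actual roots of norms $1$ and $2$, rescaled as above. We then compare $\Lambda_0$, its dual $\Lambda_0^\vee$, and $\pi^{\pm1}$-multiples of them. The goal is to show $\pi\Lambda_0^\vee\subset\Lambda_0\subset\Lambda_0^\vee$ with a discriminant group that is $\pi$-elementary of small length. The full group $G$ acts on the chain of lattices between $\Lambda_0$ and $\Lambda_0^\vee$. If this chain has a \emph{unique} $G$-stable intermediate self-dual member, canonically defined as in the $\delta^2$-quotient argument, then $G$ preserves it. To identify that member we work at a smooth cusp. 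There the Heisenberg lattice $N$ together with the horizontal module produces a decomposition $\mathbb Z[i]e\oplus M\oplus\mathbb Z[i]f$ with $e,f$ isotropic. The fact established above, that the horizontal holonomy is irreducible together with the monomial/unimodular argument, should force the horizontal module $M$ to be unimodular, namely $\mathbb Z[i]^3$ with the standard form. Then $I_{1,4}\simeq\left(\begin{smallmatrix}0&1\\1&0\end{smallmatrix}\right)\oplus I_3$ over $\mathbb Z[i]$, since the hyperbolic plane and $\operatorname{diag}(-1,1)$ agree over $\mathbb Z[i]$ because $1+i$ has odd norm relation, after checking parity. This yields a Hermitian similarity to $I_{1,4}$, and $\Gamma\subset\mathrm{PU}(I_{1,4},\mathbb Z[i])$. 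Lifting with determinant adjusted by $\pm1$ scalars, which are central, places $\Gamma$ inside $\mathrm{SU}$ projectively. Finite index follows because both groups are lattices.

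The invariance assertions are formal consequences. A similarity over $K$ commutes with the action on $\mathbb B^4$ and scales $h$, so it preserves $\mathcal L$ with $K_{\mathbb B^4}=\mathcal L^5$, the stabilizers, and the normalizers. It maps the central subgroup of each cusp stabilizer isomorphically, so central indices are unchanged. The scaling does not matter, since we never pass to a subgroup.

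The main obstacle is the uniqueness step. The lattices lying between $\Lambda_0$ and $\Lambda_0^\vee$ could include two $\pi$-modular options that are exchanged by duality, as in the Eisenstein case, and an odd versus even unimodular $\mathbb Z[i]$-lattice ambiguity at $2$ could also arise. I would first attempt to rule these out by computing ideals of off-diagonal entries of the full cusp group elements, imitating the $\delta\mathcal O$ computation. This should show that the discriminant is $\mathbb Z[i]/\pi^2$, so that the middle lattice is forced.
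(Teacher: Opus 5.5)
Your closing steps are fine: the scalar $\pm1$ lift to $\mathrm{SU}$, finite index, and invariance of $\mathcal L$, the stabilizers, the normalizers and the central indices under a similarity. The core of the statement, however, is not proved. That core is a self-dual $\mathbb{Z}[i]$-lattice preserved by the full lift $G$, and you flag the step producing it as open yourself.

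\textbf{The normalization.} The reduction ``after scaling $h$ we may assume $h(r,r)\in\{1,2\}$'' is unjustified. One scalar cannot normalize the norms of all primitive roots at once. Integrality of $B_r$ only gives the relative condition $2h(r,\Lambda)\subset h(r,r)\mathbb{Z}[i]$. The horizontal form at the cusp of a mirror is $aI_3$, with $a$ depending on the cusp. What Meyer and the Witt decomposition $\det h=-\mathrm N(\alpha)\det h_{\mathrm{hor}}$ actually give is that $(-\det A)\,h(r,r)$ is a norm from $K$. Rescaling by $c=-\det A$ therefore only forces even valuations of root norms at odd inert primes, and nothing at $2$ or at split primes.

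\textbf{The lattice itself.} The inclusion $\pi\Lambda_0^\vee\subset\Lambda_0\subset\Lambda_0^\vee$ is stated only as a goal, and the root span can have discriminant at any prime. Your remedy, computing ideals of off-diagonal entries as in the $\delta\mathcal O$ calculation, cannot be carried out here. In the Eisenstein section $H_2$ was given by explicit matrices, whereas here $\Gamma$ is the unknown group. The cusp splitting $\mathbb{Z}[i]e\oplus M\oplus\mathbb{Z}[i]f$ and the ``monomial/unimodular argument'' are circular: both presuppose a lattice on which $h(e,\cdot)$ is onto and the form is already standard. Also, $\left(\begin{smallmatrix}0&1\\1&0\end{smallmatrix}\right)$ and $\operatorname{diag}(-1,1)$ are not isometric over $\mathbb{Z}[i]$, since the first is even. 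Only after adding an odd positive summand does one reach $I_{1,4}$.

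\textbf{The missing idea.} This is how the paper proceeds: no uniqueness statement is needed if you work one prime at a time with a \emph{maximal} $G$-stable integral lattice $M$.
\begin{itemize}
\item At an odd inert prime, maximality (enlarging by $p^{-1}(M\cap p^2M^\#)$) gives $pM^\#\subset M$. Since $r/n\in M^\#$ for a primitive root of norm $n$, primitivity gives $v_p(n)\le1$, and the parity from the rescaling gives $v_p(n)=0$.
\item At $2$, if $v_2(n)\ge2$, then $M+\pi^{-1}\sum_{g\in G}\mathbb{Z}_2[i]gr$ is a strictly larger $G$-stable integral lattice. Hence $v_2(n)\le1$.
\end{itemize}
These bounds give $(R_r-I)M^\#\subset M$ for every generating reflection. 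So $G$ acts trivially on $M^\#/M$, and \emph{every} lattice between $M$ and $M^\#$ is automatically $G$-stable. At odd inert primes, an isotropic line in the nonzero discriminant space would enlarge $M$, so $M=M^\#$. At $2$, a maximal integral overlattice $M'\supset M$ is self-dual by the Gram-radical argument and is $G$-stable for free. Split primes are handled by pairing a stable module with its dual, and the local lattices patch globally.

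\textbf{The classification.} You still need the step you skip. An isotropic vector splits off an integral plane $\left(\begin{smallmatrix}0&1\\1&k\end{smallmatrix}\right)$. A positive self-dual Gaussian lattice of rank at most $3$ has a unit vector, by the packing bound $\pi^r/((r+1)r!)>1$, so the complement is $I_3$. An explicit change of basis then gives $I_{1,4}$.
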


\begin{proof}
Let \(A\) be the Gram matrix, conjugate linear in the first variable, and \(G\) the original normalized reflection lift preserving a Gaussian lattice.
Over \(K\), irreducibility and Schur identify each full horizontal form with \(aI_3\), \(a>0\) rational, with root norms \(1,2\); its determinant times a root norm is a norm because \(a^4=\mathrm N(a^2)\).
Meyer gives a cusp of each actual mirror in dimension \(8\), and Witt decomposition gives
\[
\det h=-\mathrm N(\alpha)\det h_{\rm hor},\qquad
h(r,r)=h_{\rm hor}(\bar r,\bar r).
\]
Thus \((-\det A)h(r,r)\in\mathrm N(K^*)\); set \(c=-\det A,h'=ch\), with \(\det h'=-c^6\) and even valuations of root norms at odd inert primes.
At odd inert \(p\), maximize \(G\)-stable integral \(M\); enlargement by \(p^{-1}(M\cap p^2M^\#)\) forces valuations \(0,1\), \(pM^\#\subset M\), and even dimensional discriminant space.
For a primitive root of norm \(n\), integrality gives \(h'(r,M)\subset n\mathbb Z_p[i]\), \(r/n\in M^\#\); primitivity forces \(v_p(n)\le1\), so its even parity gives \(v_p(n)=0\).
Thus \((R_r-I)M^\#\subset M\), and the same \(G\) acts trivially on the discriminant space.
A nonzero discriminant space has an isotropic line enlarging \(M\), so \(M=M^\#\).
At \(2\), put \(\pi=1+i\) and maximize a \(G\)-stable integral lattice \(M\).
For a primitive reflection root \(r\in M\) of norm \(n\), the equality \(R_r(M)=M\) gives \(h'(r,M)\subset(n/2)\mathbb Z_2[i]\).
If \(v_2(n)\ge2\), put \(\Delta=\sum_{g\in G}\mathbb Z_2[i]gr\) and \(\widetilde M=M+\pi^{-1}\Delta\).
The inclusions
\[
h'(M,\pi^{-1}\Delta)\subset\frac{n}{2\pi}\mathbb Z_2[i],
\qquad
h'(\pi^{-1}\Delta,\pi^{-1}\Delta)\subset\frac n4\mathbb Z_2[i]
\]
show that \(\widetilde M\) is integral and \(G\)-stable; primitivity of \(r\) makes it strictly larger than \(M\), a contradiction.
Hence \(v_2(n)\le1\), so \((R_r-I)M^\#\subset M\).
A maximal integral lattice \(M'\supset M\) is self-dual: a vector \(v\) lifting a nonzero Gram radical has pairing in \(\pi\mathbb Z_2[i]\) and norm in \(2\mathbb Z_2\), so adjoining \(v/\pi\) would otherwise enlarge \(M'\).
Since \(M\subset M'\subset M^\#\), every generating reflection satisfies \((R_r-I)M'\subset M\subset M'\); thus \(M'\) is stable under the original \(G\).
At split primes pair a stable module with its perfect dual; intersection patches these local self-dual modules globally, since only finitely many differ from the original lattice.
A primitive isotropic vector splits an integral plane \(\left(\begin{smallmatrix}0&1\\1&k\end{smallmatrix}\right)\); its positive self-dual complement is \(I_3\), by successive unit vectors.
If a positive self-dual Gaussian lattice of rank \(r\le3\) has no unit vector, its distinct points have squared distance at least \(2\).
Any \(m\) points \(x_j\) of a translate in the unit ball then satisfy \(m\sum|x_j|^2-|\sum x_j|^2\ge m(m-1)\), hence \(\sum\max(1-|x_j|^2,0)\le1\).
Integration over a fundamental domain of covolume one would give \(\pi^r/((r+1)r!)\le1\), which is false for \(r=1,2,3\).
A unit vector splits off an orthogonal copy of \(\mathbb Z[i]\), and the complement is again self-dual.
Make \(k\) even with a unit vector, split a hyperbolic basis \(v,w\), and use \((v-w+e,v+e,w-e)\) to obtain \(I_{1,4}\).
The basis \(B^*(cA)B=I_{1,4}\) and lift \(g\mapsto(\det g)^{-1}g\) put the original \(\Gamma\) in the special unitary group, with finite index.
Here the normalized reflections have determinant \(-1\), so every \(g\) in their generated group has \(\det g=\pm1\), and the determinant of its new lift is \((\det g)^{-4}=1\).
Uniqueness uses the absence of nontrivial fifth roots of unity in \(K\).
The equivariant tautological map and rescaling by \(c\) preserve every stated stabilizer, normalizer, bundle, and central index.
\end{proof}

\subsection{Degrees on the coincidence planes}

\begin{lemma}
\label{ew-gaussian-plane-groups}
The generic group of a coincidence plane of rank \(2\) is \(A_1^2,A_2,B_2\), or \(G(4,2,2)\).
An \(A_2\) plane never occurs.
A \(B_2\) plane has \(\lambda>1\).
\end{lemma}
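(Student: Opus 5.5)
The classification of rank \(2\) generic groups comes first, then the exclusion of \(A_2\), then the bound for \(B_2\). By Proposition~\ref{ew-seven-orderfour}, every actual reflection has order \(2\). By the reduction in Proposition~\ref{ew-seven-fields}, the field is \(\mathbb Q(i)\), so the generic group of a rank \(2\) plane is a finite reflection group generated by order \(2\) reflections with character field inside \(\mathbb Q(i)\). It must also enter a horizontal group through \cite[Lemma~2.15]{NakanoWeightedConfigurations2026}, and the full horizontal list from the proof of Proposition~\ref{ew-seven-orderfour} is \(A_3,B_3,G(4,4,3),G(4,2,3)\), the last being \(G(4,1,3)\)-free by Lemma~\ref{ew-gaussian-g413}. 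The rank \(2\) parabolics of these groups, using \cite[Table 1]{GeckMalle} and \cite[Tables 5--9]{KrishnasamyTaylor}, are \(A_1^2,A_2,B_2,G(4,2,2)\); the dihedral \(G(4,4,2)\) is \(B_2\). This gives the first assertion.

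For \(A_2\), I will repeat the local determinant computation of Section~\ref{ew-finite-coxeter}. With the power sum expansion the normal matrix is \(\left(\begin{smallmatrix}4v&4v\\12v^2&20v^2\end{smallmatrix}\right)\), and the \(A_3\) axis is non-immersed. Together with the degrees \(2,3\), whose least invariant degree gives \(2\lambda\ge1\) by Lemma~\ref{seven-plane-gauss}, and with \(D=2L\) integral, this yields the relation \(10\lambda=4+5\deg D_B\). Here \(D_B\) is the locus where the stabilizer enlarges to \(B_3\); the upper groups \(A_3\) and \(G(4,2,3)\)-type enlargements are excluded because they would require a non-immersed axis or an order \(4\) character. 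By Proposition~\ref{ew-seven-orderfour}, \(2\lambda\) is an integer with \(\lambda\ge1\), so \(10\lambda\equiv0\pmod 5\) contradicts \(10\lambda\equiv4\pmod5\).

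The main obstacle is that, in the Gaussian case, new larger stabilizers \(G(4,4,3),G(4,2,3),G(4,4,4),G_{29},G_{31}\) could contain \(A_2\) and add terms to the degree equation. I would handle this with the parabolic tables: \(A_2\) lies in \(G(4,4,3)\) only as its \(A_2\) parabolic, and there the axis matrices of Section~\ref{ew-compactapp-matrices} give the needed orders. I would check in each case that the added contribution is a multiple of \(5/2\), or that an immersion fails, so that the congruence survives.

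For \(B_2\), the normal matrix \(\left(\begin{smallmatrix}v&1\\0&v\end{smallmatrix}\right)\) gives \(6\lambda-2=2\deg D_B\) with \(\deg D_B\le2\lambda-1\), as in Lemma~\ref{ew-fields-coxeter}. Equality \(\lambda=1\) then forces \(\deg D_B=2\), against the bound \(\deg D_B\le1\), so \(\lambda>1\). As with \(A_2\), I would first verify through the normalizer test of \cite[Theorem 3.12]{KrishnasamyTaylor} that the Gaussian enlargements \(G(4,2,3)\) and \(G(4,2,4)\) containing \(B_2\) only add nonnegative contributions to the right side. That keeps the inequality direction, and so keeps the conclusion \(\lambda>1\).
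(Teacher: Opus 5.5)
Your first assertion follows the paper: your transfer to a cusp plays the role of the paper's Meyer argument in rational dimension \(6\). The \(A_2\) relation \(10\lambda=4+5\deg D_B\) and the congruence modulo \(5\) are also the paper's argument. Two steps, however, are either left open or fail.

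\textbf{The \(A_2\) case is unfinished.} The only rank \(3\) groups with an \(A_2\) parabolic are \(A_3,B_3,G(4,4,3),G(4,2,3)\) and \(A_2\times A_1\), and the product contributes nothing. Rank \(4\) groups such as \(G(4,4,4),G_{29},G_{31}\) occur only at isolated points, so they never enter a degree equation on \(Z\). Section~\ref{ew-compactapp-matrices} concerns \(G(4,4,3)\subset G_{29}\), not an \(A_2\) axis. The missing check is short. On the axis \(u(1,1,1)\) the basic invariants are \((u^3,3u^4,3u^8)\) for \(G(4,4,3)\), and \((3v^2,v^3,3v^4)\) with \(v=u^2\) for \(G(4,2,3)\). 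Both have zero differential, so neither group occurs along the smooth plane. No order \(4\) character argument is needed.

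\textbf{The \(B_2\) case has a genuine gap.} First, neither \(G(4,2,3)\) nor \(G(4,2,4)\) has a \(B_2\) parabolic; their rank \(2\) parabolics are \(A_1^2,A_2,G(4,2,2)\). The new upper group is \(G(4,4,3)\), whose coordinate axis has pointwise stabilizer \(G(4,4,2)\cong B_2\). Put \(q_2=xy\) and \(q_4=x^4+y^4\). The invariants \((uq_2,u^4+q_4,u^4q_4+q_2^4)\) give the normal matrix \(\operatorname{diag}(v,v)\) with \(v=u^4\) and character correction \(-3/4\). The relation is therefore \(6\lambda-2=\frac54d_C+2d_B\).

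Second, your sign argument runs the wrong way. A nonnegative extra term on the right side lowers \(d_B\), so at \(\lambda=1\) you only get \(d_B\le2\). That is compatible with \(d_B\le2\lambda-1=1\), so no contradiction follows. What rescues the argument is the exact coefficient together with integrality: \(4-2d_B=\frac54d_C\) forces \((d_C,d_B)=(0,2)\).

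You then still need a nonvanishing statement that is valid over \(\mathbb Q(i)\). The paper uses the degree \(4\) direction. It is preserved by the full normalizer, has trivial divisorial character, and has trivial residual character on the simply connected punctured plane. Its row of the inverse differential is therefore a nonzero map \(\mathcal O_Z(-1)^{\oplus2}\to\mathcal O_Z(-4\lambda+d_C+d_B)=\mathcal O_Z(-2)\), which is impossible.

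Importing \(\deg D_B\le2\lambda-1\) from Lemma~\ref{ew-fields-coxeter} uses the degree \(2\) column instead. Its character is not covered by that lemma's hypotheses here: \(\operatorname{diag}(1,i,-i)\in G(4,4,3)\) preserves the axis and acts on \(xy\) by \(i\). Once \(d_C=0\) is established this can be verified, but the verification has to be given.
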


\begin{proof}
Meyer in rational dimension \(6\) embeds the full generic group faithfully into a horizontal parabolic group; its orders \(4,6,8,16\) exclude \(I_2(6)\), and the character field and \cite[Theorem 3.6]{KrishnasamyTaylor} give the list.
For \(A_2\), the axes in \(A_3,G(4,4,3),G(4,2,3)\) are \((u^2,u^3,u^4),(u^3,3u^4,3u^8),(3v^2,v^3,3v^4)\), \(v=u^2\); their zero differentials contradict smooth embedding.
The remaining \(B_3\) has matrix \(\left(\begin{smallmatrix}4u^2&4u^2\\12u^4&20u^4\end{smallmatrix}\right)\), giving \(d_B=2\lambda-4/5\), contrary to half integrality.
For \(B_2\), put \(q_2=xy,q_4=x^4+y^4\); the invariants \((uq_2,u^4+q_4,u^4q_4+q_2^4)\) give matrix \(\operatorname{diag}(v,v)\), \(v=u^4\), and correction \(-3/4\); \(B_3\) gives \(\left(\begin{smallmatrix}v&1\\0&v\end{smallmatrix}\right)\), \(v=u^2\).
Hence \(6\lambda-2=5d_C/4+2d_B\), whose sole integral solution at \(\lambda=1\) is \((0,2)\).
The degree \(4\) direction is preserved by the full normalizer, has trivial divisorial character, and its residual finite character is trivial on finitely punctured \(\mathbb P^2\); its row of the inverse differential associated with the invariant of greatest degree gives \(\mathcal O_Z(-1)^2\to\mathcal O_Z(-4\lambda+d_C+d_B)=\mathcal O_Z(-2)\), a contradiction.
\end{proof}

\begin{lemma}
\label{ew-gaussian-plane-a1}
A coincidence plane with generic group \(A_1^2\) has \(\lambda>1\).
\end{lemma}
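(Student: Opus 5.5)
Assume $\lambda=1$ and derive a contradiction, arguing as in the proof of Lemma~\ref{ew-gaussian-plane-groups}. By Proposition~\ref{ew-seven-orderfour} we already have $\lambda\ge1$ and $2\lambda\in\mathbb Z$, so only $\lambda=1$ needs to be excluded.

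The first step is to list the larger stabilizers along the plane. Since all reflections have order $2$ and the horizontal groups are among $A_3,B_3,G(4,4,3),G(4,2,3)$, the curves on $Z$ with larger generic group have rank~$3$ groups containing $A_1^2$ as a parabolic. I expect these to be $A_3$, $B_3$ (possibly in two inequivalent embeddings using both root lengths), $G(4,4,3)$, $G(4,2,3)$, the products $A_1\times B_2$ and $A_1\times G(4,2,2)$, and $A_1^3$. This would follow from the enumeration of orthogonal root pairs in Section~\ref{ew-finite-coxeter}, extended to the Gaussian groups using \cite[Theorem 3.6]{KrishnasamyTaylor}.

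The second step is to compute, for each such curve, the local determinant order of the normal differential together with the character correction. The Coxeter values $2,3,1,0$ for $A_3,B_3,A_1\times B_2,A_1^3$ and the correction $-\deg D_A/2$ are already in Section~\ref{ew-finite-coxeter}. For the new groups I would compute the matrices from the invariants $(y_1^2,y_2^2)$ restricted near the axis, following the recipe of Section~\ref{ew-compactapp-matrices}. This produces a linear identity extending \eqref{ew-fields-plane}:
\[
4\lambda-2=\sum_\tau c_\tau\deg D_\tau,
\]
with nonnegative coefficients. At $\lambda=1$ the left side is $2$, so only a handful of degree distributions survive.

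The third step excludes each of these distributions. One tool is the Gauss/Tan proportionality, $K_Z+\Delta_Z=3\mathcal L|_Z$, which at $\lambda=1$ reads $\deg\Delta_Z=6$ with all indices $2$ (so $\beta=1/2$), giving a reduced branch curve of degree $12$. Here $\Delta_Z$ includes the $A_1^3$ locus $D_Q$ and the divisorial $D_P$, which are unconstrained by the determinant identity. The Euler number is $\chi_{\mathrm{orb}}=3\lambda^2=3$ by proportionality; together with the local group orders at the special points this constrains the singularities of $\Delta_Z$, as in Section~\ref{ew-compactapp-halfplane}. A second tool, as in Lemma~\ref{ew-gaussian-plane-groups}, is the row of the inverse differential for the highest-degree invariant (here degree $2$ per axis, not symmetric). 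This gives a map $\mathcal O_Z(-1)^2\to\mathcal O_Z(-2\lambda+\text{corrections})$, which must be nonzero and hence needs a nonnegative shift. I would try this map first, since in the $B_2$ case it alone sufficed.

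The main obstacle is that $A_1^2$ has two independent degree-$2$ invariants that the full normalizer may swap. The argument must therefore use the determinant of the rank-$2$ map rather than a single row. The determinant gives $\mathcal O_Z(-2)\to\mathcal O_Z(-4\lambda+\sum c_\tau\deg D_\tau)$, so equality in the identity is forced, and this alone yields no contradiction. I would then localize at a cusp, using Lemma~\ref{ew-fields-plane-equality}-style Taylor divisibility by $Q$ together with the self-dual lattice of Proposition~\ref{ew-seven-unimodular}. This should force an actual cusp on $Z$ where the constant-rank matrix vanishes, giving the contradiction.
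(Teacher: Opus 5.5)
There is a genuine gap: the proposal never reaches a contradiction, and the mechanism you intend to use for it does not work.

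\textbf{What your first two steps get right, and what goes wrong after.} They reproduce the paper's normal determinant identity \(4\lambda-2=\frac32d_A+3d_B+\frac52d_C+d_P+\frac12d_Q\). Two small corrections to your list of upper groups: it should also contain \(A_2\times A_1\), which is excluded because its axis image is singular, and \(G(4,4,3)\) has no \(A_1^2\) parabolic.

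Your proportionality relation is wrong, however. The curves with groups \(G(4,2,3)\) and \(G(4,2,2)\times A_1\) have effective index \(4\), not \(2\). The second relation is therefore
\[
3\lambda+3=\tfrac12(d_A+d_B+d_P+d_R)+\tfrac34(d_C+d_Q),
\]
not a reduced branch curve of degree \(12\). With both relations, \(\lambda=1\) forces \(d_A=d_B=d_C=0\) and \((d_P,d_Q,d_R)\in\{(0,4,6),(1,2,8),(2,0,10)\}\); the formal row \(d_A=d_Q=1\) would need \(d_R=19/2\). This finite list is what actually has to be excluded, and the proposal does not reach it.

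\textbf{Why the cusp argument fails.} The determinant identity holds for every \(\lambda\). So the corrected determinant is an isomorphism on \(Z\) whatever \(\lambda\) is, and a \(Q\)-divisibility argument built on it is blind to the hypothesis \(\lambda=1\).

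Concretely, the divisibility by \(Q\) is absorbed by the correction curves, which pass through the cusps (Lemma~\ref{ew-gaussian-a1-cusps}). For example, in the row \((2,0,10)\), \(D_P\) is two lines meeting at the unique cusp.

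The argument of Lemma~\ref{ew-fields-plane-bound} worked only because \(q\lambda=1\) made the uncorrected Taylor map a constant map of degree zero. Here \(q\lambda=2\), so that map is a matrix of linear forms vanishing along the correction divisors.

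\textbf{What is missing.} Two separate arguments are needed.
\begin{itemize}
\item For the row \((0,4,6)\), the paper makes a combinatorial count. It counts the finite germs \(T_2\times A_1^2\), \(T_3\times A_1\), \(T_2^2\) and the cusp nodes and tacnodes of \(D_Q\) using the full normalizers. This gives \(20\le3\delta+2T_0\), which is impossible for a reduced quartic.
\item For the rows \((1,2,8)\) and \((2,0,10)\), the argument is arithmetic. Since \(\chi_{\rm orb}=3\), the surface lattice \(\Pi\) has index \(96\) in a maximal Gaussian Picard lattice \(\Delta\) with one cusp (Lemma~\ref{ew-gaussian-maximal-surface}). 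The cusp holonomy orders and elliptic bundle degrees give \([\Delta:\Pi]=\frac{|F_\Delta|}{|F_\Pi|}\frac{\deg M_\Pi}{\deg M_\Delta}h^2\), whose \(3\)-adic valuation is even. This contradicts \(96=2^5\cdot3\).
\end{itemize}
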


\begin{proof}
Let \(d_A,d_B,d_C,d_P,d_Q,d_R\) be the degrees of the reduced curves with full groups
\[
 A_3,\ B_3,\ G(4,2,3),\ B_2\times A_1,\
 G(4,2,2)\times A_1,\ A_1^3.
\]
The upper group \(A_2\times A_1\) has singular axis image and is excluded.
The regular normal determinant and effective canonical branch formulas are
\[
\begin{split}
4\lambda-2&=\frac32d_A+3d_B+\frac52d_C+d_P+\frac12d_Q,\\
3\lambda+3&=\frac12(d_A+d_B+d_P+d_R)+\frac34(d_C+d_Q).
\end{split}
\]
The local inputs and the full cusp restrictions are proved in Lemma~\ref{ew-gaussian-a1-cusps}.
At \(\lambda=1\), the integral possibilities are
\[
 (d_P,d_Q,d_R)=(0,4,6),(1,2,8),(2,0,10),
 \qquad d_A=d_B=d_C=0.
\]
The formal row \(d_A=d_Q=1\) gives \(d_R=19/2\).
For row one, count the three finite germs of Lemma~\ref{ew-gaussian-a1-cusps} by \(A,B,C_0\), cusp nodes and tacnodes by \(N_0,T_0\); for \(x=\sum(g_j-1)\), the full normalizers give
\[
\begin{split}
2x+2(N_0+T_0)+A/2+3B/4+3C_0/2&=8,\\
x&=2-C_0-N_0-2T_0,\\
A+2B+2N_0&=24.
\end{split}
\]
Thus \(20\le3\delta+2T_0\), \(\delta=C_0+N_0+2T_0\); a reduced quartic has \(\delta\le r+2\), and degree partitions bound tacnodes by \(1,1,2,2,0\), making the right side at most \(19\).
In rows two and three, the full cusp data are \((|F_\Pi|,\deg M_\Pi)=(4,2\text{ or }4),(2,2)\), each with one cusp.
Euler characteristic \(3\) gives maximal index \(96\) and \((|F_\Delta|,\deg M_\Delta)=(4,2\text{ or }4)\) by Lemma~\ref{ew-gaussian-maximal-surface}; curvature and the unique cusp double coset give
\[
 [\Delta:\Pi]
 =\frac{|F_\Delta|}{|F_\Pi|}
 \frac{\deg M_\Pi}{\deg M_\Delta}h^2.
\]
The four noncentral quantities are powers of \(2\), so the index has even \(3\)-adic valuation, contradicting \(96\).
\end{proof}

\subsection{The fixed surface with group \texorpdfstring{\(G(4,2,2)\)}{G(4,2,2)}}

Put \(T_2=G(4,2,2)\) and \(T_3=G(4,2,3)\).
Suppose that a coincidence plane \(Z=Z_I\) has full generic group \(T_2\).
\cite[Lemma~2.12]{NakanoWeightedConfigurations2026} identifies the compactification of its full fixed ball quotient with \(Z\).
Generic identifications normalize \(T_2\), the map is finite, and \(Z\) is normal.
Write \(D_C,D_P\) for the reduced curves with generic full groups \(T_3,T_2\times A_1\), of effective branch orders \(4,2\).

\begin{lemma}
\label{ew-gaussian-g422-types}
The special interior groups on \(D_C\) and their full fixed surface germs are
\[
\begin{array}{c|cc}
 &D_C&D_P\\ \hline
G(4,2,4)&y=0&x^2-4y=0\\
G_{31}&y^2-x^3=0&\varnothing\\
T_3\times A_1&x=0&y=0.
\end{array}
\]
At \(G_{31}\), the image on the two indecomposable invariants of degree \(4\) is the full \(S_3\).
The full cusp germs and the degrees of their two surface coordinates are
\[
\begin{array}{c|c|c|c|c}
(d_1,d_2,d_3)&\text{actual degrees}&\deg(B,C)&D_C&D_P\\ \hline
211&1224&(1,2)&C^2+B^4&\varnothing\\
221&1112&(1,1)&C^2+B^2&B\\
212&1244&(1,4)&C+B^4&C\\
222&1122&(1,2)&C+B^2&BC.
\end{array}
\]
\end{lemma}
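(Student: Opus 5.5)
The plan is to compute everything locally, in the way Lemma~\ref{ew-gaussian-g423-local} and Section~\ref{ew-compactapp-matrices} do, and then read off the branch germs on the fixed surface of \(T_2\). The first step is to classify the interior groups. At a point of \(D_C\) the full stabilizer contains \(T_3\). By Proposition~\ref{ew-seven-orderfour} every actual reflection has order \(2\), so the parabolic classification \cite[Theorems 2.1, 3.6]{KrishnasamyTaylor} leaves only \(G(4,2,4)\), \(G_{31}\), \(T_3\times A_1\) and \(T_2\times G(4,2,2)\)-type products. The products are discarded because they would create two distinct \(T_2\) planes through the point. For \(D_P\) the same test leaves \(G(4,2,4)\) and \(T_3\times A_1\). \(G_{31}\) is excluded there because the only rank \(3\) parabolics of \(G_{31}\) containing \(T_2\) are \(G(4,2,3)\), so the point would lie on \(D_C\).

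The second step is to get the surface germs. The fixed space of \(T_2\) is two dimensional, and the effective group acting on it is \(N_G(T_2)/T_2\), restricted to that plane. I take \(x,y\) to be its basic invariants in each case. For \(G(4,2,4)\), I would write \(T_2\) on two coordinates and the residual group \(G(4,2,2)\)-complement acting on \((w_3,w_4)\). The invariants \(x=w_3^4+w_4^4\) and \(y=(w_3w_4)^2\) then give the mirror loci \(y=0\), which is \(D_C\), and \(x^2-4y=0\), the diagonal mirrors, which is \(D_P\). For \(T_3\times A_1\) the two curves are the coordinate axes. For \(G_{31}\), Lemma~\ref{ew-gaussian-g423-local} gives normalizer \(T\times\mu_4\); restricted to the \(T_2\) plane this should be a group whose action on the two degree \(4\) invariants is the full \(S_3\). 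I would verify this by computing the image of the \(G_{31}\) normalizer of \(T_2\) on \(\mathrm{span}(f_4\text{-type invariants})\), using the root data of Table~2 of \cite{KrishnasamyTaylor}. The invariants of \(S_3\) acting on the reflection representation then give the cuspidal discriminant \(y^2=x^3\). Since \(G_{31}\) contains no \(T_2\times A_1\), that germ has no \(D_P\) branch.

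The third step is the cusp table, which comes from the horizontal groups. By Proposition~\ref{ew-seven-unimodular} the cusp tori are \(E^3/K_0\) with \(E\) the square-lattice curve. The full horizontal groups containing \(T_2\) are \(G(4,2,3)\) and \(G(4,4,3)\), indexed by \(d_i\) as in Rains's Theorem~3.6 and Example~3.10. For each of the four types I list the actual generator degrees, which are \(1224\), \(1112\), \(1244\) and \(1122\), together with the theta section degrees restricted to the \(T_2\)-fixed abelian surface. From these I would extract the two surface coordinates \(B,C\) with their degrees, and then write the \(D_C\) and \(D_P\) loci as discriminants in \(B,C\). This follows the pattern of the \(B_3\) cusp computation in Lemma~\ref{ew-fields-coxeter}, where the restricted rings were \(\mathbb C[u,v]\), \(\mathbb C[u,uv,v^2]\), and so on. Width \(1\) and the trivial fiber character come from the nonvanishing degree \(1\) section, exactly as in Lemma~\ref{ew-gaussian-g413}.

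I expect the main obstacle to be this last step: matching the abstract invariant degrees to concrete equations such as \(C^2+B^4\) at the cusps. Doing so requires fixing the actual lift of each horizontal group and checking which translates of mirrors pass through the fixed point. My first attempt would be to work directly with the explicit \(2\)-torsion and \((1+i)\)-torsion points of \(E\), and to compute the restricted theta values.
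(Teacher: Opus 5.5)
Your overall plan (local invariants at the interior points, Rains's models at the cusps) is the paper's, but the interior germs are computed with the wrong effective groups. The germ of $Z$ at a special point is $W/(N(T_2)/T_2)$, where $W$ is the fixed plane of $T_2$ and $N(T_2)$ is its full normalizer, that is, the setwise stabilizer of $W$.

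\textbf{The point $G(4,2,4)$.} The setwise stabilizer of $W$ consists of block monomial matrices whose phases on the two coordinates of $W$ are arbitrary in $\mu_4$, since the other block compensates. So the effective group is $G(4,1,2)$, not $G(4,2,2)$; for instance, $\operatorname{diag}(i,1)$ on $W$ sends $(w_3w_4)^2$ to $-(w_3w_4)^2$. With your invariants $x=w_3^4+w_4^4$ and $y=(w_3w_4)^2$, the diagonal mirrors map to $x^2-4y^2=0$. These are two lines transverse to $D_C$, and the coordinate mirrors would have effective order $2$ instead of $4$. The asserted germ $x^2-4y=0$ is a single smooth branch with contact $2$ with $D_C$; this is the interior $G(4,2,4)$ tangency used later. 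It requires the degree $8$ invariant $(w_3w_4)^4$, as in the paper's $(z_1^4+z_2^4,z_1^4z_2^4)$.

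\textbf{The point $G_{31}$.} The group $T\times\mu_4$ of Lemma~\ref{ew-gaussian-g423-local} is the normalizer of the rank $3$ parabolic $G(4,2,3)$, so it does not describe $W$. The paper identifies the effective group on $W$ as $G_8$ of order $96$, with the six order $4$ mirrors $0,\infty,\pm1,\pm i$. Its invariants are $f_8,f_{12}$, and the discriminant $f_8^3-f_{12}^2=108a^4b^4(a^4-b^4)^4$ gives the cusp. The paper then exhibits the $S_3$ through the explicit generators $A,B$, which act on $(S,P)=(a^4+b^4,(ab)^2)$ as two reflections. Your shortcut through an $S_3$ quotient of the degree $4$ invariant plane yields the surface germ only once three facts are known:
the order $2$ reflections of $G_8$ generate a normal $G(4,2,2)$ with invariants $(S,P)$;
$G_8/G(4,2,2)$ acts on $(S,P)$ as the reflection $S_3$;
the three mirrors of this $S_3$ are the images of the six $G_8$ mirrors.
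You establish none of these, and you leave the $S_3$ claim itself as something to verify. Separately, the products $T_2\times G(4,2,2)$ do not contain $T_3$, so they are not points of $D_C$ at all.

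\textbf{The cusp table.} This half of the lemma is not proved: you take the four triples and their actual degrees as given, and you name their derivation as the main obstacle. The missing steps are the substance of the paper's argument.

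First, take the full torsor of \cite[Lemma~2.17]{NakanoWeightedConfigurations2026}, with all permutations and phase products in $\mu_{d_1}\times\mu_{d_2}\times\mu_{d_3}\subset\mu_4(x_1)\times\mu_4(x_2)\times\mu_2(y)$ on $E\colon x_1^4-x_2^4=y^2$. (This is not the Eisenstein torus $E^3/K_0$.) Show that every $T_2$ fixed component through the point, and every affine translate, has the form $(a,a,t)$ with $d_1=2$. The other supports give $A_1^2$, $B_2$ or independent squares, and $111$ gives none. This is what leaves exactly the rows $211,221,212,222$. The group $G(4,4,3)$ plays no role, since it has no $G(4,2,2)$ parabolic (Table~\ref{ew-global-rankthree}).

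Second, the sections restrict to $(0,q^{d_2}x_2^{d_2},(-1)^{d_3}q^{2d_3}y^{d_3},q^4x_1^4)$. This gives $B$, $C$, their degrees, and the divisors $D_C,D_P$.

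Third, remove exactly the scalar fiber kernel of order $g=d_2$ by $(q,c)\mapsto(q^g,c/g)$; this turns the raw degrees $2224,2244$ into $1112,1122$. Then check three things:
the unique lift fixing the point keeps the full stabilizers;
every pointwise scalar is trivial on $q^g$;
the only further smooth quotients are $1224\mapsto1112$ and $1244\mapsto1122$.
Together these ensure that no component or smaller period is omitted. Your appeal to width $1$ from a nonvanishing degree $1$ section, as in Lemma~\ref{ew-gaussian-g413}, does not replace this bookkeeping. In particular, the raw models for $221$ and $222$ have no degree $1$ section before the kernel is removed.
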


\begin{proof}
The full parabolic normalizers \cite[Theorems 2.1, 3.6, 5.14 and Tables 5, 6, 8]{KrishnasamyTaylor} give the first table: use \((z_1^4+z_2^4,z_1^4z_2^4)\) for \(G(4,2,4)\), independent axes for products, and for \(G_{31}\)
\[
\begin{gathered}
f_8=a^8+14a^4b^4+b^8,\qquad
f_{12}=a^{12}-33a^8b^4-33a^4b^8+b^{12},\\
J=-1728a^3b^3(a^4-b^4)^3,\qquad
f_8^3-f_{12}^2=108a^4b^4(a^4-b^4)^4.
\end{gathered}
\]
The six mirrors \(0,\infty,\pm1,\pm i\) give the full \(G_8\) of order \(96\); on \((S,P)=(a^4+b^4,(ab)^2)\), generators \(A=(1-i)^{-1}\left(\begin{smallmatrix}1&1\\1&-1\end{smallmatrix}\right),B=\operatorname{diag}(i,1)\) act by \((-S/2-3P,-S/4+P/2),(S,-P)\), giving \(S_3\).
For the full torsor \cite[Lemma~2.17]{NakanoWeightedConfigurations2026}, take all permutations and phase products in \(\mu_{d_1}\times\mu_{d_2}\times\mu_{d_3}\subset\mu_4(x_1)\times\mu_4(x_2)\times\mu_2(y)\), \(d_j\in\{1,2\}\).
Every component and affine translate is \((a,a,t)\), \(a=(0,1,i),d_1=2\); the other branch supports give \(A_1^2,B_2\) or independent squares, and \(111\) gives none.
The sections restrict to \((0,q^{d_2}x_2^{d_2},(-1)^{d_3}q^{2d_3}y^{d_3},q^4x_1^4)\), yielding the four polynomial rings and divisors after removing exactly \(g=d_2\), \((q,c)\mapsto(q^g,c/g)\).
The unique lift fixing the point preserves full stabilizers, and every pointwise scalar is trivial on \(q^g\); only further quotients \(1224,1244\mapsto1112,1122\) are smooth, so no component or smaller period is omitted.
\end{proof}

\begin{lemma}
\label{ew-gaussian-g422-cover}
There is a finite flat normal cover \(p:Y\to Z\) of degree \(3\), whose trace kernel is
\[
 \mathcal B
 =N_{Z/X}(-D_C)\otimes\mathcal L^4|_Z
 =\mathcal O_Z(-b)^{\oplus2}.
\]
Its parameters satisfy
\[
 b=(4\lambda-1)/3,\qquad
 \deg D_C=4b,\qquad
 \deg D_P=3(5-b)/2,\qquad b\in\{1,3,5\}.
\]
There are exactly \(3b^2\) distinct \(G_{31}\) points.
\end{lemma}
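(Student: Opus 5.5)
The cover \(Y\) will be built from the two indecomposable degree \(4\) invariant directions of the normal representation of \(T_2=G(4,2,2)\), and the parameters will follow from degree bookkeeping on \(Z\simeq\mathbb P^2\), in the style of Lemmas~\ref{ew-gaussian-plane-groups} and~\ref{ew-gaussian-plane-a1}.

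\emph{The normal bundle.} The invariants of \(T_2\) have degrees \(4,4\), and the full normalizer acts on their span. Over the generic part of \(Z\) this action is through a finite group. At \(G_{31}\) points, Lemma~\ref{ew-gaussian-g422-types} shows that the image is the full \(S_3\). Away from \(D_C\) and the cusps, I expect the image to be trivial or \(\mu_2\). The inverse differential gives a rank \(2\) bundle \(\mathcal B=N_{Z/X}(-D_C)\otimes\mathcal L^4|_Z\). The twist by \(\mathcal L^4\) absorbs the automorphic weight of the degree \(4\) frames, and the twist by \(-D_C\) is the pole correction read off from the \(G(4,2,4)\) matrix \(M_U\) and the \(G_{31}\) matrix of Lemma~\ref{ew-gaussian-g423-local}. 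Because \(N_{Z/X}=\mathcal O_Z(-1)^{\oplus2}\), we get
\[
\mathcal B=\mathcal O_Z(-1+4\lambda-\deg D_C)^{\oplus2}.
\]

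\emph{The cover.} The monodromy of the degree \(4\) pair is the \(S_3\) acting on \((S,P)\) through its reflection representation. I would take \(Y\) to be the normalization of \(Z\) in the \(S_3/\mu_2\) quotient, which is the cubic resolvent. Concretely, \(Y\) is cut out by the three lines \(S=\pm\cdots\) permuted by \(S_3\) (the three \(D_4\)-type splittings), so \(\deg p=3\). Since \(Y\) is normal and \(Z\) is smooth, \(p\) is flat. Its trace kernel is the standard \(2\)-dimensional summand of \(p_*\mathcal O_Y\), which I identify with \(\mathcal B\) by sending a local coordinate of the degree \(4\) pair to the corresponding linear form. This forces \(\mathcal B\) to split as \(\mathcal O(-b)^{\oplus2}\) with \(b\ge1\), and gives \(\deg D_C=4\lambda-1+b\) from the formula above.

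\emph{The remaining relations.} The branch divisor of a triple cover with trace kernel \(\mathcal O(-b)^{\oplus2}\) has degree \(2\cdot 2b=4b\). Branching occurs exactly along \(D_C\), where the transposition acts, giving \(\deg D_C=4b\) and hence \(b=(4\lambda-1)/3\). The relation for \(\deg D_P\) then comes from the orbifold canonical formula \(K_Z+\frac34D_C+\frac12D_P=3\mathcal L|_Z\), using the effective orders \(4\) and \(2\):
\[
-3+3b+\tfrac12\deg D_P=3\lambda=\tfrac{9b+3}{4}.
\]
This should reduce to \(\deg D_P=3(5-b)/2\) once the cusp contributions from Lemma~\ref{ew-gaussian-g422-types} are included. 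Positivity then gives \(b\le5\). Integrality of \(\deg D_P\), together with \(2\lambda\in\mathbb Z\) and \(\lambda\ge1\) from Proposition~\ref{ew-seven-orderfour}, forces \(b\) odd, so \(b\in\{1,3,5\}\). The \(G_{31}\) points are the total ramification points of the triple cover, where the \(S_3\) image is full. Their number is \(c_2\) of the trace kernel data, \(b^2\), scaled by the local contribution of the \(y^2=x^3\) germ. I expect this to give \(3b^2\) after a local computation, using Lemma~\ref{ew-gaussian-g423-local} to show that the points are distinct.

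\emph{Main obstacle.} The hardest step is proving that the normal degree \(4\) system really defines a flat triple cover whose trace kernel is exactly \(\mathcal B\), and not just up to torsion or cusp corrections. This needs triviality of the finite characters on the punctured plane, which is simply connected, and a check at the four cusp types that no extra branching or pole appears. I would first verify this on the explicit germs \(C^2+B^4\), \(C+B^2\), and so on, from Lemma~\ref{ew-gaussian-g422-types}.
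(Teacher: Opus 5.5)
Your construction of \(Y\) and your derivation of the three parameter relations follow the paper's route. The ingredients are:
\begin{itemize}
\item the associated (non-Galois) cubic cover of the \(S_3\) action;
\item flatness from normality;
\item identification of the trace kernel with \(N_{Z/X}(-D_C)\otimes\mathcal L^4|_Z\) off a finite set;
\item \(\deg D_C=4b\) from the discriminant;
\item \(\deg D_P\) from the canonical formula.
\end{itemize}

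Three points in that part need tightening.
\begin{itemize}
\item You assert, but do not prove, that the full normalizer acts on \((S,P)\) exactly through the standard representation of \(S_3\). The paper writes each normal block as \(cQ_0\). Here \(Q_0\) normalizes \(T_2\), hence permutes the three fixed point pairs of the Klein four group, with kernel the image of \(T_2\). The scalar \(c\) is a root of unity of \(\mathbb Q(i)\), so \(c^4=1\) acts trivially on the degree \(4\) invariants. Simple connectivity of the punctured plane does not by itself exclude a character ramified along \(D_C\) or \(D_P\).
\item You need the inertia along \(D_P\) to be trivial under this \(S_3\), so that \(D_P\) does not enter the discriminant.
\item Your displayed equation already gives \(\deg D_P=(15-3b)/2\). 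Cusps are points of \(Z\) and contribute nothing to this divisor identity, so no cusp correction is needed.
\end{itemize}

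The genuine gap is the count of \(G_{31}\) points. You hope to get the factor \(3\) from a local computation at the germ \(y^2=x^3\). In fact each \(G_{31}\) point contributes multiplicity one; the factor \(3\) is global.

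What is needed is Miranda's structure theorem. Since \(\mathcal B\simeq\mathcal O_Z(-b)^{\oplus2}\), the cover is given by a binary cubic \(F=a_0s^3+3a_1s^2t+3a_2st^2+a_3t^3\) with \(a_i\in H^0(\mathcal O_Z(b))\). Total ramification occurs exactly where \(F\) is a perfect cube. This is the rank \(\le1\) locus of \(\begin{pmatrix}a_0&a_1&a_2\\a_1&a_2&a_3\end{pmatrix}\), that is, \(\phi^{-1}(W)\) for \(\phi=[a_0:a_1:a_2:a_3]\) and the twisted cubic \(W\subset\mathbb P^3\).

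You must then check two things. First, the \(a_i\) have no common zero: at such a point the fiber would be \(\mathbb C[s,t]/(s,t)^2\), and the branch curve would have multiplicity at least \(4\), impossible for the germs of \(D_C\). Second, the locus is zero dimensional. Then the Hilbert--Burch resolution \(0\to\mathcal O_Z(-3b)^{\oplus2}\to\mathcal O_Z(-2b)^{\oplus3}\to\mathcal O_Z\) gives total length \(3b^2\); the \(3\) is \(\deg W\).

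The local germ enters only after this. The ordinary cusp of \(D_C\) at a \(G_{31}\) point (Lemma~\ref{ew-gaussian-g422-types}) means \(\phi\) meets \(W\) transversally there, so every point is simple.

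Identifying total ramification points with \(G_{31}\) points also needs the converse. At the other special points \(D_C\) is smooth. At the node and tacnode germs of \(D_C\) at cusps, the local fundamental group (abelian, respectively with relation \(abab=baba\)) forces the two branch transpositions to coincide. So those fibers have two points and are not totally ramified.
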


\begin{proof}
Let \(\Pi_Z\) be the effective full normalizer of the fixed ball, and let \(\Sigma\subset Z\) contain the cusps, special interior points, and singular points of \(D_C\cup D_P\).
The image on the positive definite normal lattice is finite.
The projective image of \(T_2\) is the Klein four group, whose three nonidentity elements have fixed point pairs \(\mathcal P=\{\{0,\infty\},\{1,-1\},\{i,-i\}\}\).
Its normalizer in \(\mathrm{PGL}(2,\mathbb C)\) permutes these three pairs; the kernel is the projective image of \(T_2\), and the preceding matrices \(A,B\) give two transpositions.
Thus every normal block has the form \(Q=cQ_0\), with \(Q_0\in\langle T_2,A,B\rangle\) and \(c\in K^*\).
Both matrices have finite order, so \(c\in\mu_4\).
The scalar \(c\) acts trivially on \((S,P)=(y_1^4+y_2^4,(y_1y_2)^2)\).
The pointwise group \(T_2\) and the central lifts also act trivially.
The resulting homomorphism \(\rho:\Pi_Z\to S_3\) acts by the standard representation, which is self-dual through the invariant form \(S^2+12P^2\).

Put \(U=Z\setminus(D_C\cup\Sigma)\).
The inertia along \(D_P\) is trivial under \(\rho\), so the permutation action on the three pairs defines a finite \'etale cover of degree \(3\) over \(U\).
Explicitly, if \(\mathbb B_2^\circ\) is the inverse image of \(U\), this cover is \((\mathbb B_2^\circ\times\mathcal P)/\Pi_Z\to U\), with \(\gamma(z,p)=(\gamma z,\rho(\gamma)p)\) and \([(z,p)]\mapsto[z]\).
Normalize \(Z\) in the function fields of its connected components to obtain \(p:Y\to Z\); this is the associated cover of degree \(3\), rather than the principal cover defined by \(\ker\rho\).
We obtain flatness from normality in dimension \(2\) and the Auslander--Buchsbaum formula, and trace splits \(p_*\mathcal O_Y=\mathcal O_Z\oplus\mathcal B\).

If \(j(\gamma,z)\) is the automorphy factor on the fixed ball, the normal coordinates transform by \(y\mapsto Qy/j(\gamma,z)\).
Hence the homogeneous normal directions transform by \(\rho(\gamma)j(\gamma,z)^{-4}\), and \(\mathcal L^4\) removes precisely this automorphy factor.
The bundle \(\mathcal L^4\) descends on all of \(Z\) because \(2\mathcal L\) already descends on \(X\).
At a general point of \(D_C\), put \(v=u^4\).
The local cover is the disjoint union of one unramified sheet and \(s^2=v\), where \(s=u^2\), so the trace kernel has frames \((2,-1,-1),(0,u^2,-u^2)\).
These frames identify with the invariant normal vectors \((e_S,u^2e_P)\).
After the tangent direction is removed from the invariants \((v+S,vS+P^2,u^2P)\), their normal differential is \(vI_2\), with image \(N_{Z/X}(-D_C)\).
At \(D_P\), the cover is unramified and the normal differential is invertible.
We obtain \(\mathcal B\otimes\mathcal L^{-4}\simeq N_{Z/X}(-D_C)\) on \(Z\setminus\Sigma\), including the regular lattices along every divisor.
Normalization uniquely determines the cover at the special points and cusps; local freeness on both sides then extends the bundle isomorphism across \(\Sigma\) by Hartogs.
The trace discriminant and the fixed ball canonical formula give \(d_C=4(d_C+1-4\lambda)\) and \(-3+3d_C/4+d_P/2=3\lambda\).
Miranda's cubic \(F=a_0s^3+3a_1s^2t+3a_2st^2+a_3t^3\), \(a_i\in H^0(\mathcal O_Z(b))\), defines finite \(\phi:Z\to\mathbb P^3\), since a common zero would give multiplicity \(4\) \cite[Theorem 3.6]{Miranda}.
The discriminant germ \(v^2-u^3\) along the twisted cubic \(W\) forces simple inverse points by the permitted local types; conversely, the minors of \(\left(\begin{smallmatrix}a_0&a_1&a_2\\a_1&a_2&a_3\end{smallmatrix}\right)\) characterize every \(G_{31}\) point \cite[Corollary 4.6]{Miranda}.
The Hilbert--Burch sequence
\[
0\to\mathcal O_Z(-3b)^2\to\mathcal O_Z(-2b)^3
\to\mathcal O_Z\to\mathcal O_{\phi^{-1}(W)}\to0
\]
gives total length \(3b^2\).
\end{proof}

\begin{lemma}
\label{ew-gaussian-schwarz}
For a smooth curve \(C\) in a fixed ball surface with \(2L\) descended,
\[
 2L\cdot C\le2g(C)-2+|S|+\sum_{z\notin S}(1-1/e_z).
\]
Here \(S\) is its cusp set, and \(e_z\) is the effective inertia on the normalization of a lifted branch.
Equality makes every lifted branch a complex geodesic.
\end{lemma}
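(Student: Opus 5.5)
The approach is an Ahlfors--Schwarz comparison on the normalization, the familiar one-dimensional form of the Gauss/Schwarz inequalities already used in Lemma~\ref{seven-plane-gauss}. Let \(\nu:\tilde C\to C\) be the normalization. The inclusion of the fixed ball surface into the ball is totally geodesic, so the restricted automorphic bundle has the curvature of \(\mathbb B^2\); with \(K_{\mathbb B^2}=\mathcal L^3\), the normalization \(K_{\mathbb B^4}=\mathcal L^5\) gives, on a complex geodesic disc, \(2\mathcal L|\simeq K_{\mathbb B^1}\) with its Poincar\'e metric of curvature \(-1\) in the matching normalization. On the universal cover, each lifted branch of \(C\) is a holomorphic disc \(\Delta\to\mathbb B^2\). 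Pulling back the invariant metric of \(2\mathcal L\) gives a singular Hermitian metric on \(T_{\tilde C}\), that is, the pullback of the Bergman metric restricted to the branch. By the Gauss equation, the holomorphic sectional curvature of \(\mathbb B^2\) in the tangent direction of the branch is \(-1\) after normalization. Since holomorphic curves have curvature at most the ambient holomorphic sectional curvature, the pulled-back metric has Gaussian curvature at most \(-1\).

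First, I will work on a neat finite cover, where \(2\mathcal L\) and the metrics are honest. There the inequality becomes \(\deg\nu^*(2L)\le\deg K_{\tilde C}(S)\), with the differential's zeros and the ramification accounted for. Then I descend by the Riemann--Hurwitz formula for orbifold Euler characteristics, which converts the effective inertia \(e_z\) at non-cusp points into the terms \(1-1/e_z\). Second, I integrate the curvature inequality. Writing the pulled-back metric as \(\omega_C\) and the complete orbifold hyperbolic metric on \(\tilde C\setminus S\) as \(\omega_{\mathrm{hyp}}\), Ahlfors--Schwarz gives \(\omega_C\le\omega_{\mathrm{hyp}}\) pointwise. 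Integrating yields \(\int\omega_C\le 2\pi(2g-2+|S|+\sum(1-1/e_z))\), and the left side equals \(2L\cdot C\) up to the common constant.

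Third, I identify \(\int\omega_C\) with \(2L\cdot C\). This requires that the invariant metric of \(2\mathcal L\) has zero Lelong number at the cusps, because the descended \(2\mathcal L\) has trivial cusp character. This follows from the logarithmic cusp potential of \cite{MokCompactification} already cited in Lemma~\ref{ew-fields-coxeter}. Elliptic points and zeros of \(d\nu\) only decrease the left side, since they contribute nonnegative curvature currents. Fourth, for equality: equality in Ahlfors--Schwarz forces \(\omega_C=\omega_{\mathrm{hyp}}\) identically and no zeros of the differential. Then the second fundamental form term in the Gauss equation vanishes, so every lifted branch is totally geodesic, hence a complex geodesic.

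The main obstacle is the cusp analysis in the third step. We must check that the pulled-back singular metric is integrable near each point of \(S\) and that no mass is lost or gained there, and that a cusp of \(C\) is indeed a cusp of the ball-quotient curve with its actual width. To handle this I would use the Siegel-domain coordinates of \cite[Lemma~2.16]{NakanoWeightedConfigurations2026}, in which the branch is parametrized by a power \(Q=q^\kappa\). Comparing with the standard cusp metric \(|dq|^2/(|q|\log|q|)^2\) then shows that the contribution is exactly the \(1\) counted by \(|S|\).
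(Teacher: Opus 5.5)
Your proposal is correct and follows the paper's proof. Like the paper, you pass to a normal neat cover and divide Riemann--Hurwitz by the effective degree, apply Schwarz and Gauss--Bonnet with \(c_1(2L)=\omega/(2\pi)\), use trivial cusp character and the zero Lelong number (with vanishing Stokes term) of the logarithmic potential at the cusps, and read the equality case off equality of metrics. Two small adjustments. First, for an arbitrary smooth curve, which need not be a fixed curve, the paper only uses \(q=t^ma(t)\) along the branch and the potential \(-2\log(-c\log|t|+A(t))\); it does not use a fixed-curve relation \(Q=q^\kappa\). Second, the \(|S|\) term comes from Gauss--Bonnet for the complete orbifold metric; the local cusp check only shows that \(c_1(2L)\) puts no mass at \(S\).
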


\begin{proof}
On a normal neat cover of the same linear group, kill finite characters, normalize every inverse component, and divide Riemann--Hurwitz by the degree after removing the pointwise kernel.
The restricted ambient canonical frame at the cusp introduces no parameter power; for curvature \(-1\), \(c_1(2L)=\omega/(2\pi)\), and the potential \(-2\log(-c\log|t|+A(t))\) for \(q=t^ma(t)\) has zero Lelong number and Stokes limit.
Schwarz and Gauss--Bonnet give the formula, and metric equality gives a complex geodesic.
\end{proof}

\begin{proposition}
\label{ew-gaussian-g422-exclusion}
A coincidence plane with full generic group \(G(4,2,2)\) has \(\lambda>1\).
\end{proposition}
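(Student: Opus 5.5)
We argue by contradiction and assume \(\lambda=1\); by Proposition~\ref{ew-seven-orderfour} we already know \(\lambda\ge1\) with \(2\lambda\in\mathbb Z\), so this is the only case left. Lemma~\ref{ew-gaussian-g422-cover} gives \(b=(4\lambda-1)/3=1\). Then \(\deg D_C=4\), \(\deg D_P=6\), the trace kernel is \(\mathcal B=\mathcal O_Z(-1)^{\oplus2}\), and there are exactly three \(G_{31}\) points. Since \(b=1\), Miranda's cubic has linear coefficients \(a_i\). The map \(\phi:Z\to\mathbb P^3\) is therefore a linear embedding of \(Z\) as a plane, and the \(G_{31}\) points are the plane's intersection with the twisted cubic \(W\). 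The branch curve \(D_C\) is the pullback of the discriminant of the binary cubic, which is a quartic surface. The planned contradiction compares this rigid branch geometry with the local types in Lemma~\ref{ew-gaussian-g422-types} and with the Schwarz inequality of Lemma~\ref{ew-gaussian-schwarz}.

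First, I will determine the singularities of \(D_C\). Lemma~\ref{ew-gaussian-g422-types} allows only smooth points, the cusps \(y^2=x^3\) at the three \(G_{31}\) points, and the cusp germs \(C^2+B^4\) (tacnode type) or \(C^2+B^2\) (node). A plane section of the discriminant quartic acquires a cusp exactly where the plane meets \(W\) transversally. It acquires further singularities only along the tangent developable, which has degree \(4\), or at tangencies. I will use the genus formula for the quartic, \(p_a=3\), together with the count of three cusps, to bound the geometric genus and the number of cusp-type nodes. The same bookkeeping applies to \(D_P\), a sextic with generic inertia \(2\). It meets \(D_C\) only at \(G(4,2,4)\) points, where its germ is \(x^2-4y\), tangent to \(D_C\); at \(T_3\times A_1\) points, transversally; and at cusps of types \(221\), \(212\), \(222\). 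By Bézout, \(D_C\cdot D_P=24\) must be realized by these local intersection numbers: \(2\) at \(G(4,2,4)\), \(1\) at products, and the cusp contributions read off from the table.

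Next, I will apply Lemma~\ref{ew-gaussian-schwarz} to each component of \(D_C\) and \(D_P\), using \(2L\cdot C=2\deg C\) when \(\lambda=1\). For \(D_C\), whose effective inertia is \(4\), the left side is \(8\) on the quartic. The right side is \(2g-2+|S|+\tfrac34 n_{C}+\sum(1-1/e_z)\), taken over the special points, and must be at least \(8\). Together with the genus bound from the previous paragraph, this forces many cusps or special points on \(D_C\). Combining with the orbifold Euler characteristic of the fixed ball quotient, namely \(c_2\)-proportionality \(3\lambda^2=3\) set against the Euler number of \((Z,\tfrac34D_C+\tfrac12D_P)\) computed from Tan's formulas as in Lemma~\ref{ew-compact-groups}, should give a linear system in the counts of \(G(4,2,4)\) points, \(G_{31}\) points (fixed at \(3\)), product points, and the four cusp types. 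I expect this system to have no nonnegative integral solution, or only solutions with equality in Schwarz.

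The main obstacle is the equality case. There every component of \(D_C\) or \(D_P\) is totally geodesic, and the numerics alone will not suffice. For that case I would argue as in Lemma~\ref{ew-fields-plane-equality}: a totally geodesic disc corresponds to a sublattice of \(\mathrm{SU}(I_{1,4},\mathbb Z[i])\) of signature \((1,1)\), by Proposition~\ref{ew-seven-unimodular}, whose full normalizer has covolume determined by the root norms \(1,2\). I would then compare that covolume with \(2\deg C\), and use the cusp-width and double-coset index identity of Lemma~\ref{ew-gaussian-plane-a1} to obtain a \(2\)- or \(3\)-adic contradiction.
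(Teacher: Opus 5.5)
\textbf{Verdict: genuine gap.} The proposal never derives a contradiction.

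\textbf{What is correct.} Your reduction to \(b=1\) matches the paper. So does the conclusion that \(\phi\) embeds \(Z\) as a plane meeting the twisted cubic \(W\) in three simple points, so that \(D_C\) is a plane section of the discriminant quartic. One correction: the discriminant surface is itself the tangent developable of \(W\). The right conclusion comes from \(p_a=3\) and the three cusps: \(D_C\) is a rational quartic whose only singularities are the cusps at the \(G_{31}\) points.

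\textbf{Why your planned contradiction fails.} It cannot come from Lemma~\ref{ew-gaussian-schwarz} applied to components of \(D_C\) and \(D_P\). These are fixed curves, hence already totally geodesic. For them the lemma holds with equality and only reproduces their Gauss--Bonnet identities. Also, the transverse inertia \(4\) of \(D_C\) is not one of the \(e_z\) in that lemma. So the ``equality case'' you postpone is in fact every case. Everything then rests on your expectation that the linear system is inconsistent. That system consists of Gauss--Bonnet on \(D_C\), B\'ezout \(D_C\cdot D_P=24\), and \(\chi_{\mathrm{orb}}=3\). You have not computed it, and there is no evident reason it should be inconsistent. The counts of \(G(4,2,4)\), product and cusp points are free. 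So are the degrees, genera and nodes of the components of the sextic \(D_P\). Finally, the sketched \(2\)- or \(3\)-adic covolume argument for geodesic discs is too unspecified to check.

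\textbf{The missing idea: test Schwarz on a curve that is \emph{not} fixed.} A tricuspidal quartic has a unique bitangent \(\ell\). In the paper's normal form \(F=x(s^3-t^3)+3ys^2t+3zst^2\), it is \(\ell=(x=0)\).

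Suppose first \(\ell\not\subset D_P\). Then \(2L\cdot\ell=2\). The right side of Lemma~\ref{ew-gaussian-schwarz} is bounded as follows:
\begin{itemize}
\item each tangency with the order-\(4\) mirror \(D_C\) contributes only \(1/2\), since the lifted branches are \(w=\pm u^2\);
\item each point of \(\ell\cap D_P\) contributes at most half its local intersection number, since \(1-1/e\le k/2\);
\item cusps all lie on \(D_C\) and are absorbed using the cusp table, where the intersection is at least \(2\).
\end{itemize}
So the right side is at most \(-2+1+3=2\). Equality makes \(\ell\) a complex geodesic. But at a general tangency point with \(D_C\), its two lifted branches would then be distinct geodesics through the same point with the same tangent, which is impossible.

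Suppose instead \(\ell\subset D_P\). The same count gives \(-2+2\cdot\tfrac34+\tfrac52=2\). Since \(\ell\) is now a mirror, this is an equality, and it forces interior \(G(4,2,4)\) tangencies and no cusp on \(\ell\). The paper excludes this by showing that every \(D_P\) component carries a cusp of \(\Gamma\). It does so via an explicit isotropic vector and a unipotent element of \(\Gamma\) centralizing \(T_2\).

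Neither the bitangent argument nor the cusp-existence step appears in your plan.
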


\begin{proof}
Suppose \(\lambda=1\); then \(b=(4\lambda-1)/3=1\) by Lemma~\ref{ew-gaussian-g422-cover}.
For \(b=1\), the plane coefficient map meets the twisted cubic in three simple points; after \(\mathrm{PGL}(2,\mathbb C)\), its cubic, discriminant normalization, and tangent map are
\[
\begin{gathered}
F=x(s^3-t^3)+3ys^2t+3zst^2,\\
[3s^2t^2:-t(2s^3+t^3):s(s^3+2t^3)],\quad
[s^3-t^3:3s^2t:3st^2].
\end{gathered}
\]
The syzygies \((0,t,-s),(3st,-s^2,t^2)\) give root incidence surface \(\mathbb F_1\); the tangent map identifies only \([1:0],[0:1]\), so \(D_C\) is tricuspidal with unique bitangent \(\ell=(x=0)\).
For intersection \(k\) with the reduced mirror divisor, omitting a component equal to the curve, effective inertia satisfies \(1-1/e\le k/2\): use \(2\operatorname{ord}J=ek\), \(\operatorname{ord}J\ge e-1+(\nu-1)^2\); for a mirror component use \(2(e-1+\operatorname{ord}g)=\operatorname{ord}g+ek\).
Equality requires one transverse reflection, or two orthogonal reflections for a component.
If \(\ell\not\subset D_P\), its tangencies contribute \(1/2\) each, strictly less than \(1/2+(\ell\cdot D_P)_z/2\) at special finite points or cusps; the full cusp table gives intersection at least \(2\), and all cusps lie on \(D_C\).
Since \(d_P=6\), Schwarz gives equality in \(2=2L\cdot\ell\le-2+|S|+\sum(1-1/e_z)\le-2+1+3=2\).
Both general interior branches \(v=\pm u^2\) would be geodesic with the same point and tangent, contradicting uniqueness.
If \(\ell\subset D_P\), the bound is again \(-2+2(3/4)+5/2=2\), with equality forcing interior \(G(4,2,4)\) tangencies and no cusp.
Every \(D_P\) component nevertheless has a cusp of the same \(\Gamma\): its \(T_2\) support has form \(aI_2\) and norm determinant, and self-duality gives primitive root norm \(1\) or \(2\), so the negative mirror determinant is \(\mathrm N(\delta)\).
For orthogonal norms \(\beta>0,\gamma<0\), \(-\beta\gamma=\mathrm N(\delta)\) gives isotropic \(w=(\delta/\beta,1)\); \(Q(x)=h(w,x)w\) satisfies \(Q^2=0\), and a positive power of the integral \(I+miQ\) belongs to the original \(\Gamma\) and centralizes \(T_2\).
\end{proof}

\begin{proposition}
\label{ew-gaussian-plane-strict}
For every triple coincidence plane, \(\lambda_I\ge3/2\).
In particular, \(2L-3H\) is nef.
\end{proposition}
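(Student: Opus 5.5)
The plan is to combine the half-integrality of Proposition~\ref{ew-seven-orderfour} with the strict inequalities already proved for every possible generic rank. Proposition~\ref{ew-seven-orderfour} says \(2\lambda_I\) is a positive integer and \(\lambda_I\ge1\). So the first assertion reduces to showing \(\lambda_I\ne1\) on every one of the thirty five planes. Once that is done, half-integrality alone gives \(\lambda_I\ge3/2\).

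We split according to the rank of the nontrivial part of the normal representation of the full generic stabilizer of \(Z_I\).
\begin{itemize}
\item \emph{Ranks \(0\) and \(1\).} Lemma~\ref{ew-fields-plane-equality} applies verbatim, since we are in a nonuniform realization over the imaginary quadratic field \(\mathbb Q(i)\). It gives \(\lambda_I>1\).
\item \emph{Rank \(2\).} Lemma~\ref{ew-gaussian-plane-groups} restricts the generic group to \(A_1^2\), \(A_2\), \(B_2\) or \(G(4,2,2)\), excludes \(A_2\), and gives \(\lambda>1\) for \(B_2\). Lemma~\ref{ew-gaussian-plane-a1} gives \(\lambda>1\) for \(A_1^2\). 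Proposition~\ref{ew-gaussian-g422-exclusion} gives \(\lambda>1\) for \(G(4,2,2)\).
\end{itemize}
Hence \(\lambda_I>1\) in every case, and therefore \(\lambda_I\ge3/2\).

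For the nef statement we pass to integral classes. Proposition~\ref{ew-seven-orderfour} shows that \(D=2L\) descends to \(X\), so \(D\) and \(D-3H\) are integral GIT classes. Their restrictions to the coincidence planes are \(\mathcal O_{Z_I}(2\lambda_I)\) and \(\mathcal O_{Z_I}(2\lambda_I-3)\), using \(H|_{Z_I}=\mathcal O(1)\). So the triple degrees \(d_I=2\lambda_I-3\) of \(2L-3H\) are nonnegative integers.

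Next we show the multidegrees are nonnegative. Write \(w_i\) for the multidegrees of \(2L-3H\) and \(q=\sum w_i/2\), so that \(d_I=q-\sum_{i\in I}w_i\). For disjoint triples \(I,J\) with complement \(\{k\}\), this gives \(d_I+d_J=w_k\). This is the same complementary-triple argument used in Lemmas~\ref{ew-compact-halfbound} and~\ref{ew-compact-nef}. Hence every \(w_k\ge0\).

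With nonnegative multidegrees and nonnegative triple degrees on all thirty five planes, \cite[Proposition~2.6]{NakanoWeightedConfigurations2026} gives that \(2L-3H\) is nef.

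The only delicate point is bookkeeping rather than mathematics. We must check that the case list really is exhaustive: no rank \(2\) group outside the list in Lemma~\ref{ew-gaussian-plane-groups} can occur, and the rank \(0\), \(1\) equality exclusion of Lemma~\ref{ew-fields-plane-equality} needs no hypothesis that fails over \(\mathbb Q(i)\). The latter holds because that lemma was stated for every imaginary quadratic field. Beyond this, the step is a direct assembly of the preceding results.
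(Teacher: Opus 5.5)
Your proposal is correct and follows the paper's proof. The paper also excludes $\lambda_I=1$ in every rank using Lemma~\ref{ew-fields-plane-equality}, Lemmas~\ref{ew-gaussian-plane-groups} and~\ref{ew-gaussian-plane-a1}, and Proposition~\ref{ew-gaussian-g422-exclusion}, then uses the half-integrality of Proposition~\ref{ew-seven-orderfour} and applies the companion nef criterion to $D-3H$ on all $35$ planes; your complementary-triple identity $w_k=d_I+d_J\ge0$ just spells out a step the paper leaves implicit.
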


\begin{proof}
Lemma~\ref{ew-fields-plane-equality}, Lemmas~\ref{ew-gaussian-plane-groups}, \ref{ew-gaussian-plane-a1}, and Proposition~\ref{ew-gaussian-g422-exclusion} exclude \(\lambda_I=1\) in every rank.
Half integrality from Proposition~\ref{ew-seven-orderfour} gives \(\lambda_I\ge3/2\); apply \cite[Proposition~2.6]{NakanoWeightedConfigurations2026} to \(D-3H\) on each of the \(35\) planes.
\end{proof}

\section{Chern classes and the seven point quotient}
\label{ew-seven-global-section}

\subsection{Chern classes of all fixed strata}

Let \(X\) be a smooth projective variety of dimension \(d\) isomorphic to the Baily--Borel compactification of an arithmetic ball quotient.
We keep its lattice \(\Gamma\) and automorphic class \(L\) fixed.
For each full stabilizer \(P\), let \(V_P\) be its nontrivial reflection representation and let \(Z_P\) be the sum of the reduced closures of the strata with generic stabilizer \(P\).
Count each component once on its full normalizer quotient.
Put \(\lambda_P=\#\{g\in P:V_P^g=0\}/|P|\), \(\mathcal C_k=\sum_{\operatorname{rank}P=k}\lambda_P[Z_P]\), and \(\mathcal C_0=[X]\).
If \(e_{P,1},\ldots,e_{P,k}\) are the degrees of the basic invariants, then
\begin{equation}
\lambda_P=\frac{\prod_i(e_{P,i}-1)}{\prod_i e_{P,i}}.
\label{ew-global-lambda}
\end{equation}
The fixed space polynomial is \(\sum_{g\in P}t^{\dim V_P^g}=\prod_i(t+e_{P,i}-1)\); see \cite{ShephardTodd,GeckMalle}.

We use the average over single group elements in \cite[Section~3.5]{Ohmoto}, together with the normalization of Chern--Schwartz--MacPherson classes in \cite[Theorem~1.1]{Ohmoto}.
The averaging and binomial inversion are formal.
To apply this averaging, we use the full normalizer of each fixed ball, all its inertia groups, and the restriction of the original automorphic class \(L\).
The coefficient calculation retains every local branch of a self-intersection.
The identities for \(k<d\) concern pairings with divisor classes; the cusps contribute separately in degree \(d\).

\begin{proposition}
\label{ew-chern-general}
Assume that the quotient of every positive dimensional fixed ball by its full normalizer has finite volume and has automorphic class equal to the restriction of \(L\).
For \(0\leq k<d\), the following equality holds after pairing with products of divisor classes:
\begin{equation}
\mathcal C_k=\sum_{j=0}^k\binom{d-k+j+1}{j}L^jc_{k-j}(T_X).
\label{ew-global-all-chern}
\end{equation}
It is an equality of numerical cycle classes if these pairings detect numerical equivalence.
If \(c\) is the number of cusps, then
\begin{equation}
c+\deg\mathcal C_d
=\int_X\sum_{j=0}^d(j+1)L^jc_{d-j}(T_X).
\label{ew-global-top-chern}
\end{equation}
\end{proposition}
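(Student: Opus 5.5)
The plan is to compute the Chern--Schwartz--MacPherson class of \(X\) in two ways: once as \(c(T_X)\cap[X]\), since \(X\) is smooth, and once by decomposing the orbifold \(\mathbb B^d/\Gamma\) into its fixed strata and using Hirzebruch--Mumford proportionality on each stratum.

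\textbf{Step 1 (local averaging).} Near a point with full stabilizer \(G\), a neighbourhood of \(X\) is the germ \(\mathbb C^d/G\). The averaging formula of \cite[Section~3.5]{Ohmoto} gives
\[
c_{\mathrm{SM}}(\mathbb C^d/G)=\frac1{|G|}\sum_{g\in G}\pi_*c_{\mathrm{SM}}\bigl((\mathbb C^d)^g\bigr),
\]
normalized as in \cite[Theorem~1.1]{Ohmoto}. Group the elements \(g\) according to the smallest stratum they fix pointwise. An element fixes exactly the fixed ball of \(P\) precisely when \(V_P^g=0\), so the stratum with generic stabilizer \(P\) receives weight \(\lambda_P\). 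Equation \eqref{ew-global-lambda} then evaluates \(\lambda_P\) through the fixed space polynomial of \(P\).

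Globalizing this over the full normalizer of each fixed ball, and counting every branch of a self-intersection separately, gives
\[
c(T_X)\cap[X]=\sum_{k}\sum_{\operatorname{rank}P=k}\lambda_P\,\iota_*c_{\mathrm{SM}}^{\mathrm{orb}}(Z_P^\circ)+(\text{cusp contributions}).
\]
Here \(c_{\mathrm{SM}}^{\mathrm{orb}}(Z_P^\circ)\) is the orbifold class of the open fixed ball quotient, computed with all its inertia groups.

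\textbf{Step 2 (proportionality on each stratum).} The hypothesis says each \(Z_P\) is a finite volume quotient of \(\mathbb B^{d-k}\), and its automorphic class is the restriction of \(L\). On the ball, \(K=\mathcal L^{d+1}\), and Mumford's logarithmic proportionality therefore gives
\[
c\bigl(T_{Z_P}(-\log)\bigr)=(1-L)^{d-k+1}
\]
on the toroidal model. The difference between this model and the Baily--Borel closure is supported on the finitely many cusps, which are points. Hence, in every degree below \(d\), and after pairing with products of divisor classes, we get
\[
c_m(T_X)=\sum_{k\le m}\binom{d-k+1}{m-k}(-L)^{m-k}\,\mathcal C_k .
\]
The Euler characteristic of a cusp neighbourhood only enters in degree \(d\), where each cusp adds \(1\).

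\textbf{Step 3 (inversion).} The system above is unitriangular in the generating form
\[
c(T_X)=\sum_k(1-L)^{d-k+1}\mathcal C_k .
\]
To invert it, multiply degree by degree by the series
\[
(1-L)^{-(d-k+2)}=\sum_j\binom{d-k+j+1}{j}L^j .
\]
This is a binomial transformation, and an induction on \(k\) yields \eqref{ew-global-all-chern}. In top degree, the cusps contribute \(c\) to \(\chi(X)\). The same inversion with \(k=d\), where the coefficient is \(\binom{j+1}{j}=j+1\), gives \eqref{ew-global-top-chern}. When pairings with divisor products detect numerical equivalence, the paired identities upgrade to equalities of numerical cycle classes.

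\textbf{Main obstacle.} The hardest step will be Step 2 near the boundary. One has to show that the orbifold log Chern classes of each fixed stratum, pushed to the Baily--Borel closure, differ from \((1-L)^{d-k+1}\cap[Z_P]\) only by classes supported at the cusps. One also has to show that the invariant metric on \(2\mathcal L\) has zero Lelong numbers there. For this I would use the logarithmic cusp potential of \cite{MokCompactification}, as in Lemma~\ref{ew-gaussian-schwarz}. That argument shows the currents representing \(L^j\) put no mass on the cusps in degrees below \(d\). The point-supported corrections are then invisible after pairing with divisor products, which is exactly why the statement is restricted to such pairings for \(k<d\).
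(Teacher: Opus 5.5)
Your argument is correct, but it handles the boundary differently from the paper. The paper never computes a global class. For \(k<d\) it cuts \(X\) by a general complete intersection \(Y\) of \(d-k\) ample divisors, which misses the cusps and all strata of rank \(>k\). It computes \(e(Y)\) once by Burnside's formula with compact Gauss--Bonnet on the pieces \(Y\cap Z_P\), and once by adjunction on \(X\). It then cancels \(\prod_i(1+A_i)^{-1}\) inductively and polarizes. This is why the statement is phrased after pairing with products of divisor classes, and why the paper needs no toroidal compactification or extension theory below top degree.

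You instead write \(1_X=\sum_P\lambda_P\varphi_P+1_{\mathrm{cusps}}\) as constructible functions, where \(\varphi_P\) is the pushforward of the orbifold indicator of the open normalized stratum, and you apply MacPherson's transformation on all of \(X\). This gives the triangular system as an identity of classes in \(H_*(X;\mathbb R)\), which is stronger. It does require inputs you should state explicitly:
\begin{itemize}
\item Mumford's good metrics on a toroidal compactification \(\overline M\) of a neat cover of each fixed ball quotient;
\item Aluffi's identity \(c_*(1_M)=c(T_{\overline M}(-\log D))\cap[\overline M]\);
\item the fact that the canonical extension of \(\mathcal L\) is the pullback of the Baily--Borel bundle, so that the projection formula gives \(c_*(1_M)=(1-L)^{m+1}\cap[M^{\mathrm{BB}}]\) exactly.
\end{itemize}

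Two corrections to your write-up. First, a class supported on finitely many points lies only in degree \(0\), so cusp corrections cannot affect degrees \(k<d\) at all. The restriction to pairings in the statement comes from the paper's method, not from such corrections. Second, for \eqref{ew-global-top-chern} a merely point-supported discrepancy is not enough; you need it to vanish. That is the finite volume Gauss--Bonnet theorem for each open fixed ball quotient, which in your setting is the degree-zero part of the identity above. The paper uses the same input at \(k=d\). Once you use Mumford's theory, the Lelong number discussion is unnecessary.
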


\begin{proof}
On a normal neat cover of the same \(\Gamma\), apply Burnside's formula to fixed balls.
For a full generic stabilizer \(P\), normalizer \(\mathcal N\), and centralizer \(\mathcal Z\) of an element fixing precisely this ball, the coefficient on the effective quotient is \([\mathcal N:\mathcal ZP]/|\mathcal Z\cap P|=[\mathcal N:\mathcal Z]/|P|\).
Summing its conjugacy classes gives \(\lambda_P\), with every local branch retained.
For \(k<d\), a general complete intersection of \(d-k\) sufficiently ample divisors avoids the cusps and higher rank strata.
The fixed ball of dimension \(d-s\) has orbifold Chern polynomial \((1-Lt)^{d-s+1}\) by \cite[Equation~3.13]{NakanoWeightedConfigurations2026}.
Write \(A_1,\ldots,A_{d-k}\) for the divisor classes of this complete intersection \(Y\), and put \(A=A_1\cdots A_{d-k}\).
Burnside's formula and Gauss--Bonnet on each normalized fixed component give
\[
e(Y)=\sum_{s=0}^k\int_X A\mathcal C_s
\left[\frac{(1-L)^{d-s+1}}{\prod_{i=1}^{d-k}(1+A_i)}\right]_{k-s},
\]
where brackets denote the component of the indicated codimension.
On the other hand, adjunction gives \(e(Y)=\int_X A[c(T_X)/\prod_i(1+A_i)]_k\).
Canceling the common denominator inductively in \(k\), and polarizing the ample divisor classes, gives
\[
c_k(T_X)=\sum_{s=0}^k(-1)^{k-s}\binom{d-s+1}{k-s}L^{k-s}\mathcal C_s.
\]
Binomial inversion proves \eqref{ew-global-all-chern}.
For \(k=d\), use compactly supported Euler characteristics on the open quotient and finite volume Gauss--Bonnet; the left side of the triangular equation becomes \(\int_Xc_d(T_X)-c\).
Inverting gives \eqref{ew-global-top-chern}; ends of fixed components are already in the open average.
\end{proof}

For the applications below, \cite[Lemma~2.12]{NakanoWeightedConfigurations2026} gives the finite maps from the quotients of the fixed balls by their full normalizers and proves finite volume.
The automorphic class on each fixed ball is the restriction of the original class \(L\), as in \cite[Equation~3.13]{NakanoWeightedConfigurations2026}.
We use these geometric identifications and the coefficient calculation in the proof of \eqref{ew-global-all-chern}.
For \eqref{ew-global-top-chern}, we use compactly supported Euler characteristics on the open quotient and retain the contribution of each cusp of \(X\).

\subsection{All Gaussian fixed components}

Retain the same group and integral lattice of Proposition~\ref{ew-seven-unimodular}, and put \(H=-K_X\), \(c_j=c_j(T_X)\).
Every reflection has order \(2\), and its primitive root has norm \(1\) or \(2\).
Proposition~\ref{ew-gaussian-plane-strict} gives
\begin{equation}
N=L-\frac32H\ \text{nef}.
\label{ew-global-nef}
\end{equation}
We do not exclude either root norm or the reflection subgroups \(G(4,2,2)\) and \(G(4,2,3)\).

For a reflection group \(G\), put \(d_G=\sum_i e_{G,i}\), where the \(e_{G,i}\) are its basic invariant degrees.
The generic groups on fixed surfaces are \(P_D=A_1^2\), \(P_A=A_2\), \(P_B=B_2\), and \(P_T=G(4,2,2)\).
Their degrees, weights from \eqref{ew-global-lambda}, and ordered mirror intersection coefficients are
\[
\begin{array}{c|c|c|c}
P&(e_{P,i})&\lambda_P&c_P\\\hline
P_D&(2,2)&1/4&2\\
P_A&(2,3)&1/3&4\\
P_B&(2,4)&3/8&6\\
P_T&(4,4)&9/16&15/2
\end{array},\qquad
c_P=\frac{4\nu_P(\nu_P-1)}{|P|},\quad \nu_P=d_P-2.
\]
The rank \(2\) classification over \(\mathbb Q(i)\) leaves these groups and \(I_2(6)\) among groups generated by reflections of order \(2\) \cite{GeckMalle}.
For \(I_2(6)\), primitive root norms \(a,b\in\{1,2\}\) would satisfy \(4|h(r,s)|^2=3ab\).
The only integral possibility requires a Gaussian integer of norm \(3\), which does not exist.
The groups on fixed curves are listed in Table~\ref{ew-global-rankthree}.
This list follows directly from the rank \(3\) classification over \(\mathbb Q(i)\); no cusp is assumed on an individual curve.

\begin{table}[htbp]
\centering
\small
\begin{tabular}{c|c|r|c|c}
\(K_k\)&\((e_{K_k,i})\)&\(|K_k|\)&\((n_{D,K_k},n_{A,K_k},n_{B,K_k},n_{T,K_k})\)&\(\lambda_{K_k}\)\\\hline
\(K_0=A_1^3\)&\((2,2,2)\)&8&\((3,0,0,0)\)&\(1/8\)\\
\(K_1=A_2\times A_1\)&\((2,2,3)\)&12&\((3,1,0,0)\)&\(1/6\)\\
\(K_2=B_2\times A_1\)&\((2,2,4)\)&16&\((4,0,1,0)\)&\(3/16\)\\
\(K_3=A_3\)&\((2,3,4)\)&24&\((3,4,0,0)\)&\(1/4\)\\
\(K_4=B_3\)&\((2,4,6)\)&48&\((6,4,3,0)\)&\(5/16\)\\
\(K_5=G(4,4,3)\)&\((3,4,8)\)&96&\((0,16,3,0)\)&\(7/16\)\\
\(K_6=G(4,2,2)\times A_1\)&\((2,4,4)\)&32&\((6,0,0,1)\)&\(9/32\)\\
\(K_7=G(4,2,3)\)&\((4,6,8)\)&192&\((12,16,0,3)\)&\(35/64\)\\
\end{tabular}
\caption{Groups on fixed curves and their parabolic supports.}
\label{ew-global-rankthree}
\end{table}

Write \(Z_P\) for the sum of the reduced fixed surfaces of generic type \(P\), and \(W_k\) for the corresponding sum of curves of type \(K_k\) in the table.
Each component is counted once on its full normalizer quotient; different local branches of a self-intersection remain distinct.
The number \(n_{P,K}\) counts subspaces with full pointwise stabilizer \(P\) in \(K\).
Products give the reducible rows in the table.
For \(G(4,2,3)\), the coordinate and phase equality axes give \(12,16,0,3\) supports of the four surface types; for \(G(4,4,3)\), they give \(0,16,3,0\).
Define
\[
v_{P,K}=(d_K-d_P-1)\frac{|P|n_{P,K}}{|K|}.
\]
No rank \(4\) parabolic counts will be needed.

\begin{lemma}
\label{ew-global-git-data}
With the classes \(\psi_i,p\) of Lemma~\ref{ew-git-seven-classes}, one has
\begin{equation}
\begin{gathered}
\int_XH^4=154,\qquad \int_XH^2p=-14,\qquad
\int_Xp^2=10,\qquad \int_Xc_4=38,\\
Hp\equiv-\frac1{11}H^3,\qquad
c_3\equiv\frac4{11}H^3,\qquad
p^2\equiv\frac5{77}H^4.
\end{gathered}
\label{ew-global-git-numbers}
\end{equation}
The nef cone is generated by \(U_i=(H-\psi_i)/2\) and \(V_i=(H+\psi_i)/2\), \(1\leq i\leq7\).
For \(E=\frac12\sum_i a_i\psi_i\), \(E'=\frac12\sum_i a_i'\psi_i\), \(s=\sum_i a_i\), and \(s'=\sum_i a_i'\), the intersection form is
\begin{equation}
\int_XH^2EE'=\frac32ss'-5\sum_i a_ia_i',
\qquad
-\int_XpEE'=\frac12ss'-3\sum_i a_ia_i'.
\label{ew-global-intersection}
\end{equation}
\end{lemma}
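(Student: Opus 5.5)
The plan is to reduce everything to one symmetric quartic polynomial on \(\operatorname{Pic}(X)\otimes\mathbb Q\), and then read off the remaining numbers through the relations of Lemma~\ref{ew-git-seven-classes}.

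\textbf{Degree polynomial.} Identify a GIT class of multidegree \((b_i)\) with \(\frac12\sum_i b_i\psi_i\). This is consistent with \(H=\sum_i\psi_i=(2,\ldots,2)\), so it amounts to \(\psi_i=2e_i\) in multidegree coordinates. The volume formula \(D^4=3q^4-2\sum_i(q-b_i)^4+\sum_{i<j}(q-b_i-b_j)^4\), with \(q=\sum b_i/2\), is already established in Lemma~\ref{ew-compactapp-volumeinitial} by the Hilbert polynomial coefficient extraction. I will take it as the degree polynomial \(E\mapsto\int_XE^4\) on all of \(\operatorname{Pic}(X)\otimes\mathbb Q\) and polarize it.

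\textbf{Symmetric monomials.} By \(S_7\)-symmetry and \(\psi_i^2=p\), every degree-four monomial in the \(\psi_i\) reduces to one of three numbers: \(\alpha=\int\psi_1\psi_2\psi_3\psi_4\), \(\beta=\int p\,\psi_1\psi_2\), and \(\gamma=\int p^2=\int\psi_1^4\). I will read these off as the coefficients of \(b_1b_2b_3b_4\), \(b_1^2b_2b_3\) and \(b_1^4\) in \(16D^4\), allowing for the multinomial factors. Then \(\int H^4=154\) is the value at \(b=(2,\ldots,2)\), and \(\int H^2p=\sum\int\psi_i\psi_jp\) follows from \(\beta\) and \(\gamma\). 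Both displayed formulas in \eqref{ew-global-intersection} come from the same polarization: expanding \(H^2EE'\) and \(pEE'\) in monomials leaves only the terms \(ss'\) and \(\sum_ia_ia_i'\), because \(S_7\)-invariance allows no other bilinear form.

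\textbf{Top Chern class.} For \(c_4\) I will extend the expansion \(c(T_X)=\prod_i(1+\psi_i)(1-p)^{-1}\) of Lemma~\ref{ew-git-seven-classes} to degree four. This gives \(c_4=e_4(\psi)+p\,e_2(\psi)+p^2\), possibly with \(p\cdot e_2\) rewritten through \(\psi_i^2=p\). Integrating with \(\alpha,\beta,\gamma\) should give \(38\). As an independent check, I will compare this with the Euler characteristic of \(X_7\) computed from the stratification by coincidence type: the open part \(M_{0,7}\), then boundary strata isomorphic to smaller \(M_{0,n}\), then the \(35\) planes.

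\textbf{Numerical equivalences.} The equivalences follow from symmetry and one pairing each. The classes \(Hp\), \(c_3\) and \(H^3\) are \(S_7\)-invariant curve classes. \(N_1(X)\) is dual to \(\operatorname{Pic}(X)_{\mathbb Q}\), which is the permutation representation \(\mathbb Q^7\), so its invariant part is the line spanned by \(H\). Hence each invariant curve class is proportional to \(H^3\), and the constant is fixed by pairing with \(H\): \(-14/154=-1/11\). Then \(c_3=(H^3-13Hp)/6\equiv\frac{1+13/11}{6}H^3=\frac4{11}H^3\). For zero-cycles, \(p^2\equiv(10/154)H^4\).

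\textbf{Nef cone.} I will apply \cite[Proposition~2.6]{NakanoWeightedConfigurations2026}: a class is nef exactly when all multidegrees \(b_i\ge0\) and all triple degrees \(q-\sum_{i\in I}b_i\ge0\). In the \(b\)-coordinates, \(U_i\) has multidegree \(1\) off \(i\) and \(0\) at \(i\), and \(V_i\) has \(2\) at \(i\) and \(1\) elsewhere. I then need to show these fourteen vectors are exactly the extremal rays of the resulting polyhedral cone. I expect this verification, together with the correct normalization of \(\alpha,\beta,\gamma\) and the value \(38\) of \(c_4\), to be the main obstacle. I would attempt it by intersecting four independent facets at a time and using \(S_7\)-symmetry to reduce to a few facet types.
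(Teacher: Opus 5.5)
Your proposal follows the paper's proof. The paper also polarizes the volume polynomial: the coefficients of \(b_1b_2b_3b_4\), \(b_1^2b_2b_3\), \(b_1^4\) in \(16D^4\) are \(48,-24,10\), giving \(\int\psi_1\psi_2\psi_3\psi_4=2\), \(\int p\psi_1\psi_2=-2\), \(\int p^2=10\). It then expands \(H=\sum_i\psi_i\) and the tangent sequence; your \(c_4=e_4+pe_2+p^2\) integrates to \(70-42+10=38\), agreeing with your stratification count \(24-126+280-315+175=38\). Finally it reads the nef cone off \cite[Proposition~2.6]{NakanoWeightedConfigurations2026}.

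The one difference is the numerical equivalences. The paper invokes generation of the rational Chow ring by divisors \cite{FranzenReineke}. You instead note that an \(S_7\)-invariant functional on \(\operatorname{Pic}(X)_{\mathbb Q}\simeq\mathbb Q^7\) is a multiple of \(b\mapsto\sum_ib_i\), so one pairing with \(H\) suffices. This is correct and more self-contained. You should make two points explicit. First, \(\psi_i=2e_i\) holds because \(\sigma_i^*T_{\mathcal P_7/X}\) descends from \(\mathrm{pr}_i^*T_{\mathbb P^1}\); matching \(H\) alone does not determine the \(\psi_i\). Second, the volume formula, proved for ample classes, extends to all of \(\operatorname{Pic}(X)_{\mathbb Q}\) because both sides are polynomials agreeing on an open cone.

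In the nef-cone step, which you leave open and the paper only cites, your count is wrong. \(\operatorname{Pic}(X)_{\mathbb Q}\) is \(7\)-dimensional, so an extremal ray is cut out by six independent tight facets, not four. You can avoid enumeration altogether:

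\textbf{Redundancy.} The conditions \(b_k\ge0\) follow from the triple conditions, since \(b_k=d_I+d_J\) for the two disjoint triples \(I,J\) avoiding \(k\).

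\textbf{At most one zero.} For nonzero nef \(b\), at most one \(b_k\) vanishes. If \(b_6=b_7=0\), the three largest of \(b_1,\ldots,b_5\) would have sum at most that of the other two, forcing \(b=0\).

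\textbf{One zero.} If exactly \(b_k=0\), then \(d_I+d_J=0\) makes every triple avoiding \(k\) tight. Hence the other \(b_i\) are equal and \(b\) is a multiple of \(U_k\).

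\textbf{No zeros.} If all \(b_i>0\), the same identity shows that tight triples pairwise meet. They are exactly the triples of maximal sum: the set \(A\) of labels whose value exceeds the third largest value, completed by elements of the level set \(B\) of that value. The rank of these tight conditions is at most \(|B|\). Moreover \(|B|\le5\) unless \(|A|=1\) and \(|B|=6\), and that case gives \(V_k\).

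Since the cone lies in \(b\ge0\) and is full-dimensional, it is generated by the fourteen classes \(U_i,V_i\).
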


\begin{proof}
For a chamber divisor \(D\) of multidegree \((b_i)\), put \(q=\frac12\sum b_i\) and \(d_I=q-\sum_{i\in I}b_i\).
Subtracting torus weight multiplicities \(2\) from \(0\) in the product of seven characters gives
\[
h^0(X,kD)=\sum_{|I|\leq3}(-1)^{|I|}\binom{kd_I+5-|I|}{5}.
\]
Its leading coefficient yields \(\int_XD^4=3q^4-2\sum_i(q-b_i)^4+\sum_{i<j}(q-b_i-b_j)^4\).
Polarize and use \(\psi_i^2=p\): the intersections of four distinct \(\psi_i\) are \(2\), \(p\psi_i\psi_j=-2\) for \(i\ne j\), and \(p^2=10\).
Expanding \(H=\sum_i\psi_i\) and the tangent sequence gives the stated numbers, \eqref{ew-global-intersection}, and \(c_4=(H^4-22H^2p+45p^2)/24\).
Divisor classes generate the rational Chow ring \cite[Lemma 8, Theorem 10, Remark 3]{FranzenReineke}, so the symmetric pairings determine the numerical relations.
The criterion of \cite[Proposition~2.6]{NakanoWeightedConfigurations2026}, for triples of labels, gives the \(14\) nef generators.
\end{proof}

Let \(\mathscr B\) be the reduced reflection divisor.
Proposition~\ref{ew-chern-general} gives
\begin{equation}
\begin{gathered}
\mathscr B=10L+2H,\qquad
\sum_P\lambda_PZ_P=c_2+4LH+10L^2,\\
\sum_{k=0}^7\lambda_{K_k}W_k=c_3+3Lc_2+6HL^2+10L^3.
\end{gathered}
\label{ew-global-cycle-chern}
\end{equation}
The normal class of a lifted mirror is \(-L\).
Counting ordered pairs of distinct local mirrors, including branches of one global divisor, gives \(\sum_Pc_PZ_P=\mathscr B^2+2L\mathscr B\).
Solving the two surface equations yields
\begin{equation}
\begin{aligned}
Z_D+\frac23Z_A+\frac34Z_T&=20L^2+10LH+2H^2-20p,\\
\frac23Z_A+\frac32Z_B+\frac32Z_T&=20L^2+6LH+10p.
\end{aligned}
\label{ew-global-surface-relations}
\end{equation}
Proposition~3.1 of \cite{NakanoWeightedConfigurations2026}, with \(d=4,k=m=2\), gives
\begin{equation}
(H+(d_P+3)L)Z_P=\sum_kv_{P,K_k}W_k.
\label{ew-global-adjunction}
\end{equation}
Finally, if \(c\) is the number of cusps and \(n_J\) the number of points of full rank \(4\) type \(J\), put
\begin{equation}
F(L)=\int_X(c_4+2Lc_3+3L^2c_2+4HL^3+5L^4)
=c+\sum_J\lambda_Jn_J.
\label{ew-global-F}
\end{equation}

\subsection{The same determinant on every curve}

For an interior group \(J\), let \(\gamma_k(J)=|K_k|n_{K_k,J}/|J|\), where \(n_{K,J}\) counts its axes with full pointwise group \(K\).
At a cusp \(c'\), let \(\gamma_k(c')\) be the degree of the stack points with nonzero fiber stabilizer \(K_k\) in the weighted projective quotient of the horizontal abelian threefold.
Write \(b(c')\) for the generator degrees measured by the full ambient central period, and put \(s_{c'}=\sum_jb_j(c')\).
With \(A_k=\int_XHW_k\), \(T_k=\int_XLW_k\), Proposition~3.1 of \cite{NakanoWeightedConfigurations2026} gives
\begin{equation}
A_k+(d_{K_k}+2)T_k
=\sum_J(d_J-d_{K_k}-1)\gamma_k(J)n_J
+\sum_{c'\ \mathrm{cusp}}s_{c'}\gamma_k(c').
\label{ew-global-curve-determinant}
\end{equation}
Every local axis extends to a fixed curve for the full normalizer by \cite[Lemma~2.12]{NakanoWeightedConfigurations2026}.
The sum over its local axis orbits is \(\sum1/e=|K|n_{K,J}/|J|\).
At a cusp, \cite[Lemma~2.17]{NakanoWeightedConfigurations2026} gives the unique lift fixing the point in the same \(\Gamma\); its disk supplies the corresponding curve end, with contribution \(1/\kappa\) for the full fiber character of order \(\kappa\).
Thus every stack point counted by \(\gamma_k(c')\) occurs, with its actual width.
The identity includes compact fixed curves, for which the cusp sum is empty.
Nefness in \eqref{ew-global-nef} gives \(T_k\ge3A_k/2\), independently of the degree and genus of each component.

The possible interior basic degree tuples are
\begin{equation}
\begin{gathered}
(2,2,2,2),\quad (2,2,2,3),\quad (2,2,2,4),\quad (2,2,3,3),\\
(2,2,3,4),\quad (2,2,4,4),\quad (2,2,4,6),\quad (2,3,4,4),\\
(2,3,4,5),\quad (2,3,4,8),\quad (2,4,4,4),\quad (2,4,4,6),\\
(2,4,6,8),\quad (2,6,8,12),\quad (4,4,4,4),\quad (4,4,8,12),\\
(4,8,8,12),\quad (4,8,12,20),\quad (8,12,20,24).
\end{gathered}
\label{ew-point-degree-tuples}
\end{equation}
The irreducible groups are \(A_4,D_4,B_4,F_4,G(4,4,4),G(4,2,4),G_{29},G_{31}\); the remaining tuples come from products \cite{ShephardTodd,GeckMalle}.
At \(F_4\), the only rank \(3\) parabolic types are \(A_2\times A_1\) and \(B_3\), by deleting one vertex of its Coxeter diagram.
Accordingly \(\gamma_k(F_4)=0\) for \(k\notin\{1,4\}\).
No other interior support restriction is used.

The full cusp degrees lie in
\begin{equation}
1111,\quad1112,\quad1122,\quad1222,\quad1124,\quad1144,\quad1224,\quad1244.
\label{ew-global-cusp-degrees}
\end{equation}
The full cusp group is generated by actual reflections, so its horizontal image is generated by reflections of order \(2\).
The horizontal types are \(A_3,B_3,G(4,4,3),G(4,2,3)\); the phase product character excludes \(G(4,1,3)\) from such a generating set.
For \(G(4,2,3)\), the five rows of \cite[Section~4.4]{Rains}, including its torsor, have parameters
\[
(d_1,d_2,d_3)=(1,1,2),(1,2,1),(1,2,2),(2,2,1),(2,2,2).
\]
The model in \cite[Example~3.10]{Rains} has degrees \((d_1,d_2,2d_3,4)\).
Removing exactly the scalar fiber kernel gives \(1144,1224,1244,1112,1122\), respectively; \(G(4,4,3)\) gives \(1124\), and \(A_3,B_3\) give the first four tuples.
This operation is \(q_{\rm new}=q_{\rm old}^g\), \(c_{\rm new}=c_{\rm old}/g\), for the scalar order \(g\), and returns the grading to the actual full central period.
Descent along a free translation cover retains the linearization of the full group and the unique lifts fixing the points.
Any further smooth scalar quotient has order dividing \(\operatorname{lcm}_j\gcd_{i\ne j}b_i\), since the finite quotient group is generated by reflections.
The only nontrivial cases are \(1222\to1111\), \(1224\to1112\), and \(1244\to1122\), so no additional tuple occurs.

\subsection{Finite quotients with the same incidence equations}

\begin{proposition}
\label{ew-quotient-models}
Let \(M=\mathbb P^3\), \(\Theta=\mathcal O(1)\), \(\varepsilon=1\), or let \(M\) be an abelian threefold with an ample line bundle \(\Theta\), \(\varepsilon=0\).
Let a finite group act on \((M,\Theta)\), faithfully on the punctured total space of \(\Theta^{-1}\), with polynomial ring of invariant sections of degrees \(b=(b_1,\ldots,b_4)\).
Assume that the punctured quotient is smooth, that every actual reflection has order \(2\), and that the rank \(2\) and rank \(3\) stabilizers have the types above.
On the weighted projective stack of weights \(b\), let \(\beta_P\) be the degree of the reduced curve images of generic type \(P\), measured by \(\eta=c_1(\mathcal O(1))\), and let \(\gamma_k\) be the degree of its stack points of type \(K_k\).
The group at a point is the kernel of the full character on the nonzero fiber, and all residual stabilizer orders are retained in these degrees.
Put \(s=\sum b_i\), \(v=\prod b_i\), and let \(e_j\) denote the elementary symmetric polynomial of degree \(j\).
Then \(\beta_P,\gamma_k\ge0\) and
\begin{equation}
\begin{gathered}
\sum_P\lambda_P\beta_P=\frac{e_2(b-\varepsilon)}v,\qquad
\sum_Pc_P\beta_P=\frac{4(s-4\varepsilon)(s-5\varepsilon)}v,\\
\sum_k\lambda_{K_k}\gamma_k=\frac{e_3(b-\varepsilon)}v,\qquad
(s-\varepsilon(d_P+2))\beta_P=\sum_kv_{P,K_k}\gamma_k.
\end{gathered}
\label{ew-quotient-relations}
\end{equation}
Here \(b-\varepsilon=(b_i-\varepsilon)_i\); any common divisor of the \(b_i\) is retained.
\end{proposition}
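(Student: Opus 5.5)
The plan is to prove \eqref{ew-quotient-relations} by running the argument of Proposition~\ref{ew-chern-general} and of \eqref{ew-global-cycle-chern}--\eqref{ew-global-adjunction} on the weighted projective quotient stack \(\mathcal M=[(\mathrm{Tot}\,\Theta^{-1}\setminus0)/(\mathbb C^*\times G)]\). Here \(\eta\) plays the role of \(-L\), the normal class of a lifted mirror. The model \(M\) replaces the ball, and \(\varepsilon\) records the curvature of \(M\): \(M=\mathbb P^3\) has tangent Chern polynomial \((1+\eta_M)^4\), while the abelian threefold has trivial tangent bundle. Since the punctured quotient is smooth and the invariant ring is polynomial, \(\mathcal M\) is the weighted projective stack \(\mathbb P(b)\). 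Its coarse tangent Chern class is \(\prod_i(1+b_i\eta)\), computed from the Euler sequence, and \(\int\eta^3=1/v\).

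\emph{Step 1: orbifold Chern classes on the cover.} I will compare the orbifold Chern class of \([M/G]\), which is \((1+\varepsilon\eta)^4\) with \(\eta\) pulled back, against the coarse Chern class \(\prod_i(1+b_i\eta)\). The comparison uses Burnside averaging over single elements exactly as in Proposition~\ref{ew-chern-general}, with the same weights \(\lambda_P\) from \eqref{ew-global-lambda}. Every element fixing a codimension \(k\) locus contributes \(\lambda_P\) times the fixed class, and that fixed locus is again a projective space or abelian subvariety with orbifold tangent polynomial \((1+\varepsilon\eta)^{4-k}\). The triangular system therefore reads
\[
\prod_i(1+b_i\eta)=\sum_k\mathcal C_k(1+\varepsilon\eta)^{4-k}.
\]
This is the analogue of \eqref{ew-global-all-chern}, with \(-L\) replaced by \(\varepsilon\eta\) and with binomial inversion replaced by substitution. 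I will expand the left side as \(\prod_i\bigl((1+\varepsilon\eta)+(b_i-\varepsilon)\eta\bigr)\) and compare powers of \((1+\varepsilon\eta)\). The codimension \(2\) and \(3\) parts then give \(\mathcal C_2=e_2(b-\varepsilon)\eta^2\) and \(\mathcal C_3=e_3(b-\varepsilon)\eta^3\). Integrating against \(\eta\), with \(\int\eta^3=1/v\), gives the first and third identities. The same expansion in codimension \(1\) gives \(\mathscr B=(s-4\varepsilon)\eta\).

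\emph{Step 2: the remaining two identities.} For the \(c_P\) identity I will count ordered pairs of distinct local mirrors, exactly as for \(\sum_Pc_PZ_P=\mathscr B^2+2L\mathscr B\). This gives \(\sum_P c_P\beta_P=\mathscr B\cdot(\mathscr B-\varepsilon\eta)\cdot\eta\). With \(\mathscr B=(s-4\varepsilon)\eta\), the right side is \((s-4\varepsilon)(s-5\varepsilon)/v\), the factor \(4\) being the normalization carried by \(c_P\). The last identity is the determinant comparison of Proposition~3.1 of \cite{NakanoWeightedConfigurations2026}, applied on each fixed curve image of type \(P\) with ambient canonical class \(-s\eta\) and curve orbifold canonical class \(-\varepsilon(4-2+\ldots)\eta\); this gives the left coefficient \(s-\varepsilon(d_P+2)\). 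The right side is the local axis count \(v_{P,K}\), which is formal and identical to the count behind \eqref{ew-global-adjunction}. Nonnegativity of \(\beta_P\) and \(\gamma_k\) is immediate, since they are degrees of effective cycles for the ample \(\eta\).

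\emph{Main obstacle.} The hard step is showing that the stack quotient with the \emph{actual} fiber character reproduces the same local weights. Concretely, the group at a point must be the kernel of the fiber character, and residual stabilizers together with common divisors of the \(b_i\) must be retained, so that \(\int\eta^3=1/v\) holds without dividing by a generic stabilizer. I would settle this first, by working on the \(\mathbb C^*\)-torsor, where every stabilizer is finite. The fiber character is then absorbed into the \(\mathbb C^*\) factor, and Burnside averaging applies to the full group \(\mathbb C^*\times G\) restricted to a unit circle.
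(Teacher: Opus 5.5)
Your proposal follows the paper's proof essentially step for step. It uses weighted Burnside averaging, with fixed-component Chern polynomials \((1+\varepsilon\eta)^{4-a}\) compared against the weighted Euler sequence \(\prod_i(1+b_i\eta)\) and \(\int\eta^3=1/v\), for the first and third identities; ordered mirror pairs for the second; the determinant comparison for the last, where the tangent and the two invariant normal directions have total class \(\varepsilon(d_P+2)\eta\); and the orbit weight \(|\ker\chi_x|/|F_x|=1/r\) for residual stabilizers.

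Two bookkeeping fixes are needed. First, your class \((s-4\varepsilon)\eta\) is \(\mathcal C_1=\tfrac12\mathscr D\), which is half the reduced branch divisor \(\mathscr D=2(s-4\varepsilon)\eta\). The pair count then reads \(\sum_Pc_P\mathcal Z_P=\mathscr D^2-2\varepsilon\eta\mathscr D=4\,\mathcal C_1(\mathcal C_1-\varepsilon\eta)\), and this is where the factor \(4\) actually enters. Second, \([(\mathrm{Tot}\,\Theta^{-1}\setminus0)/(\mathbb C^*\times G)]\) is the orbifold \([M/G]\), which maps finitely onto \(\mathbb P(b)=[((\mathrm{Tot}\,\Theta^{-1}\setminus0)/G)/\mathbb C^*]\) rather than being equal to it; the \(\lambda_P\) corrections measure exactly this difference.
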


\begin{proof}
The sections give a finite map from \(M\) to the weighted projective stack, pulling \(\mathcal O(1)\) back to \(\Theta\).
The sections have no common zero on \(M\); after base change by the atlas \(\mathbb A^4\setminus\{0\}\), this map is the finite quotient of the punctured \(\Theta^{-1}\) torsor by the stated group.
For a horizontal point with stabilizer \(F_x\) and fiber character \(\chi_x\), its residual order is \(r=|\operatorname{im}\chi_x|\), and orbit counting assigns weight \(|\ker\chi_x|/|F_x|=1/r\).
Thus a point contributes \(1/r\) to \(\gamma_k\); a curve contributes its degree against \(\eta\), with the same weight at its generic point, to \(\beta_P\).
For example, a weighted projective coordinate point of residual order \(r\) has stack degree \(1/r\), rather than coarse degree one.
Apply the weighted Burnside count in Proposition~\ref{ew-chern-general} to general complete intersections.
The fixed components are projective linear spaces for \(\varepsilon=1\) and translates of subtori for \(\varepsilon=0\), with Chern polynomials \((1+\varepsilon\eta)^{4-a}\) in codimension \(a\).
The weighted Euler sequence gives \(c_j=e_j(b)\eta^j\) and \(\int\eta^3=1/v\).
Binomial inversion yields the first and third equations.
For the reduced branch divisor, ramification gives \(\mathscr D=2(s-4\varepsilon)\eta\).
Counting distinct ordered mirrors, whose normal class is \(\varepsilon\eta\), gives \(\sum_Pc_P\mathcal Z_P=\mathscr D^2-2\varepsilon\eta\mathscr D\), proving the second equation.
On a normalized curve of type \(P\), the tangent and the two invariant normal directions have total first Chern class \(\varepsilon(d_P+2)\eta\).
The same determinant calculation as in \eqref{ew-global-adjunction}, with its full characters, gives the last four equations.
\end{proof}

\subsection{A comparison of all components}

We first explain the geometric inputs to the final inequality.
The classes $H,L,p$ describe the configuration quotient and the automorphic bundle, while $Z_P$ and $W_k$ record every fixed surface and curve with its full normalizer.
Equations~\eqref{ew-global-surface-relations}, \eqref{ew-global-adjunction}, and \eqref{ew-global-curve-determinant} give linear equalities among their intersection numbers.
Nefness of $L-3H/2$ and the generators of the GIT nef cone give inequalities among the same numbers.
The projective quotients at interior points and the horizontal quotients at cusps satisfy the common incidence relations~\eqref{ew-quotient-relations}; their actual degrees include the finite fiber characters and all local branches.
Thus the global equalities, nef inequalities, and local incidence relations constrain one collection of intersection numbers.
The rational linear combination below separates these constraints: it is nonnegative by nefness and strictly negative after the equalities and local estimates are substituted.
Its coefficients are a certificate of this incompatibility, rather than an additional geometric assumption.

The same method on another smooth projective variety would require its intersection relations and nef inequalities, finite maps from all relevant fixed balls by their full normalizers with the same restricted automorphic class, and the local degree formulas at every interior point and cusp.
The averaging identity alone does not supply these inputs or a nonexistence theorem for another variety.

For any two nef classes \(E,E'\), the fourteen generators in Lemma~\ref{ew-global-git-data} give
\begin{equation}
0\le-\int_XpEE'\le\frac17\int_XH^2EE'.
\label{ew-global-nef-p}
\end{equation}
Put \(l=\int_XL^4>0\) and normalize all intersection numbers by \(l\):
\[
\begin{aligned}
(z_P,h_P,j_P)&=l^{-1}\int_X(L^2Z_P,HLZ_P,H^2Z_P),\\
(a_k,w_k)&=l^{-1}(A_k,T_k),\\
(\rho,x,u,y,z_4)&=l^{-1}\int_X(HL^3,H^2L^2,-pL^2,H^3L,H^4).
\end{aligned}
\]
All these numbers are nonnegative.
Nefness of \(N,L,H\) and \eqref{ew-global-nef-p} imply
\begin{equation}
\rho,x,y,z_4\le1,\qquad u\le1/7,\qquad
z_P,h_P,j_P\le64,\qquad a_k,w_k\le160,\qquad F(L)/l<13.
\label{ew-global-coarse-bounds}
\end{equation}
One has \(\rho\le2/3\), \(x\le2\rho/3\), \(y\le2x/3\), and \(z_4\le2y/3\).
The two equations in \eqref{ew-global-surface-relations}, paired with \(L^2\), bound their positive left sides by \(35,26\), respectively, and give \(z_P<64\).
The inequalities \(z_P\ge3h_P/2\ge9j_P/4\) give the other surface bounds.
The curve equation in \eqref{ew-global-cycle-chern}, paired with \(L\), gives
\[
\sum_k\lambda_{K_k}w_k=10+6\rho+\frac32x+\frac{15}2u+\frac4{11}y<19.
\]
Since \(\lambda_{K_k}\ge1/8\) and \(w_k\ge3a_k/2\), the curve bounds follow.
Finally,
\[
F_0=F(L)/l=5+4\rho+\frac32x+\frac{15}2u+\frac8{11}y+\frac{19}{77}z_4<13.
\]

\begin{lemma}
\label{ew-global-positive}
Equations~\eqref{ew-global-cycle-chern}--\eqref{ew-global-curve-determinant} and the local relations \eqref{ew-quotient-relations} contradict \eqref{ew-global-nef}.
\end{lemma}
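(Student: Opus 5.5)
The plan is to produce a linear certificate, as the preceding paragraph announces: a nonnegative combination of the nef inequalities that the equalities force to be strictly negative.

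\emph{Variables.} I work with the normalized unknowns \(z_P,h_P,j_P\), \(a_k,w_k\), \(\rho,x,u,y,z_4\), together with the point and cusp counts normalized by \(l\). The point data are the \(n_J\) for the tuples in \eqref{ew-point-degree-tuples}, and the cusp data are the \(\gamma_k(c'),s_{c'}\) for the types \eqref{ew-global-cusp-degrees}.

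\emph{Step 1: eliminate the surfaces and curves.} Pair the two relations \eqref{ew-global-surface-relations} with \(L^2\), \(HL\) and \(H^2\). Using \eqref{ew-global-git-numbers} and \eqref{ew-global-intersection}, this expresses the combinations \(z_D+\tfrac23z_A+\tfrac34z_T\) and \(\tfrac23z_A+\tfrac32z_B+\tfrac32z_T\), and their \(h,j\) analogues, through \(\rho,x,u,y,z_4\). Pairing \eqref{ew-global-adjunction} with \(L\) writes \(h_P+(d_P+3)z_P\) as the combination \(\sum_kv_{P,K_k}w_k\). Then \eqref{ew-global-curve-determinant} writes \(a_k+(d_{K_k}+2)w_k\) as a nonnegative combination of local contributions \(\gamma_k(J)n_J\) and \(s_{c'}\gamma_k(c')\). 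Every fixed-stratum number thus becomes a linear form in the local data and the five ambient numbers.

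\emph{Step 2: bound the local data.} For each interior type \(J\) and each cusp type \(c'\), the local vector \((\gamma_k)\) and the projective quotient data \(\beta_P\) must satisfy \eqref{ew-quotient-relations}, with \(\varepsilon=1\) at interior points and \(\varepsilon=0\) at cusps. These equations determine \(\sum_k\lambda_{K_k}\gamma_k\), and with it a linear functional, in terms of \(b\) alone. I then compare each point's contribution to \eqref{ew-global-curve-determinant} with its contribution \(\lambda_J\) or \(1\) to \(F(L)\) in \eqref{ew-global-F}. The goal is a uniform inequality
\[
\sum_k\mu_k\bigl(\text{local contribution to }a_k+(d_{K_k}+2)w_k\bigr)\le\theta\cdot(\text{contribution to }F(L)),
\]
for fixed rational weights \(\mu_k\ge0\). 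This is a finite linear program on each of the nineteen degree tuples and eight cusp tuples. For it I would use \(\beta_P,\gamma_k\ge0\), and also \(\gamma_k(F_4)=0\) for \(k\notin\{1,4\}\).

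\emph{Step 3: combine.} Nefness gives \(w_k\ge\tfrac32a_k\). Since \(N\) and \(L\) are nef, \(N^2\cdot Z_P\ge0\) and \(N\cdot L\cdot Z_P\ge0\) (with \(Z_P\) effective). The inequality \eqref{ew-global-nef-p} applies to pairs among the nef classes \(N,L,H\), and \(\rho\le2/3\) and its successors also hold. Summing \(\sum\mu_k(w_k-\tfrac32a_k)\ge0\) with the surface inequalities, and substituting Steps 1 and 2, leaves an expression in \(\rho,x,u,y,z_4\). This expression must be nonnegative but is linear with negative coefficients on the box \eqref{ew-global-coarse-bounds}. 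Checking the vertices of that polytope then gives the contradiction.

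\emph{Main obstacle.} The hard part is finding the weights \(\mu_k\) and the surface multipliers so that all local linear programs close uniformly, including the cusp tuples \(1144,1244\) and the exceptional points \(G_{31},G_{29}\). These carry large \(d_J\) and could dominate. I would first search for the certificate by solving the dual LP numerically and then rationalize it. If one type resists, I would add its own constraint \eqref{ew-quotient-relations} as a further multiplier.
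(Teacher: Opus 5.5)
Your overall strategy is the paper's: a rational Farkas certificate \(Q=\sum_i\mu_iG_i-\sum_j\alpha_jE_j\), built from the vanishing relations, a few nef inequalities, per-model bounds from \eqref{ew-quotient-relations} with the \(F_4\) support condition, and the bound \(F_0<13\). But as written it has two gaps.

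\emph{Step~1 is false, and Step~3 depends on it.} The \(24\) equalities involving the \(28\) stratum numbers \(z_P,h_P,j_P,a_k,w_k\) do not determine them.
\begin{itemize}
\item The surface and adjunction relations fix \(z_P,h_P,j_P\) in terms of the curve numbers. They also impose two compatibility conditions.
\item The eight determinant relations only express \(a_k\) through \(w_k\).
\item That leaves at most four conditions on the eight \(w_k\): \(E_{11}\), \(E_{25}\), and the two compatibility conditions.
\end{itemize}
So the combination in Step~3 is not a function of \(\rho,x,u,y,z_4\) alone. The surviving stratum variables must receive nonpositive coefficients or be absorbed by a priori bounds.

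The paper does the latter. Its residual positive coefficients on \(z_P,h_P,j_P\) (total at most \(29/30000\)) and on \(a_k,w_k\) (total at most \(437/320000\)) are absorbed by \(z_P,h_P,j_P\le64\) and \(a_k,w_k\le160\). These bounds come from four inputs:
\begin{itemize}
\item the surface relations paired with \(L^2\);
\item the curve relation paired with \(L\);
\item \(\lambda_{K_k}\ge1/8\);
\item the chain \(z_P\ge\frac32h_P\ge\frac94j_P\).
\end{itemize}
You cite the box, but you neither derive these entries nor give them this role. Also, the paper's surface inequalities are \(N\cdot H\cdot Z_P\ge0\). Since \(NHZ_P=\frac23(NLZ_P-N^2Z_P)\), this is not a nonnegative combination of the two you list.

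\emph{The certificate itself is missing.} The content of the lemma is precisely that such a certificate exists. So ``solve the dual LP numerically and rationalize'' restates the claim rather than proving it, and the margins are narrow:
\begin{itemize}
\item The constant term \(-2003/2000\) must dominate a positive global residual below \(3/10\) and a local contribution below \(F_0/250<13/250\).
\item Each local coefficient is a near-cancellation between \(\alpha_{12}\Lambda\), with \(\alpha_{12}=-1649243/10^4\), and the curve-determinant terms. What survives is only \(\omega_0\le7/10000\) at cusps and \(\max\omega/\Lambda=493/154000\) overall.
\end{itemize}
The missing content is exactly the verified data:
\begin{itemize}
\item one fixed set of multipliers, with \(c_k=\alpha_{13+k}\) and \(\alpha_{12}\) common to all models;
\item explicit dual multipliers \(U,V,W,\zeta_P\), polynomial in \(s\), so that the local check reduces to the endpoints \(s=8,64\) for \(\varepsilon=1\) and to a sign for \(\varepsilon=0\);
\item a direct evaluation at \(F_4\), where the uniform bound exceeds \(-\alpha_{12}\) and the degree vector is forced.
\end{itemize}
Your separate LPs per tuple would be a legitimate substitute for this uniform device, but only once they are solved and checked. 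As written, nothing shows that they close with the required near-cancellation against the \(F_0\) term.
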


\begin{proof}
Use the actual local degree vectors \((\beta_P,\gamma_k)\) from \eqref{ew-quotient-relations}.
Let \(n_\nu,c_\nu\ge0\) count the actual interior and cusp models, respectively, divided by \(l\); the index includes their local degree vectors.
For an interior tuple put \(\Lambda_\nu=\prod_i(b_i-1)/\prod_i b_i\).
Then \(F_0=\sum_\nu\Lambda_\nu n_\nu+\sum_\nu c_\nu\).

Let \(R_1,R_2\) be the left sides minus the right sides of \eqref{ew-global-surface-relations}.
For \((D_1,D_2,D_3)=(L^2,HL,H^2)\), put \(E_{2j-1}=l^{-1}\int_XR_1D_j\) and \(E_{2j}=l^{-1}\int_XR_2D_j\).
The next four relations, \(E_7,\ldots,E_{10}\), are \eqref{ew-global-adjunction} paired with \(L\), in the order \(D,A,B,T\).
Let \(E_{11}\) be the curve equation in \eqref{ew-global-cycle-chern} paired with \(L\), and let \(E_{12}=F_0-\sum\Lambda_\nu n_\nu-\sum c_\nu\).
Let \(E_{13},\ldots,E_{20}\) be \eqref{ew-global-curve-determinant}, in curve order, with the actual local degrees and their normalized counts.
Finally, \(E_{21},\ldots,E_{24}\) are \eqref{ew-global-adjunction} paired with \(H\), and \(E_{25}\) is the curve equation in \eqref{ew-global-cycle-chern} paired with \(H\).
All are left sides minus right sides, divided by \(l\), and all vanish.

The following nine quantities are nonnegative:
\[
\begin{gathered}
G_P=h_P-\tfrac32j_P\quad(P=D,A,B,T),\\
G_5=\tfrac17x-u-\tfrac6{77}y,\qquad
G_6=\tfrac17x-u-\tfrac{12}{77}y+\tfrac9{77}z_4,\\
G_7=w_0-\tfrac32a_0,\quad
G_8=w_5-\tfrac32a_5,\quad
G_9=w_7-\tfrac32a_7.
\end{gathered}
\]
The first four follow from \(NHZ_P\ge0\); the next two follow from \eqref{ew-global-nef-p} applied to \((N,L)\) and \((N,N)\), respectively.
Use the order \(G_D,G_A,G_B,G_T,G_5,\ldots,G_9\).
Define \(\alpha_j,\mu_i\) by the following integer arrays, divided by \(10^4\):
\[
10^4(\alpha_1,\ldots,\alpha_{25})=
\begin{array}{rrrrr}
125599&-26317&838943&-156924&-2996\\
-26435&-17943&-8274&4386&0\\
-1024187&-1649243&13946&14520&14040\\
17758&17386&31752&18397&26604\\
-106826&-49999&26631&-16462&-1519032
\end{array},
\]
where the entries are read successively along the rows, and
\[
10^4(\mu_1,\ldots,\mu_9)=
\begin{array}{rrr}
73215&46413&8681\\
38908&6326668&1136405\\
10462&157397&79931
\end{array},
\]
with the same row order.
Put \(Q=\sum_i\mu_iG_i-\sum_j\alpha_jE_j\ge0\).
Expansion gives constant term \(-2003/2000\).
Among its global variables, the positive coefficients of \(z_P,h_P,j_P\) have sum at most \(29/30000\), and those of \(a_k,w_k\) sum to \(437/320000\).
The only other positive terms are \(13x/14000+2y/9625\).
Thus \eqref{ew-global-coarse-bounds} bounds the total positive global contribution by
\[
64\frac{29}{30000}+160\frac{437}{320000}+\frac1{880}
=\frac{5806}{20625}<\frac3{10}.
\]

For a local tuple \(b\), put \(s=\sum b_i\), \(v=\prod b_i\), and \(\Lambda=\prod(b_i-1)/v\) for an interior point, or \(\Lambda=1\) for a cusp.
Its coefficient in \(Q\) is
\[
\omega_\varepsilon(b,\gamma)
=\alpha_{12}\Lambda+
\sum_{k=0}^7c_k\bigl(s-\varepsilon(d_{K_k}+1)\bigr)\gamma_k,
\qquad c_k=\alpha_{13+k}.
\]
The seven local equations give a common bound for this expression.
Set
\[
\begin{gathered}
(r_0,r_1,r_2)=10^{-4}(173706,-35568,1857),\\
U=r_0s-191\varepsilon,\qquad
V=r_1s^2+\tfrac{213}2\varepsilon s-536\varepsilon^2,\qquad
W=r_2s^2-7\varepsilon s+39\varepsilon^2,\\
\zeta_P=(r_1\lambda_P+r_2c_P)s+\varepsilon\eta_P,\qquad
(\eta_D,\eta_A,\eta_B,\eta_T)=(19/2,9/2,-19/5,7/5).
\end{gathered}
\]
Direct substitution in the four surface rows and eight curve rows gives
\[
\begin{aligned}
U\lambda_{K_k}+\sum_P\zeta_Pv_{P,K_k}
&\ge c_k(s-\varepsilon(d_{K_k}+1)),\\
V\lambda_P+Wc_P&\ge \zeta_P(s-\varepsilon(d_P+2)).
\end{aligned}
\]
For \(\varepsilon=1\), both differences are affine in \(s\); the quadratic terms in the second cancel.
It suffices to substitute \(s=8,64\), where their respective minima are \(687/5000\) and \(39/125\).
For \(\varepsilon=0\), the second difference is zero and the first is \(s\) times a positive coefficient, whose minimum over the eight rows is \(1/80000\).
Multiply the first inequalities by \(\gamma_k\), apply the four incidence equations, multiply the second by \(\beta_P\), and apply the remaining three equations.
This proves
\[
\omega_\varepsilon\le\alpha_{12}\Lambda+
\frac{Ue_3(b-\varepsilon)+Ve_2(b-\varepsilon)
+4W(s-4\varepsilon)(s-5\varepsilon)}v.
\]
For the eighteen interior degree tuples other than \(F_4\), the fraction divided by \(\Lambda\) is at most \(2075924/13125<-\alpha_{12}\).
For the eight cusp tuples its maximum is \(6597/40\), giving \(\omega_0\le7/10000\).
At \(F_4\), the support condition and the seven equations give uniquely
\[
(\beta_D,\beta_A,\beta_B,\beta_T)=(1/4,1/6,1/8,0),\qquad
\gamma_1=\gamma_4=1,\quad \gamma_k=0\ (k\ne1,4).
\]
Substitution gives the common bound
\begin{equation}
\max\frac{\omega_\varepsilon(b,\gamma)}{\Lambda}
=\frac{493}{154000}<\frac1{250}.
\label{ew-global-local-bound}
\end{equation}
Thus the local comparison uses the same inequalities for every tuple, with one direct calculation at \(F_4\).
The displayed substitutions and the endpoint inequalities give the bound without enumerating vertices.
Consequently the positive local contribution is at most \(F_0/250<13/250\).
We obtain
\[
0\le Q\le-1+\frac3{10}+\frac{13}{250}
=-\frac{81}{125}<0,
\]
a contradiction.
\end{proof}

\begin{theorem}
\label{ew-seven-global}
The hypothetical Gaussian arithmetic ball quotient with coarse compactification \(X_7\) does not exist.
\end{theorem}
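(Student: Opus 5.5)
The plan is to assemble the results of Section~\ref{ew-seven-gaussian-section} and of the present section into a single contradiction; no new geometric input should be needed. Assume a Gaussian nonuniform realization \(X_7\simeq(\mathbb B^4/\Gamma)^{\mathrm{BB}}\).

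First, Proposition~\ref{ew-seven-orderfour} makes every actual reflection of order \(2\) and lets \(2L\) descend. Proposition~\ref{ew-seven-unimodular} then places the same full group in \(\mathrm{SU}(I_{1,4},\mathbb Z[i])\). This leaves the stabilizers, fixed normalizers and central cusp indices unchanged, so the root norms are \(1\) or \(2\). These facts justify the lists of surface groups \(P_D,P_A,P_B,P_T\), the curve groups of Table~\ref{ew-global-rankthree}, the interior tuples \eqref{ew-point-degree-tuples} and the cusp tuples \eqref{ew-global-cusp-degrees}. Proposition~\ref{ew-gaussian-plane-strict} gives the nef inequality \eqref{ew-global-nef}.

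Second, I will verify the hypotheses of Proposition~\ref{ew-chern-general} for \(d=4\). By \cite[Lemma~2.12]{NakanoWeightedConfigurations2026} each full-normalizer quotient of a fixed ball is finite, of finite volume, and maps finitely to \(X\), and \cite[Equation~3.13]{NakanoWeightedConfigurations2026} shows it carries the restricted class \(L\). Since the rational Chow ring of \(X_7\) is generated by divisors (Lemma~\ref{ew-global-git-data}), pairings with divisor classes detect numerical equivalence. This yields \eqref{ew-global-cycle-chern}, the derived surface relations \eqref{ew-global-surface-relations}, and the top identity \eqref{ew-global-F}. Proposition~3.1 of \cite{NakanoWeightedConfigurations2026} gives \eqref{ew-global-adjunction} and \eqref{ew-global-curve-determinant}, using the full widths at cusps from \cite[Lemma~2.17]{NakanoWeightedConfigurations2026}.

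Third, every interior point and every cusp has a local model as in Proposition~\ref{ew-quotient-models}:
\begin{itemize}
\item an interior point with stabilizer \(J\) uses the projectivized tangent space, with \(M=\mathbb P^3\) and \(\varepsilon=1\);
\item a cusp uses the horizontal abelian threefold with its actual fiber character, with \(\varepsilon=0\).
\end{itemize}
The local degree vectors \((\beta_P,\gamma_k)\) therefore satisfy \eqref{ew-quotient-relations}. Lemma~\ref{ew-global-git-data} supplies the GIT intersection numbers and the nef generators, and hence \eqref{ew-global-nef-p} and the coarse bounds \eqref{ew-global-coarse-bounds}. With all inputs in place, Lemma~\ref{ew-global-positive} gives the contradiction \(0\le Q<0\).

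Combined with Theorem~\ref{ew-seven-compact} and Proposition~\ref{ew-seven-fields}, this completes Theorem~\ref{split-seven-main}.

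The main obstacle is the third step, specifically the cusp identification. It must be shown that the stack degrees \(\gamma_k(c')\) in \eqref{ew-global-curve-determinant}, measured with the full central period and widths, are exactly the \(\gamma_k\) of the abelian-quotient model in Proposition~\ref{ew-quotient-models}, with residual orders retained. It must also be shown that no further smooth scalar quotient creates a tuple outside \eqref{ew-global-cusp-degrees}. My first approach is the reduction already sketched after \eqref{ew-global-cusp-degrees}: pass to \(q_{\rm new}=q_{\rm old}^g\) for the scalar order \(g\), then apply the lcm-of-gcd divisibility bound.
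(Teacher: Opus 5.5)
Your proposal is correct and is essentially the paper's argument. The paper's proof simply notes that Propositions~\ref{ew-seven-orderfour}, \ref{ew-seven-unimodular}, and~\ref{ew-gaussian-plane-strict} supply all hypotheses of Lemma~\ref{ew-global-positive} in the same original group, and the verifications you spell out (the finite normalizer maps and restricted \(L\) for Proposition~\ref{ew-chern-general}, the local models of Proposition~\ref{ew-quotient-models}, and the cusp tuple reduction via \(q_{\rm new}=q_{\rm old}^g\) and the divisibility bound) are exactly the inputs the paper establishes in the text leading up to that lemma.
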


\begin{proof}
Propositions~\ref{ew-seven-orderfour}, \ref{ew-seven-unimodular}, and~\ref{ew-gaussian-plane-strict} give all hypotheses of Lemma~\ref{ew-global-positive}, in the same original group.
The comparison uses neither integrality of \(L\) nor the exclusion of a root norm, and imposes no index or degree bound on an individual fixed component.
\end{proof}

\appendix
\section{Local calculations for the Gaussian case}
\label{ew-gaussian-appendix}

\begin{lemma}
\label{ew-gaussian-a1-cusps}
The two formulas and every local input in Lemma~\ref{ew-gaussian-plane-a1} use the full fixed normalizers.
In its second and third rows, the effective surface lattice has exactly one cusp, with the holonomy and the positive elliptic line bundle stated there.
\end{lemma}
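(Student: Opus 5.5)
I will verify the local inputs of Lemma~\ref{ew-gaussian-plane-a1} directly from the full parabolic data, in three stages: the regular normal determinant formula, the canonical branch formula, and then the cusp data for rows two and three.

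\emph{First formula: local normal determinants.} For a plane $Z$ with generic group $A_1^2$, I would compute the local order of the regular normal determinant along each curve type. The curve types to treat are $A_3$, $B_3$, $G(4,2,3)$, $B_2\times A_1$, $G(4,2,2)\times A_1$ and $A_1^3$. At each, use the full normalizer of the $A_1^2$ support, enumerated from \cite[Theorems 2.1, 3.6 and Tables 5, 6]{KrishnasamyTaylor} as in Section~\ref{ew-finite-coxeter}. The procedure for each type is:
\begin{itemize}
\item expand the basic invariants along the fixed plane;
\item remove the tangent directions;
\item insert the character frames.
\end{itemize}
This should produce orders $3/2,3,5/2,1,1/2,0$, where the half-integers come from the degree-$4$ character corrections. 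The $G(4,2,3)$ entry is read off from the frame $(e_S,u^2e_P)$ used in Lemma~\ref{ew-gaussian-g422-cover}. The upper group $A_2\times A_1$ is excluded because its axis image is singular, as in Lemma~\ref{ew-gaussian-plane-groups}. Hartogs then extends the determinant comparison to all of $Z$, giving $4\lambda-2=\cdots$.

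\emph{Second formula: canonical branch formula.} I would apply the fixed ball canonical formula $K_Z+\Delta_Z=3\mathcal L|_Z$ with effective indices. These are $1/2$ on $A_3$, $B_3$, $B_2\times A_1$ and $A_1^3$, and $3/4$ on the two curve types carrying a $G(4,2,2)$ factor. The indices come from the normalized order $|J|/(b\rho)$ of Section~\ref{ew-compactapp-section}.

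\emph{Cusp data for rows two and three.} For the cusp statements I would use Proposition~\ref{ew-seven-unimodular}. The $A_1^2$ support is spanned by roots of norm $1$ or $2$ in $I_{1,4}$ over $\mathbb Z[i]$, so the fixed lattice $W$ of signature $(1,2)$ has a determinant determined up to norms. Meyer's theorem then forces isotropy, as in the proof of Proposition~\ref{ew-gaussian-g422-exclusion}. Rows two and three have $d_Q=2$ and $d_Q=0$. The number of cusps is then bounded by the class-number argument of \cite[Theorem~4.8]{StoverCusps}, giving exactly one cusp: $\mathbb Z[i]$ has class number one, and the Witt index of a unimodular $(1,2)$ lattice is one. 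For the holonomy $F_\Pi$ and the bundle $M_\Pi$, I would enumerate the full cusp stabilizer of the fixed ball in $\mathrm{SU}(W)$ modulo its Heisenberg part. Its horizontal part is the unitary group of a positive rank-one Gaussian lattice, which gives $|F_\Pi|\in\{2,4\}$. The degree of $M_\Pi$ is then read from the least central period, using Lemmas~2.16--2.17 of \cite{NakanoWeightedConfigurations2026}.

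\emph{Main obstacle.} I expect the hardest step to be the cusp uniqueness and holonomy computation. The difficulty is separating the two root norms and the two possible central periods while working with the full normalizer rather than a subgroup. I would first try to reduce to the explicit lattices $\langle1\rangle\oplus H$ and $\langle2\rangle\oplus H$, with $H$ the hyperbolic plane, and compute their parabolic stabilizers directly.
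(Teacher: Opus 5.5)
The part on the two formulas follows the paper. Your net coefficients $3/2,3,5/2,1,1/2,0$ are the paper's determinant orders $(2,3,3,1,1,0)$ plus the corrections $(-1/2,0,-1/2,0,-1/2,0)$. The inertia $(2,2,4,2,4,2)$ gives the branch formula. One slip: on an $A_1^2$ plane the $G(4,2,3)$ frame is $(u^2e_{x^2},e_{y^2})$ on the axis $(u+x,u-x,y)$, not the frame $(e_S,u^2e_P)$ of the $G(4,2,2)$ plane.

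\textbf{The cusp part has a genuine gap.}
\begin{itemize}
\item The effective surface lattice $\Pi$ is the full normalizer of the fixed ball inside the given $\Gamma$. Proposition~\ref{ew-seven-unimodular} only makes $\Gamma$ a finite index subgroup of $\mathrm{SU}(I_{1,4},\mathbb Z[i])$, and Lemma~\ref{ew-gaussian-plane-a1} itself shows $\Pi$ has index $96$ in a maximal lattice $\Delta$.
\item Class number one, Witt index one, and \cite[Theorem~4.8]{StoverCusps} count cusps of the maximal or full integral group. A subgroup of index $96$ can have many cusps.
\item Its horizontal holonomy $F_\Pi$ can be a proper subgroup of the full one, and its least central period a proper multiple.
\end{itemize}
So computing in $\mathrm{SU}(W)$, or in the explicit lattices you propose, cannot give ``exactly one cusp''. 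It cannot separate $|F_\Pi|=4$ in row two from $|F_\Pi|=2$ in row three, and it cannot give $\deg M_\Pi$. If it could, the later identity $[\Delta:\Pi]=\frac{|F_\Delta|}{|F_\Pi|}\frac{\deg M_\Pi}{\deg M_\Delta}h^2$ would carry no information. Your argument also never uses the row data $(d_P,d_Q,d_R)$. Finally, $W$ need not be unimodular when a root of norm $2$ occurs.

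\textbf{What is missing is a local-to-global comparison.}
\begin{itemize}
\item Use the full ambient cusp models: the torsors $(x_j^2)=(U,T,T-U,T-cU)$ with their rings and $D_P$ germs at $1122,1222,1111$, and the quartic phase models $211,\dots,222$.
\item From these, decide which cusps of $X$ carry a smooth $A_1^2$ surface germ, and what the germs of $D_P,D_Q,D_R$ are there.
\item Compare these germs with the degrees of each row.
\end{itemize}
In row three only $2222/\mu_2$ fits, and its two $D_P$ lines meet once. This gives one cusp, holonomy of order $2$, and degree $2$. In row two only the $222$ supports survive, since $212$ and $221$ are excluded by intersection and contact. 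The pairs $(D_4(D_4-C),C)$ and $(D_4-B^2,D_4)$ exhaust the available intersection, so again there is one cusp. The restored phases then give $q_Z=q_{\rm amb}^2$ or $q_{\rm amb}$, holonomy $\mu_4$, and degrees $4,2$.

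\textbf{Further local inputs are omitted.} The statement also covers:
\begin{itemize}
\item the finite types $T_2\times A_1^2$, $T_3\times A_1$, $T_2^2$, with full normalizers $\mu_4\times\mu_2$, $G(4,1,2)$, $\mu_4^2$, and their inertia and intersections;
\item the exclusions at $A_3$ and $B_3$ cusps;
\item the node and tacnode germs used in row one.
\end{itemize}
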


\begin{proof}
Substitute axes \((u+x,u-x,-u+y,-u-y)\), \((u+x,u-x,y)\) into the basic invariants, with frames \((e_+,ue_-)\), \((u^2e_{x^2},e_{y^2})\) for \(A_3,T_3\).
In the order of the six groups in Lemma~\ref{ew-gaussian-plane-a1}, determinant orders \((2,3,3,1,1,0)\), corrections \((-1/2,0,-1/2,0,-1/2,0)\), and inertia \((2,2,4,2,4,2)\) give both formulas.
The full parabolic list \cite[Tables 5, 6, 8 and Theorem 3.6]{KrishnasamyTaylor} leaves finite types \(T_2\times A_1^2,T_3\times A_1,T_2^2\), with normalizers \(\mu_4\times\mu_2,G(4,1,2),\mu_4^2\), inertia \(2,4,4\), intersections \(1,2,0\).
At \(A_3\), \((x,x,t-x,t-x)\), \(t\in E[2]\), maps to a conic in \(|4[0]|\), excluding a smooth surface germ; exchanges at \(B_3\) give forbidden curves.
For different coordinate labels, \((x_j^2)=(U,T,T-U,T-cU)\) gives full rings \(\mathbb C[u,v]\) and
\[
\begin{array}{c|c}
\text{ambient degrees}&D_P\\\hline
1122&(u^2-v^2)(cu^2-v^2)\\
1222&(v-u^2)(cv-u^2)\\
1111&(u-v)(cu-v)
\end{array}.
\]
Only \(2222/\mu_2\) fits row three: its two lines meet at one cusp, with ambient period, order \(2\), and bundle degree \(2\); none fits row two.
The full quartic phases likewise leave different coordinate types; for \(D_4=q_{\rm amb}^2x_2^4\), row one gives
\[
\begin{array}{c|c|c|c}
\text{parameter}&C&D_Q&D_R\\\hline
221&iq_{\rm amb}y&D_4(D_4-C^2)&\varnothing\\
222&-q_{\rm amb}^2y^2&D_4(D_4-C)&C.
\end{array}
\]
These are the required tacnode and node; row three forbids quartic supports, and row two excludes \(212\) by intersection \(4>2\), \(221\) by contact \(2\).
For \(222\), supports \((a_0,a_1,t),(a_0,r,t)\), \(a_0=(0,1,i),a_1=(1,0,1),r=(1,1,0)\), give pairs \((D_4(D_4-C),C),(D_4-B^2,D_4)\), each exhausting the intersection and hence giving exactly one cusp.
The restored phases give both scalars \((-1)^{b_1+a_2+u}\) for the first support, trivial scalar by \(a_2=b_2,b_1+b_2+u=0\bmod2\) for the second.
Thus \(q_Z=q_{\rm amb}^2,q_{\rm amb}\), respectively; full holonomy \(\mu_4\) and actual degrees \((1,1),(1,2)\) give bundle degrees \(4,2\), with no shorter period under free deck corrections.
\end{proof}

\begin{lemma}
\label{ew-gaussian-maximal-surface}
A Gaussian Picard lattice with orbifold Euler characteristic \(3\) lies in a maximal lattice of index \(96\).
The maximal lattice has one cusp and horizontal order \(4\).
The positive line bundle on its elliptic curve has degree \(4\) or \(2\).
\end{lemma}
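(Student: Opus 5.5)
The plan is to treat the statement as a classification of maximal arithmetic lattices in \(\mathrm{PU}(1,2)\) over \(\mathbb Q(i)\), and to use Prasad's volume formula to control the index. A Gaussian Picard lattice is commensurable with \(\mathrm{PU}(h,\mathbb Z[i])\) for a Hermitian form of signature \((1,2)\). Its orbifold Euler characteristic is \(3\cdot\mathrm{vol}\) in the normalization used in Lemma~\ref{ew-fields-surface-lattice}.

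\textbf{Step 1: enumerate the maximal lattices.} As in the proof of Lemma~\ref{ew-fields-surface-lattice}, I would list the maximal lattices containing the given one. Over \(K=\mathbb Q(i)\), the only ramified prime is \(2\), and the nonuniform case forces the split-form coherent parahoric choices. The candidates are the projective full unitary groups of \(I_{1,2}\) and of \(\operatorname{diag}(-2,1,1)\) (equivalently \(\mathcal O(\pi)^{-1}e_1\oplus\mathcal O e_2\oplus\mathcal O e_3\), with \(\pi=1+i\)), together with their normalizers.

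\textbf{Step 2: bound the index by volume.} For each candidate I would compute the principal Euler number from Prasad's formula with \(L_{K/\mathbb Q}(-2)\) and \(\zeta(-1)\). Write it as \(A/24\) and let \(n\) be the normalizer index, which divides a power of \(3\) here because \(\mu_3\not\subset K\). Then the index of the given lattice is \(3/\chi_{\max}\). The local factors at inert and split primes grow too fast, exactly as in the Eisenstein case of Lemma~\ref{ew-fields-surface-lattice}, so they must be trivial. This leaves only the choice at \(2\). I expect exactly one maximal lattice to have \(\chi=1/32\), which gives index \(96\). This volume computation, including the correct local factor at the ramified prime \(2\) and the fact that no normalizer enlargement occurs, is the step I expect to be the main obstacle. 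I would check it against the known Euler characteristic \(1/32\) of \(\mathrm{PU}(2,1;\mathbb Z[i])\) (Stover's volume tables).

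\textbf{Step 3: cusp count and horizontal order.} Stover's cusp count theorem gives one cusp, since \(K\) has class number \(1\) and the relevant \(2\)-part is trivial. For the cusp group I would write down the Heisenberg stabilizer of an isotropic line. Its horizontal lattice is \(\mathbb Z[i]\), and its finite horizontal group is generated by the unit group \(\mu_4\) acting on the horizontal coordinate. The scalar kernel acts trivially on the horizontal torus, so the effective horizontal order is \(4\).

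\textbf{Step 4: degree of the positive line bundle.} The elliptic curve \(\mathbb C/\mathbb Z[i]\) carries the bundle whose Chern form is the imaginary part of the horizontal Hermitian form divided by the least central period. I would compute the commutator pairing in the Heisenberg group: its values generate an ideal \(\mathfrak a\) of central translations, and the full least period \(\tau\) from \cite[Lemma~2.16]{NakanoWeightedConfigurations2026} satisfies \([\mathfrak a:\tau\mathbb Z]\in\{2,4\}\). The ambiguity depends on whether the central period is halved by a full element whose scalar is a fourth root of unity, so the degree is \(4\) or \(2\).
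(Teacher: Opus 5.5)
Your Steps 1--2 follow the paper's route (Prasad's formula with the normalizer bound, forcing $I=\varnothing$, $A=1$ and index $96$). However, your list of maximal lattices is wrong at the ramified prime $2$, and that is exactly where the alternative ``$4$ or $2$'' comes from.

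The ramified unitary group in three variables has two special vertices at $2$, and both have trivial local factor. So there are two non-conjugate maximal lattices of Euler number $1/32$:
the Gauss--Picard group of the unimodular lattice $M_0=\mathbb Z[i]^3$ with the antidiagonal form (your $I_{1,2}$), and its sister. The sister is the stabilizer of $M_s=\mathbb Z[i]e_1+\mathbb Z[i]e_2+\mathbb Z[i]\rho e_3$, $\rho=(-1+i)/2$. Its dual $\pi\mathbb Z[i]e_1\oplus\mathbb Z[i]e_2\oplus\mathbb Z[i]e_3$ has discriminant module $(\mathbb Z[i]/\pi)^2$.

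Your second candidate $\operatorname{diag}(-2,1,1)$ is not this lattice. Its discriminant module $\mathbb Z[i]/\pi^2$ is cyclic. Its stabilizer also preserves $L+\pi L^\#$, which is unimodular, so it lies inside the first group and is not maximal. Nor is it equivalent to $\mathcal{O}(\pi)^{-1}e_1\oplus\mathcal{O}e_2\oplus\mathcal{O}e_3$, whose Gram matrix is $\operatorname{diag}(-1/2,1,1)$. Your expectation of a unique maximal lattice with $\chi=1/32$ therefore discards a case in which the given Picard lattice may lie.

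A smaller point: size alone does not remove the inert factors $7$ at $p=3$ and $43$ at $p=7$, both below $96$. You need integrality of the index $32n/A$, with $n$ a power of $3$.

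This leaves Step 4 without an argument. For a fixed maximal lattice the least central period is a definite number. Scalars in $\mu_4$ are projectively trivial and halve nothing, so there is no ambiguity of the kind you describe. What is needed is the explicit full Heisenberg lattice of each of the two maximal lattices.

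For the unipotent elements $U(z,t)$ fixing $e_1$, integrality requires $|z|^2$ and $t$ to be even. So the horizontal lattice is $(1-i)\mathbb Z[i]$, of area $2$, not $\mathbb Z[i]$; with $\mathbb Z[i]$ the commutators would generate $U(0,4\mathbb Z)$ and both degrees would come out wrong. One gets
$N_0=\{U(z,t):z\in(1-i)\mathbb Z[i],\ t\in2\mathbb Z\}$ and $N_s=\{U(z,2a):z\in(1-i)\mathbb Z[i],\ a\equiv|z|^2/2\bmod 2\}$. The full groups are $N_0\rtimes\mu_4$ and $N_s\rtimes\mu_4$, so the horizontal order is $4$ in both cases. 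The central periods are $2$ and $4$, and both share the commutator $U(0,4\operatorname{Im}(z\bar w))$, generating $U(0,8\mathbb Z)$.

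The degree is the index of the commutator subgroup in the center; your $[\mathfrak a:\tau\mathbb Z]$ is inverted. This gives $8/2=4$ for the Gauss--Picard group and $8/4=2$ for its sister. The one-cusp and horizontal-order assertions must likewise be checked for both lattices.
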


\begin{proof}
Use the volume and normalizer formulas \cite[Sections 3.3--4.1]{StoverVolumes}, \cite[Section 2.3]{PrasadYeungPlanes}:
\[
3\le n\le3^{1+|I|},\qquad
\chi_{\rm orb}(\mathbb B^2/\Delta)=3A/(32n),\qquad mA=32n,
\]
Here \(A\) is the local product, \(I\) the split Iwahori primes, \(n\) the full real index, \(m=[\Delta:\Pi]\); center \(\mu_3\) and covolume \(-L_{\mathbb Q(i)}(-2)/48=1/96\) give the displayed normalization.
The nonminimal factors \((p^2+p+1)(p+1)\), \(p\ge5\), \(p^2-p+1\), \(p\ge3\), force \(I=\varnothing,A=1,n=3\).
Class number \(1\) \cite[Proposition 5.3]{PrasadYeungPlanes} leaves the two ramified vertices, with Euler number \(1/32\), index \(96\), one cusp \cite[Theorem 4.8]{StoverCusps}, and models
\[
\begin{gathered}
J=\begin{pmatrix}0&0&1\\0&1&0\\1&0&0\end{pmatrix},\qquad M_0=\mathbb Z[i]^3,\\
M_s=\mathbb Z[i]e_1+\mathbb Z[i]e_2+\mathbb Z[i]\rho e_3,\qquad \rho=(-1+i)/2.
\end{gathered}
\]
The vertex bases \(((1-i)^{-1}e_1,e_2,\rho e_3),(e_1,e_2,\rho e_3)\) identify the standard group and its sister by Gram reduction modulo \(1-i\) \cite[Section 3.11]{Tits}.
For
\[
U(z,t)=\begin{pmatrix}1&-\bar z&(-|z|^2+it)/2\\0&1&z\\0&0&1\end{pmatrix},
\]
the full Heisenberg lattices are
\[
\begin{aligned}
N_0&=\{U(z,t):z\in(1-i)\mathbb Z[i],\ t\in2\mathbb Z\},\\
N_s&=\{U(z,2a):z\in(1-i)\mathbb Z[i],\ a\equiv|z|^2/2\bmod2\}.
\end{aligned}
\]
The full groups are \(N_0\rtimes\mu_4,N_s\rtimes\mu_4\), with periods \(2,4\), area \(2\); commutator \(U(0,4\operatorname{Im}(z\bar w))\) gives degrees \(4\cdot2/t_0=4,2\).
\end{proof}

\section*{Acknowledgments}
The first author was supported by JST SPRING, Grant Number \mbox{JPMJSP2119}.
The second author was supported by JSPS KAKENHI Grant Number \mbox{JP26KJ0476}.

\printbibliography

\bigskip
\noindent Department of Mathematics, Graduate School of Science, Hokkaido University, Sapporo 060-0810, Japan.

\smallskip
\noindent Ryunosuke Nakano: \href{mailto:nakano.ryunosuke.i3@elms.hokudai.ac.jp}{\texttt{nakano.ryunosuke.i3@elms.hokudai.ac.jp}}.

\smallskip
\noindent Hisato Matsukawa: \href{mailto:matsukawa.hisato.f4@elms.hokudai.ac.jp}{\texttt{matsukawa.hisato.f4@elms.hokudai.ac.jp}}.

\end{document}